\newtheorem{lemma}{Lemma}
\newtheorem{thm}{Theorem}
\newtheorem{example}{Example}
\newtheorem{assumption}{Assumption}
\newtheorem{corollary}{Corollary}
\newtheorem{remark}{Remark}
\numberwithin{equation}{section}
\begin{document}
\title[It\^o-Taylor expansion and $\gamma\in\{n/2:n \in\mathbb{N}\}$ order scheme for SDE\lowercase{w}MS]{On It\^o-Taylor expansion for stochastic differential equations with Markovian switching and its application in $\gamma\in\{n/2:n \in\mathbb{N}\}$-order scheme}
\author[T. Kumar and C. Kumar]{Tejinder Kumar and Chaman Kumar}
\address{Department of Mathematics\\
Indian Institute of Technology Roorkee, India}
\email{tkumar@ma.iitr.ac.in, chaman.kumar@ma.iitr.ac.in}
\maketitle

\begin{abstract}
The coefficients of the stochastic differential equations with Markovian switching  (SDEwMS) additionally depend on a Markov chain and there is no notion of differentiating such functions with respect to the  Markov chain.
In particular, this  implies that  the It\^o-Taylor expansion for SDEwMS is not a straightforward extension of the It\^o-Taylor expansion  for stochastic differential equations (SDEs). 
Further, higher-order numerical schemes for SDEwMS are not available in the literature, perhaps because of the absence of the It\^o-Taylor expansion. 
In this article, first, we overcome these challenges and  derive the It\^o-Taylor expansion for SDEwMS, under some suitable regularity assumptions on the coefficients, by developing new techniques. 
Secondly, we demonstrate an application of our first result on the It\^o-Taylor expansion  in the numerical approximations of SDEwMS. 
We derive  an explicit scheme for SDEwMS using  the It\^o-Taylor expansion and show that the strong rate of convergence of our scheme is equal to $\gamma\in\{n/2:n\in\mathbb{N}\}$ under some  suitable  Lipschitz-type conditions on the coefficients and their derivatives.  
It is worth mentioning that designing and analysis of  the It\^o-Taylor expansion and the  $\gamma\in\{n/2:n\in\mathbb{N}\}$-order scheme for SDEwMS become much more complex and involved due to the entangling of continuous dynamics and discrete events.
Finally, our results  coincide with the corresponding results on SDEs when the state of the Markov chain is a singleton set. 
\newline 
\textit{Keywords.} It\^o-Taylor expansion, $\gamma\in\{n/2:n\in\mathbb{N}\}$-order scheme, SDE with Markovian switching, rate of strong convergence. 
\newline 
\textit{AMS 2020 subject classifications}: 60H35, 41A58, 65C30, 60H35. 
\end{abstract}
\tableofcontents
\section{Introduction}
\label{Intro}
The stochastic differential equations with Markovian switching (SDEwMS)  are widely used in many real-life situations, see, for example,  \cite{bao2016permanence}, \cite{mao2006stochastic}, \cite{mao2006hierarchical},  \cite{yin1994hybrid}, \cite{zhang1998nonlinear}, \cite{zhang2001stock} and the references therein. 
The existence and uniqueness of the solution of SDEwMS are well-known in the literature, see  \cite{dareiotis2016tamed} and \cite{mao2006stochastic}. 
However, the explicit solutions of such equations are often unknown, which makes the numerical approximations of SDEwMS  an important subject of investigation. 
The numerical schemes of order $0.5$ for SDEwMS, namely the  Euler scheme,  tamed Euler scheme and implicit Euler scheme, have received significant attention in the literature, see   \cite{mao2006stochastic}, \cite{nguyen2017pathwise}, \cite{dareiotis2016tamed} and \cite{nguyen2018tamed}. 
Recently,  order $1.0$ schemes such as (tamed) explicit Milstein-type scheme   for SDEwMS and their strong rate of convergence  have been studied in \cite{kumar2020tamedmilstein}, \cite{kumar2021milstein} and \cite{nguyen2017milstein}. 
However, no attention is given to the investigation of the higher-order numerical schemes for SDEwMS, perhaps because of the non-availability of the  It\^o-Taylor expansion for functions depending on a Markov chain. 
The derivation of the It\^o-Taylor expansion for SDEwMS is not a straightforward extension of the corresponding results on SDEs (see \cite{kloeden1992numerical}), mainly because the coefficients of SDEwMS  depend additionally on a Markov chain and there is no notion of derivative of a Markov chain dependent function with respect to the Markov chain. 

In this article, we present novel strategies to address these challenges to derive and demonstrate the It\^o-Taylor expansion for SDEwMS. 
More precisely, in the derivation of the It\^o-Taylor expansion for the traditional SDEs, as discussed in \cite{kloeden1992numerical}, authors  deal with two types of integrals, namely, Riemann integral and It\^o's integral.
However, to derive the It\^o-Taylor expansion for SDEwMS, we have to deal with one more form of integral, which is the integral with respect to the optional process $\{[M_{i_0 k_0}](t):t\in[0,T]\}$ (see Subsection \ref{sub:Martingale with MS}) associated with the Markov chain.
Thus, the techniques developed for SDEs in \cite{kloeden1992numerical} can not be extended in a  straightforward manner to SDEwMS,  which necessitates  the development of novel strategies. 
Indeed,  multi-indices, multiple integrals, hierarchical and remainder sets are defined by taking into consideration the new integrals with respect to $\{[M_{i_0 k_0}](t):t\in[0,T]\}$ which brings additional complexity in the derivation and proofs of the main results on the It\^o-Taylor expansion and the explicit  $\gamma\in\{n/2:n\in\mathbb{N}\}$-order scheme for SDEwMS. 
Here, it is worth mentioning that designing and analysis become difficult because of the random switching of the Markov chain and entangling of the continuous dynamics of the state and discrete events arising due to switching of the Markov chain. 
Further, if the state space of the Markov chain is a singleton set, then SDEwMS become  SDEs. In such a case, our It\^o-Taylor expansion reduces to the classical It\^o-Taylor expansion for SDEs given in \cite{kloeden1992numerical}.
We provide an application of the It\^o-Taylor expansion in the derivation of higher-order numerical schemes for SDEwMS. 
We  show  that the strong rate of convergence of the general numerical scheme for SDEwMS is  $\gamma\in\{n/2: n \in \mathbb{N}\}$. 
For a Markov chain with singleton state space, \textit{i.e.}, when SDEwMS reduce to SDEs,  our $\gamma\in\{n/2: n \in \mathbb{N}\}$-order scheme is the same as the $\gamma\in\{n/2: n \in \mathbb{N}\}$-order scheme  discussed in \cite{kloeden1992numerical}. 
In this case,  the regularity requirement on the coefficients is much weaker in our setting when compared with the correspodning results  for SDEs given in \cite{kloeden1992numerical}.  

The paper is organized as follows. 
The formulation and preliminaries of the article are given in Section \ref{sec:formulation}.
In Section \ref{sec:notations}, notations and definitions are given, which are used throughout the paper. 
In Section \ref{Main Result}, the main results on the It\^o-Taylor expansion and its application in $\gamma\in\{n/2:n\in\mathbb{N}\}$-order numerical schemes for SDEwMS are stated. 
The technical lemmas required to prove the  It\^o-Taylor expansion for SDEwMS, including the proof of the It\^o-Taylor expansion, are given in Section \ref{sec:ito}.
In the last section, we explain the derivation of the $\gamma\in \{n/2:n\in\mathbb{N}\}$-order numerical scheme and establish its moment stability and the strong rate of convergence.  

\section{Formulation and preliminaries}
\label{sec:formulation}
Let $(\Omega,\mathcal{F}, P)$ be a  complete probability space. 
Assume that   $W:=\{(W^1(t),W^2(t), \ldots,W^m(t)): t\geq 0\}$ is  an $\mathbb{R}^m$-valued standard Wiener process and denote the natural filtration of $W$ by $\mathbb{F}^W:=\{\mathcal{F}^W_t:t\geq 0\}$.
For a fixed $m_0\in \mathbb{N}$,  define a set $\mathcal{S}:=\{1,\ldots,m_0 \}$ and an $m_0\times m_0$ matrix  $Q:=\{q_{i_0k_0}: i_0, k_0 \in \mathcal{S}\}$ such that $q_{i_0i_0}=-\displaystyle \sum_{k_0\neq i_0 \in \mathcal{S}} q_{i_0k_0}$ for any $i_0\in \mathcal{S}$ where $q_{i_0k_0}\geq 0$  for $i_0\neq k_0$. 
Now, consider a continuous-time Markov chain $\alpha:=\{\alpha(t):t\geq 0\}$ with  state space $\mathcal{S}$ and generator  $Q$.
Clearly,  the transition probability of $\alpha$  is given by, 
\begin{align} 
P(\alpha(t+\delta)=k_0|\alpha(t)=i_0)=
\begin{cases}
q_{i_0k_0}\delta+o(\delta), &\text{if } i_0\neq k_0, \\
1+q_{i_0k_0}\delta+o(\delta), &\text{if } i_0= k_0,
\end{cases} \notag
\end{align}
for  any $t \geq 0$, $i_0, k_0 \in \mathcal{S}$ where $\delta>0$. 
Also, let $\mathbb{F}^\alpha:=\{\mathcal{F}^\alpha_t:t\geq 0\}$ be the natural filtration of $\alpha$. 
We assume that $\alpha$ is independent of $W$. 
Define $\mathcal{F}_t=\mathcal{F}_t^W \vee \mathcal{F}_t^\alpha$ for any $t\geq 0$ and equip the probability space $(\Omega,\mathcal{F}, P)$ with the filtration $\mathbb{F}:=\{\mathcal{F}_t:t\geq 0\}$.
For $T>0$,  consider a $d$-dimensional  stochastic differential equation with Markovian switching (SDEwMS)  on $(\Omega,\mathcal{F}, \mathbb{F}, P)$,  given by, 
\begin{align} \label{eq:sdems}
X(t)=X_0+\int^t_0b(X(s),\alpha(s))ds+\int^t_0 \sigma(X(s),\alpha(s))dW(s) 
\end{align} 
almost surely  for any $t\in[0,T]$ where initial value $X_0\in \mathbb{R}^d$ is an $\mathcal{F}_0$-measurable random variable and $b:\mathbb{R}^d\times \mathcal{S} \mapsto \mathbb{R}^d$ and $\sigma:\mathbb{R}^d\times \mathcal{S} \mapsto \mathbb{R}^{d \times m}$ are measurable functions. 

We make the following assumptions for existence, uniqueness and moment stability of SDEwMS \eqref{eq:sdems}. 
\begin{assumption}
\label{ass:initial data}
$E|X_0|^{2}, E|Y_0|^{2} < \infty$ and $E|X_0-Y_0|^2\leq L h^{2\gamma}$ where  $L>0$ is a constant. 
\end{assumption}   

\begin{assumption}
\label{ass: b sigma lipschitz}
There exists a  constant $L>0$ such that,
\begin{align*}
|b^k(x,i_0)-b^k(y,i_0)|+|\sigma^{(k,j)}(x,i_0)-\sigma^{(k,j)}(y,i_0)| \leq L|x-y|
\end{align*}
for all $i_0\in \mathcal{S}$, $k\in\{1,\ldots,d\}$, $j\in\{1,\ldots,m\}$ and $x,y\in \mathbb{R}^d$.
\end{assumption}

The result on the existence and uniqueness of the strong solution for SDEwMS  is stated in the following theorem and the proof can be found in \cite{mao2006stochastic}, see Theorem 3.1.3.
\begin{thm}\label{thm:true moment}
Let Assumptions \ref{ass:initial data} and \ref{ass: b sigma lipschitz} be satisfied. Then, there exists a unique continuous solution $\{X(t): \{t\in [0,T]\}\}$ of SDEwMS \eqref{eq:sdems}. Moreover, the following hold, 
\begin{align*}
E\Big(\sup_{t\in [0,T]}|X(t)|^{p}\Big|\mathcal{F}_T^{\alpha}\Big)& \leq C
\\
E\Big( \sup_{t\in [s,s+h]}|X(t)-X(s)|^p\Big|\mathcal{F}_T^{\alpha}  \Big)& \leq Ch^{p/2} 
\end{align*}
where the positive constant $C$ is independent of $h$.
\end{thm}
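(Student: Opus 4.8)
The plan is to exploit the independence of $\alpha$ and $W$ to turn \eqref{eq:sdems}, after conditioning on $\mathcal{F}_T^{\alpha}$, into a classical It\^o SDE with globally Lipschitz, linear-growth coefficients, and then to run the usual moment estimates under the conditional probability $P(\,\cdot\mid\mathcal{F}_T^{\alpha})$. First I would record three facts. (i) Since $\alpha$ is independent of $W$, enlarging $\mathbb{F}^W$ by the $\sigma$-algebra $\mathcal{F}_T^{\alpha}$ leaves $W$ a Brownian motion with respect to $\{\mathcal{F}_t^W\vee\mathcal{F}_T^{\alpha}:t\in[0,T]\}$, and the integrand $\sigma(X(\cdot),\alpha(\cdot))$ is adapted to this filtration because $\mathcal{F}_t\subseteq\mathcal{F}_t^W\vee\mathcal{F}_T^{\alpha}$; hence the It\^o-calculus tools (It\^o's formula, Burkholder--Davis--Gundy inequality) are available conditionally on $\mathcal{F}_T^{\alpha}$. (ii) For almost every $\omega$, the path $t\mapsto\alpha(t)$ is a right-continuous step function on $[0,T]$ with finitely many jumps $0=\tau_0<\tau_1<\cdots<\tau_N=T$, so on each $[\tau_j,\tau_{j+1})$ the chain is frozen at a state $i_j\in\mathcal{S}$ and \eqref{eq:sdems} reduces to $dX=b(X,i_j)\,ds+\sigma(X,i_j)\,dW$. (iii) By Assumption \ref{ass: b sigma lipschitz} and the finiteness of $\mathcal{S}$, $b(\cdot,i_0)$ and $\sigma(\cdot,i_0)$ are Lipschitz and of linear growth \emph{uniformly} in $i_0\in\mathcal{S}$ (linear growth because $\sup_{i_0\in\mathcal{S}}(|b(0,i_0)|+|\sigma(0,i_0)|)<\infty$).

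Existence and uniqueness of a continuous solution then follow from the cited Theorem~3.1.3 of \cite{mao2006stochastic}, or alternatively by solving the SDE successively on $[\tau_0,\tau_1),[\tau_1,\tau_2),\dots$ and patching at the $\tau_j$, at which $X$ is continuous because switching changes the coefficients, not the state. It then remains to establish the two conditional moment bounds. For the first, I apply It\^o's formula to $(1+|X(t)|^2)^{p/2}$, take $E(\,\cdot\mid\mathcal{F}_T^{\alpha})$, and use the linear growth of $b,\sigma$ and the conditional Burkholder--Davis--Gundy inequality to get, for $t\in[0,T]$,
\[
E\Big(\sup_{r\in[0,t]}|X(r)|^p\,\Big|\,\mathcal{F}_T^{\alpha}\Big)\le C\Big(1+E\big(|X_0|^p\mid\mathcal{F}_T^{\alpha}\big)+\int_0^t E\Big(\sup_{r\in[0,s]}|X(r)|^p\,\Big|\,\mathcal{F}_T^{\alpha}\Big)\,ds\Big),
\]
whence Gronwall's inequality gives the claimed bound; since the Lipschitz/growth constants, $T$, and (by hypothesis) the moment of $X_0$ are all $h$-independent, so is $C$. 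For the increment bound I would write $X(t)-X(s)=\int_s^t b(X(r),\alpha(r))\,dr+\int_s^t\sigma(X(r),\alpha(r))\,dW(r)$, bound the drift term by the conditional H\"older inequality (which produces a factor $h^{p}\le T^{p/2}h^{p/2}$) and the diffusion term by the conditional Burkholder--Davis--Gundy inequality (which produces $h^{p/2}$), in both cases controlling $E\big(\int_s^t(1+|X(r)|^p)\,dr\mid\mathcal{F}_T^{\alpha}\big)\le Ch$ via the first bound.

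I expect the only genuine obstacle to be the measure-theoretic bookkeeping of the conditioning step: making precise that $W$ is still a Brownian motion for $\{\mathcal{F}_t^W\vee\mathcal{F}_T^{\alpha}\}$, that the It\^o integral and the BDG inequality are unaffected by passing to $P(\,\cdot\mid\mathcal{F}_T^{\alpha})$, and that the constants produced are deterministic, i.e. truly independent of the frozen chain path. This is exactly where independence of $\alpha$ and $W$ and the finiteness of $\mathcal{S}$ (providing switching-uniform Lipschitz and growth constants) are indispensable; granting these, the remaining estimates are the standard ones for SDEs with Lipschitz coefficients.
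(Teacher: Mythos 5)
Your proposal is correct and matches the intended argument: the paper does not prove this theorem but cites Theorem~3.1.3 of \cite{mao2006stochastic}, whose proof is exactly the route you describe (existence/uniqueness by solving on the inter-jump intervals of $\alpha$ and patching, moment bounds by Gronwall and Burkholder--Davis--Gundy after conditioning on the chain path, with constants uniform because the Lipschitz and linear-growth bounds are uniform over the finite state space $\mathcal{S}$). The only caveat, which is an imprecision in the statement rather than in your argument, is that bounding $E\bigl(\sup_t|X(t)|^p\bigm|\mathcal{F}_T^{\alpha}\bigr)$ by a deterministic constant requires control of $E\bigl(|X_0|^p\bigm|\mathcal{F}_T^{\alpha}\bigr)$, whereas Assumption~\ref{ass:initial data} only provides second moments of $X_0$.
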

\subsection{Martingales associated with the Markov chain}\label{sub:Martingale with MS}
Following  \cite{nguyen2017milstein}, we define 
 \begin{align}
[M_{i_0k_0}](t):=\sum_{s\in [0, t]}\mathbbm{1}{\{\alpha(s-)=i_0\}} &\mathbbm{1}{\{\alpha(s)=k_0\}},\quad \langle M_{i_0k_0}\rangle(t):=\int_0^t q_{i_0k_0}\mathbbm{1}{\{\alpha(s-)=i_0\}} ds\nonumber
\\
M_{i_0k_0}(t)&:= [M_{i_0 k_0}](t)-\langle M_{i_0k_0}\rangle(t)  \nonumber
\end{align}
almost surely for any $t\in[0,T]$ and $i_0\neq k_0 \in \mathcal{S}$.
Clearly,  $M_{i_0k_0}(0)=0$ (a.s.) and $\{M_{i_0k_0}(t):  t\in[0,T]\}$ is a purely discontinuous square integrable martingale with respect to the filtration $\mathbb{F}^\alpha$. 
Also, $\{[M_{i_0 k_0}](t):t\in[0,T]\}$ is an optional process and  $\{\langle M_{i_0k_0}\rangle(t):t\in[0,T]\}$ is its predictable quadratic variation process. 
Moreover, the following orthogonality relations (of quadratic covariances) hold, 
\begin{align*}
[W^i, W^j]=0 \,(i\neq j), \, [M_{i_0k_0}, W^i]=0, \mbox{ and } [M_{i_0k_0}, M_{i_1k_1}]=0 \,\,((i_0k_0)\neq (i_1k_1))
\end{align*}
for any $i,j \in \{1,2,\ldots,m\}$ and $i_0, k_0, i_1, k_1 \in \mathcal{S}$. 
For convenience, we  take $M_{i_0i_0}(t)=0$ for  any $i_0\in\mathcal{S}$ and $t\in[0,T]$.  

The proof of the following lemma appears in \cite{nguyen2017milstein} (see Lemma 4.1). 
\begin{lemma}\label{lem:rateMS}
Let $q=\max \{-q_{i_0i_0}:i_0 \in \mathcal{S}\}$ and $N^{(s,t]}$ denotes the number of jumps of the Markov chain $\alpha$ in the interval $(s,t]$ for any $s<t\in[0,T]$.  Then, 
\begin{enumerate}[label=(\alph*)]
\item  $P(N^{(s,t]}\geq N)\leq q^N(s-t)^N$ whenever $N\geq 1$, and 
\item $EN^{(s,t]}\leq C (t-s)$ whenever $t-s<1/(2q)$ where $C>0$ is a constant independent of $t-s$. 
\end{enumerate}
\end{lemma}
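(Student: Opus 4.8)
The plan is to reduce both statements to the well-known distribution of the jump times of a continuous-time Markov chain with bounded jump rates. First I would recall that, conditional on the embedded jump chain, the holding time of $\alpha$ in any state $i_0$ is exponentially distributed with parameter $-q_{i_0i_0} \le q$; hence, by a standard coupling/domination argument, the counting process $\{N^{(s,s+r]}: r \ge 0\}$ is stochastically dominated by a Poisson process of intensity $q$. Concretely, one sets up the inequality by noting that the waiting time until the first jump after time $s$ stochastically dominates an $\mathrm{Exp}(q)$ random variable (uniformly in the current state and independently of the past, using the strong Markov property at $s$), and iterating this $N$ times gives that the time until the $N$-th jump after $s$ dominates a sum of $N$ i.i.d.\ $\mathrm{Exp}(q)$ variables, i.e.\ a $\mathrm{Gamma}(N,q)$ random variable.

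For part (a), the event $\{N^{(s,t]} \ge N\}$ is exactly the event that the $N$-th jump after $s$ occurs at or before time $t$, which by the domination above has probability at most $P(\mathrm{Gamma}(N,q) \le t-s)$. Then I would bound this Gamma tail crudely: since the $N$-th jump time is a sum $\tau_1 + \cdots + \tau_N$ of (dominated) independent holding times, the event that it is $\le t-s$ forces each partial increment to be controlled, and a clean way to get the stated bound $q^N(t-s)^N$ is to observe
\begin{align*}
P\big(\mathrm{Gamma}(N,q) \le t-s\big) = \int_0^{t-s} \frac{q^N u^{N-1}}{(N-1)!}\,e^{-qu}\,du \le \frac{q^N (t-s)^N}{N!} \le q^N(t-s)^N,
\end{align*}
which is even slightly stronger than claimed. (I should double-check the intended direction of $(s-t)^N$ versus $(t-s)^N$ in the statement — with $s<t$ the quantity $(s-t)^N$ can be negative, so I read it as $(t-s)^N$, or equivalently $|t-s|^N$.)

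For part (b), I would write $EN^{(s,t]} = \sum_{N \ge 1} P(N^{(s,t]} \ge N)$ and plug in the bound from part (a): $EN^{(s,t]} \le \sum_{N\ge 1} q^N(t-s)^N = \frac{q(t-s)}{1-q(t-s)}$, which converges precisely because $t-s < 1/(2q)$ forces $q(t-s) < 1/2$; in that regime $\frac{q(t-s)}{1-q(t-s)} \le 2q(t-s)$, giving the claim with $C = 2q$.

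The main obstacle is getting the stochastic-domination step fully rigorous: one must justify that the holding times dominate i.i.d.\ exponentials \emph{jointly}, not just marginally, and that this can be done measurably with respect to the filtration $\mathbb{F}^\alpha$ so that the strong Markov property applies at each successive jump time. The standard device is to construct $\alpha$ (or at least a coupling) via a Poisson point process of rate $q$ together with independent uniform marks that decide whether a potential jump is accepted, so that the accepted jumps are a thinning of a rate-$q$ Poisson process; then $N^{(s,t]} \le \tilde N^{(s,t]}$ pathwise, where $\tilde N$ is Poisson of rate $q$, and both (a) and (b) follow immediately from Poisson tail bounds. Everything else is routine. Since this lemma is quoted from \cite{nguyen2017milstein}, I would in fact just cite that reference and sketch the Poisson-domination idea for the reader's convenience.
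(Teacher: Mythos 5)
The paper gives no proof of this lemma, deferring entirely to Lemma 4.1 of \cite{nguyen2017milstein}, and your Poisson-domination argument is the standard route taken there: dominate the holding times by i.i.d.\ $\mathrm{Exp}(q)$ variables, bound the Gamma tail to get (a), and sum the tail probabilities (a geometric series, convergent since $q(t-s)<1/2$) to get (b) with $C=2q$. Your reading of $(s-t)^N$ as $(t-s)^N$ is the correct interpretation of an obvious typo, and the proposal is correct as it stands.
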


\section{Notations and definitions}\label{Notations and definitions} 
\label{sec:notations}
For sets $A$ and $B$, define $A\setminus B:=\{x:x\in A \mbox{ and } x\notin B\}$, $A\cap B:=\{x:x\in A,x\in B\}$ and $A\cup B:=\{x:x\in A \: \text{ or }\: x\in B\}$.
$A^{(i,j)}$ and $A^{(j)}$ stand for the $(i,j)$-th element and the $j$-th column of a matrix $A$ respectively and the $k$-th element of a vector $v$ is denoted by $v^k$. 
$xy$ denotes the inner product of vectors  $x$ and $y$. 
The  indicator function of a set $A$ is denoted by $\mathbbm{1}A$. 
$\varphi$ stands for an empty set and by convention, the sum over an empty set is taken to be zero. 
$N^{(s,t]}$ is the no. of jump and $\tau_1,\ldots,\tau_{N^{(s,t]}}$ are the jump times of the Markov chain $\alpha$ in the interval $(s,t]$ for any $s<t\in [0,T]$.
The general constant is denoted by $C$ which can vary from place to place and does not depend on the step-size. 
\subsection{Multi-indices} \label{sub:Multi-indices}
We call a row vector $\beta=(j_1,\ldots,j_l)$, a \textit{multi-index} of length $l(\beta)=l \in\mathbb{N}$, when $ j_i\in\{N_1, \ldots,N_{\mu},\bar{N}_{\mu },0,1,\ldots,m\}$ for all $i\in\{1,\ldots,l\}$ and satisfies the following condition,
\begin{itemize}
\item[(a)] if $j_{i}\in\{N_1,\ldots,N_{\mu },\bar{N}_{\mu }\}$, then $j_{i-1}\notin\{N_1,\ldots,N_{\mu },\bar{N}_{\mu }\}$ for all $ i\in \{2,\ldots,l\}$
\end{itemize} 
where  $\mu\in\mathbb{N}$ is fixed. 
Here, $N_i$ signifies the number $i$ and $\bar{N}_i$, the number $i+1$ for any $i\in\mathbb{N}$.
A \textit{multi-index $\beta$ of length zero} is denoted   by $\nu$, \textit{i.e.}, $l(\nu)=0$.
Here,  each component of multi-index $\beta$ represents a type of integral in multiple integrals which becomes more clear when we define multiple integrals in  Subsection \ref{sub:multiple integral}. 
Furthermore,   the positive  integer $\mu$ is related to the  strong rate $\gamma\in \{n/2,n\in\mathbb{N}\}$ of convergence of the numerical schemes considered in this paper (see Subsection \ref{sub:scheme}).

 Also, assume that $\mathcal{M}:=\{\beta: l(\beta)\in\mathbb{N}\cup\{0\}\}$ denotes the set of all multi-indices of arbitrary length.
We divide the set $\mathcal{M}$ of all multi-indices into disjoint sets $\mathcal{M}_1$, $\mathcal{M}_2$ and $\mathcal{M}_3$ as follows, 
\begin{itemize}
\item[] $\mathcal{M}_1:=\{(j_1,\ldots,j_l)\in\mathcal{M}:j_i\in\{0,1,\ldots,m\}\forall\, i\in\{1,\ldots,l\}\}\cup \{\nu\}$,
\item[] $\mathcal{M}_2:=\{(j_1,\ldots,j_l)\in\mathcal{M}\setminus\mathcal{M}_1:j_1\in\{0,1,\ldots,m\}\}$,
\item[] $\mathcal{M}_3:=\{(j_1,\ldots,j_l)\in\mathcal{M}\setminus\mathcal{M}_1:j_1\in\{N_1,\ldots,N_{\mu},\bar{N}_{\mu}\}\}$.
\end{itemize}
 
For any $\beta=(j_1,\ldots,j_l)\in\mathcal{M}\setminus\{\nu\}$, the deletion of its first component  is represented by $-\beta=(j_2,\ldots,j_l)$ and the deletion of its last component  is represented by  $\beta-=(j_1,\ldots,j_{l-1})$.
Notice that $-\beta=\beta-=\nu$ for any $\beta\in\mathcal{M}$ of length $l(\beta)=1$.

For $\beta=(j_1,\ldots,j_k), \bar{\beta}=(\bar{j_1},\ldots,\bar{j_l}) \in \mathcal{M}$, the \textit{concatenation operation} $\star$  is defined by 
\begin{itemize}
\item[] $\beta\star\bar{\beta}:=(j_1,\ldots,j_k,\bar{j_1},\ldots,\bar{j_l})\in \mathcal{M}$ with $\nu\star\bar{\beta}=\bar{\beta}$ and $\beta\star\nu=\beta$.
\end{itemize}

Also, for $\beta=(j_1,\ldots,j_l) \in\mathcal{M}$, 
\begin{itemize}
\item[] $n(\beta)$ :  number of components of $\beta$ which are equal to $k$ where $k\in \{1,\ldots,m\}$,
\item[] $\bar{n}(\beta)$ : number of components of $\beta$ which are equal to $0$,
\item[] $[n](\beta)$ : number of components of $\beta$ which are equal to $k$ where  $k \in \{N_1,\ldots,N_{\mu},\bar{N}_{\mu}\}$,
\item[]$\eta(\beta)=n(\beta)+2\bar{n}(\beta)+\mu_{\max}(\beta)$ where $\mu_{\max}(\beta)=\displaystyle \max_{i \in \{1,\ldots, l\}}\{j_i$: $j_i\in\{N_1,\ldots,N_{\mu}$, $\bar{N}_{\mu}\}$ for any $i\in\{1,\ldots,l\}$\}. Clearly, $\eta(\nu)=0$.
\end{itemize}
Notice that $\eta(\beta)$ may be equal for two different $\beta\in\mathcal{M}$ whose length is not same. For example, $\eta((1,N_1))=\eta((N_1,1,N_1))=2$.
Further, $\{\beta\in\mathcal{M}:\eta(\beta)<0\}$ is an empty set.
\begin{example}
Let $m=4$ and consider a multi-index   $\beta=(0,N_2,2,1,N_3,0)$ of length $6$. 
Clearly, $\beta$ satisfies conditions (a) mentioned above. 
Then, $-\beta=(N_2,2,1,N_3,0)$ and $\beta-=(0,N_2,2,1,N_3)$.
Also, $n(\beta)=2$, $\bar{n}(\beta)=2$ and $\mu_{max}(\beta)=\max\{N_2,N_3\}=3$ which yields $\eta(\beta)=9$. 
Further, for  $\bar{\beta}=(4,0,0,N_3,1,\bar{N}_1)$,  $$\beta\star\bar{\beta}=(0,N_2,2,1,N_3,0,4,0,0,N_3,1,\bar{N}_1) \in \mathcal{M}.$$ 
Clearly, $\beta\star\bar{\beta}$ satisfies conditions (a) and is a multi-index of length $12$.
\end{example}
\subsection{Operators}
\label{sub:Operators}
The following operators are used throughout this article,
\begin{align*}
L^0_{i_0} & :=\sum^d_{k=1}b^k(\cdot,i_0)\frac{\partial }{\partial x^k}+\frac{1}{2}\sum^d_{k,l=1}\sum^m_{j=1}\sigma^{(k,j)}(\cdot,i_0)\sigma^{(l,j)}(\cdot,i_0)\frac{\partial^2 }{\partial x^k\partial x^l},
\\
L^j_{i_0}& :=\sum^d_{k=1}\sigma^{(k,j)}(\cdot,i_0)\frac{\partial }{\partial x^k}, \,\, j\in\{1,2,\ldots,m\}
\end{align*}
for any $i_0\in \mathcal{S}$.
Further, for $\beta\in\{(j_1,\ldots,j_l)\in\mathcal{M}:j_i\in\{0,1,\ldots,m\}\forall i\in\{1,\ldots,l\}\}$
\begin{align*}
J^{\beta}_{i_0}&:=L^{j_1}_{i_0}\cdots L^{j_l}_{i_0}
\\
J^{\beta}_{i_0k_0}&:=J^{\beta}_{k_0}-J^{\beta}_{i_0}
\end{align*}
for any $i_0,k_0\in \mathcal{S}$. 
If $\beta=\nu$, then $J_{i_0}^{\beta}$ is taken as the identity operator. 
\subsection{Multiple integrals}\label{sub:multiple integral}
Let $\beta=(j_1,\ldots,j_l)\in\mathcal{M}$ and $f:\mathbb{R}^d\times\mathcal{S}\mapsto\mathbb{R}$ be $(n(\beta)+2\bar{n}(\beta))-$times continuously differentiable function in the first component.
The multiple integral $I_{\beta}[f(X(\cdot),\alpha(\cdot))]_{s,t}$ can be defined recursively as,
\begin{align}
&I_{\beta}[f(X(\cdot),\alpha(\cdot))]_{s,t}\notag
\\
&:= 
     \begin{cases}
     f(X(t),\alpha(t))& \mbox{if } l(\beta)=0, \mbox{\textit{i.e.}, } \beta=\nu 
     \\
       \displaystyle \int^t_s I_{\beta-}[L^{j_l}_{\alpha(\cdot)}f(X(\cdot),\alpha(\cdot))]_{s,s_1}dW^{j_l}(s_1) &   \mbox{if }  j_l\in\{0,1,\ldots,m\} 
       \\ 
       \displaystyle \sum_{i_0\neq k_0}  \int^t_s\mathbbm{1}\{N^{(s,t]}=r \}  I_{\beta-}[f(X(\cdot),k_0)-f(X(\cdot),i_0)]_{s,s_1}d[M_{i_0k_0}](s_1) &\mbox{if }  j_l\in\{N_r\}, r\in\{1,\ldots,\mu\}
      \\ 
     \displaystyle \sum_{i_0\neq k_0}  \int^t_s\mathbbm{1}\{N^{(s,t]}>\mu \} I_{\beta-}[f(X(\cdot),k_0)-f(X(\cdot),i_0)]_{s,s_1}d[M_{i_0k_0}](s_1) & \mbox{if } j_l=\bar{N}_{\mu }
     \end{cases}  \notag
\end{align}
almost surely for all $s<t\in[0,T]$ where $dW^{0}(s_1):=ds_1$. 
Notice that if the state of the Markov chain $\alpha$ is fixed in $f$, say $i_0\in\mathcal{S}$, then 
\begin{align}
I_{\beta}[f(X(\cdot),i_0)]_{s,t}:=& 
     \begin{cases}
     f(X(t),i_0)& \mbox{if } l(\beta)=0 
       \\ 
       \displaystyle \int^t_s I_{\beta-}[L^{j_l}_{\alpha(\cdot)}f(X(\cdot),i_0)]_{s,s_1}dW^{j_l}(s_1) &   \mbox{if }  j_l\in\{ 0,1,\ldots,m\} 
     \end{cases}  \notag
\end{align}
almost surely for any $s<t\in[0,T]$. 
Furthermore, we define, 
\begin{align*}
&I_{\beta}[f(X(s),\alpha(\cdot))]_{s,t}\notag
\\
&:= 
     \begin{cases}
     f(X(s),\alpha(s))& \mbox{if } l(\beta)=0, \mbox{\textit{i.e.}, } \beta=\nu 
     \\
       \displaystyle \int^t_s I_{\beta-}[L^{j_l}_{\alpha(\cdot)}f(X(s),\alpha(\cdot))]_{s,s_1}dW^{j_l}(s_1) &   \mbox{if }  j_l\in\{0,1,\ldots,m\} 
       \\ 
       \displaystyle \sum_{i_0\neq k_0}  \int^t_s\mathbbm{1}\{N^{(s,t]}=r \}  I_{\beta-}[f(X(s),k_0)-f(X(s),i_0)]_{s,s_1}d[M_{i_0k_0}](s_1) &\mbox{if }  j_l\in\{N_r\}, r\in\{1,\ldots,\mu\}
      \\ 
     \displaystyle \sum_{i_0\neq k_0}  \int^t_s\mathbbm{1}\{N^{(s,t]}>\mu \} I_{\beta-}[f(X(s),k_0)-f(X(s),i_0)]_{s,s_1}d[M_{i_0k_0}](s_1) & \mbox{if } j_l=\bar{N}_{\mu }
     \end{cases}  \notag
\end{align*}
almost surely for all $s<t\in[0,T]$. 
As before, if the state of $\alpha$ is fixed in $f$, say $i_0\in\mathcal{S}$, then 
\begin{align}
I_{\beta}[f(X(s),i_0)]_{s,t}:=& 
     \begin{cases}
     f(X(s),i_0)& \mbox{if } l(\beta)=0 
       \\ 
       \displaystyle \int^t_s I_{\beta-}[L^{j_l}_{\alpha(\cdot)}f(X(s),i_0)]_{s,s_1}dW^{j_l}(s_1) &   \mbox{if }  j_l\in\{ 0,1,\ldots,m\} 
     \end{cases}  \notag
\end{align}
almost surely for any $s<t\in[0,T]$. 
Clearly, for $\beta\in\{(j_1,\ldots,j_l)\in\mathcal{M}:j_i\notin\{N_1,\ldots,N_{\mu},\bar{N}_{\mu}\}\forall\, i\in\{1,\ldots,l\}\}$,
\begin{align}
I_{\beta}[f(X(s),\alpha(s))]_{s,t}:=& 
     \begin{cases}
     f(X(s),\alpha(s))& \mbox{if } l(\beta)=0 
       \\ 
       \displaystyle \int^t_s I_{\beta-}[L^{j_l}_{\alpha(s)}f(X(s),\alpha(s))]_{s,s_1}dW^{j_l}(s_1) &   \mbox{if }  j_l\in\{ 0,1,\ldots,m\} 
     \end{cases}  \notag
\end{align} 
almost surely for any $s<t\in[0,T]$. 
We assume that  all of the above multiple integrals are well defined.
\begin{remark}
When the state space $\mathcal{S}$ of the Markov chain $\alpha$ is a singleton set, then the SDEwMS \eqref{eq:sdems} reduces to an  SDE and the integrals corresponding to the components $\{N_1,\ldots,N_{\mu},\bar{N}_{\mu}\}$ in the multiple integrals vanish. 
In such a situation, the multi-indices and multiple integrals defined above reduce to the case as discussed in Section $5.2$ of \cite{kloeden1992numerical}. 
The components $\{N_1,\ldots,N_{\mu},\bar{N}_{\mu}\}$ in  multi-indices  and the corresponding multiple integrals appear due to the switching of the Markov chain. 
\end{remark}
\begin{example}
Let $\beta=(N_1,k,N_1)\in\mathcal{M}$ for a $k\in\{1,\ldots,m\}$.
 Then, using the definitions of multiple integrals, one has
\begin{align*}
 &\hspace{-0.32cm} I_{(N_1,k,N_1)}[f(X(\cdot),\alpha(\cdot))]_{s,t}=\sum_{i_0\neq k_0}\int_s^t\mathbbm{1}\{N^{(s,t]}=1 \} I_{(N_1,k)} [f(X(\cdot),k_0)-f(X(\cdot),i_0)]_{s,s_1}  d[ M_{i_0 k_0}](s_1)
\\
&=\sum_{i_0\neq k_0}\int_s^t\mathbbm{1}\{N^{(s,t]}=1 \}\int_s^{s_1} I_{(N_1)} [L^k_{\alpha(\cdot)}f(X(\cdot),k_0)-L^k_{\alpha(\cdot)}f(X(\cdot),i_0)]_{s,s_2}  dW^k(s_2)d[ M_{i_0 k_0}](s_1)
\\
&=\sum_{i_0\neq k_0}\int_s^t\mathbbm{1}\{N^{(s,t]}=1 \}\int_s^{s_1}\sum_{i_1\neq k_1}\int_{s}^{s_2}\mathbbm{1}\{N^{(s,s_2]}=1 \} 
\\
&\qquad I_{\nu}[(L^k_{k_1}f(X(\cdot),k_0)-L^k_{i_1}f(X(\cdot),k_0))-(L^k_{k_1}f(X(\cdot),i_0)-L^k_{i_1}f(X(\cdot),i_0))]_{s,s_3}
\\
&\qquad d[ M_{i_1 k_1}](s_3) dW^k(s_2)d[ M_{i_0 k_0}](s_1)
\\
=&\sum_{i_0\neq k_0}\int_s^t\mathbbm{1}\{N^{(s,t]}=1 \}\int_s^{s_1}\sum_{i_1\neq k_1}\int_{s}^{s_2}\mathbbm{1}\{N^{(s,s_2]}=1 \} 
\\
&\qquad ((L^k_{k_1}f(X(s_3),k_0)-L^k_{i_1}f(X(s_3),k_0))-(L^k_{k_1}f(X(s_3),i_0)-L^k_{i_1}f(X(s_3),i_0)))
\\
&\qquad d[ M_{i_1 k_1}](s_3) dW^k(s_2)d[ M_{i_0 k_0}](s_1)
\end{align*}
almost surely for any $s<t\in[0,T]$.
\end{example}
\begin{example}
Let $\beta=(0,N_1)\in\mathcal{M}$.
 Then, the definition of multiple integrals yields,
 \begin{align*}
 I_{(0,N_1)}[f(&X(s),\alpha(\cdot))]_{s,t}=\sum_{i_0\neq k_0}\int_s^t\mathbbm{1}\{N^{(s,t]}=1 \} I_{(0)} [f(X(s),k_0)-f(X(s),i_0)]_{s,s_1}  d[ M_{i_0 k_0}](s_1)
 \\
 =&\sum_{i_0\neq k_0}\int_s^t\mathbbm{1}\{N^{(s,t]}=1 \}\int_s^{s_1}I_{\nu} [L^0_{\alpha(\cdot)}f(X(s),k_0)-L^0_{\alpha(\cdot)}f(X(s),i_0)]_{s,s_2}ds_2  d[ M_{i_0 k_0}](s_1)
\\
=&\sum_{i_0\neq k_0}\int_s^t\mathbbm{1}\{N^{(s,t]}=1 \}\int_s^{s_1}(L^0_{\alpha(s)}f(X(s),k_0)-L^0_{\alpha(s)}f(X(s),i_0))ds_2  d[ M_{i_0 k_0}](s_1)
 \end{align*}
 almost surely for any $s<t\in[0,T]$.
\end{example}
\subsection{Hierarchical and remainder sets}\label{sub:hierarchical and reminder}
A subset $\mathcal{A}$ of $\mathcal{M}$ is called a \textit{hierarchical set} if $\displaystyle \sup_{\beta\in\mathcal{A}}l(\beta)<\infty$   and $-\beta\in\mathcal{A}$  for every  $\beta\in\mathcal{A}\setminus\{\nu\}$. 

The \textit{remainder set} $\mathcal{B}(\mathcal{A})$ corresponding to a hierarchical set $\mathcal{A}$ is  define as,
\begin{itemize}
\item[] $\mathcal{B}(\mathcal{A}):=\{\beta\in\mathcal{M}\setminus\mathcal{A}:-\beta\in\mathcal{A}\}$. 
\end{itemize}
If $\mathcal{A}=\varphi$, then $\mathcal{B}(\mathcal{A})=\{\nu\}$. 
 \begin{example}\label{ex:heirarchical}
 Let $\mathcal{A}:=\{\beta\in\mathcal{M}: \eta(\beta)\leq 2 \}$ with $\mu=3$. 
 Then
\begin{itemize}
\item[] $\mathcal{A}=\{\nu,(N_1),(N_2),(0),(k),(k,N_1),(N_1,k),(k_1,k),(N_1,k,N_1);k,k_1\in\{1,\ldots,m\}\}$
\end{itemize} 
 is the hierarchical set and corresponding remainder set is
\begin{itemize} 
\item[]  \hspace{-1cm} $\mathcal{B}(\mathcal{A})=\{(N_3),(\bar{N}_3), (0,N_1), (0,N_2), (k,N_2), (N_1,0),(N_2,0),(N_3,0),(\bar{N}_3,0),(0,0),(k,0),(N_2,k),$\newline
$(N_3,k),(\bar{N}_3,k), (0,k),(N_2,k,N_1),(N_3,k,N_1),(\bar{N}_3,k,N_1),(0,k,N_1),(k_1,k,N_1),(0,N_1,k),$\newline
$(k_1,N_1,k), (N_1,k_1,k),(N_2,k_1,k),(N_3,k_1,k),(\bar{N}_3,k_1,k),(0,k_1,k),(k_2,k_1,k),(0,N_1,k,N_1),$\newline
 $(k_1,N_1,k,N_1);k,k_1,k_2\in\{1,\ldots,m\}\}$.
\end{itemize} 
\end{example}
\section{Main results}\label{Main Result}
In this section, we introduce two key findings of this paper, namely, the It\^o-Taylor expansion for functions additionally depending on the Markov chain $\alpha$ (see Subsection \ref{sub:ItoTaylorexpansion})  and the explicit numerical scheme of arbitrary order  $\gamma\in\{n/2:n\in\mathbb{N}\}$ for SDEwMS \eqref{eq:sdems} (see Subsection \ref{sub:scheme}). 
\subsection{It\^o-Taylor expansion}\label{sub:ItoTaylorexpansion}
In the It\^{o}-Taylor expansion, the regularity   of the drift coefficient $b$ of SDEwMS \eqref{eq:sdems} is identified  with the help of following notations. 
\begin{itemize}
\item[] $\mathcal{D}_b:=\{\beta\in\mathcal{M}:\beta\star(0)\star\bar{\beta}\in\mathcal{A}\cup\mathcal{B}(\mathcal{A}),$ the components of $\bar{\beta}\in\mathcal{M}$ are not equal to $0\}$,
\item[] $\mathcal{K}_b:=\displaystyle\max_{\beta\in\mathcal{D}_b}\{n(\beta)+2\bar{n}(\beta)\}$,
\end{itemize}
 and  for the regularity of the diffusion coefficient  $\sigma$, we define, 
\begin{itemize}
\item[] $\mathcal{D}_{\sigma}=\{\beta\in\mathcal{M}:\beta\star(j)\star\bar{\beta}\in\mathcal{A}\cup\mathcal{B}(\mathcal{A}),j\in\{0,1,\ldots,m\},$ the components of $\bar{\beta}\in\mathcal{M}$  are not equal to $k$ where $k\in\{0,1,\ldots,m\}\}$,
\item[] $\mathcal{K}_{\sigma}:=\displaystyle \max_{\beta\in\mathcal{D}_{\sigma}}\{n(\beta)+2\bar{n}(\beta)\}$.
\end{itemize}
Similarly, for the regularity of the function $f$, we define, 
\begin{itemize}
\item[] $\mathcal{K}_f:=\displaystyle \max_{\beta\in\mathcal{A}\cup\mathcal{B}(\mathcal{A})}\{n(\beta)+2\bar{n}(\beta)\}$.
\end{itemize}
 Moreover, we  write the  hierarchical set $\mathcal{A}$ as $\mathcal{A}=\tilde{\mathcal{A}}\cup(\mathcal{A}\setminus\tilde{\mathcal{A}} )$ where
\begin{itemize}
\item[] $\tilde{\mathcal{A}}:=\{(j_1,\ldots,j_l)\in\mathcal{A}: j_i\in\{N_1,\ldots,N_\mu,\bar{N}_\mu\} \mbox{ for any } i\in\{1,\ldots,l\} \}$.
\end{itemize}

We make the following assumption on the coefficients of SDEwMS \eqref{eq:sdems}.
\begin{assumption} \label{as:ito}
 For a hierarchical set $\mathcal{A}$ and for any $i_0 \in \mathcal{S}$, $k \in\{1, \ldots, d\}$ and $j\in\{1,\ldots,m\}$, let $b^k(\cdot,i_0)$ and $\sigma^{(k,j)}(\cdot,i_0)$ be $\mathcal{K}_{b}-$ and $\mathcal{K}_{\sigma} -$ times continuously differentiable functions  respectively.
\end{assumption}

The following theorem is the first main result of this article which is proved in Section \ref{sec:ito}. 
\begin{thm}[\bf{It\^o-Taylor expansion}] \label{thm:Ito-Taylor}
For a hierarchical set $\mathcal{A}$, let Assumption $\ref{as:ito}$ be satisfied  and assume that $f:\mathbb{R}^d\times\mathcal{S}\mapsto \mathbb{R}$ is a $\mathcal{K}_f-$times continuously differentiable function. 
Then, for all $ s<t\in [0, T]$,
\begin{align}
f(X(t),\alpha(t))=&\sum_{\beta\in\mathcal{A}\setminus\tilde{\mathcal{A}}}I_{\beta}[f(X(s),\alpha(s))]_{s,t}+\sum_{\beta\in\tilde{\mathcal{A}}}I_{\beta}[f(X(s),\alpha(\cdot))]_{s,t}+\sum_{\beta\in\mathcal{B}(\mathcal{A})}I_{\beta}[f(X(\cdot),\alpha(\cdot))]_{s,t}   \notag
\end{align}
 almost surely, provided all the multiple integrals appearing above exist.
\end{thm}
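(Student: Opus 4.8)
\emph{Proof proposal.} My plan is to prove the expansion by induction on the cardinality $|\mathcal{A}|$ of the hierarchical set. If $\mathcal{A}=\varphi$ then $\tilde{\mathcal{A}}=\mathcal{A}\setminus\tilde{\mathcal{A}}=\varphi$ and $\mathcal{B}(\mathcal{A})=\{\nu\}$, so the right-hand side is $I_{\nu}[f(X(\cdot),\alpha(\cdot))]_{s,t}=f(X(t),\alpha(t))$ and there is nothing to prove. If $\mathcal{A}\neq\varphi$, the defining property of a hierarchical set forces $\nu\in\mathcal{A}$, and since $\sup_{\beta\in\mathcal{A}}l(\beta)<\infty$ and there are finitely many multi-indices of each length, $\mathcal{A}$ is finite, so the induction terminates. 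I would keep the regularity requirements attached to the \emph{target} set $\mathcal{A}$ fixed throughout; the relevant compatibility fact is that for every hierarchical $\mathcal{A}'\subseteq\mathcal{A}$ one has $\mathcal{A}'\cup\mathcal{B}(\mathcal{A}')\subseteq\mathcal{A}\cup\mathcal{B}(\mathcal{A})$ (if $\gamma\in\mathcal{B}(\mathcal{A}')$ then $-\gamma\in\mathcal{A}'\subseteq\mathcal{A}$, hence $\gamma\in\mathcal{A}\cup\mathcal{B}(\mathcal{A})$), so Assumption~\ref{as:ito} together with the $\mathcal{K}_f$-smoothness of $f$ guarantees that every coefficient $L^{j_1}_{\cdot}\cdots L^{j_l}_{\cdot}f$ occurring at any intermediate stage is well defined and continuous and that all the multiple integrals exist.

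\emph{Base case $\mathcal{A}=\{\nu\}$.} Here $\tilde{\mathcal{A}}=\{\nu\}$, $\mathcal{A}\setminus\tilde{\mathcal{A}}=\varphi$ and $\mathcal{B}(\mathcal{A})=\{(j):j\in\{0,1,\ldots,m,N_1,\ldots,N_\mu,\bar N_\mu\}\}$, so the claim is the one-step expansion
\[
f(X(t),\alpha(t))=f(X(s),\alpha(s))+\sum_{j=0}^m I_{(j)}[f(X(\cdot),\alpha(\cdot))]_{s,t}+\sum_{r=1}^{\mu} I_{(N_r)}[f(X(\cdot),\alpha(\cdot))]_{s,t}+I_{(\bar N_\mu)}[f(X(\cdot),\alpha(\cdot))]_{s,t}.
\]
I would obtain this from the It\^o formula for the switching diffusion,
\[
f(X(t),\alpha(t))=f(X(s),\alpha(s))+\sum_{j=0}^m\int_s^t L^j_{\alpha(u)}f(X(u),\alpha(u))\,dW^j(u)+\sum_{i_0\neq k_0}\int_s^t\big(f(X(u),k_0)-f(X(u),i_0)\big)\,d[M_{i_0k_0}](u),
\]
which holds because $X$ is a continuous semimartingale so that there is no $[X,\alpha]$-term and the jumps of $\alpha$ contribute exactly the last sum; and then I split each jump integral by the value of $N^{(s,t]}$: since $[M_{i_0k_0}]$ is constant on $\{N^{(s,t]}=0\}$, $\int_s^t(\cdots)\,d[M_{i_0k_0}](u)=\sum_{r=1}^{\mu}\int_s^t\mathbbm{1}\{N^{(s,t]}=r\}(\cdots)\,d[M_{i_0k_0}](u)+\int_s^t\mathbbm{1}\{N^{(s,t]}>\mu\}(\cdots)\,d[M_{i_0k_0}](u)$, which are by definition $I_{(N_r)}[f(X(\cdot),\alpha(\cdot))]_{s,t}$ and $I_{(\bar N_\mu)}[f(X(\cdot),\alpha(\cdot))]_{s,t}$.

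\emph{Inductive step.} Assume the statement for all hierarchical sets of cardinality $n$, and let $\mathcal{A}$ be hierarchical with $|\mathcal{A}|=n+1$. Choose $\theta=(j_1,\ldots,j_l)\in\mathcal{A}$ of maximal length; then $\mathcal{A}_0:=\mathcal{A}\setminus\{\theta\}$ is again hierarchical (for $\beta\in\mathcal{A}_0\setminus\{\nu\}$, $-\beta\in\mathcal{A}$ has length $<l(\theta)$, so $-\beta\ne\theta$, i.e. $-\beta\in\mathcal{A}_0$), $|\mathcal{A}_0|=n$, and $\theta\in\mathcal{B}(\mathcal{A}_0)$. By the induction hypothesis the expansion holds for $\mathcal{A}_0$, and among its remainder terms the one indexed by $\theta$ is $I_\theta[f(X(\cdot),\alpha(\cdot))]_{s,t}$. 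The heart of the step is to rewrite \emph{only} this term, by applying It\^o's formula to its innermost coefficient: the innermost integral of $I_\theta[f(X(\cdot),\alpha(\cdot))]_{s,t}$ is of type $j_1$, and its integrand is a function of $(X(s_l),\alpha(s_l))$ when $j_1\in\{0,1,\ldots,m\}$, but a function of $X(s_l)$ alone (the chain's state having been frozen by that innermost jump integral) when $j_1\in\{N_1,\ldots,N_\mu,\bar N_\mu\}$. In the first case the switching It\^o formula plus the splitting by $N^{(s,\cdot]}$ gives
\[
I_\theta[f(X(\cdot),\alpha(\cdot))]_{s,t}=I_\theta[f(X(s),\alpha(s))]_{s,t}+\sum_{j'=0}^m I_{(j')\star\theta}[f(X(\cdot),\alpha(\cdot))]_{s,t}+\sum_{r=1}^{\mu}I_{(N_r)\star\theta}[f(X(\cdot),\alpha(\cdot))]_{s,t}+I_{(\bar N_\mu)\star\theta}[f(X(\cdot),\alpha(\cdot))]_{s,t},
\]
while in the second case It\^o's formula for the \emph{continuous} process $X$ produces no jump term and yields $I_\theta[f(X(\cdot),\alpha(\cdot))]_{s,t}=I_\theta[f(X(s),\alpha(\cdot))]_{s,t}+\sum_{j'=0}^m I_{(j')\star\theta}[f(X(\cdot),\alpha(\cdot))]_{s,t}$ when $\theta\in\{(N_1),\ldots,(N_\mu),(\bar N_\mu)\}$, and the same with $I_\theta[f(X(s),\alpha(s))]_{s,t}$ in place of $I_\theta[f(X(s),\alpha(\cdot))]_{s,t}$ otherwise; note that condition (a) forbids prepending an $N$-type to $\theta$ precisely when $j_1$ is itself an $N$-type, which matches the absence of a jump term. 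Substituting this identity into the $\mathcal{A}_0$-expansion in place of $I_\theta[f(X(\cdot),\alpha(\cdot))]_{s,t}$ produces exactly the $\mathcal{A}$-expansion once one checks the bookkeeping: the frozen term moves $\theta$ out of $\mathcal{B}(\mathcal{A}_0)$ into $\mathcal{A}$, landing in $\tilde{\mathcal{A}}$ iff $\theta$ has only $N$-type components — i.e. $\theta\in\{\nu,(N_1),\ldots,(N_\mu),(\bar N_\mu)\}$, these being the only possibilities since consecutive $N$-types are forbidden — and the newly created running terms are indexed exactly by $\{\gamma\in\mathcal{M}:-\gamma=\theta\}$, which is disjoint from $\mathcal{B}(\mathcal{A}_0)\setminus\{\theta\}$ (every such $\gamma$ has $-\gamma=\theta\notin\mathcal{A}_0$) and together with it equals $\mathcal{B}(\mathcal{A})$.

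\emph{Main obstacle.} The conceptual skeleton above is short; the real work lies in the bookkeeping and in making the It\^o-formula manipulations rigorous. Concretely I expect the difficulties to be: (i) establishing the switching It\^o formula in exactly the recursive form required and justifying the interchange of the finite, a.s.\ sums over $i_0\neq k_0$ with the stochastic integrals, which rests on Lemma~\ref{lem:rateMS} to control $N^{(s,t]}$; (ii) verifying, in each sub-case, that expanding the innermost coefficient reproduces \emph{precisely} $I_\theta$ of the frozen datum plus the $I_{(j')\star\theta}$'s — this demands care about which state (namely $\alpha(s)$, or a state already fixed by an interior jump integral) each operator $L^{j}$ is evaluated at, i.e.\ about the exact meaning of the three freezing levels $I_\beta[f(X(s),\alpha(s))]$, $I_\beta[f(X(s),\alpha(\cdot))]$, $I_\beta[f(X(\cdot),\alpha(\cdot))]$ of the multiple integral; and (iii) confirming that the passage $\mathcal{B}(\mathcal{A}_0)\to\mathcal{B}(\mathcal{A})$ is exactly as claimed, including that condition (a) excludes precisely the would-be new indices that the It\^o formula fails to generate. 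The smoothness accounting — that $\mathcal{K}_b$, $\mathcal{K}_\sigma$, $\mathcal{K}_f$ count exactly the derivatives consumed when forming every $L^{j_1}_{\cdot}\cdots L^{j_l}_{\cdot}f$ over $\mathcal{A}\cup\mathcal{B}(\mathcal{A})$ — is then routine from the definitions of $\mathcal{D}_b$ and $\mathcal{D}_\sigma$.
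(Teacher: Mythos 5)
Your induction on $|\mathcal{A}|$ (removing one element $\theta$ of maximal length, expanding only the remainder term $I_{\theta}[f(X(\cdot),\alpha(\cdot))]_{s,t}$, and checking that $\mathcal{B}(\mathcal{A})=(\mathcal{B}(\mathcal{A}_0)\setminus\{\theta\})\cup\{\gamma:-\gamma=\theta\}$) is a legitimate and in some ways cleaner alternative to the paper's induction on $\eta(\mathcal{A})$, which has to peel off a whole $\eta$-layer at once and therefore needs the auxiliary decomposition into $\bar{\mathcal{B}}_1,\ldots,\bar{\mathcal{B}}_4$ with a second application of the one-step lemma. Your set-theoretic bookkeeping, including the observation that condition (a) forbids prepending an $N$-type index exactly when $j_1$ is itself $N$-type, is correct.

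However, there is a genuine error in the analytic heart of your step, namely in the form of the frozen term produced by the one-step expansion, and correlatively in your identification of $\tilde{\mathcal{A}}$. The set $\tilde{\mathcal{A}}$ consists of all $\beta\in\mathcal{A}$ having \emph{at least one} component in $\{N_1,\ldots,N_{\mu},\bar{N}_{\mu}\}$ (see, e.g., $\tilde{\mathcal{A}}_{1.5}^{\sigma}$, which contains $(j_1,N_1)$ and $(N_1,j_1)$), not only the length-one indices $(N_1),\ldots,(\bar{N}_{\mu})$ as you claim. Correspondingly, the correct one-step expansion (Lemma \ref{lem:Ito on beta}) splits into \emph{three} cases according to $\theta\in\mathcal{M}_1$, $\mathcal{M}_2$, $\mathcal{M}_3$, and the frozen term is $I_{\theta}[f(X(s),\alpha(s))]_{s,t}$ only for $\theta\in\mathcal{M}_1$; for every $\theta$ containing an $N$-type component it is $I_{\theta}[f(X(s),\alpha(\cdot))]_{s,t}$, because the chain states entering through the sums $\sum_{i_0\neq k_0}$ and the $d[M_{i_0k_0}]$ integrals cannot be frozen at $\alpha(s)$. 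Concretely, for $\theta=(k,N_1)\in\mathcal{M}_2$ the innermost integrand is $L^{k}_{\alpha(s_2)}f(X(s_2),k_0)-L^{k}_{\alpha(s_2)}f(X(s_2),i_0)$, and applying the switching It\^o formula freezes it to $L^{k}_{\alpha(s)}f(X(s),k_0)-L^{k}_{\alpha(s)}f(X(s),i_0)$, which reassembles to $I_{(k,N_1)}[f(X(s),\alpha(\cdot))]_{s,t}$ — the object $I_{(k,N_1)}[f(X(s),\alpha(s))]_{s,t}$ that your ``first case'' produces is not even defined in Subsection \ref{sub:multiple integral} for indices outside $\mathcal{M}_1$. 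The same defect occurs in your ``otherwise'' sub-case for $\theta\in\mathcal{M}_3$ of length $\geq 2$, e.g.\ $(N_1,k)$, whose frozen term is again $I_{\theta}[f(X(s),\alpha(\cdot))]_{s,t}$. As written, your substitution therefore assembles an identity whose frozen terms disagree with the statement of Theorem \ref{thm:Ito-Taylor} for every mixed multi-index of length at least two — which is precisely the part of the expansion that is new relative to the classical SDE case. The fix is to replace your two-case division by the three-case division of Lemma \ref{lem:Ito on beta}; the rest of your argument then goes through.
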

The above expansion is characterized by the specific selection of the hierarchical set $\mathcal{A}$.
Later in Subsection \ref{sec:Derivation of scheme},  it is used to construct the numerical scheme of arbitrary order $\gamma\in\{n/2:n\in\mathbb{N}\}$, see equation \eqref{eq:gen.scheme}.
\begin{remark}
 The aforementioned expansion can be considered as a generalization of the It\^o-Taylor expansion for SDE discussed in Theorem 5.5.1 of \cite{kloeden1992numerical} in the sense that the coefficients of SDE additionally depend on a Markov chain.
 Indeed, if the state space $\mathcal{S}$ of the Markov chain $\alpha$ is a singleton set, then  SDEwMS \eqref{eq:sdems} reduces to an  SDE and $\tilde{\mathcal{A}}=\varphi$ which leads to   the It\^o-Taylor expansion for SDE. 
The terms corresponding to $\tilde{\mathcal{A}}$ in the above expansion appear due to  the presence of the Markov chain in the coefficients of the SDE. 
\end{remark}
The following two examples illustrate the It\^{o}-Taylor expansion (Theorem \ref{thm:Ito-Taylor}) for  some specific hierarchical sets $\mathcal{A}$. 
\begin{example}
Let $\mathcal{A}=\{\nu\}$. 
Then, $\tilde{\mathcal{A}}=\varphi$, $\mathcal{A}\setminus\tilde{\mathcal{A}}=\{\nu\}$ and  $ \mathcal{B}(\mathcal{A})=\{(N_1),\ldots, (N_\mu), (\bar{N}_\mu),(0),$ $(1), \ldots, (m)\}$.
Further, $\mathcal{D}_b=\{\nu\}$ which gives $\mathcal{K}_b=0$. 
Similarly, $\mathcal{K}_\sigma=0$ and $\mathcal{K}_f=2$.
Thus, by Theorem~$\ref{thm:Ito-Taylor}$,
\begin{align*}
f(X(t),\alpha(t))=&I_{\nu}[f(X(s),\alpha(s))]_{s,t}+\sum_{r=1}^{\mu}I_{(N_r)}[f(X(\cdot),\alpha(\cdot))]_{s,t}+I_{(\bar{N}_\mu)}[f(X(\cdot),\alpha(\cdot))]_{s,t}
\\
&+I_{(0)}[f(X(\cdot),\alpha(\cdot))]_{s,t}+\sum_{j=1}^mI_{(j)}[f(X(\cdot),\alpha(\cdot))]_{s,t}
\end{align*}
which on using  the definition of multiple integrals from Subsection \ref{sub:multiple integral} yields, 
\begin{align*}
f(X(t),&\alpha(t))=f(X(s),\alpha(s))+\sum_{r=1}^\mu \sum_{i_0\neq k_0}\int_s^t \mathbbm{1}\{N^{(s,t]}=r \} (f(X(s_1),k_0)-f(X(s_1),i_0))d[M_{i_0k_0}](s_1)
\\
&+ \sum_{i_0\neq k_0}\int_s^t\mathbbm{1}\{N^{(s,t]}>\mu \}  (f(X(s_1),k_0)-f(X(s_1),i_0))d[M_{i_0k_0}](s_1)
\\
&+\int_s^t L^{0}_{\alpha(s_1)} f(X(s_1),\alpha(s_1))ds_1+\sum_{j=1}^m\int_s^t L^{j}_{\alpha(s_1)} f(X(s_1),\alpha(s_1))dW^j(s_1)
\end{align*}
almost surely for all $ s<t\in [0, T]$. 
\end{example}

\begin{example}
Let $\mathcal{A}:=\{(j_1,\ldots,j_l)\in\mathcal{M}: \eta((j_1,\ldots,j_l))\leq 2$,  if  $j_i\notin\{ N_1,\ldots,N_{\mu},1,\ldots,m\}, \forall  \, i\in\{1,\ldots,l\}\}\cup\{(j_1,\ldots,j_l)\in\mathcal{M}: \eta((j_1,\ldots,j_l))\leq 1$  if  $j_i\in\{ N_1,\ldots,N_{\mu},1,\ldots,m\}$ for any $ i\in\{1,\ldots,l\} \}$ with $\mu=3$. 
Clearly, $\mathcal{A}=\{\nu,(0),(N_1),(k); k \in \{1, \ldots, m\}\}$, $\tilde{\mathcal{A}}=\{(N_1)\}$, $\mathcal{A}\setminus\tilde{\mathcal{A}}=\{\nu,(0),(k); k \in \{1, \ldots, m\}\}$ and $\mathcal{B}(\mathcal{A})$ $=$ $\{(N_2),$ $(N_3)$, $(\bar{N}_3),$ $(N_1,0),$ $ (N_2,0),$  $(N_3,0),$ $(\bar{N}_3,0),$ $(0,0),$ $(k,0),$ $(0,N_1),$ $(k,N_1),$ $(N_1,k),$ $(N_2,k),$ $(N_3,k),$ $(\bar{N}_3,k),$ $(0,k),$ $(k_1,k);k,k_1\in\{1,\ldots,m\}\}$. 
Further, $\mathcal{D}_b=\mathcal{D}_\sigma=\{\nu,(N_1),$ $(N_2),(N_3),(\bar{N}_3),(0),(k);k\in\{1,\ldots,m\}\}$ which gives $\mathcal{K}_b=2$, $\mathcal{K}_\sigma=2$ and $\mathcal{K}_f=4$.
Moreover, by Theorem \ref{thm:Ito-Taylor}, we obtain
\begin{align*}
f(&X(t),\alpha(t))=I_{\nu}[f(X(s),\alpha(s))]_{s,t}+I_{(0)}[f(X(s),\alpha(s))]_{s,t}+\sum_{k=1}^mI_{(k)}[f(X(s),\alpha(s))]_{s,t}
\\
&+I_{(N_1)}[f(X(s),\alpha(\cdot))]_{s,t}+I_{(N_2)}[f(X(\cdot),\alpha(\cdot))]_{s,t}+I_{(N_3)}[f(X(\cdot),\alpha(\cdot))]_{s,t}+I_{(\bar{N}_3)}[f(X(\cdot),\alpha(\cdot))]_{s,t}
\\
&+\sum_{i=1}^3I_{(N_i,0)}[f(X(\cdot),\alpha(\cdot))]_{s,t}+I_{(\bar{N}_3,0)}[f(X(\cdot),\alpha(\cdot))]_{s,t}+I_{(0,0)}[f(X(\cdot),\alpha(\cdot))]_{s,t}
\\
&+\sum_{k=1}^mI_{(k,0)}[f(X(\cdot),\alpha(\cdot))]_{s,t}+I_{(0,N_1)}[f(X(\cdot),\alpha(\cdot))]_{s,t}+\sum_{k=1}^mI_{(k,N_1)}[f(X(\cdot),\alpha(\cdot))]_{s,t}
\\
&+\sum_{i=1}^3\sum_{k=1}^mI_{(N_i,k)}[f(X(\cdot),\alpha(\cdot))]_{s,t}+\sum_{k=1}^mI_{(\bar{N}_3,k)}[f(X(\cdot),\alpha(\cdot))]_{s,t}+\sum_{k=1}^mI_{(0,k)}[f(X(\cdot),\alpha(\cdot))]_{s,t}
\\
&+\sum_{k,k_1=1}^mI_{(k_1,k)}[f(X(\cdot),\alpha(\cdot))]_{s,t}
\end{align*}
which on using  the definition of multiple integrals from Subsection \ref{sub:multiple integral} yields,
\begin{align*}
f(&X(t),\alpha(t))=f(X(s),\alpha(s))+\int_s^t L^0_{\alpha(s)}f(X(s),\alpha(s))ds_1+\sum_{k=1}^m\int_s^t L^k_{\alpha(s)}f(X(s),\alpha(s))dW^k(s_1)
\\
&+\sum_{i_0\neq k_0}\int_s^t \mathbbm{1}\{N^{(s,t]}=1 \} (f(X(s),k_0)-f(X(s),i_0))d[M_{i_0k_0}](s_1)
\\
&+\sum_{i_0\neq k_0}\int_s^t \mathbbm{1}\{N^{(s,t]}=2 \} (f(X(s_1),k_0)-f(X(s_1),i_0))d[M_{i_0k_0}](s_1)
\\
&+\sum_{i_0\neq k_0}\int_s^t \mathbbm{1}\{N^{(s,t]}=3 \} (f(X(s_1),k_0)-f(X(s_1),i_0))d[M_{i_0k_0}](s_1)
\\
&+\sum_{i_0\neq k_0}\int_s^t \mathbbm{1}\{N^{(s,t]}>3 \} (f(X(s_1),k_0)-f(X(s_1),i_0))d[M_{i_0k_0}](s_1)
\\
&+\sum_{i=1}^3\int_s^t \sum_{i_0\neq k_0}\int_s^{s_1}\mathbbm{1}\{N^{(s,s_1]}=i \}  (L^{0}_{k_0} f(X(s_2),k_0)-L^{0}_{i_0} f(X(s_2),i_0))d[ M_{i_0k_0}](s_2)ds_1
\\
&+\int_s^t \sum_{i_0\neq k_0}\int_s^{s_1}\mathbbm{1}\{N^{(s,s_1]}>3 \}  (L^{0}_{k_0} f(X(s_2),k_0)-L^{0}_{i_0} f(X(s_2),i_0))d[ M_{i_0k_0}](s_2)ds_1
\\
&+\int_s^t\int_s^{s_1}L^{0}_{\alpha(s_2)} L^{0}_{\alpha(s_2)} f(X(s_2),\alpha(s_2))ds_2ds_1
\\
&+\sum_{k=1}^m\int_s^t\int_s^{s_1}L^{k}_{\alpha(s_2)} L^{0}_{\alpha(s_2)} f(X(s_2),\alpha(s_2))dW^k(s_2)ds_1
\\
&+\sum_{i_0\neq k_0}\int_s^{t}\mathbbm{1}\{N^{(s,t]}=1 \}\int_s^{s_1} (L^0_{\alpha(s_2)}f(X(s_2),k_0)-L^0_{\alpha(s_2)}f(X(s_2),i_0))ds_2d[M_{i_0k_0}](s_1)
\\
&+\sum_{k=1}^m\sum_{i_0\neq k_0}\int_s^{t}\mathbbm{1}\{N^{(s,t]}=1 \}\int_s^{s_1} (L^k_{\alpha(s_2)}f(X(s_2),k_0)-L^k_{\alpha(s_2)}f(X(s_2),i_0))dW^k(s_2)d[M_{i_0k_0}](s_1)
\\
&+\sum_{i=1}^3\sum_{k=1}^m\int_s^t \sum_{i_0\neq k_0}\int_s^{s_1}\mathbbm{1}\{N^{(s,s_1]}=i \}  (L^{k}_{k_0} f(X(s_2),k_0)-L^{k}_{i_0} f(X(s_2),i_0))d[ M_{i_0k_0}](s_2)dW^k(s_1)
\\
&+\sum_{k=1}^m\int_s^t \sum_{i_0\neq k_0}\int_s^{s_1}\mathbbm{1}\{N^{(s,s_1]}>3 \}  (L^{k}_{k_0} f(X(s_2),k_0)-L^{k}_{i_0} f(X(s_2),i_0))d[ M_{i_0k_0}](s_2)dW^k(s_1)
\\
&+\sum_{k=1}^m\int_s^t\int_s^{s_1}L^{0}_{\alpha(s_2)} L^{k}_{\alpha(s_2)} f(X(s_2),\alpha(s_2))ds_2dW^k(s_1)
\\
&+\sum_{k,k_1=1}^m\int_s^t\int_s^{s_1}L^{k_1}_{\alpha(s_2)} L^{k}_{\alpha(s_2)} f(X(s_2),\alpha(s_2))dW^{k_1}(s_2)dW^k(s_1)
\end{align*}
almost surely for all $ s<t\in [0, T]$. 
\end{example}
\subsection{Explicit $\mathbf{\gamma}$-order numerical scheme}\label{sub:scheme}
For explicit numerical scheme of $\gamma\in\{n/2:n\in\mathbb{N}\}$-order  (see equation \eqref{eq:gen.scheme}),  we fix $\mu=2\gamma$ and define 
\begin{itemize}
\item[] $\mathcal{A}_{\gamma}^b:=\{\nu\} \cup \{(j_1,\ldots,j_l)\in\mathcal{M}: \eta((j_1,\ldots,j_l))\leq 2\gamma-1, \mbox{ if } j_i= 0\, \forall\: i\in\{1,\ldots,l\}\}\cup\{(j_1,\ldots,j_l)\in\mathcal{M}: \eta((j_1,\ldots,j_l))\leq 2\gamma-2$  if  $j_i\in\{ N_1,\ldots,N_{\mu},1,\ldots,m\}$ for any $ i\in\{1,\ldots,l\} \}$.
\item[] $\mathcal{A}_{\gamma}^\sigma:=\{\beta\in\mathcal{M}: \eta(\beta)\leq 2\gamma-1 \}$.
\end{itemize}
The sets $\mathcal{A}_{\gamma}^b$ and $\mathcal{A}_{\gamma}^\sigma$ are  hierarchical sets and their remainder sets are denoted by $\mathcal{B}(\mathcal{A}_\gamma^b):=\{\beta\in\mathcal{M}\setminus\mathcal{A}_{\gamma}^b:-\beta\in\mathcal{A}_{\gamma}^b\}$ and $\mathcal{B}(\mathcal{A}_\gamma^\sigma):=\{\beta\in\mathcal{M}\setminus\mathcal{A}_{\gamma}^\sigma:-\beta\in\mathcal{A}_{\gamma}^\sigma\}$, respectively. 
Further, we write $\mathcal{A}_{\gamma}^b=\tilde{\mathcal{A}}_{\gamma}^b\cup(\mathcal{A}_{\gamma}^b\setminus\tilde{\mathcal{A}}_{\gamma}^b)$ and $\mathcal{A}_{\gamma}^\sigma=\tilde{\mathcal{A}}_{\gamma}^\sigma\cup(\mathcal{A}_{\gamma}^\sigma\setminus\tilde{\mathcal{A}}_{\gamma}^\sigma)$ where 
\begin{itemize}
\item[] $\tilde{\mathcal{A}}_{\gamma}^b:=\{(j_1,\ldots,j_l)\in\mathcal{A}_{\gamma}^b: j_i\in\{N_1,\ldots,N_\mu\} \mbox{ for any } i\in\{1,\ldots,l \}\}$,
\item[] $\tilde{\mathcal{A}}_{\gamma}^\sigma:=\{(j_1,\ldots,j_l)\in\mathcal{A}_{\gamma}^\sigma: j_i\in\{N_1,\ldots,N_\mu\} \mbox{ for any } i\in\{1,\ldots,l \}\}.
$
\end{itemize}
Also, we assume that $b^k(\cdot,i_0)$ and $\sigma^{(k,j)}(\cdot,i_0)$ are sufficiently smooth functions for all $i_0 \in \mathcal{S}$, $k \in\{1, \ldots, d\}$ and $j\in\{1,\ldots,m\}$. 
We partition the interval $[0,T]$ into $n_T\in\mathbb{N}$ sub-intervals of equal length $h=T/n_T>0$, \textit{i.e.}, $t_n=nh$ for any $n\in\{0,1,\ldots, n_T\}$. 
 The explicit $\gamma\in\{n/2:n\in\mathbb{N}\}$-order scheme  for SDEwMS \eqref{eq:sdems} at grid point $t_{n+1}$ is given by,
\begin{align} 
Y^k&(t_{n+1})=Y^k(t_{n})+\int_{t_{n}}^{t_{n+1}}\Big(\sum_{\beta\in\mathcal{A}_{\gamma}^b\setminus\tilde{\mathcal{A}}_{\gamma}^b}I_{\beta}[b^k(Y(t_n),\alpha(t_n)) ]_{t_n,s}+\sum_{\beta\in\tilde{\mathcal{A}}_{\gamma}^b}I_{\beta}[b^k(Y(t_n),\alpha(\cdot)) ]_{t_n,s}\Big)ds\notag
\\
&+\sum_{j=1}^m\int_{t_{n}}^{t_{n+1}}\Big(\sum_{\beta\in\mathcal{A}_{\gamma}^\sigma\setminus\tilde{\mathcal{A}}_{\gamma}^\sigma}I_{\beta}[\sigma^{(k,j)}(Y(t_n),\alpha(t_n)) ]_{t_n,s}+\sum_{\beta\in\tilde{\mathcal{A}}_{\gamma}^\sigma}I_{\beta}[\sigma^{(k,j)}(Y(t_n),\alpha(\cdot)) ]_{t_n,s}\Big)dW^j(s)\label{eq:gen.scheme}
\end{align}
 almost surely for all $n\in\{0,1,\ldots,n_T-1\}$ and $k\in\{1,\ldots,d\}$ where the initial value $Y_0:=Y(0)$ is an $\mathcal{F}_0$-measurable random variable in $\mathbb{R}^d$.
 \begin{remark}
If the state space $\mathcal{S}$ of the Markov chain $\alpha$ is a singleton set, then the SDEwMS \eqref{eq:sdems} reduces to an  SDE and the integrals appearing in the terms corresponding to the sets $\tilde{\mathcal{A}}_{\gamma}^b$ and $\tilde{\mathcal{A}}_{\gamma}^\sigma$ in equation \eqref{eq:gen.scheme} vanish.
 In this case, \eqref{eq:gen.scheme} is the the $\gamma$-order explicit numerical scheme for SDE, see Section $10.6$ in \cite{kloeden1992numerical}.
 Clearly,  terms corresponding to the sets $\tilde{\mathcal{A}}_{\gamma}^b$ and $\tilde{\mathcal{A}}_{\gamma}^\sigma$ in \eqref{eq:gen.scheme} appear due to  the presence of the Markov chain.   
\end{remark}

We now give some examples of explicit numerical scheme \eqref{eq:gen.scheme} for $\gamma\in\{0.5,1.0,1.5\}$.

\begin{example}[\textbf{Euler scheme for SDEwMS}]
If  $\gamma=0.5$, then $\mathcal{A}_{0.5}^b=\mathcal{A}_{0.5}^\sigma=\{\nu\}$ and $\tilde{\mathcal{A}}_{0.5}^b=\tilde{\mathcal{A}}_{0.5}^\sigma=\varphi$.
By using \eqref{eq:gen.scheme} for $\gamma=0.5$ and the definition of multiple integrals from Subsection $\ref{sub:multiple integral}$, the Euler scheme for SDEwMS \eqref{eq:sdems} is given by 
\begin{align}
Y^k(t_{n+1})=&Y^k(t_{n})+\int_{t_{n}}^{t_{n+1}}I_{\nu}[b^k(Y(t_n),\alpha(t_n))]_{t_n,s}ds+\sum_{j=1}^m \int_{t_{n}}^{t_{n+1}}I_{\nu}[\sigma^{(k,j)}(Y(t_n),\alpha(t_n))]_{t_n,s}dW^j(s)\notag
\\
=&Y^k(t_n)
+\int_{t_n}^{t_{n+1}}b^k(Y(t_n),\alpha(t_n))ds+\sum_{j=1}^m \int_{t_n}^{t_{n+1}}\sigma^{(k,j)}(Y(t_n),\alpha(t_n))dW^j(s)\notag
\end{align}
almost surely for all $n\in\{0,1,\ldots,n_T-1\}$ and $k\in\{1,\ldots,d\}$, see also \cite{yuan2004convergence}. 
 If the state space $\mathcal{S}$ of the Markov chain $\alpha$ is a singleton set, then the SDEwMS \eqref{eq:sdems} and the above scheme  reduce to  the classical SDE and  its Euler scheme, respectively, as detailed in section 10.2 of \cite{kloeden1992numerical}.
\end{example}
\begin{example}[\textbf{Milstein-type scheme for SDEwMS}]
Let  $\gamma=1.0$. Then,  $\mathcal{A}_{1.0}^b=\{\nu\}$, $\mathcal{A}_{1.0}^\sigma=\{\nu,(N_1),(1),\ldots,(m)\}$, $\tilde{\mathcal{A}}_{1.0}^b=\varphi$ and $\tilde{\mathcal{A}}_{1.0}^\sigma=\{(N_1)\}$, which on using \eqref{eq:gen.scheme}  and the definition of multiple integrals from Subsection $\ref{sub:multiple integral}$ yields
\begin{align}
Y^k(&t_{n+1})=Y^k(t_{n})+\int_{t_{n}}^{t_{n+1}}I_{\nu}[b^k(Y(t_n),\alpha(t_n))]_{t_n,s}ds+\sum_{j=1}^m \int_{t_{n}}^{t_{n+1}}\Big(I_{\nu}[\sigma^{(k,j)}(Y(t_n),\alpha(t_n))]_{t_n,s}\notag
\\
&+\sum_{j_1=1}^m I_{(j_1)}[\sigma^{(k,j)}(Y(t_n),\alpha(t_n))]_{t_n,s}+I_{(N_1)}[\sigma^{(k,j)}(Y(t_n),\alpha(\cdot))]_{t_n,s}\Big)dW^j(s)\notag
\\
=&Y^k(t_n)
+\int_{t_n}^{t_{n+1}}b^k(Y(t_n),\alpha(t_n))ds+\sum_{j=1}^m \int_{t_n}^{t_{n+1}}\sigma^{(k,j)}(Y(t_n),\alpha(t_n))dW^j(s)\notag
\\
&+\sum_{j,j_1=1}^m \int_{t_n}^{t_{n+1}}\int_{t_n}^{s}L^{j_1}_{\alpha(t_n)}\sigma^{(k,j)}(Y(t_n),\alpha(t_n))dW^{j_1}(s_1)dW^j(s)\notag
\\
&+\sum_{j=1}^m \int_{t_n}^{t_{n+1}}\sum_{i_0\neq k_0}\int_{t_n}^{s}\mathbbm{1}\{N^{(t_n,s]}=1\}(\sigma^{(k,j)}(Y(t_n),k_0)-\sigma^{(k,j)}(Y(t_n),i_0))d[M_{i_0k_0}](s_1)dW^j(s)\notag
\end{align}
almost surely for all $n\in\{0,1,\ldots,n_T-1\}$ and $k\in\{1,\ldots,d\}$ which is the Milstein-type scheme for SDEwMS \eqref{eq:sdems}, see also \cite{kumar2021milstein} and \cite{nguyen2017milstein}.
If the state space $\mathcal{S}$ of the Markov chain $\alpha$ is a singleton set, then SDEwMS \eqref{eq:sdems} is an SDE, and the last term of the above equation vanishes which leads to the Milstein scheme for SDE, as detailed in Section 10.3 of \cite{kloeden1992numerical}.
Further, if the following commutative condition,
\begin{align*}
L^{j_1}_{i_0}\sigma^{(k,j)}(x,i_0)=L^{j}_{i_0}\sigma^{(k,j_1)}(x,i_0)
\end{align*} 
hold for all $x\in\mathbb{R}^d$, $i_0\in\mathcal{S}$, $j,j_1\in\{1,\ldots,m\}$ and $k\in\{1,\ldots,d\}$, then we write 
\begin{align*}
Y^k&(t_{n+1})=Y^k(t_{n})+b^k(Y(t_n),\alpha(t_n))h+\sum_{j=1}^m \sigma^{(k,j)}(Y(t_n),\alpha(t_n))(W^j(t_{n+1})-W^j(t_n))
\\
+&\frac{1}{2}\sum_{j,j_1=1}^mL^{j_1}_{\alpha(t_n)}\sigma^{(k,j)}(Y(t_n),\alpha(t_n))((W^{j_1}(t_{n+1})-W^{j_1}(t_n))(W^j(t_{n+1})-W^j(t_n))-\mathbbm{1}\{j=j_1\}h)
\\
+&\sum_{j=1}^m \mathbbm{1}\{N^{(t_n,t_{n+1})}=1\}(\sigma^{(k,j)}(Y(t_n),\alpha(\tau_1))-\sigma^{(k,j)}(Y(t_n),\alpha(t_n)))(W^j(t_{n+1})-W^j(\tau_1))
\end{align*}
almost surely for all $n\in\{0,1,\ldots,n_T-1\}$ and $k\in\{1,\ldots,d\}$ where $\tau_1$ is the first jump time of the Markov chain $\alpha$ in the interval $(t_n,t_{n+1})$.
\end{example}

\begin{example}[\textbf{$\mathbf{1.5}$-order scheme for SDEwMS}]
If $\gamma=1.5$, then
\begin{itemize}
\item[] $\mathcal{A}_{1.5}^b=\{\nu,(N_1),(0),(j_1);j_1\in\{1,\ldots,m\}\}$,
\item[] $\mathcal{A}_{1.5}^\sigma=\{\nu,(N_1),(N_2),(0),(j_1),(j_1,N_1),(N_1,j_1),(j_1,j_2),(N_1,j_1,N_1);j_1,j_2\in\{1,\ldots,m\}\}$,
\item[] $\tilde{\mathcal{A}}_{1.5}^b=\{(N_1)\}$,
\item[] $\tilde{\mathcal{A}}_{1.5}^\sigma=\{(N_1),(N_2),(j_1,N_1),(N_1,j_1),(N_1,j_1,N_1);j_1\in\{1,\ldots,m\}\}$.
\end{itemize}
 Further, by employing \eqref{eq:gen.scheme} for $\gamma=1.5$, we have
\begin{align*}
&Y^k(t_{n+1})=Y^k(t_{n})+\int_{t_{n}}^{t_{n+1}}\Big(I_{\nu}[b^k(Y(t_n),\alpha(t_n))]_{t_n,s}+I_{(0)}[b^k(Y(t_n),\alpha(t_n))]_{t_n,s}
\\
&+\sum_{j_1=1}^mI_{(j_1)}[b^k(Y(t_n),\alpha(t_n))]_{t_n,s}+I_{(N_1)}[b^k(Y(t_n),\alpha(\cdot))]_{t_n,s}\Big)ds
\\
&+\sum_{j=1}^m \int_{t_{n}}^{t_{n+1}}\Big(I_{\nu}[\sigma^{(k,j)}(Y(t_n),\alpha(t_n))]_{t_n,s}+I_{(0)}[\sigma^{(k,j)}(Y(t_n),\alpha(t_n))]_{t_n,s}
\\
&+\sum_{j_1=1}^m I_{(j_1)}[\sigma^{(k,j)}(Y(t_n),\alpha(t_n))]_{t_n,s}+\sum_{j_1,j_2=1}^mI_{(j_1,j_2)}[\sigma^{(k,j)}(Y(t_n),\alpha(t_n))]_{t_n,s}
\\
&+I_{(N_1)}[\sigma^{(k,j)}(Y(t_n),\alpha(\cdot))]_{t_n,s}+I_{(N_2)}[\sigma^{(k,j)}(Y(t_n),\alpha(\cdot))]_{t_n,s}+\sum_{j_1=1}^m I_{(j_1,N_1)}[\sigma^{(k,j)}(Y(t_n),\alpha(\cdot))]_{t_n,s}
\\
&+\sum_{j_1=1}^mI_{(N_1,j_1)}[\sigma^{(k,j)}(Y(t_n),\alpha(\cdot))]_{t_n,s}+\sum_{j_1=1}^m I_{(N_1,j_1,N_1)}[\sigma^{(k,j)}(Y(t_n),\alpha(\cdot))]_{t_n,s}\Big)dW^j(s)
\\
\end{align*}
which by the definition of multiple integrals from subsection $\ref{sub:multiple integral}$ yields
\begin{align}
Y^k(&t_{n+1})=Y^k(t_n)
+\int_{t_n}^{t_{n+1}}b^k(Y(t_n),\alpha(t_n))ds+\int_{t_n}^{t_{n+1}}\int_{t_n}^sL^{0}_{\alpha(t_n)}b^k(Y(t_n),\alpha(t_n))ds_1ds\notag
\\
&+\sum_{j_1=1}^m\int_{t_n}^{t_{n+1}}\int_{t_n}^sL^{j_1}_{\alpha(t_n)}b^k(Y(t_n),\alpha(t_n))dW^{j_1}(s_1)ds\notag
\\
&+\int_{t_n}^{t_{n+1}}\sum_{i_0\neq k_0}\int_{t_n}^s\mathbbm{1}\{N^{(t_n,s]}=1\}(b^k(Y(t_n),k_0)-b^k(Y(t_n),i_0))d[M_{i_0k_0}](s_1)ds\notag
\\
&+\sum_{j=1}^m \int_{t_n}^{t_{n+1}}\sigma^{(k,j)}(Y(t_n),\alpha(t_n))dW^j(s)\notag
\\
&+\sum_{j=1}^m \int_{t_n}^{t_{n+1}}\int_{t_n}^{s}L^{0}_{\alpha(t_n)}\sigma^{(k,j)}(Y(t_n),\alpha(t_n))ds_1dW^j(s)\notag
\\
&+\sum_{j,j_1=1}^m \int_{t_n}^{t_{n+1}}\int_{t_n}^{s}L^{j_1}_{\alpha(t_n)}\sigma^{(k,j)}(Y(t_n),\alpha(t_n))dW^{(j_1)}(s_1)dW^j(s)\notag
\\
&+\sum_{j,j_1,j_2=1}^m \int_{t_n}^{t_{n+1}}\int_{t_n}^s\int_{t_n}^{s_1}L^{j_2}_{\alpha(t_n)}L^{j_1}_{\alpha(t_n)}\sigma^{(k,j)}(Y(t_n),\alpha(t_n))dW^{j_2}(s_2)dW^{j_1}(s_1)dW^j(s)\notag
\\
&+\sum_{j=1}^m \int_{t_n}^{t_{n+1}}\sum_{i_0\neq k_0}\int_{t_n}^{s}\mathbbm{1}\{N^{(t_n,s]}=1\}(\sigma^{(k,j)}(Y(t_n),k_0)-\sigma^{(k,j)}(Y(t_n),i_0))d[M_{i_0k_0}](s_1)dW^j(s)\notag
\\
&+\sum_{j=1}^m \int_{t_n}^{t_{n+1}}\sum_{i_0\neq k_0}\int_{t_n}^{s}\mathbbm{1}\{N^{(t_n,s]}=2\}(\sigma^{(k,j)}(Y(t_n),k_0)-\sigma^{(k,j)}(Y(t_n),i_0))d[M_{i_0k_0}](s_1)dW^j(s)\notag
\\
&+\sum_{j,j_1=1}^m \int_{t_n}^{t_{n+1}}\sum_{i_0\neq k_0}\int_{t_n}^s\mathbbm{1}\{N^{(t_n,s]}=1\}\int_{t_n}^{s_1}(L^{j_1}_{\alpha(t_n)}\sigma^{(k,j)}(Y(t_n),k_0)\notag
\\
&\qquad-L^{j_1}_{\alpha(t_n)}\sigma^{(k,j)}(Y(t_n),i_0))dW^{j_1}(s_2)d[M_{i_0k_0}](s_1)dW^j(s)\notag
\\
&+\sum_{j,j_1=1}^m \int_{t_n}^{t_{n+1}}\int_{t_n}^s\sum_{i_0\neq k_0}\int_{t_n}^{s_1}\mathbbm{1}\{N^{(t_n,s_1]}=1\}(L^{j_1}_{k_0}\sigma^{(k,j)}(Y(t_n),k_0)\notag
\\
&\qquad-L^{j_1}_{i_0}\sigma^{(k,j)}(Y(t_n),i_0))d[M_{i_0k_0}](s_2)dW^{j_1}(s_1)dW^j(s)\notag
\\
&+\sum_{j,j_1=1}^m \int_{t_n}^{t_{n+1}}\sum_{i_0\neq k_0}\int_{t_n}^s\mathbbm{1}\{N^{(t_n,s]}=1\}\int_{t_n}^{s_1}\sum_{i_1\neq k_1}\int_{t_n}^{s_2}\mathbbm{1}\{N^{(t_n,s_2]}=1\}\notag
\\
&\qquad (L^{j_1}_{k_1}\sigma^{(k,j)}(Y(t_n),k_0)-L^{j_1}_{i_1}\sigma^{(k,j)}(Y(t_n),k_0))-(L^{j_1}_{k_1}\sigma^{(k,j)}(Y(t_n),i_0)-L^{j_1}_{i_1}\sigma^{(k,j)}(Y(t_n),i_0)) \notag
\\
& \qquad \qquad d[M_{i_1k_1}](s_3)dW^{j_1}(s_2)d[M_{i_0k_0}](s_1)dW^j(s)\notag
\end{align}
almost surely for all $n\in\{0,1,\ldots,n_T-1\}$ and $k\in\{1,\ldots,d\}$.
 The above equation is an order 1.5 strong Taylor scheme for SDEwMS \eqref{eq:sdems}.
 Notice that the last term on the right side of the above equation is zero.
 If the state space $\mathcal{S}$ of the Markov chain $\alpha$ is a singleton set, then the SDEwMS \eqref{eq:sdems} reduces to an  SDE and the $5th$, $10th$ to $14th$ terms on the right side of the above equation, which appear due to the switching of the Markov chain $\alpha$, vanish. 
 This gives an  order 1.5 strong Taylor scheme for SDE, see Section 10.4 in \cite{kloeden1992numerical}. 
 Now, if following commutative conditions,
 \begin{align*}
L^{j_1}_{i_0}\sigma^{(k,j)}(x,i_0)=L^{j}_{i_0}\sigma^{(k,j_1)}(x,i_0),\qquad L^{j_2}_{i_0}L^{j_1}_{i_0}\sigma^{(k,j)}(x,i_0)=L^{j_1}_{i_0}L^{j_2}_{i_0}\sigma^{(k,j)}(x,i_0)
\end{align*} 
hold for all $x\in\mathbb{R}^d$, $i_0\in\mathcal{S}$, $j,j_1,j_2\in\{1,\ldots,m\}$ and $k\in\{1,\ldots,d\}$, then we have
\begin{align*}
Y^k(&t_{n+1})=Y^k(t_n)+b^k(Y(t_n),\alpha(t_n))h+\frac{1}{2}L^{0}_{\alpha(t_n)}b^k(Y(t_n),\alpha(t_n))h^2
\\
&+\sum_{j_1=1}^mL^{j_1}_{\alpha(t_n)}b^k(Y(t_n),\alpha(t_n))\Delta_n Z^{j_1}
\\
&+\mathbbm{1}\{N^{(t_n,t_{n+1})}=1\}(b^k(Y(t_n),\alpha(\tau_1))-b^k(Y(t_n),\alpha(t_n)))(t_{n+1}-\tau_1)
\\
&+\sum_{j=1}^m \sigma^{(k,j)}(Y(t_n),\alpha(t_n))\Delta_nW^j+\sum_{j=1}^mL^{0}_{\alpha(t_n)}\sigma^{(k,j)}(Y(t_n),\alpha(t_n))(h\Delta_nW^j-\Delta_n Z^{j})
\\
&+\frac{1}{2}\sum_{j,j_1=1}^mL^{j_1}_{\alpha(t_n)}\sigma^{(k,j)}(Y(t_n),\alpha(t_n))(\Delta_nW^{j_1}\Delta_nW^j-\mathbbm{1}\{j=j_1\}h)
\\
&+\frac{1}{6}\sum_{j,j_1,j_2=1}^m L^{j_2}_{\alpha(t_n)}L^{j_1}_{\alpha(t_n)}\sigma^{(k,j)}(Y(t_n),\alpha(t_n))\big(\Delta_n W^{j}\Delta_n W^{j_1}\Delta_n W^{j_2}
\\
&\qquad-\mathbbm{1}\{j\neq j_1,j\neq j_2,j_1=j_2\}h\Delta_n W^{j}-3\mathbbm{1}\{j=j_1=j_2\}h\Delta_n W^{j}\big)
\\
&+\sum_{j=1}^m \mathbbm{1}\{N^{(t_n,t_{n+1})}=1\}(\sigma^{(k,j)}(Y(t_n),\alpha(\tau_1))-\sigma^{(k,j)}(Y(t_n),\alpha(t_n)))(W^j(t_{n+1})-W^j(\tau_1))
\\
&+\sum_{j=1}^m \mathbbm{1}\{N^{(t_n,t_{n+1})}=2\}(\sigma^{(k,j)}(Y(t_n),\alpha(\tau_2))-\sigma^{(k,j)}(Y(t_n),\alpha(t_n)))(W^j(t_{n+1})-W^j(\tau_2))
\\
&+\sum_{j,j_1=1}^m \mathbbm{1}\{N^{(t_n,t_{n+1})}=1\}(L^{j_1}_{\alpha(t_n)}\sigma^{(k,j)}(Y(t_n),\alpha(\tau_1))-L^{j_1}_{\alpha(t_n)}\sigma^{(k,j)}(Y(t_n),\alpha(t_n)))
\\
&\qquad\times(W^{j_1}(\tau_1)-W^{j_1}(t_n))(W^j(t_{n+1})-W^j(\tau_1))
\\
&+\frac{1}{2}\sum_{j,j_1=1}^m\mathbbm{1}\{N^{(t_n,t_{n+1})}=1\}(L^{j_1}_{\alpha(\tau_1)}\sigma^{(j)}(Y(t_n),\alpha(\tau_1))-L^{j_1}_{\alpha(t_n)}\sigma^{(j)}(Y(t_n),\alpha(t_n)))
\\
&\qquad\times\big((W^{j_1}(t_{n+1})-W^{j_1}(\tau_1))(W^{j}(t_{n+1})-W^{j}(\tau_1))-\mathbbm{1}\{j=j_1\}(t_{n+1}-\tau_1)\big)
\end{align*}
almost surely for all $n\in\{0,1,\ldots,n_T-1\}$ and $k\in\{1,\ldots,d\}$ where $\Delta_nZ^j:=\int_{t_n}^{t_{n+1}}\int_{t_n}^sdW^{j_1}(s_1)ds$ is normally distributed with mean zero and variance $E(\Delta_nZ^j)^2=h^3/3$, $\Delta_nW^j:=W^j(t_{n+1})-W^j(t_n)$ for all $j\in\{1,\ldots,m\}$ and $\tau_1,\tau_2$ are the first and second jump times of the Markov chain $\alpha$ in the interval $(t_n,t_{n+1})$, respectively. 
\end{example}

\begin{example}[\textbf{$\mathbf{2.0}$-order scheme for SDEwMS}]
Let $\gamma=2.0$.
Then,
\begin{itemize}
\item[]  $\mathcal{A}_{2.0}^b=\mathcal{A}_{1.5}^b\cup\{(N_2),(N_1,j_1),(j_1,N_1),(j_1,j_2),(N_1,j_1,N_1);j_1,j_2\in\{1,\ldots,m\}\}$
\item[] $\tilde{\mathcal{A}}_{2.0}^b=\tilde{\mathcal{A}}_{1.5}^b\cup\{(N_2,),(N_1,j_1),(j_1,N_1),(N_1,j_1,N_1);j_1\in\{1,\ldots,m\}\}$,
\item[] $\mathcal{A}_{2.0}^\sigma=\mathcal{A}_{1.5}^\sigma\cup\{(N_3),(0,N_1),(j_1,N_2),(N_1,0),(j_1,0),(N_2,j_1),(0,j_1),(N_1,0,N_1),$ 
 $(N_2,j_1,N_1),$ $(j_1,j_2,N_1),$ $(N_1,j_1,N_2),$ $(N_2,j_1,N_2),$ $(j_1,N_1,j_2),$ $(N_1,j_1,j_2),(j_1,j_2,j_3),$ 
 $(j_1,N_1,j_2,N_1),$ $(N_1,j_1,j_2,N_1),$ $(N_1,j_1,N_1,j_2),$ $(N_1,j_1,N_1,j_2,N_1)$ $;j_1,j_2,j_3\in\{1,\ldots,m\}$\}
 \item[] $\tilde{\mathcal{A}}_{2.0}^\sigma=\tilde{\mathcal{A}}_{1.5}^\sigma\cup\{(N_3),(0,N_1),(j_1,N_2),(N_1,0),(N_2,j_1),(N_1,0,N_1),(N_2,j_1,N_1),(j_1,j_2,N_1),$ $(N_1,j_1,N_2),(N_2,j_1,N_2),(j_1,N_1,j_2),(N_1,j_1,j_2),(j_1,N_1,j_2,N_1),(N_1,j_1,j_2,N_1), 
  (N_1,j_1,N_1,j_2),$ $(N_1,j_1,N_1,j_2,N_1) ;j_1,j_2\in\{1,\ldots,m\}\}. $
\end{itemize}
By the application of \eqref{eq:gen.scheme}, the 2.0-order scheme for SDEwMS \eqref{eq:sdems} is written in the form of multiple integrals as
\begin{align*}
Y^k&(t_{n+1})=Y^k(t_{n})+\int_{t_{n}}^{t_{n+1}}\Big(\sum_{\beta\in\mathcal{A}_{1.5}^b\setminus\tilde{\mathcal{A}}_{1.5}^b}I_{\beta}[b^k(Y(t_n),\alpha(t_n)) ]_{t_n,s}+\sum_{j_1,j_2=1}^m I_{(j_1,j_2)}[b^k(Y(t_n),\alpha(t_n)) ]_{t_n,s}
\\
&+\sum_{\beta\in\tilde{\mathcal{A}}_{1.5}^b}I_{\beta}[b^k(Y(t_n),\alpha(\cdot)) ]_{t_n,s}+I_{(N_2)}[b^k(Y(t_n),\alpha(\cdot)) ]_{t_n,s}+\sum_{j_1=1}^mI_{(N_1,j_1)}[b^k(Y(t_n),\alpha(\cdot)) ]_{t_n,s}
\\
&+\sum_{j_1=1}^mI_{(j_1,N_1)}[b^k(Y(t_n),\alpha(\cdot)) ]_{t_n,s}+\sum_{j_1=1}^mI_{(N_1,j_1,N_1)}[b^k(Y(t_n),\alpha(\cdot)) ]_{t_n,s}\Big)ds\notag
\\
&+\sum_{j=1}^m\int_{t_{n}}^{t_{n+1}}\Big(\sum_{\beta\in\mathcal{A}_{1.5}^\sigma\setminus\tilde{\mathcal{A}}_{1.5}^\sigma}I_{\beta}[\sigma^{(k,j)}(Y(t_n),\alpha(t_n)) ]_{t_n,s}+\sum_{j_1=1}^mI_{(j_1,0)}[\sigma^{(k,j)}(Y(t_n),\alpha(t_n)) ]_{t_n,s}
\\
&+\sum_{j_1=1}^mI_{(0,j_1)}[\sigma^{(k,j)}(Y(t_n),\alpha(t_n)) ]_{t_n,s}+\sum_{j_1,j_2,j_3=1}^mI_{(j_1,j_2,j_3)}[\sigma^{(k,j)}(Y(t_n),\alpha(t_n)) ]_{t_n,s}
\\
&+\sum_{\beta\in\tilde{\mathcal{A}}_{1.5}^\sigma}I_{\beta}[\sigma^{(k,j)}(Y(t_n),\alpha(\cdot)) ]_{t_n,s}+I_{(N_3)}[\sigma^{(k,j)}(Y(t_n),\alpha(\cdot)) ]_{t_n,s}+I_{(0,N_1)}[\sigma^{(k,j)}(Y(t_n),\alpha(\cdot)) ]_{t_n,s}
\\
&+\sum_{j_1=1}^mI_{(j_1,N_2)}[\sigma^{(k,j)}(Y(t_n),\alpha(\cdot)) ]_{t_n,s}+I_{(N_1,0)}[\sigma^{(k,j)}(Y(t_n),\alpha(\cdot)) ]_{t_n,s}
\\
&+\sum_{j_1=1}^mI_{(N_2,j_1)}[\sigma^{(k,j)}(Y(t_n),\alpha(\cdot)) ]_{t_n,s}+I_{(N_1,0,N_1)}[\sigma^{(k,j)}(Y(t_n),\alpha(\cdot)) ]_{t_n,s}
\\
&+\sum_{j_1=1}^mI_{(N_2,j_1,N_1)}[\sigma^{(k,j)}(Y(t_n),\alpha(\cdot)) ]_{t_n,s}+\sum_{j_1,j_2=1}^mI_{(j_1,j_2,N_1)}[\sigma^{(k,j)}(Y(t_n),\alpha(\cdot)) ]_{t_n,s}
\\
&+\sum_{j_1=1}^mI_{(N_1,j_1,N_2)}[\sigma^{(k,j)}(Y(t_n),\alpha(\cdot)) ]_{t_n,s}+\sum_{j_1=1}^mI_{(N_2,j_1,N_2)}[\sigma^{(k,j)}(Y(t_n),\alpha(\cdot)) ]_{t_n,s}
\\
&+\sum_{j_1,j_2=1}^mI_{(j_1,N_1,j_2)}[\sigma^{(k,j)}(Y(t_n),\alpha(\cdot)) ]_{t_n,s}+\sum_{j_1,j_2=1}^mI_{(N_1,j_1,j_2)}[\sigma^{(k,j)}(Y(t_n),\alpha(\cdot)) ]_{t_n,s}
\\
&+\sum_{j_1,j_2=1}^mI_{(j_1,N_1,j_2,N_1)}[\sigma^{(k,j)}(Y(t_n),\alpha(\cdot)) ]_{t_n,s}+\sum_{j_1,j_2=1}^mI_{(N_1,j_1,j_2,N_1)}[\sigma^{(k,j)}(Y(t_n),\alpha(\cdot)) ]_{t_n,s}
\\
&+\sum_{j_1,j_2=1}^mI_{(N_1,j_1,N_1,j_2)}[\sigma^{(k,j)}(Y(t_n),\alpha(\cdot)) ]_{t_n,s}+\sum_{j_1,j_2=1}^mI_{(N_1,j_1,N_1,j_2,N_1)}[\sigma^{(k,j)}(Y(t_n),\alpha(\cdot)) ]_{t_n,s}\Big)dW^j(s)
\end{align*}
almost surely for all $n\in\{0,1,\ldots,n_T-1\}$ and $k\in\{1,\ldots,d\}$. 
If the Markov chain $\alpha$ has singletin state space $\mathcal{S}$, then SDEwMS \eqref{eq:sdems} reduces to an SDE and fourth to eighth and thirteen to twentynine terms on the right side of the  above equation become zero and this leads to 2.0-order strong Taylor scheme for SDE, one can refer to Section 10.5 in \cite{kloeden1992numerical}.
\end{example}

In order to investigate the strong rate of convergence of  the explicit numerical scheme \eqref{eq:gen.scheme} for SDEwMS \eqref{eq:sdems}, we make the following assumptions. 

\begin{assumption}
\label{ass:A b sigma ds dw lipschitz}
There exists a  constant $L>0$ such that
\begin{align*}
|J^\beta_{i_0}b^k(x,i_0)-J^\beta_{i_0}b^k(y,i_0)| + |J^{\bar{\beta}}_{i_0}\sigma^{(k,j)}(x,i_0)-J^{\bar{\beta}}_{i_0}\sigma^{(k,j)}(y,i_0)|\leq L|x-y| 
\end{align*} 
for all $\beta\in (\mathcal{A}^b_\gamma\setminus\tilde{\mathcal{A}}_\gamma^b)\setminus\{\nu\}$ and $\bar{\beta}\in \mathcal{A}^\sigma_\gamma\setminus\tilde{\mathcal{A}}_\gamma^\sigma\setminus\{\nu\}$, $i_0\in \mathcal{S}$, $k\in\{1,\ldots,d\}$, $j\in\{1,\ldots,m\}$ and $x,y\in \mathbb{R}^d$. 
\end{assumption}

\begin{assumption}
\label{ass:A b N lipschitz}
There exists a  constant $L>0$ such that, for all $\beta\in\tilde{\mathcal{A}}_\gamma^b$, $i_1,\ldots,i_{r+1}\in\mathcal{S}$, $k\in\{1,\ldots,d\}$ and $x,y\in \mathbb{R}^d$,
\begin{align*}
|J^{\beta_{r+1}}_{i_{r+1}}J^{\beta_r}_{i_r}\cdots J^{\beta_2}_{i_2}J^{\beta_1}_{i_1}b^k(x,i_1)-J^{\beta_{r+1}}_{i_{r+1}}J^{\beta_r}_{i_r}\cdots J^{\beta_2}_{i_2}J^{\beta_1}_{i_1}b^k(y,i_1)|\leq L |x-y|
\end{align*}
where $\beta=\beta_{r+1}\star(N_{\mu_{r}})\star \beta_{r}\star \cdots\star(N_{\mu_2})\star \beta_2\star(N_{\mu_1})\star \beta_1$, $ \mu_1,\ldots,\mu_r\in\{1,\ldots,2\gamma-2\}$, $\beta_1,\ldots,\beta_{r+1}\in\mathcal{M}_1$ and $r\in\{1,\ldots,2\gamma-2\}$.
\end{assumption}

\begin{assumption} \label{ass:A sigma N lipschitz}
There exists a  constant $L>0$ such that, for all $\beta\in\tilde{\mathcal{A}}_\gamma^\sigma$, $i_1,\ldots,i_{r+1}\in\mathcal{S}$, $k\in\{1,\ldots,d\}$, $j\in\{1,\ldots,m\}$ and $x,y\in \mathbb{R}^d$,
\begin{align*}
|J^{\beta_{r+1}}_{i_{r+1}}J^{\beta_r}_{i_r}\cdots J^{\beta_2}_{i_2}J^{\beta_1}_{i_1}\sigma^{(k,j)}(x,i_1)-J^{\beta_{r+1}}_{i_{r+1}}J^{\beta_r}_{i_r}\cdots J^{\beta_2}_{i_2}J^{\beta_1}_{i_1}\sigma^{(k,j)}(y,i_1)|\leq L |x-y|
\end{align*}
where $\beta=\beta_{r+1}\star(N_{\mu_{r}})\star \beta_{r}\star \cdots\star(N_{\mu_2})\star \beta_2\star(N_{\mu_1})\star \beta_1$, $ \mu_1,\ldots,\mu_r\in\{1,\ldots,2\gamma-1\}$, $\beta_1,\ldots,\beta_{r+1}\in\mathcal{M}_1$ and $r\in\{1,\ldots,2\gamma-2\}$.
\end{assumption}
To  state the forthcoming assumptions, we define the following sets,
 \begin{itemize}
 \item[] $\tilde{\mathcal{B}}_1^b:=\{(j)\star\beta:j\in\{N_1,\ldots,N_\mu,\bar{N}_\mu,0,1,\ldots,m\},\beta\in\{(j_1,\ldots,j_l)\in\mathcal{B}(\mathcal{A}_{\gamma-0.5}^b)\cap\mathcal{A}_{\gamma}^b:j_i\notin\{N_1,\ldots,N_{\mu}\}\forall\, i\in\{1,\ldots,l\}\}\cup \{\nu\} \}$ and
 \item[] $\tilde{\mathcal{B}}_1^\sigma:=\{(j)\star\beta:j\in\{N_1,\ldots,N_\mu,\bar{N}_\mu,0,1,\ldots,m\},\beta\in\{(j_1,\ldots,j_l)\in\mathcal{B}(\mathcal{A}_{\gamma-0.5}^\sigma)\cap\mathcal{A}_{\gamma}^\sigma:j_i\notin\{N_1,\ldots,N_{\mu}\}\forall\, i\in\{1,\ldots,l\}\}\cup \{\nu\}\}.$
\end{itemize}

\begin{assumption}
\label{ass:reminder b sigma ds dw growth}
For all $i_0\in \mathcal{S}$, $k\in\{1,\ldots,d\}$, $j\in\{1,\ldots,m\}$, $x\in \mathbb{R}^d$, $\beta\in(\mathcal{B}(\mathcal{A}_\gamma^b)\setminus\tilde{\mathcal{B}}_1^b) \cap \mathcal{M}_1$ and $\bar{\beta}\in(\mathcal{B}(\mathcal{A}_\gamma^\sigma)\setminus\tilde{\mathcal{B}}_1^\sigma) \cap \mathcal{M}_1$, there exists a constant $L>0$ such that 
\begin{align*}
|J^\beta_{i_0}b^k(x,i_0)|+|J^{\bar{\beta}}_{i_0}\sigma^{(k,j)}(x,i_0)|\leq L(1+|x|).
\end{align*}
\end{assumption}

 \begin{assumption}\label{ass:reminder b N growth}
 For all $\beta\in(\mathcal{B}(\mathcal{A}_\gamma^b)\setminus\tilde{\mathcal{B}}_1^b) \cap (\mathcal{M}_2\cup\mathcal{M}_3)$, $i_1,\ldots,i_{r+1}\in\mathcal{S}$, $r\in\{1,\ldots,2\gamma-2\}$, $k\in\{1,\ldots,d\}$ and $x\in \mathbb{R}^d$,  there exists a  constant $L>0$ such that
\begin{align*}
|J^{\beta_{r+1}}_{i_{r+1}}J^{\beta_r}_{i_r}\cdots J^{\beta_2}_{i_2}J^{\beta_1}_{i_1}b^k(x,i_1)|\leq L (1+|x|)
\end{align*}
where $\beta=\beta_{r+1}\star(N_{\mu_{r}})\star \beta_{r}\star \cdots\star(N_{\mu_2})\star \beta_2\star(N_{\mu_1})\star \beta_1$, $\mu_1,\ldots,\mu_r\in\{1,\ldots,2\gamma\}$ and $\beta_1,\ldots,\beta_{r+1}\in\mathcal{M}_1$.
\end{assumption}
 
\begin{assumption}\label{ass:reminder sigma N growth}
 For all $\beta\in(\mathcal{B}(\mathcal{A}_\gamma^\sigma)\setminus\tilde{\mathcal{B}}_1^\sigma) \cap (\mathcal{M}_2 \cup \mathcal{M}_3)$, $i_1,\ldots,i_{r+1}\in\mathcal{S}$, $r\in\{1,\ldots,2\gamma-1\}$, $k\in\{1,\ldots,d\}$, $j\in\{1,\ldots,m\}$ and $x\in \mathbb{R}^d$, there exists a  constant $L>0$ such that
\begin{align*}
|J^{\beta_{r+1}}_{i_{r+1}}J^{\beta_r}_{i_r}\cdots J^{\beta_2}_{i_2}J^{\beta_1}_{i_1}\sigma^{(k,j)}(x,i_1)|\leq L (1+|x|),
\end{align*}  
where $\beta=\beta_{r+1}\star(N_{\mu_{r}})\star \beta_{r}\star \cdots\star(N_{\mu_2})\star \beta_2\star(N_{\mu_1})\star \beta_1$, $\mu_1,\ldots,\mu_r\in\{1,\ldots,2\gamma\}$ and  $\beta_1,\ldots,\beta_{r+1}\in\mathcal{M}_1$.
\end{assumption} 

The following theorem is  the second main result of this article.
\begin{thm} \label{thm:main}
Let Assumptions $\ref{ass:initial data}$, \ref{ass: b sigma lipschitz} and \ref{ass:A b sigma ds dw lipschitz} to $\ref{ass:reminder sigma N growth}$ be satisfied. Then,  the numerical scheme \eqref{eq:gen.scheme} converges in $\mathcal{L}^2$-sense to the true solution of SDEwMS \eqref{eq:sdems} with a rate of convergence equal to $\gamma\in\{n/2:n\in\mathbb{N}\}$, \textit{i.e.}, there exists a constant $C>0$, independent of $h$, such that
$$
E\Big(\sup_{n\in\{0,1,\ldots,n_T\}}|X(t_n)-Y(t_n)|^2 \Big)\leq C h^{2\gamma}
$$
where $0<h<1/(2q)$ with $q:=\max\{-q_{i_0i_0}:i_0\in\mathcal{S}\}$. 
\end{thm}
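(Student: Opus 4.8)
The plan is to follow the standard strong-Taylor strategy of \cite{kloeden1992numerical}, adapted to the switching setting: turn the It\^o-Taylor expansion (Theorem \ref{thm:Ito-Taylor}) of the true solution into a one-step recursion for the global error $e_n:=X(t_n)-Y(t_n)$, dominate the ``principal part'' of the recursion by $L|e_n|$ using the Lipschitz-type Assumptions \ref{ass:A b sigma ds dw lipschitz}--\ref{ass:A sigma N lipschitz}, dominate the ``remainder part'' by $h^{2\gamma}$ using the linear-growth Assumptions \ref{ass:reminder b sigma ds dw growth}--\ref{ass:reminder sigma N growth}, and close with a discrete Gr\"onwall inequality. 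Everything is done conditionally on $\mathcal{F}_T^\alpha$: given the path of $\alpha$, the processes $[M_{i_0k_0}]$ become deterministic pure-jump functions, the indicators $\mathbbm{1}\{N^{(s,t]}=r\}$ are deterministic, and every multiple integral reduces to an iterated It\^o/Riemann integral in $W$ and $ds$ with $\mathcal{F}_T^\alpha$-measurable coefficients; the $\mathcal{F}_T^\alpha$-randomness is absorbed only at the end, where Lemma \ref{lem:rateMS} (valid since $h<1/(2q)$) turns the jump-number indicators and the increments of $[M_{i_0k_0}]$ into powers of $h$ and Theorem \ref{thm:true moment} disposes of the remaining growth factors. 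Two preliminary facts feed the argument: (i) moment stability of the scheme, $E\big(\sup_{0\le n\le n_T}|Y(t_n)|^p \mid \mathcal{F}_T^\alpha\big)\le C$ uniformly in $h$, obtained by induction on $n$ from \eqref{eq:gen.scheme} using Assumptions \ref{ass:reminder b sigma ds dw growth}--\ref{ass:reminder sigma N growth}, the Burkholder--Davis--Gundy inequality on the $dW$-part, and discrete Gr\"onwall; and (ii) a conditional $L^2$-estimate showing that, over an interval of length $\le h$, the multiple integral $I_\beta[g(X(\cdot),\alpha(\cdot))]$ carries a factor $h^{\eta(\beta)/2}$ (after integrating out $\mathcal{F}_T^\alpha$) times a linear-growth bound in $\sup|X|$, the power $\eta(\beta)=n(\beta)+2\bar n(\beta)+\mu_{\max}(\beta)$ coming from $h^{1/2}$ per $dW$-component, $h$ per $ds$-component, and (via Lemma \ref{lem:rateMS}) $h^{1/2}$ per unit of $\mu_{\max}$ contributed by the $N_r,\bar N_\mu$-components; this estimate is a recursion on $l(\beta)$ combining the It\^o isometry with $\big|\int g\,d[M_{i_0k_0}]\big|\le N^{(s,t]}\sup|g|$ and the independence $\alpha\perp W$.

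\emph{Error recursion.} Apply Theorem \ref{thm:Ito-Taylor} to $x\mapsto b^k(x,i_0)$ on $[t_n,s]$ with the hierarchical set $\mathcal{A}_\gamma^b$, and to $x\mapsto\sigma^{(k,j)}(x,i_0)$ with $\mathcal{A}_\gamma^\sigma$; integrating over $[t_n,t_{n+1}]$ in $ds$, resp.\ $dW^j(s)$, and summing over $n<N$ gives $X(t_N)=X_0+\sum_{n<N}\Phi_n(X(t_n))+\sum_{n<N}R_n$, where $\Phi_n(x)$ is the right-hand side of \eqref{eq:gen.scheme} with $Y(t_n)$ replaced by $x$ and $R_n=R_n^b+R_n^\sigma$ collects the remainder-set contributions, namely $R_n^{b,k}=\int_{t_n}^{t_{n+1}}\sum_{\beta\in\mathcal{B}(\mathcal{A}_\gamma^b)}I_\beta[b^k(X(\cdot),\alpha(\cdot))]_{t_n,s}\,ds$ and $R_n^{\sigma,k}=\sum_j\int_{t_n}^{t_{n+1}}\sum_{\beta\in\mathcal{B}(\mathcal{A}_\gamma^\sigma)}I_\beta[\sigma^{(k,j)}(X(\cdot),\alpha(\cdot))]_{t_n,s}\,dW^j(s)$. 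Subtracting \eqref{eq:gen.scheme} yields $e_{n+1}=e_n+\big(\Phi_n(X(t_n))-\Phi_n(Y(t_n))\big)+R_n$. Each summand of $\Phi_n(X(t_n))-\Phi_n(Y(t_n))$ is a multiple integral whose frozen integrand is an increment of the form $J^{\beta_{r+1}}_{i_{r+1}}\cdots J^{\beta_1}_{i_1}b^k(X(t_n),i_1)-J^{\beta_{r+1}}_{i_{r+1}}\cdots J^{\beta_1}_{i_1}b^k(Y(t_n),i_1)$ (and analogously for $\sigma$ and for the $\tilde{\mathcal{A}}$-terms with nested $J$'s), bounded by $L|e_n|$ thanks to Assumptions \ref{ass:A b sigma ds dw lipschitz}, \ref{ass:A b N lipschitz}, \ref{ass:A sigma N lipschitz}; hence, by (i), (ii), Burkholder--Davis--Gundy on the $dW$-part and Cauchy--Schwarz in $n$ on the $ds$-part,
\[
E\Big[\sup_{N'\le N}\Big|\sum_{n<N'}\big(\Phi_n(X(t_n))-\Phi_n(Y(t_n))\big)\Big|^2\ \Big|\ \mathcal{F}_T^\alpha\Big]\le C\sum_{n<N}h\,E\Big[\sup_{n'\le n}|e_{n'}|^2\ \Big|\ \mathcal{F}_T^\alpha\Big].
\]

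\emph{Remainder and conclusion.} The crux is $E\big[\sup_{N'\le n_T}\big|\sum_{n<N'}R_n\big|^2\mid\mathcal{F}_T^\alpha\big]\le Ch^{2\gamma}(1+\sup_{t\le T}|X(t)|^2)$. For $R_n^\sigma$: once the $[M_{i_0k_0}]$-integrals are frozen by conditioning on $\mathcal{F}_T^\alpha$, $\sum_{n<N'}R_n^\sigma$ is a discrete-time martingale, so Burkholder--Davis--Gundy reduces it to $\sum_n E\int_{t_n}^{t_{n+1}}\big|\sum_{\beta\in\mathcal{B}(\mathcal{A}_\gamma^\sigma)}I_\beta[\sigma^{(k,j)}(X(\cdot),\alpha(\cdot))]_{t_n,s}\big|^2ds$; by (ii) the $\beta$-summand together with the outer $dW^j$-integration contributes order $h^{\eta((j)\star\beta)}$, and $\mathcal{A}_\gamma^\sigma=\{\eta\le2\gamma-1\}$ is engineered precisely so that $\eta((j)\star\beta)\ge2\gamma+1$ for every $\beta\in\mathcal{B}(\mathcal{A}_\gamma^\sigma)$, whence summing the $n_T=T/h$ orthogonal terms gives $O(h^{2\gamma})$. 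The drift part $R_n^b$ is handled the same way with one $ds$-integration replacing $dW^j$ and Cauchy--Schwarz in $n$ replacing the martingale step, the definition of $\mathcal{A}_\gamma^b$ now forcing $\eta((0)\star\beta)\ge2\gamma+2$ so that $n_T\cdot h^{\gamma+1}=O(h^\gamma)$. The one delicate point is the borderline family $\tilde{\mathcal{B}}_1^b,\tilde{\mathcal{B}}_1^\sigma$ --- the remainder multi-indices still lying inside $\mathcal{A}_\gamma^b,\mathcal{A}_\gamma^\sigma$, on which no growth bound is assumed; for these one applies Theorem \ref{thm:Ito-Taylor} once more to the innermost coefficient, after which the newly frozen terms are differences against the matching terms of \eqref{eq:gen.scheme} and hence $\le L|e_n|$ (absorbed into the previous display via Assumptions \ref{ass:A b sigma ds dw lipschitz}--\ref{ass:A sigma N lipschitz}), while the fresh remainder is $O(h^{2\gamma})$ by (ii). Assembling the pieces, $u_N:=E\big[\sup_{n\le N}|e_n|^2\mid\mathcal{F}_T^\alpha\big]$ satisfies $u_N\le CE|e_0|^2+C\sum_{n<N}h\,u_n+Ch^{2\gamma}(1+\sup_{t\le T}|X(t)|^2)$; discrete Gr\"onwall gives $u_{n_T}\le C\big(E|e_0|^2+h^{2\gamma}(1+\sup_{t\le T}|X(t)|^2)\big)e^{CT}$, and taking expectations, using Assumption \ref{ass:initial data} ($E|e_0|^2\le Lh^{2\gamma}$) and Theorem \ref{thm:true moment}, yields $E\big(\sup_{n\le n_T}|X(t_n)-Y(t_n)|^2\big)\le Ch^{2\gamma}$.

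\textbf{Main obstacle.} I expect the remainder estimate to be the bottleneck: the $\eta$-bookkeeping certifying $\eta((j)\star\beta)\ge2\gamma+1$ (diffusion) and $\eta((0)\star\beta)\ge2\gamma+2$ (drift) for every element of the two remainder sets --- exactly what the $2\gamma-1$ and $2\gamma-2$ thresholds in $\mathcal{A}_\gamma^\sigma$ and $\mathcal{A}_\gamma^b$ are designed to deliver --- together with the extra re-expansion forced by the $\tilde{\mathcal{B}}_1$ terms, all interlocked with the control of the $d[M_{i_0k_0}]$-integrals via conditioning on $\mathcal{F}_T^\alpha$ and Lemma \ref{lem:rateMS}. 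It is precisely this entanglement of the continuous scale ($W$, of size $h^{1/2}$) and the discrete scale ($\alpha$, whose $r$-jump events have probability of order $h^r$) inside a single multiple integral that makes the SDEwMS estimate substantially more involved than its classical counterpart.
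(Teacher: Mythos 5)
Your overall architecture (It\^o--Taylor expansion of the true solution, Lipschitz control of the frozen principal part, $\eta$-bookkeeping of the remainder after conditioning on $\mathcal{F}_T^\alpha$, Gr\"onwall) is the same as the paper's, and your diffusion-remainder estimate is essentially correct. But there are two genuine gaps, both in the remainder estimate that you yourself flag as the bottleneck.

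First, the drift remainder. You claim that Cauchy--Schwarz in $n$ suffices because ``the definition of $\mathcal{A}_\gamma^b$ forc[es] $\eta((0)\star\beta)\ge 2\gamma+2$'', i.e.\ $\eta(\beta)\ge 2\gamma$ for every $\beta\in\mathcal{B}(\mathcal{A}_\gamma^b)$. This is false: since $\mathcal{A}_\gamma^b$ only requires $\eta(\beta)\le 2\gamma-2$ for indices containing a Wiener or jump component, its remainder set contains indices with $\eta(\beta)=2\gamma-1$ (e.g.\ $(j_1,j_2)$, $j_i\in\{1,\ldots,m\}$, for $\gamma=1.5$, or $(j_1)$ and $(N_1)$ for $\gamma=1$), and for these Cauchy--Schwarz in $n$ only yields $T^2h^{2\gamma-1}$. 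The extra factor of $h$ must come from the mean-zero structure: for indices containing a $W$-component the partial sums $\sum_{i<n}\int_{t_i}^{t_{i+1}}I_\beta\,ds$ form a martingale in $n$, so Doob plus orthogonality of increments replaces $\big|\sum_i\big|^2\le n_T\sum_i|\cdot|^2$ by $\sum_i E|\cdot|^2$ (this is the paper's treatment of the sets $\mathcal{C}_{21},\mathcal{C}_{22}$ in Lemma \ref{lem:B_A_b estimate}); for indices with $\eta=2\gamma-1$ built only from $0$ and $N_r$ one must split $d[M_{i_0k_0}]=dM_{i_0k_0}+d\langle M_{i_0k_0}\rangle$, treat the $dM$-part by Doob and extract the extra $h$ from the Lebesgue density $q_{i_0k_0}\mathbbm{1}\{\alpha(u-)=i_0\}\,du$ of the compensator (Lemma \ref{lem:B_A_b_N_mu}, the most delicate step of the convergence proof). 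Your proposal contains neither device.

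Second, the $\tilde{\mathcal{B}}_1$ terms. These lie in $\mathcal{B}(\mathcal{A}_\gamma^b)$, not inside $\mathcal{A}_\gamma^b$, and your fix --- re-expand them and ``difference against the matching terms of \eqref{eq:gen.scheme}'' --- cannot work: the frozen indices produced by such a re-expansion are of the form $(j)\star\beta\notin\mathcal{A}_\gamma^b$, so the scheme contains no matching term to subtract. The paper avoids ever generating these indices: it expands only to level $\gamma-0.5$, re-expands the part of $\mathcal{B}(\mathcal{A}_{\gamma-0.5})$ lying in $\mathcal{A}_\gamma$, and keeps the $\mathcal{B}_1$-contribution in the \emph{unexpanded difference} form $I_\beta[g(X(\cdot),\alpha(\cdot))-g(X(t_n),\alpha(t_n))]_{t_n,s}$, which sits at the borderline order $\eta(\beta)$ and gains the missing factor $h$ from $E\big(\sup_{u\le s}|X(u)-X(t_n)|^2\big)\le Ch$ (continuity of the state) and $P(N^{(t_n,s]}\ge1)\le Ch$ (rarity of a switch), cf.\ Lemma \ref{lem:convergence B_1 estimate}. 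Until these two points are repaired, the claimed bound $Ch^{2\gamma}$ is not established for any $\gamma\ge1$.
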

We conclude this section by following remarks.
\begin{remark}
If the Markov chain $\alpha$ has singleton state space $\mathcal{S}$, then SDEwMS \eqref{eq:sdems} reduces to an SDE and Assumptions \ref{ass:A b N lipschitz}, \ref{ass:A sigma N lipschitz}, \ref{ass:reminder b N growth} and \ref{ass:reminder sigma N growth} does not appear.
\end{remark}

\begin{remark}
\label{rem:A b sigma ds dw growth}
Due to the Assumptions \ref{ass: b sigma lipschitz} and \ref{ass:A b sigma ds dw lipschitz}, 
\begin{align*}
|J^{\beta}_{i_0}b^k(x,i_0)|+ |J^{\bar{\beta}}_{i_0}\sigma^{(k,j)}(x,i_0)| \leq C(1+|x|)
\end{align*}
for all $\beta\in \mathcal{A}^b_\gamma\setminus\tilde{\mathcal{A}}_\gamma^b$, $\bar{\beta}\in \mathcal{A}^\sigma_\gamma\setminus\tilde{\mathcal{A}}_\gamma^\sigma$, $i_0\in\mathcal{S}$, $k\in\{1,\ldots,d\}$, $j\in\{1,\ldots,m\}$ and $x\in \mathbb{R}^d$.
\end{remark}

We obtain the following remarks due to Assumptions \ref{ass:A b N lipschitz} and \ref{ass:A sigma N lipschitz}. 
\begin{remark}\label{rem:A b sigma N lipschitz}
Notice that, for all $\beta=\beta_{r+1}\star(N_{\mu_{r}})\star \beta_{r}\star \cdots\star(N_{\mu_2})\star \beta_2\star(N_{\mu_1})\star \beta_1 \in\tilde{\mathcal{A}}_\gamma^b$, the expansion of $J^{\beta_{r+1}}_{i_{r+1}}J^{\beta_r}_{i_rk_r}\cdots J^{\beta_2}_{i_2k_2}(J^{\beta_1}_{k_1}b^k(x,k_1)-J^{\beta_1}_{i_1}b^k(x,i_1))-J^{\beta_{r+1}}_{i_{r+1}}J^{\beta_r}_{i_rk_r}\cdots J^{\beta_2}_{i_2k_2}(J^{\beta_1}_{k_1}b^k(y,k_1)-J^{\beta_1}_{i_1}b^k(y,i_1))$ consists of $2^{r}$ terms of the type $J^{\beta_{r+1}}_{i_{r+1}}$ $J^{\beta_r}_{\rho_r}\cdots J^{\beta_2}_{\rho_2}J^{\beta_1}_{\rho_1}b^k(x,\rho_1)-J^{\beta_{r+1}}_{i_{r+1}}J^{\beta_r}_{\rho_r}\cdots J^{\beta_2}_{\rho_2}J^{\beta_1}_{\rho_1}b^k(y,\rho_1)$ where $\rho_j\in\{i_j,k_j\}$,  for all $j\in\{1,\ldots,r\}$, $k\in\{1,\ldots,d\}$,  $i_1, \ldots, i_{r+1}, k_1,\ldots,k_r\in\mathcal{S}$, $ \mu_1,\ldots,\mu_r\in\{1,\ldots,2\gamma-2\}$ and $\beta_1,\ldots,\beta_{r+1}\in\mathcal{M}_1$.
Hence, Assumption \ref{ass:A b N lipschitz} gives
\begin{align*}
|J^{\beta_{r+1}}_{i_{r+1}}J^{\beta_r}_{i_rk_r}\cdots J^{\beta_2}_{i_2k_2}(J^{\beta_1}_{k_1}b^k(x,k_1)&-J^{\beta_1}_{i_1}b^k(x,i_1))-J^{\beta_{r+1}}_{i_{r+1}}J^{\beta_r}_{i_rk_r}\cdots J^{\beta_2}_{i_2k_2}(J^{\beta_1}_{k_1}b^k(y,k_1)-J^{\beta_1}_{i_1}b^k(y,i_1))|
\\
\leq&C2^{r}|x-y|\leq C2^{2\gamma-2}|x-y| \leq C|x-y|
\end{align*}
for all  $x,y\in \mathbb{R}^d$ and $r\in\{1,\ldots,2\gamma-2\}$.
Similarly, 
 for all $\bar{\beta}=\bar{\beta}_{\bar{r}+1}\star(N_{\bar{\mu}_{\bar{r}}})\star \bar{\beta}_{\bar{r}}\star \cdots\star(N_{\bar{\mu}_2})\star \bar{\beta}_2\star(N_{\bar{\mu}_1})\star \bar{\beta}_1\in\tilde{\mathcal{A}}_\gamma^\sigma$, Assumption \ref{ass:A sigma N lipschitz} yields,
\begin{align*}
|J^{\bar{\beta}_{\bar{r}+1}}_{\bar{i}_{\bar{r}+1}}&J^{\bar{\beta}_{\bar{r}}}_{\bar{i}_{\bar{r}}\bar{k}_{\bar{r}}}\cdots J^{\bar{\beta}_2}_{\bar{i}_2\bar{k}_2}(J^{\bar{\beta}_1}_{\bar{k}_1}\sigma^{(k,j)}(x,\bar{k}_1)-J^{\bar{\beta}_1}_{\bar{i}_1}\sigma^{(k,j)}(x,\bar{i}_1))
\\
-&J^{\bar{\beta}_{\bar{r}+1}}_{\bar{i}_{\bar{r}+1}}J^{\bar{\beta}_{\bar{r}}}_{\bar{i}_{\bar{r}}\bar{k}_{\bar{r}}}\cdots J^{\bar{\beta}_2}_{\bar{i}_2\bar{k}_2}(J^{\bar{\beta}_1}_{\bar{k}_1}\sigma^{(k,j)}(y,\bar{k}_1)-J^{\bar{\beta}_1}_{\bar{i}_1}\sigma^{(k,j)}(y,\bar{i}_1))|\leq C|x-y|
\end{align*}
where $\bar{i}_1,\ldots,\bar{i}_{\bar{r}+1}, \bar{k}_1, \ldots, \bar{k}_{\bar{r}} \in\mathcal{S}$, $ \bar{\mu}_1,\ldots,\bar{\mu}_{\bar{r}}\in\{1,\ldots,2\gamma-1\}$, $\bar{\beta}_1,\ldots,\bar{\beta}_{\bar{r}+1}\in\mathcal{M}_1$, $\bar{r}\leq 2\gamma-1$, $k\in\{1,\ldots,d\}$, $j\in\{1,\ldots,m\}$ and $x,y\in \mathbb{R}^d$.
\end{remark}

\begin{remark}\label{rem:A b N growth}
Due to Remark \ref{rem:A b sigma N lipschitz}, for all $\beta=\beta_{r+1}\star(N_{\mu_{r}})\star \beta_{r}\star \cdots\star(N_{\mu_2})\star \beta_2\star(N_{\mu_1})\star \beta_1\in\tilde{\mathcal{A}}_\gamma^b$, $i_1,\ldots,i_{r+1}, k_1, \ldots, k_r \in\mathcal{S}$, $k\in\{1,\ldots,d\}$ and $x\in \mathbb{R}^d$,
\begin{align*}
|J^{\beta_{r+1}}_{i_{r+1}}J^{\beta_r}_{i_rk_r}\cdots J^{\beta_2}_{i_2k_2}(J^{\beta_1}_{k_1}b^k(x,k_1)-J^{\beta_1}_{i_1}b^k(x,i_1))|\leq C(1+|x|)
\end{align*}
where  $ \mu_1,\ldots,\mu_r\in\{1,\ldots,2\gamma-2\}$, $\beta_1,\ldots,\beta_{r+1}\in\mathcal{M}_1$ and $r\in\{1,\ldots,2\gamma-2\}$.
As before, for all $\bar{\beta}=\bar{\beta}_{\bar{r}+1}\star(N_{\bar{\mu}_{\bar{r}}})\star \bar{\beta}_{\bar{r}}\star \cdots\star(N_{\bar{\mu}_2})\star \bar{\beta}_2\star(N_{\bar{\mu}_1})\star \bar{\beta}_1\in\tilde{\mathcal{A}}_\gamma^\sigma$,
\begin{align*}
|J^{\bar{\beta}_{\bar{r}+1}}_{\bar{i}_{\bar{r}+1}}&J^{\bar{\beta}_{\bar{r}}}_{\bar{i}_{\bar{r}}\bar{k}_{\bar{r}}}\cdots J^{\bar{\beta}_2}_{\bar{i}_2\bar{k}_2}(J^{\bar{\beta}_1}_{\bar{k}_1}\sigma^{(k,j)}(x,\bar{k}_1)-J^{\bar{\beta}_1}_{\bar{i}_1}\sigma^{(k,j)}(x,\bar{i}_1))|\leq C(1+|x|)
\end{align*}
where $\bar{i}_1,\ldots,\bar{i}_{\bar{r}+1}, \bar{k}_1, \ldots, \bar{k}_{\bar{r}} \in\mathcal{S}$, $ \bar{\mu}_1,\ldots,\bar{\mu}_{\bar{r}}\in\{1,\ldots,2\gamma-1\}$, $\bar{\beta}_1,\ldots,\bar{\beta}_{\bar{r}+1}\in\mathcal{M}_1$, $\bar{r}\leq 2\gamma-1$, $k\in\{1,\ldots,d\}$, $j\in\{1,\ldots,m\}$ and $x\in \mathbb{R}^d$.
\end{remark}

As a result of the Assumptions \ref{ass:reminder b N growth} and \ref{ass:reminder sigma N growth}, we get the following remarks.

\begin{remark}\label{rem:reminder b N growth}
Notice that, for all  $\beta=\beta_{r+1}\star(N_{\mu_{r}})\star \beta_{r}\star \cdots\star(N_{\mu_2})\star \beta_2\star(N_{\mu_1})\star \beta_1\in(\mathcal{B}(\mathcal{A}_\gamma^b)\setminus\tilde{\mathcal{B}}_1^b)\cap (\mathcal{M}_2 \cup\mathcal{M}_3)) $, the expression of $J^{\beta_{r+1}}_{i_{r+1}}J^{\beta_r}_{i_rk_r}\cdots J^{\beta_2}_{i_2k_2}(J^{\beta_1}_{k_1}b^k(x,k_1)-J^{\beta_1}_{i_1}b^k(x,i_1))$ consists of $2^r$ terms of the type $J^{\beta_{r+1}}_{i_{r+1}}$ $J^{\beta_r}_{\rho_r}\cdots J^{\beta_2}_{\rho_2}J^{\beta_1}_{\rho_1}b^k(x,\rho_1)$ where $\rho_j\in\{i_j,k_j\}$  for all $j\in\{1,\ldots,r\}$, $k\in\{1,\ldots,d\}$,  $i_1, \ldots, i_{r+1}, k_1,\ldots,k_r\in\mathcal{S}$, $ \mu_1,\ldots,\mu_r\in\{1,\ldots,2\gamma\}$, $\beta_1,\ldots,\beta_{r+1}\in\mathcal{M}_1$ and $r\in\{1,\ldots,2\gamma-2\}$.
Thus, Assumption \ref{ass:reminder b N growth} yields,
\begin{align*}
|J^{\beta_{r+1}}_{i_{r+1}}&J^{\beta_r}_{i_rk_r}\cdots J^{\beta_2}_{i_2k_2}(J^{\beta_1}_{k_1}b^k(x,k_1)-J^{\beta_1}_{i_1}b^k(x,i_1))|\leq C2^r(1+|x|)\leq C2^{2\gamma-2}(1+|x|)\leq C(1+|x|)
\end{align*}
for all $x\in \mathbb{R}^d$.
\end{remark}

\begin{remark}\label{rem:reminder sigma N growth}
By using Assumption \ref{ass:reminder sigma N growth} and adopting the similar arguments as used in Remark \ref{rem:reminder b N growth}, for all $\beta\in(\mathcal{B}(\mathcal{A}_\gamma^\sigma)\setminus\tilde{\mathcal{B}}_1^\sigma)\cap (\mathcal{M}_2 \cup\mathcal{M}_3))$, $i_1,\ldots,i_{r+1}\in\mathcal{S}$, $k\in\{1,\ldots,d\}$, $j\in\{1,\ldots,m\}$ and $x\in \mathbb{R}^d$, we have
\begin{align*}
|J^{\beta_{r+1}}_{i_{r+1}}&J^{\beta_r}_{i_rk_r}\cdots J^{\beta_2}_{i_2k_2}(J^{\beta_1}_{k_1}\sigma^{(k,j)}(x,k_1)-J^{\beta_1}_{i_1}\sigma^{(k,j)}(x,i_1))|\leq C(1+|x|)
\end{align*}
where $\beta=\beta_{r+1}\star(N_{\mu_{r}})\star \beta_{r}\star \cdots\star(N_{\mu_2})\star \beta_2\star(N_{\mu_1})\star \beta_1$, $\mu_1,\ldots,\mu_r\in\{1,\ldots,2\gamma\}$, $\beta_1,\ldots,\beta_{r+1}\in\mathcal{M}_1$ and $r\in\{1,\ldots,2\gamma-1\}$.
\end{remark}

\section{It\^o-Taylor expansion}
\label{sec:ito}
In this section, we prove our first main result on the It\^o-Taylor expansion, \textit{i.e.}, Theorem \ref{thm:Ito-Taylor}, but first we establish several lemmas and corollaries required in the proof. 

The proof of the following lemma can be found  in \cite{nguyen2017milstein} (see Lemma 2.2). 
\begin{lemma}\label{lem:ItowMS}
Let $f:\mathbb{R}^d\times\mathcal{S}\mapsto\mathbb{R}$ be a twice  continuously differentiable function in the first argument.
Then,
\begin{align*}
f(X(t),\alpha(t))=&f(X(s),\alpha(s))+\int^t_sL^{0}_{\alpha(u)}f(X(u),\alpha(u))du+\sum_{j=1}^m\int^t_sL^j_{\alpha(u)}f(X(u),\alpha(u))dW^{j}(u)
\\
&\quad+\sum_{i_0\neq k_0}\int^t_s(f(X(u),k_0)-f(X(u),i_0))d[M_{i_0k_0}](u)
\\
=& I_{\nu}[f(X(s),\alpha(s))]_{s,t}+I_{(0)}[f(X(\cdot),\alpha(\cdot))]_{s,t}+\sum_{j=1}^mI_{(j)}[f(X(\cdot),\alpha(\cdot))]_{s,t}
\\
&+\sum_{r=1}^{\mu}I_{(N_r)}[f(X(\cdot),\alpha(\cdot))]_{s,t}+I_{(\bar{N}_{\mu})}[f(X(\cdot),\alpha(\cdot))]_{s,t}
\end{align*}
 almost surely for all $s<t\in[0,T]$.
\end{lemma}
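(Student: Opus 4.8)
The plan is to prove Lemma \ref{lem:ItowMS} in two stages: first establish the Itô-type formula on the top line of the display, and then recognize the right-hand side as a sum of multiple integrals with the asserted multi-indices.

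\textbf{Stage 1: the Itô formula for $f(X(t),\alpha(t))$.} Since $\alpha$ is independent of $W$, I would condition on $\mathcal{F}^\alpha_T$ and work on the (random) sequence of intervals where $\alpha$ is constant. Between two consecutive jump times $\tau_k$ and $\tau_{k+1}$ of $\alpha$, the state $\alpha$ equals some fixed $i_0\in\mathcal{S}$, and $X$ solves an ordinary Itô SDE with coefficients $b(\cdot,i_0),\sigma(\cdot,i_0)$; the classical Itô formula applied to the twice continuously differentiable function $f(\cdot,i_0)$ gives
\begin{align*}
f(X(\tau_{k+1}-),i_0)=f(X(\tau_k),i_0)+\int_{\tau_k}^{\tau_{k+1}}L^0_{i_0}f(X(u),i_0)\,du+\sum_{j=1}^m\int_{\tau_k}^{\tau_{k+1}}L^j_{i_0}f(X(u),i_0)\,dW^j(u).
\end{align*}
At each jump time $\tau_{k+1}$, the continuous part $X$ does not jump (the driving noise is continuous), but $\alpha$ jumps from some $i_0$ to some $k_0$, producing the increment $f(X(\tau_{k+1}),k_0)-f(X(\tau_{k+1}),i_0)$, which is exactly what the integral against $d[M_{i_0k_0}]$ records since $[M_{i_0k_0}]$ increments by one at such a jump. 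Summing the continuous pieces over all inter-jump intervals reconstitutes $\int_s^t L^0_{\alpha(u)}f\,du+\sum_j\int_s^t L^j_{\alpha(u)}f\,dW^j$, because $\alpha(u)$ is the relevant state on each piece; summing the jump contributions gives $\sum_{i_0\neq k_0}\int_s^t(f(X(u),k_0)-f(X(u),i_0))\,d[M_{i_0k_0}](u)$. Telescoping yields the first displayed identity. (Alternatively, one simply cites \cite{nguyen2017milstein}, Lemma 2.2, as the statement says.)

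\textbf{Stage 2: rewriting in multiple-integral notation.} Here I would just unwind the definitions from Subsection \ref{sub:multiple integral}. By definition $I_\nu[f(X(s),\alpha(s))]_{s,t}=f(X(s),\alpha(s))$; for $j_l=0$ the recursion gives $I_{(0)}[f(X(\cdot),\alpha(\cdot))]_{s,t}=\int_s^t I_\nu[L^0_{\alpha(\cdot)}f(X(\cdot),\alpha(\cdot))]_{s,s_1}\,ds_1=\int_s^t L^0_{\alpha(s_1)}f(X(s_1),\alpha(s_1))\,ds_1$, and similarly $I_{(j)}[f(X(\cdot),\alpha(\cdot))]_{s,t}=\int_s^t L^j_{\alpha(s_1)}f(X(s_1),\alpha(s_1))\,dW^j(s_1)$. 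For the jump terms, the definition for $j_l=N_r$ gives $I_{(N_r)}[f(X(\cdot),\alpha(\cdot))]_{s,t}=\sum_{i_0\neq k_0}\int_s^t\mathbbm{1}\{N^{(s,t]}=r\}(f(X(s_1),k_0)-f(X(s_1),i_0))\,d[M_{i_0k_0}](s_1)$, and for $j_l=\bar N_\mu$ the analogous expression with $\mathbbm{1}\{N^{(s,t]}>\mu\}$. Since the events $\{N^{(s,t]}=1\},\dots,\{N^{(s,t]}=\mu\},\{N^{(s,t]}>\mu\}$ partition $\Omega$ (up to the null event $\{N^{(s,t]}=0\}$, on which every such integral vanishes because $[M_{i_0k_0}]$ is constant), summing $\sum_{r=1}^\mu I_{(N_r)}[\cdot]_{s,t}+I_{(\bar N_\mu)}[\cdot]_{s,t}$ collapses the indicators to $1$ and reproduces exactly $\sum_{i_0\neq k_0}\int_s^t(f(X(u),k_0)-f(X(u),i_0))\,d[M_{i_0k_0}](u)$. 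Assembling the four groups gives the second displayed identity.

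\textbf{Main obstacle.} Stage 2 is purely bookkeeping once the indicator partition is observed, so the only real content is Stage 1, and within it the delicate point is the handling of the jump times: one must check that $X$ itself is continuous across each $\tau_k$ (so no extra Itô correction is incurred there) and that the finitely many jumps on $[s,t]$ — finite almost surely by Lemma \ref{lem:rateMS}(a) — allow a clean telescoping without integrability issues. These are standard facts for SDEwMS with Lipschitz coefficients (Theorem \ref{thm:true moment}), so citing \cite{nguyen2017milstein} is the cleanest route; if a self-contained argument is wanted, the conditioning-on-$\mathcal{F}^\alpha_T$ device above makes the finitely-many-pieces structure explicit and reduces everything to the classical Itô formula.
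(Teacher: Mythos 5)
Your proposal is correct and matches the paper's treatment: the paper simply cites Lemma 2.2 of \cite{nguyen2017milstein} for the first displayed identity and regards the second as immediate from the definitions in Subsection \ref{sub:multiple integral}, which is exactly your two-stage structure (your Stage 1 sketch of the conditioning-on-$\mathcal{F}^\alpha_T$, piecewise It\^o argument is the standard proof of that cited lemma, and your Stage 2 indicator-partition bookkeeping is the definitional unwinding the paper leaves implicit). One cosmetic point: the event $\{N^{(s,t]}=0\}$ is not a null event --- it has positive probability --- but your parenthetical gives the correct reason the jump integrals vanish there, namely that $[M_{i_0k_0}]$ is constant on $(s,t]$ on that event.
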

When the state of the Markov chain $\alpha$ is fixed, then we obtain the following corollary. 
\begin{corollary}\label{cor:Ito}
For every $i_0 \in \mathcal{S}$,  let $f(\cdot,i_0):\mathbb{R}^d\mapsto\mathbb{R}$ be a twice continuously differential function in the first argument. Then,
\begin{align*}
f(X(t),i_0)=&f(X(s),i_0)+\int^t_sL^0_{\alpha(u)}f(X(u),i_0)du+\sum^m_{j=1}\int^t_sL^j_{\alpha(u)}f(X(u),i_0)dW^{j}(u)
\\
=&I_{\nu}[f(X(s),i_0)]_{s,t}+I_{(0)}[f(X(\cdot),i_0)]_{s,t}+\sum_{j=1}^mI_{(j)}[f(X(\cdot),i_0)]_{s,t} 
\end{align*}
almost surely for all $s<t\in[0,T]$.
\end{corollary}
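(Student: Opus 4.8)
The plan is to obtain Corollary \ref{cor:Ito} as an immediate consequence of Lemma \ref{lem:ItowMS}, applied to the function obtained from $f$ by freezing its second argument at $i_0$. First I would fix $i_0\in\mathcal{S}$ and set $\tilde{f}(x,k_0):=f(x,i_0)$ for all $x\in\mathbb{R}^d$ and $k_0\in\mathcal{S}$, so that $\tilde{f}$ is constant in its second argument. Since $f(\cdot,i_0)$ is twice continuously differentiable, $\tilde{f}$ is twice continuously differentiable in its first argument, and hence Lemma \ref{lem:ItowMS} applies with $\tilde{f}$ in place of $f$.

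The key point is that the optional-integral (jump) term in the conclusion of Lemma \ref{lem:ItowMS} vanishes for $\tilde{f}$: being constant in its second argument, $\tilde{f}(X(u),k_1)-\tilde{f}(X(u),i_1)=0$ for all $i_1\neq k_1\in\mathcal{S}$, so $\sum_{i_1\neq k_1}\int_s^t\bigl(\tilde{f}(X(u),k_1)-\tilde{f}(X(u),i_1)\bigr)\,d[M_{i_1k_1}](u)=0$ almost surely. Moreover $\tilde{f}(X(u),\alpha(u))=f(X(u),i_0)$, and because the operators $L^0_{\alpha(u)},L^j_{\alpha(u)}$ act only on the first argument (with their coefficients evaluated at the running state $\alpha(u)$), one has $L^0_{\alpha(u)}\tilde{f}(X(u),\alpha(u))=L^0_{\alpha(u)}f(X(u),i_0)$ and $L^j_{\alpha(u)}\tilde{f}(X(u),\alpha(u))=L^j_{\alpha(u)}f(X(u),i_0)$ for $j\in\{1,\ldots,m\}$. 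Substituting these identities into the statement of Lemma \ref{lem:ItowMS} yields the first displayed equality of the corollary.

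For the second displayed equality I would simply unwind the definitions of the multiple integrals with fixed state from Subsection \ref{sub:multiple integral}: $I_\nu[f(X(s),i_0)]_{s,t}=f(X(s),i_0)$, $I_{(0)}[f(X(\cdot),i_0)]_{s,t}=\int_s^t L^0_{\alpha(s_1)}f(X(s_1),i_0)\,ds_1$, and $I_{(j)}[f(X(\cdot),i_0)]_{s,t}=\int_s^t L^j_{\alpha(s_1)}f(X(s_1),i_0)\,dW^j(s_1)$ for $j\in\{1,\ldots,m\}$. Since the state is fixed there are no components of the form $N_r$ or $\bar{N}_\mu$, hence no additional terms appear and the two displays coincide.

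I do not expect a genuine obstacle here, since the entire content sits in Lemma \ref{lem:ItowMS}; the only thing requiring care is the bookkeeping above, namely verifying that freezing the second slot kills the jump term and that the operator notation is compatible. Should a self-contained argument be preferred, one could instead apply the classical It\^o formula to $f(\cdot,i_0)\circ X$ pathwise on each interval between consecutive jumps of $\alpha$ (on which $X$ solves an ordinary It\^o SDE with coefficients frozen at the current state and $f(X(\cdot),i_0)$ has continuous paths) and telescope over these intervals; this reproduces the same identity at the cost of handling the jump times explicitly.
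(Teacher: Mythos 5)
Your proposal is correct and matches the paper's intent exactly: the paper presents Corollary \ref{cor:Ito} without a separate proof, as the specialization of Lemma \ref{lem:ItowMS} to a function whose second argument is frozen, in which case the $d[M_{i_0k_0}]$ term vanishes and the remaining terms are precisely the fixed-state multiple integrals. Your bookkeeping of the operators and of the second displayed identity is accurate, so nothing further is needed.
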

For the following lemma, we define 
\begin{align*}
\mathbbm{I}^{j}_{(s,t]}=\begin{cases} 
\mathbbm{1}\{N^{(s,t]}=r\} & \mbox{ if } j= N_r, r\in\{1,\ldots,\mu\}
\\
\mathbbm{1}\{N^{(s,t]}>\mu\} & \mbox{ if }  j= \bar{N}_\mu
\end{cases}
\end{align*}
 for any  $s<t\in[0,T]$ where $j\in\{N_1, \ldots, N_\mu, \bar{N}_\mu\}$.
\begin{lemma}\label{lem:Ito on beta}
Let $\beta\in\mathcal{M}$ such that $l(\beta) \in \{2, 3,\ldots\}$. Then, for a sufficiently smooth function $~f:\mathbb{R}^d\times\mathcal{S}\mapsto\mathbb{R}$, we have
\begin{align*}
I_{\beta}[f(X(\cdot),\alpha(\cdot))&]_{s,t}
=\begin{cases}
\displaystyle I_{\beta}[f(X(s),\alpha(s))]_{s,t}+\sum_{j\in\{N_1,\ldots,N_{\mu },\bar{N}_{\mu },0,1,\ldots,m\}} I_{(j)\star\beta}[f(X(\cdot),\alpha(\cdot))]_{s,t} &\mbox{if }\beta\in\mathcal{M}_1
\\
\displaystyle I_{\beta}[f(X(s),\alpha(\cdot))]_{s,t}+\sum_{j\in\{N_1,\ldots,N_{\mu },\bar{N}_{\mu },0,1,\ldots,m\}} I_{(j)\star\beta}[f(X(\cdot),\alpha(\cdot))]_{s,t} &\mbox{if }\beta\in\mathcal{M}_2
\\
\displaystyle  I_{\beta}[f(X(s),\alpha(\cdot))]_{s,t}+\sum_{j=0}^m I_{(j)\star\beta}[f(X(\cdot),\alpha(\cdot))]_{s,t} &\mbox{if }\beta\in\mathcal{M}_3
\end{cases}
\end{align*}
almost surely  for any $s<t\in[0,T]$, provided all the multiple integrals appearing above exist. 
If  $l(\beta)=0$, then  the cases $\beta \in \mathcal{M}_2$ and $\beta \in \mathcal{M}_3$ on the right side of the above expression is dropped and when $l(\beta)=1$,  the case $\beta\in \mathcal{M}_2$  does not appear.  
\end{lemma}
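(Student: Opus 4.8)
The plan is to proceed by induction on $l(\beta)$, reducing everything to Lemma \ref{lem:ItowMS} and Corollary \ref{cor:Ito}, which handle the It\^o expansion for a single function. The key observation is that the recursive definition of $I_\beta$ strips off the \emph{last} component $j_l$ of $\beta$, writing $I_\beta[\cdot]_{s,t}$ as an outer integral (over $dW^{j_l}$, $d[M_{i_0k_0}]$, or $ds$, depending on the type of $j_l$) of the integrand $I_{\beta-}[\cdot]_{s,s_1}$. So I would first isolate the innermost integral structure in $s_1 \mapsto s_2$, apply the single-function expansion there, and then re-assemble. More precisely, for $\beta = (j_1,\ldots,j_l)$ with $l \geq 2$, the recursion in the innermost variable is controlled by the \emph{first} component $j_1$. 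I would split into the three cases according to which of $\mathcal{M}_1, \mathcal{M}_2, \mathcal{M}_3$ contains $\beta$, since these are exactly distinguished by whether $j_1 \in \{0,1,\ldots,m\}$ or $j_1 \in \{N_1,\ldots,N_\mu,\bar N_\mu\}$, and by whether later components involve the $N$-type indices.

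First, I would treat the case $\beta \in \mathcal{M}_1$, i.e. all $j_i \in \{0,1,\ldots,m\}$. Here the innermost object being integrated is $g(X(s_{l}),\alpha(s_l)) := L^{j_l}_{\alpha(\cdot)}\cdots$ built up from $f$; more usefully, I would apply Lemma \ref{lem:ItowMS} directly to the function $h := J^{-\beta}_{\alpha(\cdot)} f = L^{j_2}_{\alpha(\cdot)}\cdots L^{j_l}_{\alpha(\cdot)} f$ evaluated along $X(\cdot)$, to expand $h(X(s_1),\alpha(s_1))$ in terms of $h(X(s),\alpha(s))$ plus the $I_{(0)}, I_{(j)}, I_{(N_r)}, I_{(\bar N_\mu)}$ corrections applied to $h$ over $[s,s_1]$. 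Then the outer integral $\int_s^t (\cdot)\, dW^{j_1}(s_1)$ converts: the leading term $h(X(s),\alpha(s))$ produces $I_\beta[f(X(s),\alpha(s))]_{s,t}$ (using that $\beta \in \mathcal{M}_1$ so all of $\beta$ keeps the frozen state), while each correction term $I_{(\xi)}[h(X(\cdot),\alpha(\cdot))]_{s,s_1}$ under $\int_s^t \cdot\, dW^{j_1}(s_1)$ is precisely $I_{(j_1)\star(\xi)\star(-\beta)}[f] = I_{(j_1)\star\beta'}$ where... — wait, one must be careful: the correction indices $\xi$ are \emph{prepended} to $-\beta$, not to $\beta$, because the expansion happens at the deepest level. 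I would reorganize by expanding at the \emph{outermost} level instead: write $I_\beta[f]_{s,t} = \int_s^t I_{\beta-}[L^{j_l}_{\alpha(\cdot)}f(X(\cdot),\alpha(\cdot))]_{s,s_1}\,dW^{j_l}(s_1)$ and induct on $l$; the base case $l=2$ uses Lemma \ref{lem:ItowMS}/Corollary \ref{cor:Ito} on $I_{(j_1)}[g]_{s,s_1}$ where $g = L^{j_2}_{\alpha(\cdot)}f$, and the claimed identity for $l=2$ follows by substituting the single-function expansion of $g(X(s_1),\alpha(s_1))$ — but $I_{(j_1)}$ for $j_1 \in \{0,\ldots,m\}$ is just an iterated integral, so $I_{(j_1)}[g(X(\cdot),\alpha(\cdot))]_{s,s_1}$, upon expanding $g$ via Lemma \ref{lem:ItowMS}, splits as $I_{(j_1)}[g(X(s),\alpha(s))] + \sum_\xi I_{(j_1)\star(\xi)}[(\text{deriv of }g)]$; matching with $I_{(j_1,j_2)} = I_\beta$ and $I_{(\xi,j_1,j_2)}$ requires the concatenation to land on the right side, which it does because the first stripped component in $I_{(\xi)\star\beta}$ is $\xi$. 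For the inductive step I would apply the $l=2$ (or rather $l-1 \to l$) result to $I_{(-\beta)}[L^{j_1}_{\alpha(\cdot)}(\cdots)]$ — actually cleanest is: expand $I_{\beta-}$ applied to the expanded integrand, using the inductive hypothesis at length $l-1$, then push the outer $dW^{j_l}$ integral through.

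For $\beta \in \mathcal{M}_2$ (so $j_1 \in \{0,\ldots,m\}$ but some later component is an $N$-index), the difference is only that the leading term must be $I_\beta[f(X(s),\alpha(\cdot))]_{s,t}$ rather than $I_\beta[f(X(s),\alpha(s))]_{s,t}$, because the $\alpha(\cdot)$ cannot be frozen at $s$ past an $N$-type integral — the definition of $I_\beta[f(X(s),\alpha(\cdot))]$ precisely keeps $X$ frozen but lets $\alpha$ vary. The same induction works; I only need to check that when I apply Lemma \ref{lem:ItowMS} to freeze $X(\cdot)$ at the base of the recursion, the Markov-chain state is handled by the $f(X(s),\alpha(\cdot))$ notation. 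For $\beta \in \mathcal{M}_3$ ($j_1 \in \{N_1,\ldots,N_\mu,\bar N_\mu\}$), the innermost integral is against $d[M_{i_0k_0}]$ of a \emph{difference} $f(X(\cdot),k_0) - f(X(\cdot),i_0)$; crucially a single-function It\^o expansion of each of $f(X(u),k_0)$ and $f(X(u),i_0)$ (fixed state!) via Corollary \ref{cor:Ito} produces \emph{only} the $I_{(0)}$ and $I_{(j)}$, $j=1,\ldots,m$ corrections — no new $N$-type components arise — which is exactly why the sum on the right for $\mathcal{M}_3$ runs only over $j \in \{0,1,\ldots,m\}$. This is the conceptual heart of the statement: applying It\^o's formula for SDEwMS (Lemma \ref{lem:ItowMS}) to a function with \emph{frozen} Markov state cannot generate further switching integrals, by Corollary \ref{cor:Ito}.

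The main obstacle I anticipate is purely bookkeeping: getting the concatenation operation to land correctly, i.e. verifying that expanding the integrand at the bottom of the nested-integral tower and then pulling the corrections back out through all the outer integrals yields exactly $I_{(j)\star\beta}$ and not some other ordering, and simultaneously confirming that the multi-index condition (a) (no two consecutive $N$-type components) is preserved — this is automatic since $j_1$, the first component of $\beta$, is what gets a new component $j$ prepended, and when $j$ is $N$-type the relevant case is $\beta \in \mathcal{M}_1 \cup \mathcal{M}_2$ so $j_1 \notin \{N_1,\ldots,\bar N_\mu\}$, while when $\beta \in \mathcal{M}_3$ only $j \in \{0,\ldots,m\}$ is prepended. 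The degenerate cases $l(\beta)=0$ and $l(\beta)=1$ are exactly Lemma \ref{lem:ItowMS} and Corollary \ref{cor:Ito} restated (with the stated terms dropped because $I_\beta[f(X(s),\alpha(\cdot))]$ coincides with $I_\beta[f(X(s),\alpha(s))]$ when $\beta = \nu$, and when $l(\beta)=1$ with $j_1 \in \{0,\ldots,m\}$ there is no distinction yet), so those serve as the induction base and require no separate argument beyond citation.
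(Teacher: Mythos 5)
Your proposal is correct and follows essentially the same route as the paper: induction on $l(\beta)$, with base cases handled by Lemma \ref{lem:ItowMS} and Corollary \ref{cor:Ito}, and the inductive step performed by stripping the last component, applying the hypothesis to $I_{\beta-}$ (tracking which of $\mathcal{M}_1,\mathcal{M}_2,\mathcal{M}_3$ contains $\beta-$), and pushing the outer integral through so that the correction indices are prepended. Your observation that the $\mathcal{M}_3$ case only produces corrections $j\in\{0,\ldots,m\}$ because the frozen-state expansion of Corollary \ref{cor:Ito} generates no new switching integrals is exactly the mechanism the paper uses.
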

\begin{proof}
First, we establish the result for $l(\beta)\in \{0,1\}$. 
 Let $l(\beta)=0$, \textit{i.e.}, $\beta=\nu\in\mathcal{M}_1$. 
  By using the definition of multiple integrals from Subsection \ref{sub:multiple integral} and Lemma \ref{lem:ItowMS}, we have,
\begin{align*}
I_{\beta}[f(X(\cdot),\alpha(\cdot))]_{s,t}=&f(X(t),\alpha(t))
\\
=&I_{\nu}[f(X(s),\alpha(s))]_{s,t}+I_{(0)}[f(X(\cdot),\alpha(\cdot))]_{s,t}+\sum_{j=1}^mI_{(j)}[f(X(\cdot),\alpha(\cdot))]_{s,t}
\\
&+\sum_{r=1}^{\mu}I_{(N_r)}[f(X(\cdot),\alpha(\cdot))]_{s,t}+I_{(\bar{N}_{\mu})}[f(X(\cdot),\alpha(\cdot))]_{s,t}
\\
=&I_{\beta}[f(X(s),\alpha(s))]_{s,t}+\sum_{j\in\{N_1,\ldots,N_{\mu },\bar{N}_{\mu },0,1,\ldots,m\}} I_{(j)\star\beta}[f(X(\cdot),\alpha(\cdot))]_{s,t} 
\end{align*}
almost surely for any $s<t\in[0,T]$.
Now, let $l(\beta)=1$ and $\beta=(j_1)\in \mathcal{M}_1$ for $j_1 \in \{0,1,\ldots,m\}$. 
 As before, we use the definition of multiple integrals from Subsection \ref{sub:multiple integral} and Lemma \ref{lem:ItowMS} to obtain, 
\begin{align*}
I_{\beta}[f(X(\cdot),&\alpha(\cdot))]_{s,t}=\int_s^tL^{j_1}_{\alpha(s_1)} f(X(s_1),\alpha(s_1))dW^{j_1}(s_1)
\\
=&\int_s^t\Big(L^{j_1}_{\alpha(s)} f(X(s),\alpha(s))+\sum_{j=0}^m\int_{s}^{s_1}L^j_{\alpha(s_2)}L^{j_1}_{\alpha(s_2)} f(X(s_2),\alpha(s_2))dW^j(s_2)
\\
&+\sum_{r=1}^\mu\sum_{i_0\neq k_0}\int_s^{s_1}\mathbbm{1}\{N^{(s,s_1]}=r\}(L^{j_1}_{k_0} f(X(s_2),k_0)-L^{j_1}_{i_0} f(X(s_2),i_0))d[M_{i_0k_0}](s_2)
\\
&+\sum_{i_0\neq k_0}\int_s^{s_1}\mathbbm{1}\{N^{(s,s_1]}>\mu\}(L^{j_1}_{k_0} f(X(s_2),k_0)-L^{j_1}_{i_0} f(X(s_2),i_0))d[M_{i_0k_0}](s_2)\Big)dW^{j_1}(s_1)
\\
=&I_{\beta}[f(X(s),\alpha(s))]_{s,t}+\sum_{j\in\{N_1,\ldots,N_{\mu },\bar{N}_{\mu },0,1,\ldots,m\}} I_{(j)\star\beta}[f(X(\cdot),\alpha(\cdot))]_{s,t} 
\end{align*}
almost surely for any $s<t\in[0,T]$.

Similarly, let $l(\beta)=1$ and $\beta=(j_1)\in\mathcal{M}_3$ for $j_1\in\{N_1, \ldots, N_\mu, \bar{N}_\mu\}$. Then, on using the definition of multiple integrals from Subsection \ref{sub:multiple integral} and Corollary \ref{cor:Ito}, one has 
\begin{align*}
I_{\beta}[f(X(\cdot),\alpha(\cdot))]_{s,t}=&\sum_{i_0\neq k_0}\int_s^t\mathbbm{I}^{j_1}_{(s,t]}(f(X(s_1),k_0)-f(X(s_1),i_0))d[M_{i_0k_0}](s_1)
\\
=&\sum_{i_0\neq k_0}\int_s^t\mathbbm{I}^{j_1}_{(s,t]}\Big((f(X(s),k_0)-f(X(s),i_0))
\\
&+\sum_{j=0}^m\int_s^{s_1}(L^j_{\alpha(s_2)}f(X(s_2),k_0)-L^j_{\alpha(s_2)}f(X(s_2),i_0))dW^j(s_2)\Big)d[M_{i_0k_0}](s_1)
\\
=&I_{\beta}[f(X(s),\alpha(\cdot))]_{s,t}+\sum_{j=0}^m I_{(j)\star\beta}[f(X(\cdot),\alpha(\cdot))]_{s,t}
\end{align*}
almost surely for any $s<t\in[0,T]$.

We use mathematical induction to prove the case $l(\beta) \in \{2,3,\ldots\}$. 
First, we establish the base case, \textit{i.e.}, $l(\beta)=2$.
Let $\beta=(j_1,j_2)\in \mathcal{M}_1$.
 Due to the definition of multiple integrals from Subsection \ref{sub:multiple integral} and Lemma \ref{lem:ItowMS}, we obtain
\begin{align*}
I_{\beta}[f(X(\cdot),&\alpha(\cdot))]_{s,t}=\int_s^t\int_s^{s_1} L^{j_1}_{\alpha(s_2)}L^{j_2}_{\alpha(s_2)} f(X(s_2),\alpha(s_2))dW^{j_1}(s_2)dW^{j_2}(s_1)
\\
=&\int_s^t\int_s^{s_1}\Big( L^{j_1}_{\alpha(s)}L^{j_2}_{\alpha(s)} f(X(s),\alpha(s))+\sum_{j=0}^m\int_{s}^{s_2}L^j_{\alpha(s_3)}L^{j_1}_{\alpha(s_3)}L^{j_2}_{\alpha(s_3)} f(X(s_3),\alpha(s_3))dW^j(s_3)
\\
&+\sum_{r=1}^\mu\sum_{i_0\neq k_0}\int_s^{s_2}\mathbbm{1}\{N^{(s,s_2]}=r\}(L^{j_1}_{k_0}L^{j_2}_{k_0} f(X(s_3),k_0)-L^{j_1}_{i_0}L^{j_2}_{i_0}f(X(s_3),i_0))d[M_{i_0k_0}](s_3)
\\
&+\sum_{i_0\neq k_0}\int_s^{s_2}\mathbbm{1}\{N^{(s,s_2]}>\mu\}(L^{j_1}_{k_0}L^{j_2}_{k_0} f(X(s_3),k_0)
\\
&\qquad-L^{j_1}_{i_0}L^{j_2}_{i_0}f(X(s_3),i_0))d[M_{i_0k_0}](s_3)\Big)dW^{j_1}(s_2)dW^{j_2}(s_1)
\\
=&I_{\beta}[f(X(s),\alpha(s))]_{s,t}+\sum_{j\in\{N_1,\ldots,N_{\mu },\bar{N}_{\mu },0,1,\ldots,m\}} I_{(j)\star\beta}[f(X(\cdot),\alpha(\cdot))]_{s,t} 
\end{align*}
almost surely for any $s<t\in[0,T]$.
Now, let $\beta=(j_1,j_2)\in \mathcal{M}_2$.
 As before, the definition of multiple integrals from Subsection \ref{sub:multiple integral} and Lemma \ref{lem:ItowMS} yields
\begin{align*}
I_{\beta}[f(X(\cdot),&\alpha(\cdot))]_{s,t}
\\
=&\sum_{i_0\neq k_0}\int_s^t\mathbbm{I}^{j_2}_{(s,t]}\int_s^{s_1}(L^{j_1}_{\alpha(s_2)}f(X(s_2),k_0)-L^{j_1}_{\alpha(s_2)}f(X(s_2),i_0))dW^{j_1}(s_2)d[M_{i_0k_0}](s_1)
\\
=&\sum_{i_0\neq k_0}\int_s^t\mathbbm{I}^{j_2}_{(s,t]}\int_s^{s_1}\Big((L^{j_1}_{\alpha(s)}f(X(s),k_0)-L^{j_1}_{\alpha(s)}f(X(s),i_0))
\\
&+\sum_{j=0}^m\int_{s}^{s_2}L^j_{\alpha(s_3)}(L^{j_1}_{\alpha(s_3)}f(X(s_3),k_0)-L^{j_1}_{\alpha(s_3)}f(X(s_3),i_0))dW^j(s_3)
\\
&+\sum_{r=1}^\mu\sum_{i_1\neq k_1}\int_s^{s_2}\mathbbm{1}\{N^{(s,s_2]}=r\}((L^{j_1}_{k_1}f(X(s_3),k_0)-L^{j_1}_{i_1}f(X(s_3),k_0)))
\\
&-(L^{j_1}_{k_1}f(X(s_3),i_0)-L^{j_1}_{i_1}f(X(s_3),i_0)))d[M_{i_0k_0}](s_3)
\\
&+\sum_{i_1\neq k_1}\int_s^{s_2}\mathbbm{1}\{N^{(s,s_2]}>\mu\}((L^{j_1}_{k_1}f(X(s_3),k_0)-L^{j_1}_{i_1}f(X(s_3),k_0)))
\\
&-(L^{j_1}_{k_1}f(X(s_3),i_0)-L^{j_1}_{i_1}f(X(s_3),i_0)))d[M_{i_0k_0}](s_3)\Big)dW^{j_1}(s_2)d[M_{i_0k_0}](s_1)
\\
=&I_{\beta}[f(X(s),\alpha(\cdot))]_{s,t}+\sum_{j\in\{N_1,\ldots,N_{\mu },\bar{N}_{\mu },0,1,\ldots,m\}} I_{(j)\star\beta}[f(X(\cdot),\alpha(\cdot))]_{s,t}
\end{align*}
almost surely for any $s<t\in[0,T]$.
Further, let $\beta=(j_1,j_2)\in \mathcal{M}_3$.
By the definition of multiple integrals from Subsection \ref{sub:multiple integral} and Corollary \ref{cor:Ito}, we have
\begin{align*}
I_{\beta}&[f(X(\cdot),\alpha(\cdot))]_{s,t}=\int_s^t\sum_{i_0\neq k_0}\int_s^{s_1}\mathbbm{I}_{(s,s_1]}^{j_1} (L^{j_2}_{k_0} f(X(s_2),k_0)-L^{j_2}_{i_0} f(X(s_2),i_0))d[M_{i_0k_0}](s_2)dW^{j_2}(s_1)
\\
=&\int_s^t\sum_{i_0\neq k_0}\int_s^{s_1}\mathbbm{I}_{(s,s_1]}^{j_1}\Big( (L^{j_2}_{k_0} f(X(s),k_0)-L^{j_2}_{i_0} f(X(s),i_0))
\\
&+\sum_{j=0}^m\int_s^{s_2}(L^j_{\alpha(s_3)}L^{j_2}_{k_0} f(X(s_3),k_0)-L^j_{\alpha(s_3)}L^{j_2}_{i_0} f(X(s_3),i_0))dW^j(s_3)\Big)d[M_{i_0k_0}](s_2)dW^{j_2}(s_1)
\\
=&I_{\beta}[f(X(s),\alpha(\cdot))]_{s,t}+\sum_{j=0}^m I_{(j)\star\beta}[f(X(\cdot),\alpha(\cdot))]_{s,t}
\end{align*}
almost surely for any $s<t\in[0,T]$.

For the inductive arguments, we assume that the result holds for $l(\beta)=k-1\geq 2$. 
Now, consider  $\beta=(j_1,\ldots,j_k)\in\mathcal{M}$ and $k\geq 3$.
 \newline
 \textbf{Case 1.} Let $\beta\in\mathcal{M}_1 $.
Then, the definition of multiple integrals from Subsection \ref{sub:multiple integral} gives
 \begin{align*}
  I_{\beta}[f(X(\cdot),\alpha(\cdot))]_{s,t}=&\int_s^tI_{\beta-}[L^{j_k}_{\alpha(\cdot)}f(X(\cdot),\alpha(\cdot))]_{s,u}dW^{j_k}(u)
 \end{align*}
 almost surely for any $s<t\in[0,T]$.
Notice that $\beta-\in\mathcal{M}_1$ and the  inductive hypothesis is 
 \begin{align}
 I_{\beta-}[L^{j_k}_{\alpha(\cdot)}f(X(\cdot),\alpha(\cdot))]_{s,u}=&I_{\beta-}[L^{j_k}_{\alpha(s)}f(X(s),\alpha(s))]_{s,u}\notag
 \\
 &+\sum_{j\in\{N_1,\ldots,N_{\mu },\bar{N}_{\mu },0,1,\ldots,m\}} I_{(j)\star\beta-}[L^{j_k}_{\alpha(\cdot)}f(X(\cdot),\alpha(\cdot))]_{s,u}\notag
 \end{align}
 which by the definition of multiple integrals from Subsection \ref{sub:multiple integral} yields,
  \begin{align*}
I_{\beta}[f(X(\cdot),\alpha(\cdot))]_{s,t} =&\int_s^t\Big(I_{\beta-}[L^{j_k}_{\alpha(s)}f(X(s),\alpha(s))]_{s,u}
 \\
 &+\sum_{j\in\{N_1,\ldots,N_{\mu },\bar{N}_{\mu },0,1,\ldots,m\}} I_{(j)\star\beta-}[L^{j_k}_{\alpha(\cdot)}f(X(\cdot),\alpha(\cdot))]_{s,u}\Big)dW^{j_k}(u)
 \\
 =& I_{\beta}[f(X(s),\alpha(s))]_{s,t}+\sum_{j\in\{N_1,\ldots,N_{\mu },\bar{N}_{\mu },0,1,\ldots,m\}} I_{(j)\star\beta}[f(X(\cdot),\alpha(\cdot))]_{s,t}
 \end{align*}
almost surely for any $s<t\in[0,T]$.
\newline 
\textbf{Case 2.}
Let $\beta\in\mathcal{M}_2$ and $j_k\notin\{N_1,\ldots,N_{\mu },\bar{N}_{\mu }\}$.
Due to the definition of multiple integrals from Subsection \ref{sub:multiple integral}, we obtain
\begin{align*}
 I_{\beta}[f(X(\cdot),\alpha(\cdot))]_{s,t}=&\int_s^tI_{\beta-}[L^{j_k}_{\alpha(\cdot)}f(X(\cdot),\alpha(\cdot))]_{s,u}dW^{j_k}(u)
\end{align*}
almost surely for any $s<t\in[0,T]$.
As before, notice that $\beta-\in\mathcal{M}_2$ and the inductive hypothesis is
\begin{align}
I_{\beta-}[L^{j_k}_{\alpha(\cdot)}f(X(\cdot),\alpha(\cdot))]_{s,u}=&I_{\beta-}[L^{j_k}_{\alpha(\cdot)}f(X(s),\alpha(\cdot))]_{s,u}\notag
 \\
 &+\sum_{j\in\{N_1,\ldots,N_{\mu },\bar{N}_{\mu },0,1,\ldots,m\}} I_{(j)\star\beta-}[L^{j_k}_{\alpha(\cdot)}f(X(\cdot),\alpha(\cdot))]_{s,u} \notag
\end{align}
almost surely for any $s<u\in[0,T]$. 
Thus, by applying the above equation and the  definition of multiple integrals from Subsection \ref{sub:multiple integral}, one has
 \begin{align*}
 I_{\beta}[f(X(\cdot),\alpha(\cdot))]_{s,t}=&\int_s^t\Big(I_{\beta-}[L^{j_k}_{\alpha(\cdot)}f(X(s),\alpha(\cdot))]_{s,u}
 \\
 &+\sum_{j\in\{N_1,\ldots,N_{\mu },\bar{N}_{\mu },0,1,\ldots,m\}} I_{(j)\star\beta-}[L^{j_k}_{\alpha(\cdot)}f(X(\cdot),\alpha(\cdot))]_{s,u}\Big)dW^{j_k}(u)
 \\
 =&I_{\beta}[f(X(s),\alpha(\cdot))]_{s,t}+\sum_{j\in\{N_1,\ldots,N_{\mu },\bar{N}_{\mu },0,1,\ldots,m\}} I_{(j)\star\beta}[f(X(\cdot),\alpha(\cdot))]_{s,t}
 \end{align*}
  almost surely for any $s<t\in[0,T]$.
 \newline
 \textbf{Case 3.} Let $\beta\in\mathcal{M}_2 $ and $j_k\in\{N_1,\ldots,N_{\mu },\bar{N}_{\mu }\}$.
By using the definition of multiple integrals from Subsection \ref{sub:multiple integral}, we can write
\begin{align*}
I_{\beta}[f(X(\cdot),\alpha(\cdot))]_{s,t}=&\sum_{i_0\neq k_0}\int_s^t\mathbbm{I}_{(s,t]}^{j_k}I_{\beta-}[f(X(\cdot),k_0)-f(X(\cdot),i_0)]_{s,u}d[M_{i_0k_0}](u)
\end{align*}
almost surely for any $s<t\in[0,T]$.
Notice that $\beta-\in\mathcal{M}_1\cup\mathcal{M}_2$ and the inductive hypothesis is 
\begin{align}
I_{\beta-}[f(X(\cdot),k_0)&-f(X(\cdot),i_0)]_{s,u}=I_{\beta-}[f(X(s),k_0)-f(X(s),i_0)]_{s,u}\notag
 \\
 &+\sum_{j\in\{N_1,\ldots,N_{\mu },\bar{N}_{\mu },0,1,\ldots,m\}} I_{(j)\star\beta-}[f(X(\cdot),k_0)-f(X(\cdot),i_0)]_{s,u}\notag
\end{align}
almost surely for any $s<u\in[0,T]$.
Further, above equation and  the definition of multiple integrals from Subsection \ref{sub:multiple integral} yields, 
 \begin{align*}
 I_{\beta}[f(X(\cdot),\alpha(\cdot))]_{s,t}=&\sum_{i_0\neq k_0}\int_s^t\mathbbm{I}_{(s,t]}^{j_k}\Big(I_{\beta-}[f(X(s),k_0)-f(X(s),i_0)]_{s,u}
 \\
 &+\sum_{j\in\{N_1,\ldots,N_{\mu },\bar{N}_{\mu },0,1,\ldots,m\}} I_{(j)\star\beta-}[f(X(\cdot),k_0)-f(X(\cdot),i_0)]_{s,u}\Big)d[M_{i_0k_0}](u)
 \\
  =&I_{\beta}[f(X(s),\alpha(\cdot))]_{s,t}+\sum_{j\in\{N_1,\ldots,N_{\mu },\bar{N}_{\mu },0,1,\ldots,m\}} I_{(j)\star\beta}[f(X(\cdot),\alpha(\cdot))]_{s,t}
 \end{align*}
  almost surely for any $s<t\in[0,T]$.
 \newline
 \textbf{Case 4.} Let $\beta\in\mathcal{M}_3 $ and $j_k\notin\{N_1,\ldots,N_{\mu },\bar{N}_{\mu }\}$.
 By the  definition of multiple integrals from Subsection \ref{sub:multiple integral}, we have 
 \begin{align*}
  I_{\beta}[f(X(\cdot)&,\alpha(\cdot))]_{s,t}=\int_s^tI_{\beta-}[L^{j_k}_{\alpha(\cdot)}f(X(\cdot),\alpha(\cdot))]_{s,u}dW^{j_k}(u)
 \end{align*}
 almost surely for any $s<t\in[0,T]$.
 In this case, $\beta-\in\mathcal{M}_3$ and the inductive hypothesis is
 \begin{align*}
 I_{\beta-}[L^{j_k}_{\alpha(\cdot)}f(X(\cdot),\alpha(\cdot))]_{s,u}=&I_{\beta-}[L^{j_k}_{\alpha(\cdot)}f(X(s),\alpha(\cdot))]_{s,u}+\sum_{j=0}^{m} I_{(j)\star\beta-}[L^{j_k}_{\alpha(\cdot)}f(X(\cdot),\alpha(\cdot))]_{s,u}
 \end{align*}
 almost surely for any $s<u\in[0,T]$.
By the definition of multiple integrals from Subsection \ref{sub:multiple integral} and inductive hypothesis, we have 
  \begin{align*}
 I_{\beta}[f(X(\cdot),\alpha(\cdot))]_{s,t} =&\int_s^t\Big(I_{\beta-}[L^{j_k}_{\alpha(\cdot)}f(X(s),\alpha(\cdot))]_{s,u}
 \\
 &+\sum_{j=0}^{m} I_{(j)\star\beta-}[L^{j_k}_{\alpha(\cdot)}f(X(\cdot),\alpha(\cdot))]_{s,u}\Big)dW^{j_k}(u)
 \\
 =&I_{\beta}[f(X(s),\alpha(\cdot))]_{s,t}+\sum_{j=0}^{m} I_{(j)\star\beta}[f(X(\cdot),\alpha(\cdot))]_{s,t}
 \end{align*}
  almost surely for any $s<t\in[0,T]$.
  \newline
 \textbf{Case 5.} Let $\beta\in\mathcal{M}_3 $ and $j_k\in\{N_1,\ldots,N_{\mu },\bar{N}_{\mu }\}$.
The definition of multiple integrals from Subsection \ref{sub:multiple integral} gives 
\begin{align*}
 I_{\beta}[f(X(\cdot),\alpha(\cdot))]_{s,t}=&\sum_{i_0\neq k_0}\int_s^t\mathbbm{I}_{(s,t]}^{j_k}I_{\beta-}[f(X(\cdot),k_0)-f(X(\cdot),i_0)]_{s,u}d[M_{i_0k_0}](u)
\end{align*}
almost surely for any $s<t\in[0,T]$.
Notice that $\beta-\in\mathcal{M}_3$, then the inductive hypothesis is written as 
\begin{align}
I_{\beta-}[f(X(\cdot),k_0)-f(X(\cdot),i_0)]_{s,u}=&I_{\beta-}[f(X(s),k_0)-f(X(s),i_0)]_{s,u}\notag
 \\
 &+\sum_{j=0}^{m} I_{(j)\star\beta-}[f(X(\cdot),k_0)-f(X(\cdot),i_0)]_{s,u}\notag
\end{align}
almost surely for any $s<u\in[0,T]$.
Hence, the above equation and the definition of multiple integrals from Subsection \ref{sub:multiple integral} gives
 \begin{align*}
 I_{\beta}[f(X(\cdot),\alpha(\cdot))]_{s,t}=&\sum_{i_0\neq k_0}\int_s^t\mathbbm{I}_{(s,t]}^{j_k}\Big(I_{\beta-}[f(X(s),k_0)-f(X(s),i_0)]_{s,u}
 \\
 &+\sum_{j=0}^{m} I_{(j)\star\beta-}[f(X(\cdot),k_0)-f(X(\cdot),i_0)]_{s,u}\Big)d[M_{i_0k_0}](u)
 \\
  =&I_{\beta}[f(X(\cdot),\alpha(\cdot))]_{s,t}+\sum_{j=0}^{m} I_{(j)\star\beta}[f(X(\cdot),\alpha(\cdot))]_{s,t}
 \end{align*}
  almost surely for any $s<t\in[0,T]$. This completes the proof.
\end{proof}

Now we will prove one of the main result of this article, \textit{i.e.}, theorem \ref{thm:Ito-Taylor}.
\begin{proof}[\textbf{Proof of Theorem \ref{thm:Ito-Taylor}}]
If $\mathcal{A}=\varphi$, then the proof  is trivial.
We shall prove Theorem \ref{thm:Ito-Taylor} for $\mathcal{A}\neq \varphi$ by induction on
\begin{equation}
\eta(\mathcal{A})=\sup_{\beta\in\mathcal{A}}\eta(\beta) \nonumber
\end{equation}
where $\eta(\beta)$ is defined in Subsection \ref{sub:Multi-indices}. 
Let $\eta(\mathcal{A})=0$.
Then, $\mathcal{A}=\{\nu\}$ with the remainder set
\begin{itemize}
\item[] $\mathcal{B}(\mathcal{A})=\big\{(N_1),\ldots,(N_{\mu}),(\bar{N}_{\mu}),(0),(1),\ldots,(m)\big\}$ and  $\tilde{\mathcal{A}}=\varphi$.
\end{itemize}
Thus, on using Lemma \ref{lem:ItowMS} and the definition of multiple integrals from Subsection \ref{sub:multiple integral}, one has
\begin{align*}
f(X(t),\alpha(t))&=f(X(s),\alpha(s))+\int^t_sL^{0}_{\alpha(u)}f(X(u),\alpha(u))du+\sum_{j=1}^m\int^t_sL^j_{\alpha(u)}f(X(u),\alpha(u))dW^{j}(u)
\\
&+\sum_{r=1}^{\mu}\sum_{i_0\neq k_0}\int^t_s\mathbbm{1}\{N^{(s,t]}=r\} (f(X(u),k_0)-f(X(u),i_0))d[M_{i_0k_0}](u)
\\
&+\sum_{i_0\neq k_0}\int^t_s\mathbbm{1}\{N^{(s,t]}>\mu\} (f(X(u),k_0)-f(X(u),i_0))d[M_{i_0k_0}](u)
\\
=&\sum_{\beta\in\mathcal{A}\setminus\tilde{\mathcal{A}}}I_{\beta}[f(X(s),\alpha(s))]_{s,t}+\sum_{\beta\in\tilde{\mathcal{A}}}I_{\beta}[f(X(s),\alpha(\cdot))]_{s,t}+\sum_{\beta\in\mathcal{B}(\mathcal{A})}I_{\beta}[f(X(\cdot),\alpha(\cdot))]_{s,t} \nonumber 
\end{align*}
almost surely for all $s<t\in[0,T]$.
Thus, the result holds for $\eta(\mathcal{A})=0$. 

Now, let $\eta(\mathcal{A})\geq 1$ and  $\zeta:=\{\beta\in\mathcal{A}:\:\eta(\beta)\leq \eta(\mathcal{A})-1\}$.
Notice that $\zeta$ is a hierarchical set.
Indeed, $\nu \in \zeta$ because $\nu\in \mathcal{A}$ and  $\eta(\nu)=0\leq \eta(\mathcal{A})-1$. 
Also, for any $\beta \in \zeta\setminus \{\nu\}$, $\beta\in \mathcal{A}$ and $\eta(\beta)\leq \eta(\mathcal{A})-1$ which implies $-\beta\in \mathcal{A}$ and $\eta(-\beta)\leq \eta(\beta)\leq \eta(\mathcal{A})-1$. 
Further, the inductive hypothesis is
\begin{align}
f(X(t),\alpha(t))=&\sum_{\beta\in \zeta\setminus\tilde{\zeta}}I_{\beta}\big[f(X(s),\alpha(s))\big]_{s,t}+\sum_{\beta\in\tilde{\zeta}}I_{\beta}\big[f(X(s),\alpha(\cdot))\big]_{s,t}\nonumber
\\
&+\sum_{\beta\in\mathcal{B}(\zeta)}I_{\beta}\big[f(X(\cdot),\alpha(\cdot))\big]_{s,t} \label{eq:ind.Ito Taylor Proof}
\end{align}
almost surely for all $s<t\in[0,T]$ where  $\tilde{\zeta}:=\{(j_1,\ldots,j_l)\in\zeta: j_i\in\{ N_1,\ldots,N_{\mu},\bar{N}_{\mu}\}\text{ for any }i\in\{1,\ldots,l\} \}$.

We write $\mathcal{B}(\zeta)$ as $\mathcal{B}(\zeta)=\bar{\mathcal{B}_1}\cup\bar{\mathcal{B}}_2\cup\bar{\mathcal{B}}_3\cup(\mathcal{B}(\zeta)\setminus(\bar{\mathcal{B}_1}\cup\bar{\mathcal{B}}_2\cup\bar{\mathcal{B}}_3))$ where
\begin{itemize}
\item[] $\bar{\mathcal{B}}_1:=\{(j_1,\ldots,j_l)\in\mathcal{B}(\zeta) \cap \mathcal{A}: j_i\in\{0,1,\ldots,m\}\forall\, i\in\{1,\ldots,l\}\},$
\item[] $\bar{\mathcal{B}}_2:=\{(j_1,\ldots,j_l)\in \mathcal{B}(\zeta) \cap \mathcal{A}:  j_1\in\{N_1,\ldots,N_{\mu },\bar{N}_{\mu}\}$,
\item[] $\bar{\mathcal{B}_3}:=\{(j_1,\ldots,j_l)\in(\mathcal{B}(\zeta)\cap \mathcal{A})\setminus\bar{\mathcal{B}}_1:  j_1\in\{0,1,\ldots,m\}\}$.
\end{itemize}

Notice that $\bar{\mathcal{B}}_1\subset\mathcal{M}_1$, $\bar{\mathcal{B}}_2\subset\mathcal{M}_3$ and $\bar{\mathcal{B}}_3\subset\mathcal{M}_2$.
 By using Lemma \ref{lem:Ito on beta} for $\beta\in\bar{\mathcal{B}}_1$, we have
\begin{align}
\sum_{\beta\in\bar{\mathcal{B}}_1}I_{\beta}[f(X(\cdot),\alpha(\cdot))]_{s,t}=&\sum_{\beta\in\bar{\mathcal{B}}_1}I_{\beta}[f(X(s),\alpha(s))]_{s,t}\nonumber
\\
&+\sum_{\beta\in\bar{\mathcal{B}}_1}\sum_{j\in\{N_1,\ldots,N_{\mu },\bar{N}_{\mu },0,1,\ldots,m\}} I_{(j)\star \beta}[f(X(\cdot),\alpha(\cdot))]_{s,t}\nonumber
\\
=&\sum_{\beta\in\bar{\mathcal{B}}_1}I_{\beta}[f(X(s),\alpha(s))]_{s,t}+\sum_{\beta\in\tilde{\bar{\mathcal{B}}}_1}I_{\beta}[f(X(\cdot),\alpha(\cdot))]_{s,t} \label{eq:beta1 in ito}
\end{align}
almost surely for all $s<t\in[0,T]$ where 
\begin{itemize}
\item[] $\tilde{\bar{\mathcal{B}}}_1:=\{(j)\star \beta:\beta\in\bar{\mathcal{B}}_1,j\in\{N_1,\ldots,N_{\mu},\bar{N}_{\mu },0,1,\ldots,m\}\}$
\end{itemize}
and $\eta(\beta)\geq\eta(\mathcal{A})+1$ for all $\beta\in\tilde{\bar{\mathcal{B}}}_1$.
Further, for $\beta\in\bar{\mathcal{B}}_2$, Lemma \ref{lem:Ito on beta}  yields
\begin{align}
\sum_{\beta\in\bar{\mathcal{B}}_2}I_{\beta}[f(X(\cdot),\alpha(\cdot))]_{s,t}=&\sum_{\beta\in\bar{\mathcal{B}}_2}I_{\beta}[f(X(s),\alpha(\cdot))]_{s,t}+\sum_{\beta\in\bar{\mathcal{B}}_2}\sum_{j=0}^m I_{(j)\star \beta}[f(X(\cdot),\alpha(\cdot))]_{s,t} \nonumber
\\
=&\sum_{\beta\in\bar{\mathcal{B}}_2}I_{\beta}[f(X(s),\alpha(\cdot))]_{s,t}+\sum_{\beta\in\tilde{\bar{\mathcal{B}}}_2}I_{\beta}[f(X(\cdot),\alpha(\cdot))]_{s,t}\label{eq:beta2 in ito}
\end{align}
almost surely for all $s<t\in[0,T]$ where 
\begin{itemize}
\item[] $\tilde{\bar{\mathcal{B}}}_2:=\{(j)\star \beta:\beta\in \bar{\mathcal{B}}_2,j\in\{0,1,2,\ldots,m\}\}$.
\end{itemize}
and $\eta(\beta)\geq\eta(\mathcal{A})+1$ for all $\beta\in\tilde{\bar{\mathcal{B}}}_2$.
Similarly, on using Lemma \ref{lem:Ito on beta} for $\beta\in\bar{\mathcal{B}}_3$, we obtain
\begin{align*}
\sum_{\beta\in\bar{\mathcal{B}_3}}I_{\beta}[f(X(\cdot),\alpha(\cdot))]_{s,t}=&\sum_{\beta\in\bar{\mathcal{B}_3}}I_{\beta}[f(X(s),\alpha(\cdot))]_{s,t}\nonumber
\\
&+\sum_{\beta\in\bar{\mathcal{B}_3}}\sum_{j\in\{N_1,\ldots,N_{\mu },\bar{N}_{\mu },0,1,\ldots,m\}} I_{(j)\star \beta}[f(X(\cdot),\alpha(\cdot))]_{s,t} 
\\
=&\sum_{\beta\in\bar{\mathcal{B}_3}}I_{\beta}[f(X(s),\alpha(\cdot))]_{s,t}+\sum_{\beta\in\tilde{\bar{\mathcal{B}}}_3}I_{\beta}[f(X(\cdot),\alpha(\cdot))]_{s,t}
\end{align*} 
almost surely for all $s<t\in[0,T]$ where
\begin{itemize}
\item[] $\tilde{\bar{\mathcal{B}}}_3:=\{(j)\star \beta:\beta\in\bar{\mathcal{B}_3},j\in\{N_1,\ldots,N_{\mu},\bar{N}_{\mu},0,1,\ldots,m\}\}$.
\end{itemize}
Now notice that there may be some $\beta\in\tilde{\bar{\mathcal{B}}}_3$ such that $\eta(\beta)=\eta(\mathcal{A})$ and $\beta\in\mathcal{A}$. Thus, we define
\begin{itemize}
\item[] $\bar{\mathcal{B}}_4:=\tilde{\bar{\mathcal{B}}}_3\cap \mathcal{A}$. 
\end{itemize}
Clearly, $\bar{\mathcal{B}}_4\subset\mathcal{M}_3$.
By the application of Lemma \ref{lem:Ito on beta} for $\beta\in \bar{\mathcal{B}}_4$, the above equation can be written as
\begin{align}
\sum_{\beta\in\bar{\mathcal{B}_3}}I_{\beta}[f(X(\cdot),\alpha(\cdot))]_{s,t}=&\sum_{\beta\in\bar{\mathcal{B}_3}}I_{\beta}[f(X(s),\alpha(\cdot))]_{s,t}+\sum_{\beta\in\bar{\mathcal{B}}_4}I_{\beta}[f(X(s),\alpha(\cdot))]_{s,t} \nonumber
\\
& +\sum_{\beta\in\bar{\mathcal{B}}_4}\sum_{j=0}^m I_{(j)\star \beta}[f(X(\cdot),\alpha(\cdot))]_{s,t}+\sum_{\beta\in\tilde{\bar{\mathcal{B}}}_3\setminus\bar{\mathcal{B}}_4}I_{\beta}[f(X(\cdot),\alpha(\cdot))]_{s,t}\nonumber
\\
=&\sum_{\beta\in\bar{\mathcal{B}_3}}I_{\beta}[f(X(s),\alpha(\cdot))]_{s,t}+\sum_{\beta\in\bar{\mathcal{B}}_4}I_{\beta}[f(X(s),\alpha(\cdot))]_{s,t}\nonumber
\\
&+\sum_{\beta\in\tilde{\bar{\mathcal{B}}}_4}I_{\beta}[f(X(\cdot),\alpha(\cdot))]_{s,t}+\sum_{\beta\in\tilde{\bar{\mathcal{B}}}_3\setminus\bar{\mathcal{B}}_4}I_{\beta}[f(X(\cdot),\alpha(\cdot))]_{s,t}\label{eq:beta3 in ito}
\end{align}
almost surely for all $s<t\in[0,T]$ where $\tilde{\bar{\mathcal{B}}}_4:=\big\{(j)\star \beta: \beta\in \bar{\mathcal{B}}_4,\:j\in\{0,1,2,\ldots,m\}\big\}$ with $\eta(\beta)\geq\eta(\mathcal{A})+1$.

Notice that, $\zeta\cup\bar{\mathcal{B}}_1\cup\bar{\mathcal{B}}_2\cup\bar{\mathcal{B}}_3\cup\bar{\mathcal{B}}_4\subset\mathcal{A}$. 
Clearly $\nu\in\zeta$.
Let us consider $\beta\in\mathcal{A}\setminus\{\nu\}$ to prove $\mathcal{A}\subset\zeta\cup\bar{\mathcal{B}}_1\cup\bar{\mathcal{B}}_2\cup\bar{\mathcal{B}}_3\cup\bar{\mathcal{B}}_4$.
If $\eta(\beta)\leq \eta(\mathcal{A})-1$, then $\beta\in\zeta$.
Now, let $\eta(\beta)=\eta(\mathcal{A})$ which implies either $\eta(-\beta)<\eta(\beta)$ or $\eta(-\beta)=\eta(\beta)$.
Let $\eta(-\beta)<\eta(\beta)$, then the definition of hierarchical and remainder sets from Subsection \ref{sub:hierarchical and reminder} yields $-\beta\in\zeta$ and $\beta\in\mathcal{B}(\zeta)$ which gives $\beta\in\bar{\mathcal{B}}_1\cup\bar{\mathcal{B}}_2\cup\bar{\mathcal{B}}_3$.
Further, if $\eta(-\beta)=\eta(\beta)$, then  $-\beta\in\bar{\mathcal{B}}_3$ and $\beta\in\bar{\mathcal{B}}_4$.
Hence 
\begin{itemize}
\item[] $\mathcal{A}=\zeta\cup\bar{\mathcal{B}}_1\cup\bar{\mathcal{B}}_2\cup\bar{\mathcal{B}}_3\cup\bar{\mathcal{B}}_4$ and $\tilde{\mathcal{A}}=\zeta\cup\bar{\mathcal{B}}_2\cup\bar{\mathcal{B}}_3\cup\bar{\mathcal{B}}_4$.
\end{itemize}
Moreover, 
\begin{itemize}
\item[] $(\mathcal{B}(\zeta)\setminus (\bar{\mathcal{B}}_1\cup\bar{\mathcal{B}}_2\cup\bar{\mathcal{B}}_3))\cup\tilde{\bar{\mathcal{B}}}_1\cup\tilde{\bar{\mathcal{B}}}_2\cup(\tilde{\bar{\mathcal{B}}}_3\setminus\bar{\mathcal{B}}_4)\cup\tilde{\bar{\mathcal{B}}}_4=\{\beta\in\mathcal{M}\setminus\mathcal{A} :-\beta\in\zeta\}\cup\{\beta\in\mathcal{M}\setminus\mathcal{A}:-\beta\in\bar{\mathcal{B}}_{1}\}\cup\{\beta\in\mathcal{M}\setminus\mathcal{A}:-\beta\in\bar{\mathcal{B}}_{2}\}\cup\{\beta\in\mathcal{M}\setminus\mathcal{A}:-\beta\in\bar{\mathcal{B}}_{3}\}\cup\{\beta\in\mathcal{M}\setminus\mathcal{A}:-\beta\in\bar{\mathcal{B}}_4\}=\mathcal{B}(\mathcal{A})$.
\end{itemize}

 Thus, by using \eqref{eq:beta1 in ito}, \eqref{eq:beta2 in ito} and \eqref{eq:beta3 in ito} in \eqref{eq:ind.Ito Taylor Proof}, we obtain 
 \begin{align*}
 f(X(t),\alpha(t))=&\sum_{\beta\in\mathcal{A}\setminus\tilde{\mathcal{A}}}I_{\beta}[f(X(s),\alpha(s))]_{s,t}+\sum_{\beta\in\tilde{\mathcal{A}}}I_{\beta}[f(X(s),\alpha(\cdot))]_{s,t}+\sum_{\beta\in\mathcal{B}(\mathcal{A})}I_{\beta}[f(X(\cdot),\alpha(\cdot))]_{s,t}
 \end{align*}
almost surely for all $s<t\in[0,T]$. This completes the proof.
\end{proof} 
\begin{remark}
If there is no jump then $\tilde{\zeta},\bar{\mathcal{B}}_2,\bar{\mathcal{B}}_3,\tilde{\bar{\mathcal{B}}}_2,\tilde{\bar{\mathcal{B}}}_3,\bar{\mathcal{B}}_4,\tilde{\bar{\mathcal{B}}}_4$ in preceding proof are empty sets and SDEwMS \eqref{eq:sdems} reduces to an  SDE.
As a result, by repeating the preceding steps, we can prove Theorem 5.5.1 of \cite{kloeden1992numerical}, \textit{i.e.}, It\^o-Taylor expansion for SDE.
\end{remark}
\section{Explicit $\gamma$-order  scheme for SDEwMS}
\label{sec:scheme der., moment, convergence}
This section is devoted to the derivation of the $\gamma\in \{n/2:n\in\mathbb{N}\}$-order numerical scheme for SDEwMS \eqref{eq:sdems} and  prove its   moment stability  and  strong rate of convergence.

\subsection{Derivation}
\label{sec:Derivation of scheme}
In this subsection, we present the derivation of the explicit $\gamma\in \{n/2:n\in\mathbb{N}\}$-order scheme for SDEwMS \eqref{eq:sdems}.
For this, we recall the definitions of  hierarchical sets $\mathcal{A}_{\gamma-0.5}^b$ and $\mathcal{A}_{\gamma-0.5}^\sigma$ from Subsection \ref{sub:scheme} and  $\mathcal{A}_{0}^b=\mathcal{A}_{0}^\sigma=\varphi$. 
Then, we use Theorem \ref{thm:Ito-Taylor} to obtain, 
\begin{align*}
b^k(X(s),\alpha(s))=&\sum_{\beta\in\mathcal{A}_{\gamma-0.5}^b\setminus\tilde{\mathcal{A}}_{\gamma-0.5}^b}I_{\beta}[b^k(X(t_n),\alpha(t_n))]_{t_n,s}
+\sum_{\beta\in\tilde{\mathcal{A}}_{\gamma-0.5}^b}I_{\beta}[b^k(X(t_n),\alpha(\cdot))]_{t_n,s}
\\
&+\sum_{\beta\in\mathcal{B}(\mathcal{A}_{\gamma-0.5}^b)}I_{\beta}[b^k(X(\cdot),\alpha(\cdot))]_{t_n,s}
\\
\sigma^{(k,j)}(X(s),\alpha(s))=&\sum_{\beta\in\mathcal{A}_{\gamma-0.5}^\sigma\setminus\tilde{\mathcal{A}}_{\gamma-0.5}^\sigma}I_{\beta}[\sigma^{(k,j)}(X(t_n),\alpha(t_n))]_{t_n,s}+\sum_{\beta\in\tilde{\mathcal{A}}_{\gamma-0.5}^\sigma}I_{\beta}[\sigma^{(k,j)}(X(t_n),\alpha(\cdot))]_{t_n,s}
\\
&+\sum_{\beta\in\mathcal{B}(\mathcal{A}_{\gamma-0.5}^\sigma)}I_{\beta}[\sigma^{(k,j)}(X(\cdot),\alpha(\cdot))]_{t_n,s}
\end{align*}
almost surely for all $n\in\{0,1,\ldots,n_T-1\}$, $j\in\{1,\ldots,m\}$, $k\in\{1,\ldots,d\}$ and $s\in [t_n, t_{n+1}]$.
Thus, from \eqref{eq:sdems}, we have, 
\begin{align*}
X^k(t_{n+1}&)=X^k(t_{n})+\int_{t_n}^{t_{n+1}}\Big(\sum_{\beta\in\mathcal{A}_{\gamma-0.5}^b\setminus\tilde{\mathcal{A}}_{\gamma-0.5}^b}I_{\beta}[b^k(X(t_n),\alpha(t_n))]_{t_n,s}
\\
&+\sum_{\beta\in\tilde{\mathcal{A}}_{\gamma-0.5}^b}I_{\beta}[b^k(X(t_n),\alpha(\cdot))]_{t_n,s}
+\sum_{\beta\in\mathcal{B}(\mathcal{A}_{\gamma-0.5}^b)}I_{\beta}[b^k(X(\cdot),\alpha(\cdot))]_{t_n,s}\Big)ds\notag
\\
&+\sum_{j=1}^m\int_{t_n}^{t_{n+1}}\Big(\sum_{\beta\in\mathcal{A}_{\gamma-0.5}^\sigma\setminus\tilde{\mathcal{A}}_{\gamma-0.5}^\sigma}I_{\beta}[\sigma^{(k,j)}(X(t_n),\alpha(t_n))]_{t_n,s}
\\
&+\sum_{\beta\in\tilde{\mathcal{A}}_{\gamma-0.5}^\sigma}I_{\beta}[\sigma^{(k,j)}(X(t_n),\alpha(\cdot))]_{t_n,s}
+\sum_{\beta\in\mathcal{B}(\mathcal{A}_{\gamma-0.5}^\sigma)}I_{\beta}[\sigma^{(k,j)}(X(\cdot),\alpha(\cdot))]_{t_n,s}\Big)dW^j(s)
\end{align*}
almost surely for any $k \in \{1,\ldots,d\}$ and $n \in \{0, \ldots, n_T-1\}$. 
Let $\mu=2\gamma$ be fixed and notice that  the remainder sets $\mathcal{B}(\mathcal{A}_{\gamma-0.5}^b)$ and $\mathcal{B}(\mathcal{A}_{\gamma-0.5}^\sigma)$ contain elements of $\mathcal{A}_\gamma^b$ and  $\mathcal{A}_\gamma^\sigma$ which are identified with the following sets, 
\begin{itemize}
\item[] $\mathcal{B}_1^b:=\{\nu\}\cup\{(j_1,\ldots,j_l)\in\mathcal{B}(\mathcal{A}_{\gamma-0.5}^b)\cap \mathcal{A}_{\gamma}^b:j_i\notin\{N_1,\ldots,N_{\mu}\}\forall i\in\{1,\ldots,l\}\},$
\item[] $\mathcal{B}_1^\sigma:=\{\nu\}\cup\{(j_1,\ldots,j_l)\in\mathcal{B}(\mathcal{A}_{\gamma-0.5}^\sigma)\cap\mathcal{A}_{\gamma}^\sigma:j_i\notin\{N_1,\ldots,N_{\mu}\}\forall i\in\{1,\ldots,l\}\},$
\item[] $\mathcal{B}_2^b:=\{(j_1,\ldots,j_l)\in\mathcal{B}(\mathcal{A}_{\gamma-0.5}^b)\cap \mathcal{A}_{\gamma}^b:j_1\in\{N_1,\ldots,N_{\mu}\}\},$
\item[] $\mathcal{B}_2^\sigma:=\{(j_1,\ldots,j_l)\in\mathcal{B}(\mathcal{A}_{\gamma-0.5}^\sigma) \cap \mathcal{A}_{\gamma}^\sigma: j_1\in\{N_1,\ldots,N_{\mu}\}\},$
\item[] $\mathcal{B}_3^b:=\{(j_1,\ldots,j_l)\in(\mathcal{B}(\mathcal{A}_{\gamma-0.5}^b)\cap \mathcal{A}_{\gamma}^b ) \setminus\mathcal{B}_1^b:j_1\in\{0,1,\ldots,m\}\},$
\item[] $\mathcal{B}_3^\sigma:=\{(j_1,\ldots,j_l)\in( \mathcal{B}(\mathcal{A}_{\gamma-0.5}^\sigma)\cap \mathcal{A}_{\gamma}^\sigma ) \setminus\mathcal{B}_1^\sigma:j_1\in\{0,1,\ldots,m\}\}$. 
\end{itemize}
Clearly,  $\mathcal{B}_2^b, \mathcal{B}_2^\sigma \subset \mathcal{M}_3$ and $\mathcal{B}_3^b, \mathcal{B}_3^\sigma \subset \mathcal{M}_2$. 
Similarly, some elements of $\mathcal{B}(\mathcal{A}_\gamma^b)$ and $\mathcal{B}(\mathcal{A}_\gamma^\sigma)$ are identified by the below mentioned sets, 
\begin{itemize}
\item[] $\mathcal{B}_4^b:= \mathcal{B}(\mathcal{A}_{\gamma-0.5}^b)\setminus(\mathcal{B}_1^b\cup\mathcal{B}_2^b\cup\mathcal{B}_3^b),$
\item[] $\mathcal{B}_4^\sigma:= \mathcal{B}(\mathcal{A}_{\gamma-0.5}^\sigma)\setminus(\mathcal{B}_1^\sigma\cup\mathcal{B}_2^\sigma\cup\mathcal{B}_3^\sigma)$. 
\end{itemize}
Thus,  by using the sets defined above, we write, 
\begin{align*}
X^k(t_{n+1})&=X^k(t_{n})+\int_{t_n}^{t_{n+1}}\Big(\sum_{\beta\in\mathcal{B}_1^b\cup(\mathcal{A}_{\gamma-0.5}^b\setminus\tilde{\mathcal{A}}_{\gamma-0.5}^b)}I_{\beta}[b^k(X(t_n),\alpha(t_n))]_{t_n,s}
\\
&+\sum_{\beta\in\tilde{\mathcal{A}}_{\gamma-0.5}^b}I_{\beta}[b^k(X(t_n),\alpha(\cdot))]_{t_n,s}
+\sum_{\beta\in\mathcal{B}_2^b\cup\mathcal{B}_3^b\cup\mathcal{B}_4^b}I_{\beta}[b^k(X(\cdot),\alpha(\cdot))]_{t_n,s}
\\
&+\sum_{\beta\in\mathcal{B}_1^b}I_{\beta}[b^k(X(\cdot),\alpha(\cdot))-b^k(X(t_n),\alpha(t_n))]_{t_n,s}\Big)ds\notag
\\
&+\sum_{j=1}^m\int_{t_n}^{t_{n+1}}\Big(\sum_{\beta\in\mathcal{B}_1^\sigma\cup(\mathcal{A}_{\gamma-0.5}^\sigma\setminus\tilde{\mathcal{A}}_{\gamma-0.5}^\sigma)}I_{\beta}[\sigma^{(k,j)}(X(t_n),\alpha(t_n))]_{t_n,s}
\\
&+\sum_{\beta\in\tilde{\mathcal{A}}_{\gamma-0.5}^\sigma}I_{\beta}[\sigma^{(k,j)}(X(t_n),\alpha(\cdot))]_{t_n,s}
+\sum_{\beta\in\mathcal{B}_2^\sigma\cup\mathcal{B}_3^\sigma\cup\mathcal{B}_4^\sigma}I_{\beta}[\sigma^{(k,j)}(X(\cdot),\alpha(\cdot))]_{t_n,s}
\\
&+\sum_{\beta\in\mathcal{B}_1^\sigma}I_{\beta}[\sigma^{(k,j)}(X(\cdot),\alpha(\cdot))-\sigma^{(k,j)}(X(t_n),\alpha(t_n))]_{t_n,s}\Big)dW^j(s)
\end{align*}
which on the application of  Lemma \ref{lem:Ito on beta} for $\beta\in\{\mathcal{B}_2^b,\mathcal{B}_3^b,\mathcal{B}_2^\sigma,\mathcal{B}_3^\sigma\}$ yields
\begin{align*}
X^k(t_{n+1})&=X^k(t_{n})+\int_{t_n}^{t_{n+1}}\Big(\sum_{\beta\in\mathcal{B}_1^b\cup(\mathcal{A}_{\gamma-0.5}^b\setminus\tilde{\mathcal{A}}_{\gamma-0.5}^b)}I_{\beta}[b^k(X(t_n),\alpha(t_n))]_{t_n,s}
\\
&+\sum_{\beta\in\tilde{\mathcal{A}}_{\gamma-0.5}^b\cup\mathcal{B}_2^b\cup\mathcal{B}_3^b}I_{\beta}[b^k(X(t_n),\alpha(\cdot))]_{t_n,s}+\sum_{\beta\in\mathcal{B}_2^b}\sum_{k_1=0}^mI_{(k_1)\star\beta}[b^k(X(\cdot),\alpha(\cdot))]_{t_n,s}
\\
&+\sum_{\beta\in\mathcal{B}_3^b}\sum_{k_1\in\{N_1,\ldots,N_{\mu},\bar{N}_{\mu},0,1,\ldots,m\}}I_{(k_1)\star\beta}[b^k(X(\cdot),\alpha(\cdot))]_{t_n,s}+\sum_{\beta\in\mathcal{B}_4^b}I_{\beta}[b^k(X(\cdot),\alpha(\cdot))]_{t_n,s}
\\
&+\sum_{\beta\in\mathcal{B}_1^b}I_{\beta}[b^k(X(\cdot),\alpha(\cdot))-b^k(X(t_n),\alpha(t_n))]_{t_n,s}\Big)ds\notag
\\
&+\sum_{j=1}^m\int_{t_n}^{t_{n+1}}\Big(\sum_{\beta\in\mathcal{B}_1^\sigma\cup(\mathcal{A}_{\gamma-0.5}^\sigma\setminus\tilde{\mathcal{A}}_{\gamma-0.5}^\sigma)}I_{\beta}[\sigma^{(k,j)}(X(t_n),\alpha(t_n))]_{t_n,s}
\\
&+\sum_{\beta\in\tilde{\mathcal{A}}_{\gamma-0.5}^\sigma\cup\mathcal{B}_2^\sigma\cup\mathcal{B}_3^\sigma}I_{\beta}[\sigma^{(k,j)}(X(t_n),\alpha(\cdot))]_{t_n,s}
+\sum_{\beta\in\mathcal{B}_2^\sigma}\sum_{k_1=0}^mI_{(k_1)\star\beta}[\sigma^{(k,j)}(X(\cdot),\alpha(\cdot))]_{t_n,s}
\\
&+\sum_{\beta\in\mathcal{B}_3^\sigma}\sum_{k_1\in\{N_1,\ldots,N_{\mu},\bar{N}_{\mu},0,1,\ldots,m\}}I_{(k_1)\star\beta}[\sigma^{(k,j)}(X(\cdot),\alpha(\cdot))]_{t_n,s}+\sum_{\beta\in\mathcal{B}_4^\sigma}I_{\beta}[\sigma^{(k,j)}(X(\cdot),\alpha(\cdot))]_{t_n,s}
\\
&+\sum_{\beta\in\mathcal{B}_1^\sigma}I_{\beta}[\sigma^{(k,j)}(X(\cdot),\alpha(\cdot))-\sigma^{(k,j)}(X(t_n),\alpha(t_n))]_{t_n,s}\Big)dW^j(s)
\end{align*}
almost surely for all $n\in\{0,1,\ldots,n_T-1\}$ and $k\in\{1,\ldots,d\}$.
Now, to summarize the notations of the fourth, fifth, tenth and eleventh terms on the right side of the above equation, we denote by
\begin{itemize}
\item[] $\tilde{\mathcal{B}}_2^b:=\{(k_1)\star \beta:\beta\in\mathcal{B}_2^b,k_1\in\{0,1,\ldots,m\}\},$
\item[] $\tilde{\mathcal{B}}_2^\sigma:=\{(k_1)\star \beta:\beta\in\mathcal{B}_2^\sigma,k_1\in\{0,1,\ldots,m\}\},$
\item[] $\tilde{\mathcal{B}}_3^b:=\{(k_1)\star \beta:\beta\in\mathcal{B}_3^b,k_1\in\{N_1,\ldots,N_{\mu},\bar{N}_{\mu},0,1,\ldots,m\}\},$
\item[] $\tilde{\mathcal{B}}_3^\sigma:=\{(k_1)\star \beta:\beta\in\mathcal{B}_3^\sigma,k_1\in\{N_1,\ldots,N_{\mu},\bar{N}_{\mu},0,1,\ldots,m\}\}$.
\end{itemize}
Also notice that there are some elements of $\mathcal{A}_{\gamma}^b$ and $\mathcal{A}_{\gamma}^\sigma$ which are present in $\tilde{\mathcal{B}}_3^b$ and $\tilde{\mathcal{B}}_3^\sigma$, respectively and we identify them with the help of following notations,
\begin{itemize}
\item[] $\mathcal{B}_5^b:=\tilde{\mathcal{B}}_3^b\cap \mathcal{A}_{\gamma}^b,$
\item[] $\mathcal{B}_5^\sigma:=\tilde{\mathcal{B}}_3^\sigma\cap \mathcal{A}_{\gamma}^\sigma$.
\end{itemize}
Clearly, $\mathcal{B}_5^b$, $\mathcal{B}_5^\sigma\subset\mathcal{M}_3$.
Thus, by using the above notations, we write, 
\begin{align*}
X^k(t_{n+1})&=X^k(t_{n})+\int_{t_n}^{t_{n+1}}\Big(\sum_{\beta\in\mathcal{B}_1^b\cup(\mathcal{A}_{\gamma-0.5}^b\setminus\tilde{\mathcal{A}}_{\gamma-0.5}^b)}I_{\beta}[b^k(X(t_n),\alpha(t_n))]_{t_n,s}
\\
&+\sum_{\beta\in\tilde{\mathcal{A}}_{\gamma-0.5}^b\cup\mathcal{B}_2^b\cup\mathcal{B}_3^b}I_{\beta}[b^k(X(t_n),\alpha(\cdot))]_{t_n,s}
+\sum_{\beta\in\mathcal{B}_5^b}I_{\beta}[b^k(X(\cdot),\alpha(\cdot))]_{t_n,s}
\\
&+\sum_{\beta\in\mathcal{B}_4^b\cup\tilde{\mathcal{B}}_2^b\cup(\tilde{\mathcal{B}}_3^b\setminus\mathcal{B}_5^b)}I_{\beta}[b^k(X(\cdot),\alpha(\cdot))]_{t_n,s}
\\
&+\sum_{\beta\in\mathcal{B}_1^b}I_{\beta}[b^k(X(\cdot),\alpha(\cdot))-b^k(X(t_n),\alpha(t_n))]_{t_n,s}\Big)ds\notag
\\
&+\sum_{j=1}^m\int_{t_n}^{t_{n+1}}\Big(\sum_{\beta\in\mathcal{B}_1^\sigma\cup(\mathcal{A}_{\gamma-0.5}^\sigma\setminus\tilde{\mathcal{A}}_{\gamma-0.5}^\sigma)}I_{\beta}[\sigma^{(k,j)}(X(t_n),\alpha(t_n))]_{t_n,s}
\\
&+\sum_{\beta\in\tilde{\mathcal{A}}_{\gamma-0.5}^\sigma\cup\mathcal{B}_2^\sigma\cup\mathcal{B}_3^\sigma}I_{\beta}[\sigma^{(k,j)}(X(t_n),\alpha(\cdot))]_{t_n,s}
+\sum_{\beta\in\mathcal{B}_5^\sigma}I_{\beta}[\sigma^{(k,j)}(X(\cdot),\alpha(\cdot))]_{t_n,s}
\\
&+\sum_{\beta\in\mathcal{B}_4^\sigma\cup\tilde{\mathcal{B}}_2^\sigma\cup(\tilde{\mathcal{B}}_3^\sigma\setminus\mathcal{B}_5^\sigma)}I_{\beta}[\sigma^{(k,j)}(X(\cdot),\alpha(\cdot))]_{t_n,s}
\\
&+\sum_{\beta\in\mathcal{B}_1^\sigma}I_{\beta}[\sigma^{(k,j)}(X(\cdot),\alpha(\cdot))-\sigma^{(k,j)}(X(t_n),\alpha(t_n))]_{t_n,s}\Big)dW^j(s)
\end{align*}
almost surely for all $n\in\{0,1,\ldots,n_T-1\}$ and $k\in\{1,\ldots,d\}$.
Further, on using Lemma \ref{lem:Ito on beta} for $\beta\in\{\mathcal{B}_5^b,\mathcal{B}_5^\sigma\}$, we have
\begin{align*}
X^k(t_{n+1})&=X^k(t_{n})+\int_{t_n}^{t_{n+1}}\Big(\sum_{\beta\in(\mathcal{B}_1^b\cup(\mathcal{A}_{\gamma-0.5}^b\setminus\tilde{\mathcal{A}}_{\gamma-0.5}^b))}I_{\beta}[b^k(X(t_n),\alpha(t_n))]_{t_n,s}\notag
\\
&+\sum_{\beta\in\tilde{\mathcal{A}}_{\gamma-0.5}^b\cup\mathcal{B}_2^b\cup\mathcal{B}_3^b\cup\mathcal{B}_5^b}I_{\beta}[b^k(X(t_n),\alpha(\cdot))]_{t_n,s}
+\sum_{\beta\in\mathcal{B}_4^b\cup\tilde{\mathcal{B}}_2^b\cup(\tilde{\mathcal{B}}_3^b\setminus\mathcal{B}_5^b)\cup\tilde{\mathcal{B}}_5^b}I_{\beta}[b^k(X(\cdot),\alpha(\cdot))]_{t_n,s}\notag
\\
&+\sum_{\beta\in\mathcal{B}_1^b}I_{\beta}[b^k(X(\cdot),\alpha(\cdot))-b^k(X(t_n),\alpha(t_n))]_{t_n,s}\Big)ds\notag
\\
&+\sum_{j=1}^m\int_{t_n}^{t_{n+1}}\Big(\sum_{\beta\in(\mathcal{B}_1^\sigma\cup(\mathcal{A}_{\gamma-0.5}^\sigma\setminus\tilde{\mathcal{A}}_{\gamma-0.5}^\sigma))}I_{\beta}[\sigma^{(k,j)}(X(t_n),\alpha(t_n))]_{t_n,s}\notag
\\
&+\sum_{\beta\in\tilde{\mathcal{A}}_{\gamma-0.5}^\sigma\cup\mathcal{B}_2^\sigma\cup\mathcal{B}_3^\sigma\cup\mathcal{B}_5^\sigma}I_{\beta}[\sigma^{(k,j)}(X(t_n),\alpha(\cdot))]_{t_n,s}\notag
\\
&+\sum_{\beta\in\mathcal{B}_4^\sigma\cup\tilde{\mathcal{B}}_2^\sigma\cup(\tilde{\mathcal{B}}_3^\sigma\setminus\mathcal{B}_5^\sigma)\cup\tilde{\mathcal{B}}_5^\sigma}I_{\beta}[\sigma^{(k,j)}(X(\cdot),\alpha(\cdot))]_{t_n,s}\notag
\\
&+\sum_{\beta\in\mathcal{B}_1^\sigma}I_{\beta}[\sigma^{(k,j)}(X(\cdot),\alpha(\cdot))-\sigma^{(k,j)}(X(t_n),\alpha(t_n))]_{t_n,s}\Big)dW^j(s) 
\end{align*}
almost surely for all $n\in\{0,1,\ldots,n_T-1\}$ and $k\in\{1,\ldots,d\}$ where 
\begin{itemize}
\item[] $\tilde{\mathcal{B}}_5^b:=\{(k_1)\star \beta:\beta\in\mathcal{B}_5^b,k_1\in\{0,1,\ldots,m\}\},$
\item[] $\tilde{\mathcal{B}}_5^\sigma:=\{(k_1)\star \beta:\beta\in\mathcal{B}_5^\sigma,k_1\in\{0,1,\ldots,m\}\}$.
\end{itemize}

Clearly, $\mathcal{A}_{\gamma-0.5}^\sigma\cup\mathcal{B}_1^\sigma\cup\mathcal{B}_2^\sigma\cup\mathcal{B}_3^\sigma\cup\mathcal{B}_5^\sigma\subset\mathcal{A}_\gamma^\sigma$. 
Notice that $\nu\in\mathcal{A}_{\gamma-0.5}^\sigma\cup\mathcal{B}_1^\sigma\cup\mathcal{B}_2^\sigma\cup\mathcal{B}_3^\sigma\cup\mathcal{B}_5^\sigma$.
Now, we prove $\mathcal{A}_\gamma^\sigma\subset\mathcal{A}_{\gamma-0.5}^\sigma\cup\mathcal{B}_1^\sigma\cup\mathcal{B}_2^\sigma\cup\mathcal{B}_3^\sigma\cup\mathcal{B}_5^\sigma$. 
For this, let $\beta\in\mathcal{A}_\gamma^\sigma\setminus\{\nu\}$ which implies $\eta(\beta)\leq 2\gamma-1$.
If $\eta(\beta)\leq 2\gamma-2$, then $\beta\in\mathcal{A}_{\gamma-0.5}^\sigma$.
Further, consider $\eta(\beta)=2\gamma-1$ which gives rise to two cases, either $\eta(-\beta)<\eta(\beta)$ or $\eta(-\beta)=\eta(\beta)$. 
If $\eta(-\beta)<\eta(\beta)$,  then by the definition of hierarchical and remainder sets from Subsection \ref{sub:hierarchical and reminder}, we have  $-\beta\in\mathcal{A}_{\gamma-0.5}^\sigma$ and $\beta\in\mathcal{B}(\mathcal{A}_{\gamma-0.5}^\sigma)$ which gives $\beta\in\mathcal{B}_1^\sigma\cup\mathcal{B}_2^\sigma\cup\mathcal{B}_3^\sigma$.
Moreover, if $\eta(-\beta)=\eta(\beta)$, then  $-\beta\in\mathcal{B}_3^\sigma$ and $\beta\in\mathcal{B}_5^\sigma$.
Hence 
\begin{itemize}
\item[] $\mathcal{A}_\gamma^\sigma=\mathcal{A}_{\gamma-0.5}^\sigma\cup\mathcal{B}_1^\sigma\cup\mathcal{B}_2^\sigma\cup\mathcal{B}_3^\sigma\cup\mathcal{B}_5^\sigma$ and 
\item[] $\tilde{\mathcal{A}}_{\gamma}^\sigma=\tilde{\mathcal{A}}_{\gamma-0.5}^\sigma\cup\mathcal{B}_2^\sigma\cup\mathcal{B}_3^\sigma\cup\mathcal{B}_5^\sigma$.
\end{itemize}
Moreover, 
\begin{itemize}
\item[] $\mathcal{B}_4^\sigma\cup\tilde{\mathcal{B}}_1^\sigma\cup\tilde{\mathcal{B}}_2^\sigma\cup(\tilde{\mathcal{B}}_3^\sigma\setminus\mathcal{B}_5^\sigma)\cup\tilde{\mathcal{B}}_5^\sigma=\{\beta\in\mathcal{M}\setminus\mathcal{A}_{\gamma}^\sigma :-\beta\in\mathcal{A}_{\gamma-0.5}^\sigma\}\cup\{\beta\in\mathcal{M}\setminus\mathcal{A}_{\gamma}^\sigma:-\beta\in\mathcal{B}_{1}^\sigma\}\cup\{\beta\in\mathcal{M}\setminus\mathcal{A}_{\gamma}^\sigma:-\beta\in\mathcal{B}_{2}^\sigma\}\cup\{\beta\in\mathcal{M}\setminus\mathcal{A}_{\gamma}^\sigma:-\beta\in\mathcal{B}_{3}^\sigma\}\cup\{\beta\in\mathcal{M}\setminus\mathcal{A}_{\gamma}^\sigma:-\beta\in\mathcal{B}_5^\sigma\}=\mathcal{B}(\mathcal{A}_{\gamma}^\sigma)$.
\end{itemize} 
Similarly, we obtain
\begin{itemize}
\item[] $\mathcal{A}_\gamma^b=\mathcal{A}_{\gamma-0.5}^b\cup\mathcal{B}_1^b\cup\mathcal{B}_2^b\cup\mathcal{B}_3^b\cup\mathcal{B}_5^b$, 
\item[] $\tilde{\mathcal{A}}_{\gamma}^b=\tilde{\mathcal{A}}_{\gamma-0.5}^b\cup\mathcal{B}_2^b\cup\mathcal{B}_3^b\cup\mathcal{B}_5^b$,
\item[] $\mathcal{B}(\mathcal{A}_\gamma^b)\setminus \tilde{\mathcal{B}}_1^b=\mathcal{B}_4^b\cup\tilde{\mathcal{B}}_2^b\cup(\tilde{\mathcal{B}}_3^b\setminus\mathcal{B}_5^b)\cup\tilde{\mathcal{B}}_5^b$.
\end{itemize} 
Thus, preceding equation can be written as
\begin{align}\label{eq:gen. scheme derivation}
&X^k(t_{n+1})=X^k(t_{n})+\sum_{\beta\in\mathcal{A}_{\gamma}^b\setminus\tilde{\mathcal{A}}_{\gamma}^b}\int_{t_n}^{t_{n+1}}I_{\beta}[b^k(X(t_n),\alpha(t_n))]_{t_n,s}ds\notag
\\
&+\sum_{\beta\in\tilde{\mathcal{A}}_{\gamma}^b}\int_{t_n}^{t_{n+1}}I_{\beta}[b^k(X(t_n),\alpha(\cdot))]_{t_n,s}ds\notag+\sum_{\beta\in\mathcal{B}(\mathcal{A}_\gamma^b)\setminus \tilde{\mathcal{B}}_1^b}\int_{t_n}^{t_{n+1}}I_{\beta}[b^k(X(\cdot),\alpha(\cdot))]_{t_n,s}ds\notag
\\
&+\sum_{\beta\in\mathcal{B}_1^b}\int_{t_n}^{t_{n+1}}I_{\beta}[b^k(X(\cdot),\alpha(\cdot))-b^k(X(t_n),\alpha(t_n))]_{t_n,s}ds\notag
\\
&+\sum_{\beta\in\mathcal{A}_{\gamma}^\sigma\setminus\tilde{\mathcal{A}}_{\gamma}^\sigma}\sum_{j=1}^m\int_{t_n}^{t_{n+1}}I_{\beta}[\sigma^{(k,j)}(X(t_n),\alpha(t_n))]_{t_n,s}dW^j(s)\notag
\\
&+\sum_{\beta\in\tilde{\mathcal{A}}_{\gamma}^\sigma}\sum_{j=1}^m\int_{t_n}^{t_{n+1}}I_{\beta}[\sigma^{(k,j)}(X(t_n),\alpha(\cdot))]_{t_n,s}dW^j(s)\notag
\\
&+\sum_{\beta\in\mathcal{B}(\mathcal{A}_\gamma^\sigma)\setminus \tilde{\mathcal{B}}_1^\sigma}\sum_{j=1}^m\int_{t_n}^{t_{n+1}}I_{\beta}[\sigma^{(k,j)}(X(\cdot),\alpha(\cdot))]_{t_n,s}dW^j(s)\notag
\\
&+\sum_{\beta\in\mathcal{B}_1^\sigma}\sum_{j=1}^m\int_{t_n}^{t_{n+1}}I_{\beta}[\sigma^{(k,j)}(X(\cdot),\alpha(\cdot))-\sigma^{(k,j)}(X(t_n),\alpha(t_n))]_{t_n,s}dW^j(s) 
\end{align}
almost surely for all $n\in\{0,1,\ldots,n_T-1\}$ and $k\in\{1,\ldots,d\}$.
Hence, the explicit numerical scheme \eqref{eq:gen.scheme} of $\gamma\in \{n/2:n\in\mathbb{N}\}$-order  for SDEwMS \eqref{eq:sdems} is obtained by ignoring the fourth, fifth, eighth and ninth terms on the right side of \eqref{eq:gen. scheme derivation}.
\subsection{Moment bound}
In this subsection, we shall show the moment stability of the numerical scheme \eqref{eq:gen.scheme} of $\gamma\in \{n/2:n\in\mathbb{N}\}$-order for SDEwMS \eqref{eq:sdems}. 
The result on the moment bound of the scheme \eqref{eq:gen.scheme} is stated in Lemma \ref{lem:schemeMoment}. 
We first establish some  preliminary lemmas which are required for proving Lemma~\ref{lem:schemeMoment}.

For the following lemmas, let us recall the definitions of $\eta(\beta)$ and $l(\beta)$ from Subsection \ref{sub:Multi-indices} and $J^{\beta}_{i_0}$ from Subsection \ref{sub:Operators}. 
It is assumed that all integrals appearing in Lemmas \ref{lem:multiple estimate 0,1} and \ref{lem:multiple estimate N_mu} are well-defined.  Also, one requires the standard stopping time arguments in the proof which we avoid to  write for the sake of notational simplicity. 
\begin{lemma}
\label{lem:multiple estimate 0,1}
Let $f:\mathbb{R}^d\times\mathcal{S}\mapsto\mathbb{R}$ and $\beta\in\{(j_1,\ldots,j_l)\in\mathcal{M}:j_i\in\{0,1,\ldots,m\}\forall i\in\{1,\ldots,l\}\}$.
Then,
\begin{align}
E\Big(&\sup_{u\in[s,t]}| I_{\beta}[f(X(\cdot),\alpha(\cdot))]_{s,u}|^2\Big|\mathcal{F}_T^{\alpha} \Big) \notag
\\
& \leq C (t-s)^{\eta(\beta)-l(\beta)} \int_{s}^{t}\int_{s}^{t_{1}}\cdots\int_{s}^{t_{l(\beta)-1}}E\Big(|J^{\beta}_{\alpha(t_{l(\beta)})} f(X(t_{l(\beta)}),\alpha(t_{l(\beta)}))|^2\Big|\mathcal{F}_T^{\alpha} \Big)  dt_{l(\beta)}\cdots dt_2dt_1\notag
\end{align}
almost surely for all $s<t\in[0,T]$.

In addition, if  $|J^{\beta}_{i_0}f(x,i_0)|\leq L (1+|x|)$ for all $x\in\mathbb{R}^d$ and $i_0\in\mathcal{S}$ where $L>0$ is a constant, then
\begin{align*}
E\Big(\sup_{u\in[s,t]}|I_{\beta}[f(X(\cdot),\alpha(\cdot))]_{s,u}|^2&\Big|\mathcal{F}_T^{\alpha} \Big)\leq C (t-s)^{\eta(\beta)} E\big(\sup_{u\in[s,t]}(1+|X(u)|)^2\big|\mathcal{F}_T^{\alpha} \big).
\end{align*}

Moreover, for $\bar{\beta}\in\mathcal{M}$, 
\begin{align*}
E\Big(& \sup_{u\in[s,t]}|I_{\bar{\beta}*\beta}[f(X(\cdot),\alpha(\cdot))]_{s,u}|^2\Big|\mathcal{F}_T^{\alpha} \Big)  \notag
\\
&\leq C (t-s)^{\eta(\beta)-l(\beta)} \int_{s}^{t}\int_{s}^{t_{1}}\cdots\int_{s}^{t_{l(\beta)-1}} E\Big( |I_{\bar{\beta}}[J^\beta_{\alpha(\cdot)} f(X(\cdot),\alpha(\cdot))]_{s,t_{l(\beta
)}}|^2\Big|\mathcal{F}_T^{\alpha} \Big)  dt_{l(\beta)}\cdots dt_2dt_1\notag
\end{align*}
almost surely for all $s<t\in[0,T]$. 
\end{lemma}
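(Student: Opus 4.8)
The plan is to run everything conditionally on $\mathcal{F}_T^{\alpha}$, exploiting the independence of $W$ and $\alpha$. Setting $\mathcal{G}_u:=\mathcal{F}_u^W\vee\mathcal{F}_T^{\alpha}$, the process $W$ is still a Brownian motion with respect to $\{\mathcal{G}_u:u\in[0,T]\}$, because it has independent increments and is independent of $\mathcal{F}_T^{\alpha}$; the residual dependence on the chain is ``frozen'' and sits only inside the operators $L^j_{\alpha(\cdot)}$. Hence, for any $\mathcal{G}$-adapted, square-integrable $g$ (square-integrability being the standing ``all integrals well-defined'' hypothesis, after the usual stopping-time localisation), Doob's $\mathcal{L}^2$-maximal inequality and the It\^o isometry applied on the $\mathcal{G}$-filtration, followed by $E(\cdot|\mathcal{F}_T^{\alpha})$ and the tower property (note $\mathcal{F}_T^{\alpha}\subseteq\mathcal{G}_s$), give the conditional estimate $E(\sup_{u\in[s,t]}|\int_s^u g\,dW^j|^2\,|\,\mathcal{F}_T^{\alpha})\le 4\int_s^t E(|g(s_1)|^2\,|\,\mathcal{F}_T^{\alpha})\,ds_1$ for $j\in\{1,\dots,m\}$; for $j=0$ the Cauchy--Schwarz inequality gives the analogous bound with the extra factor $(t-s)$ in front of the integral.

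For the first estimate I would induct on $l(\beta)$. In the base case $l(\beta)=1$, $\beta=(j_1)$, one has $I_{\beta-}[L^{j_1}_{\alpha(\cdot)}f]_{s,s_1}=J^{\beta}_{\alpha(s_1)}f(X(s_1),\alpha(s_1))$, and the two conditional estimates above (with $g(s_1)=J^\beta_{\alpha(s_1)}f(X(s_1),\alpha(s_1))$) give precisely the claim, since $\eta(\beta)-l(\beta)=1$ when $j_1=0$ and $\eta(\beta)-l(\beta)=0$ when $j_1\in\{1,\dots,m\}$. For the inductive step, $\beta=(j_1,\dots,j_l)$ with $l\ge2$, write $I_{\beta}[f(X(\cdot),\alpha(\cdot))]_{s,u}=\int_s^u I_{\beta-}[L^{j_l}_{\alpha(\cdot)}f(X(\cdot),\alpha(\cdot))]_{s,s_1}\,dW^{j_l}(s_1)$, apply the appropriate conditional estimate (Cauchy--Schwarz for $j_l=0$, Doob$+$isometry for $j_l\in\{1,\dots,m\}$) to reduce $E(\sup_{u\in[s,t]}|I_{\beta}[f]_{s,u}|^2|\mathcal{F}_T^{\alpha})$ to an integral of $E(\sup_{u\in[s,s_1]}|I_{\beta-}[L^{j_l}_{\alpha(\cdot)}f]_{s,u}|^2|\mathcal{F}_T^{\alpha})$, and then invoke the induction hypothesis for $\beta-$ with the function $L^{j_l}_{\alpha(\cdot)}f$. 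Using $J^{\beta-}_{i_0}L^{j_l}_{i_0}=J^{\beta}_{i_0}$, the relations $l(\beta)=l(\beta-)+1$ and $\eta(\beta)=\eta(\beta-)+1+\mathbbm{1}\{j_l=0\}$, and the inequality $(s_1-s)^{\eta(\beta-)-l(\beta-)}\le(t-s)^{\eta(\beta-)-l(\beta-)}$ — legitimate because $\eta(\gamma)-l(\gamma)=\bar{n}(\gamma)\ge0$ for every multi-index $\gamma$ with components in $\{0,\dots,m\}$ — one pulls the power of $(t-s)$ outside and recovers exactly the $l(\beta)$-fold iterated integral claimed for $\beta$.

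The second estimate is then a corollary: under $|J^\beta_{i_0}f(x,i_0)|\le L(1+|x|)$ the innermost conditional expectation in the first estimate is bounded, uniformly in the integration variables, by $L^2 E(\sup_{u\in[s,t]}(1+|X(u)|)^2|\mathcal{F}_T^{\alpha})$; integrating this constant over the simplex $\{s\le t_{l(\beta)}\le\cdots\le t_1\le t\}$ of volume $(t-s)^{l(\beta)}/l(\beta)!$ and multiplying by the prefactor $(t-s)^{\eta(\beta)-l(\beta)}$ produces $C(t-s)^{\eta(\beta)}E(\sup_{u\in[s,t]}(1+|X(u)|)^2|\mathcal{F}_T^{\alpha})$. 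For the third estimate I would first unwind the recursive definition, peeling the $l=l(\beta)$ components of $\beta$ (all in $\{0,\dots,m\}$) off $\bar\beta\star\beta$, to obtain the identity
$$I_{\bar\beta\star\beta}[f(X(\cdot),\alpha(\cdot))]_{s,t}=\int_s^t\int_s^{s_1}\cdots\int_s^{s_{l-1}} I_{\bar\beta}\big[J^{\beta}_{\alpha(\cdot)}f(X(\cdot),\alpha(\cdot))\big]_{s,s_l}\,dW^{j_1}(s_l)\cdots dW^{j_l}(s_1).$$
This has exactly the structure treated in the first estimate, with the adapted process $u\mapsto I_{\bar\beta}[J^{\beta}_{\alpha(\cdot)}f(X(\cdot),\alpha(\cdot))]_{s,u}$ in the innermost slot; re-running the same induction (Cauchy--Schwarz on the $j_i=0$ layers, conditional Doob$+$isometry on the $j_i\in\{1,\dots,m\}$ layers, and $(s_i-s)^{\eta-l}\le(t-s)^{\eta-l}$ to move powers to the front) yields the asserted bound.

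I expect the only genuinely delicate point to be the conditional martingale set-up: making precise that, after conditioning on $\mathcal{F}_T^{\alpha}$, the iterated Wiener integrals remain honest $\mathcal{L}^2$-martingales so that Doob's inequality and the It\^o isometry apply conditionally, together with the measurability and integrability checks hidden behind the ``all integrals well-defined'' clause and the stopping-time localisation. Everything else is the careful but routine bookkeeping of $\eta(\beta)$, $l(\beta)$ and the nested simplex integrals; in the third part one should resist quoting the first estimate as a black box, since the inner integrand $I_{\bar\beta}[J^{\beta}_{\alpha(\cdot)}f]_{s,\cdot}$ depends on the base point $s$, and instead repeat the induction with it in the innermost position.
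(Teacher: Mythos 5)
Your proposal is correct and follows essentially the same route as the paper's proof: induction on $l(\beta)$, peeling the outermost integral via the Cauchy--Schwarz/H\"older bound when $j_l=0$ and an $\mathcal{L}^2$-maximal martingale estimate when $j_l\in\{1,\dots,m\}$, all conditionally on $\mathcal{F}_T^{\alpha}$, with the second and third inequalities obtained exactly as you describe. The only cosmetic difference is that the paper invokes the Burkholder--Davis--Gundy inequality where you use Doob plus the It\^o isometry (equivalent at $p=2$), and you are more explicit than the paper about why the conditional martingale machinery applies after enlarging the filtration by $\mathcal{F}_T^{\alpha}$.
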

\begin{proof}
The first inequality of the lemma is proved by using induction on $l(\beta)$.
Let $l(\beta)=1$ and $\beta=(0)$ which gives $\eta(\beta)=2$. 
On the application of H\"older's inequality, we have
\begin{align*}
E\Big(\sup_{u\in[s,t]}|I_{\beta}[f(X(\cdot),\alpha(\cdot))]_{s,u}|^2&\Big|\mathcal{F}_T^{\alpha} \Big)=E\Big(\sup_{u\in[s,t]}\Big|\int^u_{s}L^0_{\alpha(t_1)}f(X(t_1),\alpha(t_1))dt_1 \Big|^2\Big|\mathcal{F}_T^{\alpha} \Big)
\\
\leq & C (t-s)\int_{s}^{t}E\Big(|L^0_{\alpha(t_1)}f(X(t_1),\alpha(t_1))|^2\Big|\mathcal{F}_T^{\alpha} \Big)dt_1
\\
=&C (t-s)^{\eta(\beta)-l(\beta)}\int_{s}^{t}E\Big(|J^{\beta}_{\alpha(t_1)}f(X(t_1),\alpha(t_1))|^2\Big|\mathcal{F}_T^{\alpha} \Big)dt_1
\end{align*}
almost surely for all $s<t\in[0,T]$ where $J^{\beta}_{i_0}=L_{i_0}^0$, $i_0\in \mathcal{S}$.
Further, if $l(\beta)=1$ and $\beta=(j)$ for all $j\in\{1,2,\ldots,m\}$ which yields $\eta(\beta)=1$. 
Then,  Burkholder-Davis-Gundy inequality gives,
\begin{align*}
E\Big(\sup_{u\in[s,t]}|I_{\beta}[f(X(\cdot),\alpha(\cdot))]_{s,u}|^2& \Big|\mathcal{F}_T^{\alpha} \Big)=E\Big(\sup_{u\in[s,t]}\Big|\int^u_{s}L^j_{\alpha(t_1)}f(X(t_1),\alpha(t_1))dW^j(t_1) \Big|^2\Big|\mathcal{F}_T^{\alpha} \Big)
\\
\leq & C \int_{s}^{t}E\Big(|L^j_{\alpha(t_1)}f(X(t_1),\alpha(t_1))|^2\Big|\mathcal{F}_T^{\alpha} \Big)dt_1
\\
=&C (t-s)^{\eta(\beta)-l(\beta)}\int_{s}^{t}E\Big(|J^{\beta}_{\alpha(t_1)}f(X(t_1),\alpha(t_1))|^2\Big|\mathcal{F}_T^{\alpha} \Big)dt_1
\end{align*}
almost surely for all $s<t\in[0,T]$ where $J^{\beta}_{i_0}=L_{i_0}^j$, $i_0\in \mathcal{S}$ and $j\in \{1,\ldots,m\}$.
Thus, the first inequality of the lemma holds for $l(\beta)=1$. 
For inductive arguments, we assume that it holds for $l(\beta)=k$ for a fixed $k\in \mathbb{N}$. 
Now, consider $\beta=(j_1,\ldots,j_{k+1})$ and  $j_{k+1}=0$. 
On using H\"older's inequality and the inductive hypothesis, we have, 
\begin{align*}
E\Big(&\sup_{u\in[s,t]}|I_{\beta}[f(X(\cdot),\alpha(\cdot))]_{s,u}|^2\Big|\mathcal{F}_T^{\alpha} \Big)=E\Big(\sup_{u\in[s,t]}|\int^{u}_{s}I_{\beta-}[L^{j_{k+1}}_{\alpha(\cdot)}f(X(\cdot),\alpha(\cdot))]_{s,t_1}dt_1|^2\Big|\mathcal{F}_T^{\alpha} \Big)
\\
\leq & C(t-s) \int^{t}_{s}E\Big(\sup_{u\in[s,t_1]}|I_{\beta-}[L^{j_{k+1}}_{\alpha(\cdot)}f(X(\cdot),\alpha(\cdot))]_{s,u}|^2\Big|\mathcal{F}_T^{\alpha} \Big)dt_1
\\
\leq& C (t-s)^{\eta(\beta)-l(\beta)} \int_{s}^{t}\int_{s}^{t_{1}}\cdots\int_{s}^{t_{l(\beta)-1}} E\Big(|J^{\beta}_{\alpha(t_{l(\beta)})} f(X(t_{l(\beta)}),\alpha(t_{l(\beta)}))|^2\Big|\mathcal{F}_T^{\alpha} \Big)  dt_{l(\beta)}\cdots dt_2dt_1
\end{align*}
almost surely for all $s<t\in[0,T]$.
Moreover, if $j_{k+1}\in\{1,2,\ldots,m\}$, then by using Burkholder-Davis-Gundy inequality and inductive hypothesis, we obtain
\begin{align*}
E\Big(&\sup_{u\in[s,t]}|I_{\beta}[f(X(\cdot),\alpha(\cdot))]_{s,u}|^2\Big|\mathcal{F}_T^{\alpha} \Big)=E\Big(\sup_{u\in[s,t]}|\int^{u}_{s}I_{\beta-}[L^{j_{k+1}}_{\alpha(\cdot)}f(X(\cdot),\alpha(\cdot))]_{s,t_1}dW^j(t_1)|^2\Big|\mathcal{F}_T^{\alpha} \Big)
\\
\leq & C \int^{t}_{s}E\Big(\sup_{u\in[s,t_1]}|I_{\beta-}[L^{j_{k+1}}_{\alpha(\cdot)}f(X(\cdot),\alpha(\cdot))]_{s,u}|^2\Big|\mathcal{F}_T^{\alpha} \Big)dt_1
\\
\leq& C (t-s)^{\eta(\beta)-l(\beta)} \int_{s}^{t}\int_{s}^{t_{1}}\cdots\int_{s}^{t_{l(\beta)-1}} E\Big(|J^{\beta}_{\alpha(t_{l(\beta)})} f(X(t_{l(\beta)}),\alpha(t_{l(\beta)}))|^2\Big|\mathcal{F}_T^{\alpha} \Big)  dt_{l(\beta)}\cdots dt_2dt_1
\end{align*} 
almost surely for all $s<t\in[0,T]$.
This completes the proof of first inequality of the lemma.
The second inequality of the lemma follows immediately from the first inequality on using  $|J^{\beta}_{i_0}f(x,i_0)|\leq L (1+|x|)$ for all $x\in\mathbb{R}^d$ and $i_0\in\mathcal{S}$. 
By adapting the similar arguments, the third inequality of the lemma also follows. 
\end{proof}
For following lemma,  let us define $l_{k}:=\displaystyle\sum_{i=1}^{k}l(\beta_i)$ for $\beta_i\in\mathcal{M}$,  $i\in\{1,\ldots,k\}$ and $k\in\mathbb{N}$ and  recall the definitions of operators  $J^{\beta}_{i_0}$, $J^{\beta}_{i_0k_0}$ from Subsection \ref{sub:Operators}. 

\begin{lemma}
\label{lem:multiple estimate N_mu}
Let $f:\mathbb{R}^d\times\mathcal{S}\mapsto\mathbb{R}$. Then,  for all $ \beta\in \mathcal{M}$ such that $\beta=\beta_{r+1}\star(N_{\mu_{r}})\star \beta_{r}\star \cdots\star(N_{\mu_2})\star \beta_2\star(N_{\mu_1})\star \beta_1$ where $ \mu_1, \ldots,\mu_r\in\{1,\ldots,\mu\}$, $\beta_1,\ldots,\beta_{r+1} \in \mathcal{M}_1$ and   $r=[n](\beta)\in \mathbb{N}$,
\begin{align*}
 E\Big(|I_{\beta}[f(X(\cdot),&\alpha(\cdot))]_{s,t_0}|^2\Big|\mathcal{F}_T^{\alpha} \Big)\leq C\prod_{i=1}^r\mu_i (t_0-s)^{\eta(\beta_1)+\cdots +\eta(\beta_{r+1})-l_{r+1}}
\\
&\int_s^{t_0}\int_{s}^{t_{1}}\cdots\int_{s}^{t_{l_1-1}}\sum_{i_1\neq k_1}\int_{s}^{t_{l_1}}\mathbbm{1}\{N^{(s,t_{l_1}]}=\mu_1\}
\\
&\int_s^{t_{l_1+1}}\int_{s}^{t_{l_1+2}}\cdots\int_{s}^{t_{l_2}}\sum_{i_2\neq k_2}\int_{s}^{t_{l_2+1}}\mathbbm{1}\{N^{(s,t_{l_2+1}]}=\mu_2\} \cdots
\\
&\int_s^{t_{l_{r-1}+r-1}}\int_{s}^{t_{l_{r-1}+r}}\cdots\int_{s}^{t_{l_r+r-2}}\sum_{i_r\neq k_r}\int_{s}^{t_{l_r+r-1}}\mathbbm{1}\{N^{(s,t_{l_r+r-1}]}=\mu_r\}
\\
&\int_s^{t_{l_{r}+r}}\int_{s}^{t_{l_{r}+r+1}}\cdots\int_{s}^{t_{l_{r+1}+r-1}}
\\
&E\Big(|J^{\beta_{r+1}}_{\alpha(t_{l_{r+1}+r})}J^{\beta_r}_{i_rk_r}\cdots J^{\beta_2}_{i_2k_2}(J^{\beta_1}_{k_1}f(X(t_{l_{r+1}+r}),k_1)-J^{\beta_1}_{i_1}f(X(t_{l_{r+1}+r}),i_1))|^2\Big|\mathcal{F}_T^{\alpha} \Big)
\\
&dt_{l_{r+1}+r}\cdots dt_{l_{r}+r+2}dt_{l_{r}+r+1}d[M_{i_rk_r}](t_{l_r+r})dt_{l_r+r-1}\cdots dt_{l_{r-1}+r+1}dt_{l_{r-1}+r} \cdots
\\
&d[M_{i_2k_2}](t_{l_2+2})dt_{l_2+1}\cdots dt_{l_1+3}dt_{l_1+2}d[M_{i_1k_1}](t_{l_1+1})dt_{l_1}\cdots dt_2dt_1
\end{align*}
almost surely for all $s<t_0\in[0,T]$ provided $\mu_1\geq \mu_2 \geq \cdots\geq \mu_r$, otherwise, $I_{\beta}[f(X(\cdot),\alpha(\cdot))]_{s,t_0}=~0$.
\end{lemma}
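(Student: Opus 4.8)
The plan is to argue by induction on $r=[n](\beta)\in\mathbb{N}$, the number of $N$-type components of $\beta$, peeling $\beta$ off from the right in exactly the order in which the recursive definition of $I_{\beta}$ unwinds: first the continuous block $\beta_1\in\mathcal{M}_1$, then the single component $N_{\mu_1}$, and then recursing on $\beta_{r+1}\star(N_{\mu_r})\star\cdots\star(N_{\mu_2})\star\beta_2$. First I would dispose of the degenerate case: if $\mu_1\geq\mu_2\geq\cdots\geq\mu_r$ fails then $\mu_{i+1}>\mu_i$ for some $i$, and unwinding the definition shows that the indicator $\mathbbm{1}\{N^{(s,\cdot]}=\mu_i\}$ carried at the $N_{\mu_i}$-level is evaluated on an interval containing every interval on which the more deeply nested $N_{\mu_{i+1}}$-level evaluates $\mathbbm{1}\{N^{(s,\cdot]}=\mu_{i+1}\}$; since $t\mapsto N^{(s,t]}$ is nondecreasing, the product of these two indicators vanishes identically, so $I_{\beta}[f(X(\cdot),\alpha(\cdot))]_{s,t_0}=0$ almost surely. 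Hence we may assume $\mu_1\geq\cdots\geq\mu_r$.

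For the base case $r=1$ I would write $\beta=\bar\beta\star\beta_1$ and apply the third inequality of Lemma \ref{lem:multiple estimate 0,1}, with its $\bar\beta$ taken to be $\beta_2\star(N_{\mu_1})$ and its continuous tail taken to be $\beta_1$; this produces the outermost $l(\beta_1)$ integrals, the factor $(t_0-s)^{\eta(\beta_1)-l(\beta_1)}$, and, in the centre, $E\big(|I_{\bar\beta}[J^{\beta_1}_{\alpha(\cdot)}f(X(\cdot),\alpha(\cdot))]_{s,t_{l_1}}|^2\big|\mathcal{F}_T^{\alpha}\big)$. Next I would unwind the $N_{\mu_1}$-level: because each $[M_{i_1k_1}]$ is $\mathcal{F}_T^{\alpha}$-measurable, conditionally on $\mathcal{F}_T^{\alpha}$ the quantity $\sum_{i_1\neq k_1}\int_s^{t_{l_1}}\mathbbm{1}\{N^{(s,t_{l_1}]}=\mu_1\}(\cdot)\,d[M_{i_1k_1}]$ is a finite deterministic sum over the jump times of $\alpha$ in $(s,t_{l_1}]$, which on the event $\{N^{(s,t_{l_1}]}=\mu_1\}$ number exactly $\mu_1$; the elementary bound $|\sum_{j=1}^{N}\xi_j|^2\leq N\sum_{j=1}^{N}|\xi_j|^2$ together with conditional Fubini then gives
\begin{align*}
E\big(|I_{\bar\beta}[\cdot]_{s,t_{l_1}}|^2\big|\mathcal{F}_T^{\alpha}\big)&\leq\mu_1\,\mathbbm{1}\{N^{(s,t_{l_1}]}=\mu_1\}\sum_{i_1\neq k_1}\int_s^{t_{l_1}}\\
&\quad E\big(|I_{\beta_2}[J^{\beta_1}_{k_1}f(X(\cdot),k_1)-J^{\beta_1}_{i_1}f(X(\cdot),i_1)]_{s,t_{l_1+1}}|^2\big|\mathcal{F}_T^{\alpha}\big)\,d[M_{i_1k_1}](t_{l_1+1}).
\end{align*}
Since $\beta_2\in\mathcal{M}_1$ and $x\mapsto J^{\beta_1}_{k_1}f(x,k_1)-J^{\beta_1}_{i_1}f(x,i_1)$ is smooth, the first inequality of Lemma \ref{lem:multiple estimate 0,1} applied on $[s,t_{l_1+1}]$ then produces the innermost $l(\beta_2)$ integrals, the factor $(t_{l_1+1}-s)^{\eta(\beta_2)-l(\beta_2)}$, and the term $E\big(|J^{\beta_2}_{\alpha(\cdot)}(J^{\beta_1}_{k_1}f(X(\cdot),k_1)-J^{\beta_1}_{i_1}f(X(\cdot),i_1))|^2\big|\mathcal{F}_T^{\alpha}\big)$; bounding each factor $(t_\bullet-s)^{\eta(\beta_i)-l(\beta_i)}$ by $(t_0-s)^{\eta(\beta_i)-l(\beta_i)}$ (each exponent is nonnegative because $\beta_i\in\mathcal{M}_1$ forces $l(\beta_i)=n(\beta_i)+\bar n(\beta_i)\leq n(\beta_i)+2\bar n(\beta_i)=\eta(\beta_i)$) and collecting terms yields the asserted estimate with $r=1$.

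For the inductive step I would carry out the same first two peelings, reaching a nested integral whose central conditional expectation is $E\big(|I_{\beta_{r+1}\star(N_{\mu_r})\star\cdots\star(N_{\mu_2})\star\beta_2}[J^{\beta_1}_{k_1}f(\cdot,k_1)-J^{\beta_1}_{i_1}f(\cdot,i_1)]_{s,t_{l_1+1}}|^2\big|\mathcal{F}_T^{\alpha}\big)$. The multi-index occurring here has exactly $r-1$ $N$-components and $\mu_2\geq\cdots\geq\mu_r$, so the induction hypothesis applies with the state-independent function $x\mapsto J^{\beta_1}_{k_1}f(x,k_1)-J^{\beta_1}_{i_1}f(x,i_1)$ in place of $f$; noting that $J^{\beta_{r+1}}_{\alpha(\cdot)}J^{\beta_r}_{i_rk_r}\cdots J^{\beta_2}_{i_2k_2}$ applied to this function is $J^{\beta_{r+1}}_{\alpha(\cdot)}J^{\beta_r}_{i_rk_r}\cdots J^{\beta_2}_{i_2k_2}(J^{\beta_1}_{k_1}f(\cdot,k_1)-J^{\beta_1}_{i_1}f(\cdot,i_1))$, adding the exponents $(\eta(\beta_1)-l(\beta_1))+(\eta(\beta_2)+\cdots+\eta(\beta_{r+1})-(l_{r+1}-l(\beta_1)))=\eta(\beta_1)+\cdots+\eta(\beta_{r+1})-l_{r+1}$, multiplying the constants $\mu_1\cdot\prod_{i=2}^{r}\mu_i=\prod_{i=1}^{r}\mu_i$, and once more replacing every $(t_\bullet-s)$-power by the corresponding $(t_0-s)$-power completes the induction.

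The most delicate point will be the handling of the $d[M_{i_1k_1}]$ integrals: under the conditioning on $\mathcal{F}_T^{\alpha}$ one must pass from the stochastic integral to a finite deterministic sum over jump times, and then combine $|\sum_{j=1}^{N}\xi_j|^2\leq N\sum_{j=1}^{N}|\xi_j|^2$ with the indicator $\mathbbm{1}\{N^{(s,\cdot]}=\mu_i\}$ so as to extract the combinatorial constant $\mu_i$ while leaving the remaining integrand in precisely the conditionally-$\mathcal{L}^2$ form demanded at the next stage. Beyond that, the work is the heavy but routine bookkeeping of the nested time variables $t_1,\ldots,t_{l_{r+1}+r}$ and of which subinterval each indicator is attached to; and, as noted after the statement, one must keep in the background the standard localization/stopping-time argument needed to legitimately invoke the Burkholder--Davis--Gundy inequality inside Lemma \ref{lem:multiple estimate 0,1}.
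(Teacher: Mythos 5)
Your proposal follows essentially the same route as the paper's proof: induction on $r=[n](\beta)$, peeling off $\beta_1$ with the third inequality of Lemma \ref{lem:multiple estimate 0,1}, extracting the factor $\mu_1$ from the $d[M_{i_1k_1}]$ integral via the counting bound $|\sum_{i=1}^{N}\xi_i|^2\le N\sum_{i=1}^{N}|\xi_i|^2$ on the event $\{N^{(s,\cdot]}=\mu_1\}$ (the paper's estimate \eqref{eq:[M]estimate}), and then applying the inductive hypothesis to $I_{\bar\beta}[J^{\beta_1}_{k_1}f(\cdot,k_1)-J^{\beta_1}_{i_1}f(\cdot,i_1)]$. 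The only addition is your monotonicity argument for why $I_\beta=0$ when $\mu_1\ge\cdots\ge\mu_r$ fails, which the paper asserts without proof; your bookkeeping of the exponents and the $\nu$ edge cases matches the paper's.
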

\begin{proof}
We shall prove the lemma by induction on $r=[n](\beta)$.
Let $[n](\beta)=1$.
Then, $\beta=\beta_2\star(N_{\mu_1})\star\beta_1$. 
If $\beta_1\neq\nu$, on using  Lemma \ref{lem:multiple estimate 0,1}, we obtain
\begin{align*}
E\Big(& |I_{\beta}[f(X(\cdot),\alpha(\cdot))]_{s,t_0}|^2\Big|\mathcal{F}_T^{\alpha} \Big)=E\Big(|I_{\beta_2\star(N_{\mu_1})\star\beta_1}[f(X(\cdot),\alpha(\cdot))]_{s,t_0}|^2\Big|\mathcal{F}_T^{\alpha} \Big)
\\
&\leq C(t_0-s)^{\eta(\beta_1)-l_1} \int_s^{t_0}\int_{s}^{t_{1}}\cdots\int_{s}^{t_{l_1-1}}E\Big(|I_{\beta_2\star(N_{\mu_1})}[J^{\beta_1}_{\alpha(\cdot)}f(X(\cdot),\alpha(\cdot))]_{s,t_{l_1}}|^2\Big|\mathcal{F}_T^{\alpha} \Big)  dt_{l_1}\cdots dt_2dt_1
\end{align*}
almost surely for any $s<t_0\in [0,T]$. 
Notice that if $\beta_1=\nu$, then $J^{\beta_1}_{i_0}$ is the identity operator for all $i_0 \in \mathcal{S}$,  $\eta(\beta_1)=l_1=0$ and the iterated integrals$ \int_s^{t_0}\int_{s}^{t_{1}}\cdots\int_{s}^{t_{l_1-1}}$ on the right side of the above expression and in the  subsequent calculations disappear. 
Also, the above inequality becomes an equality. 
Further,  the definition of multiple integrals from Subsection \ref{sub:multiple integral} yields
\begin{align}
E\Big(&|I_{\beta}[f(X(\cdot),\alpha(\cdot))]_{s,t_0}|^2\Big|\mathcal{F}_T^{\alpha} \Big)\leq C(t_0-s)^{\eta(\beta_1)-l_1} \int_s^{t_0}\int_{s}^{t_{1}}\cdots\int_{s}^{t_{l_1-1}}\notag
\\
& E\Big(\big|\sum_{i_1\neq k_1}\int_{s}^{t_{l_1}}\mathbbm{1}\{N^{(s,t_{l_1}]}=\mu_1\}I_{\beta_2}[J^{\beta_1}_{k_1}f(X(\cdot),k_1)-J^{\beta_1}_{i_1}f(X(\cdot),i_1)]_{s,t_{l_1+1}}d[M_{i_1k_1}](t_{l_1+1})\big|^2\Big|\mathcal{F}_T^{\alpha} \Big)  \notag
\\
&dt_{l_1}\cdots dt_2dt_1\notag
\\
\leq&  C\mu_1(t_0-s)^{\eta(\beta_1)-l_1} \int_s^{t_0}\int_{s}^{t_{1}}\cdots\int_{s}^{t_{l_1-1}}\sum_{i_1\neq k_1}\int_{s}^{t_{l_1}}\mathbbm{1}\{N^{(s,t_{l_1}]}=\mu_1\}\notag
\\
& E\Big(\big|I_{\beta_2}[J^{\beta_1}_{k_1}f(X(\cdot),k_1)-J^{\beta_1}_{i_1}f(X(\cdot),i_1)]_{s,t_{l_1+1}}\big|^2\Big|\mathcal{F}_T^{\alpha} \Big)d[M_{i_1k_1}](t_{l_1+1}) dt_{l_1}\cdots dt_2dt_1\notag
\end{align}
almost surely for all $s<t_0\in[0,T]$ where the last inequality is obtained by using, 
\begin{align}
\Big|\sum_{i_1\neq k_1}\int_{s}^{t_{l_1}}I_{\beta_2}&[J^{\beta_1}_{k_1}f(X(\cdot),k_1)-J^{\beta_1}_{i_1}f(X(\cdot),i_1)]_{s,t_{l_1+1}}d[M_{i_1k_1}](t_{l_1+1})\Big|^2 \notag
\\
=&\Big|\sum_{i=1}^{\displaystyle N^{(s,t_{l_1}]}}I_{\beta_2}[J^{\beta_1}_{\alpha(\tau_i)}f(X(\cdot),\alpha(\tau_i))-J^{\beta_1}_{\alpha(\tau_{i-1})}f(X(\cdot),\alpha(\tau_{i-1}))]_{s,\tau_i} \Big|^2 \notag
\\
\leq & N^{(s,t_{l_1}]} \sum_{i=1}^{\displaystyle N^{(s,t_{l_1}]}}\Big|I_{\beta_2}[J^{\beta_1}_{\alpha(\tau_i)}f(X(\cdot),\alpha(\tau_i))-J^{\beta_1}_{\alpha(\tau_{i-1})}f(X(\cdot),\alpha(\tau_{i-1}))]_{s,\tau_i} \Big|^2, \label{eq:[M]estimate}
\end{align}
$\tau_1\ldots, \tau_{\displaystyle N^{(s,t_{l_1}]}}$ are jump times of the Markov chain $\alpha$ in the interval $(s,t_{l_1}]$.
Moreover, if $\beta_2\neq\nu$, then the application of Lemma \ref{lem:multiple estimate 0,1} gives 
\begin{align*}
E\Big(&|I_{\beta}[f(X(\cdot),\alpha(\cdot))]_{s,t_0}|^2\Big|\mathcal{F}_T^{\alpha} \Big)
\\
\leq&  C\mu_1(t_0-s)^{\eta(\beta_1)+\eta(\beta_2)-l_2} \int_s^{t_0}\int_{s}^{t_{1}}\cdots\int_{s}^{t_{l_1-1}}\sum_{i_1\neq k_1}\int_{s}^{t_{l_1}}\mathbbm{1}\{N^{(s,t_{l_1}]}=\mu_1\}\int_s^{t_{l_1+1}}\int_{s}^{t_{l_1+2}}\cdots\int_{s}^{t_{l_2}}\notag
\\
& E\Big(\big|J^{\beta_2}_{\alpha(t_{l_2+1})}(J_{k_1}^{\beta_1}f(X(t_{l_2+1}),k_1)-J_{i_1}^{\beta_1}f(X(t_{l_2+1}),i_1))\big|^2\Big|\mathcal{F}_T^{\alpha} \Big) 
\\
& dt_{l_2+1}\cdots dt_{l_1+3}dt_{l_1+2}d[M_{i_1k_1}](t_{l_1+1}) dt_{l_1}\cdots dt_2dt_1
\end{align*}
almost surely for all $s<t_0\in[0,T]$. 
As before, if  $\beta_2=\nu$, then  $J^{\beta_2}_{i_0}$ is the identity operator for $i_0 \in \mathcal{S}$, $\eta(\beta_2)=l(\beta_2)=0$,  $l_2=l_1$ and the iterated integrals $\int_s^{t_{l_1+1}}\int_{s}^{t_{l_1+2}}\cdots\int_{s}^{t_{l_2}}$ on the right side of the above inequality does not appear. 
Hence, lemma holds for $[n](\beta)=1$.

We make the inductive hypothesis that the lemma holds for $r-1=[n](\beta)-1\in \mathbb{N}$.

Now,  let us consider $r=[n](\beta)\in \{2,3,\ldots\}$ and recall $\beta=\beta_{r+1}\star(N_{\mu_{r}})\star \beta_{r}\star \cdots\star(N_{\mu_2})\star \beta_2\star(N_{\mu_1})\star \beta_1=\bar{\beta}\star(N_{\mu_1})\star \beta_1$ where $\bar{\beta}=\beta_{r+1}\star(N_{\mu_{r}})\star \beta_{r}\star \cdots\star(N_{\mu_2})\star \beta_2$.
Clearly, $[n](\bar{\beta)}=r-1$. 

For $\beta_{1}\neq\nu$,  Lemma \ref{lem:multiple estimate 0,1} yields, 
\begin{align*}
E\Big(&|I_{\beta}[f(X(\cdot),\alpha(\cdot))]_{s,t_0}|^2\Big|\mathcal{F}_T^{\alpha} \Big)=E\Big(|I_{\bar{\beta} \star(N_{\mu_1})\star \beta_1}[f(X(\cdot),\alpha(\cdot))]_{s,t_0}|^2\Big|\mathcal{F}_T^{\alpha} \Big)
\\
\leq&C(t_0-s)^{\eta(\beta_1)-l_1} \int_s^{t_0}\int_{s}^{t_{1}}\cdots\int_{s}^{t_{l_1-1}}E\Big(|I_{\bar{\beta}\star(N_{\mu_1})}[J^{\beta_1}_{\alpha(\cdot)}f(X(\cdot),\alpha(\cdot))]_{s,t_{l_1}}|^2\Big|\mathcal{F}_T^{\alpha} \Big)  dt_{l_1}\cdots dt_2dt_1
\end{align*}
almost surely for any $s<t_0\in[0,T]$. 
As before, if $\beta_1=\nu$, then $J^{\beta_1}_{i_0}$ is the identity operator for $i_0 \in \mathcal{S}$,  $\eta(\beta_1)=l_1=0$ and the iterated integrals $ \int_s^{t_0}\int_{s}^{t_{1}}\cdots\int_{s}^{t_{l_1-1}}$ does not appear on the right side of the above expression and in the corresponding  estimates that follow.
Also, the above inequality becomes  an equality. 
Further, the definition of multiple integrals from Subsection \ref{sub:multiple integral} yields, 
\begin{align}
E\Big(&|I_{\beta}[f(X(\cdot),\alpha(\cdot))]_{s,t_0}|^2\Big|\mathcal{F}_T^{\alpha} \Big)\leq C(t_0-s)^{\eta(\beta_1)-l_1} \int_s^{t_0}\int_{s}^{t_{1}}\cdots\int_{s}^{t_{l_1-1}} E\Big(\big|\sum_{i_1\neq k_1}\int_{s}^{t_{l_1}}\notag
\\
&\mathbbm{1}\{N^{(s,t_{l_1}]}=\mu_1\} I_{\bar{\beta}}[J^{\beta_1}_{k_1}f(X(\cdot),k_1)-J^{\beta_1}_{i_1}f(X(\cdot),i_1)]_{s,t_{l_1+1}}d[M_{i_1k_1}](t_{l_1+1})\big|^2\Big|\mathcal{F}_T^{\alpha} \Big)dt_{l_1}\cdots dt_2dt_1 \notag
\end{align}
which on using an estimate similar to the one obtained in  \eqref{eq:[M]estimate} gives

\begin{align}
E\Big(&|I_{\beta}[f(X(\cdot),\alpha(\cdot))]_{s,t_0}|^2\Big|\mathcal{F}_T^{\alpha} \Big)\leq C\mu_1(t_0-s)^{\eta(\beta_1)-l_1} \int_s^{t_0}\int_{s}^{t_{1}}\cdots\int_{s}^{t_{l_1-1}} \sum_{i_1\neq k_1}\int_{s}^{t_{l_1}} \mathbbm{1}\{N^{(s,t_{l_1}]}=\mu_1\}\notag
\\
& E\Big(\big|I_{\bar{\beta}}[J^{\beta_1}_{k_1}f(X(\cdot),k_1)-J^{\beta_1}_{i_1}f(X(\cdot),i_1)]_{s,t_{l_1+1}}\big|^2\Big|\mathcal{F}_T^{\alpha} \Big)d[M_{i_1k_1}](t_{l_1+1})dt_{l_1}\cdots dt_2dt_1 \notag
\end{align}
almost surely for any $s<t_0\in [0,T]$. 
The proof is completed by using the following inductive hypothesis, 
 \begin{align*}
E&\Big(\big|I_{\bar{\beta}}[J^{\beta_1}_{k_1}f(X(\cdot),k_1)-J^{\beta_1}_{i_1}f(X(\cdot),i_1)]_{s,t_{l_1+1}}\big|^2\Big|\mathcal{F}_T^{\alpha} \Big)\leq C\prod_{i=2}^r\mu_i (t_0-s)^{\eta(\beta_2)+\cdots +\eta(\beta_{r+1})-(l_{r+1}-l_1)}
 \\
&  \int_s^{t_{l_1+1}}\int_{s}^{t_{l_1+2}}\cdots\int_{s}^{t_{l_2}}\sum_{i_2\neq k_2}\int_{s}^{t_{l_2+1}}\mathbbm{1}\{N^{(s,t_{l_2+1}]}=\mu_2\} 
\\
&\cdots \int_s^{t_{l_{r-1}+r-1}}\int_{s}^{t_{l_{r-1}+r}}\cdots\int_{s}^{t_{l_r+r-2}}\sum_{i_r\neq k_r}\int_{s}^{t_{l_r+r-1}}\mathbbm{1}\{N^{(s,t_{l_r+r-1}]}=\mu_r\}
\\
&\int_s^{t_{l_{r}+r}}\int_{s}^{t_{l_{r}+r+1}}\cdots\int_{s}^{t_{l_{r+1}+r-1}}E\Big(|J^{\beta_{r+1}}_{\alpha(t_{l_{r+1}+r})}J^{\beta_r}_{i_rk_r}\cdots J^{\beta_3}_{i_3k_3}(J^{\beta_2}_{k_2}(J^{\beta_1}_{k_1}f(X(\cdot),k_1)-J^{\beta_1}_{i_1}f(X(\cdot),i_1))
\\
&-J^{\beta_2}_{i_2}(J^{\beta_1}_{k_1}f(X(\cdot),k_1)-J^{\beta_1}_{i_1}f(X(\cdot),i_1))|^2\Big|\mathcal{F}_T^{\alpha} \Big)dt_{l_{r+1}+r}\cdots dt_{l_{r}+r+2}dt_{l_{r}+r+1}
\\
&d[M_{i_rk_r}](t_{l_r+r})dt_{l_r+r-1}\cdots dt_{l_{r-1}+r+1}dt_{l_{r-1}+r} \cdots d[M_{i_1k_1}](t_{l_2+2})dt_{l_2+1}\cdots dt_{l_1+3}dt_{l_1+2}
\end{align*}
  almost surely for all $s<t_0\in[0,T]$.   
\end{proof}
The first inequality in the following corollary follows from Lemma \ref{lem:multiple estimate 0,1} and Remark \ref{rem:A b sigma ds dw growth} and the second inequality  from Lemma \ref{lem:multiple estimate N_mu} and Remark \ref{rem:A b N growth}.
\begin{corollary}
\label{cor:A_gamma moment}
Let Assumptions \ref{ass:initial data}, \ref{ass: b sigma lipschitz} and \ref{ass:A b sigma ds dw lipschitz} to \ref{ass:A sigma N lipschitz} hold. Then,
\begin{align*}
\sum_{\beta\in\mathcal{A}_{\gamma}^b\setminus\tilde{\mathcal{A}}_{\gamma}^b}E(|I_{\beta}[b^k(X(s),\alpha(s))]_{s,t}|^2) + \sum_{\beta\in\mathcal{A}_{\gamma}^\sigma\setminus\tilde{\mathcal{A}}_{\gamma}^\sigma}E(|I_{\beta}[\sigma^{(k,j)}(X(s),\alpha(s))]_{s,t}|^2) \leq& C E(1+|X(s)|)^2,
\\
\sum_{\beta\in\tilde{\mathcal{A}}_{\gamma}^b}E(|I_{\beta}[b^k(X(s),\alpha(\cdot))]_{s,t}|^2)+\sum_{\beta\in\tilde{\mathcal{A}}_{\gamma}^\sigma}E(|I_{\beta}[\sigma^{(k,j)}(X(s),\alpha(\cdot))]_{s,t}|^2) \leq& C E(1+|X(s)|)^2,
\end{align*}
for all $k\in\{1,\ldots,d\}$, $j\in\{1,\ldots,m\}$ and $s<t\in[0,T]$.
\end{corollary}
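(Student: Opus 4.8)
The plan is to treat the two inequalities separately, and in each to estimate a single summand, since the index sets involved are finite. For the first inequality I would begin by noting that, by the definition of $\tilde{\mathcal{A}}_\gamma^b$, every $\beta\in\mathcal{A}_\gamma^b\setminus\tilde{\mathcal{A}}_\gamma^b$ has all of its components in $\{0,1,\ldots,m\}$ (the $N$-type indices have been removed, and no $\bar N_\mu$ can occur in $\mathcal{A}_\gamma^b$ since that would force $\eta(\beta)\ge 2\gamma+1$), i.e.\ $\beta\in\mathcal{M}_1$; likewise for $\mathcal{A}_\gamma^\sigma\setminus\tilde{\mathcal{A}}_\gamma^\sigma$. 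Unwinding the recursive definition of $I_\beta[b^k(X(s),\alpha(s))]_{s,t}$ from Subsection \ref{sub:multiple integral}, which is even simpler than the proof of Lemma \ref{lem:multiple estimate 0,1} because the integrand is frozen at time $s$, I would factor the $\mathcal{F}_s$-measurable quantity $J^\beta_{\alpha(s)}b^k(X(s),\alpha(s))$ out of the integrals, leaving a deterministic iterated Wiener/Lebesgue integral of the constant $1$ of pattern $\beta$ whose second moment is of order $(t-s)^{\eta(\beta)}$. Then I would insert the linear growth bound $|J^\beta_{i_0}b^k(x,i_0)|\le C(1+|x|)$ of Remark \ref{rem:A b sigma ds dw growth} (a consequence of Assumptions \ref{ass: b sigma lipschitz} and \ref{ass:A b sigma ds dw lipschitz}), use $\eta(\beta)\ge 0$ and $t-s\le T$, take expectations, and sum over the finite set; the case $\beta=\nu$ is immediate from the linear growth of $b$. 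The $\sigma$-part is identical, with $\mathcal{A}_\gamma^\sigma$ in place of $\mathcal{A}_\gamma^b$.

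For the second inequality I would fix $\beta\in\tilde{\mathcal{A}}_\gamma^b$, write $\beta=\beta_{r+1}\star(N_{\mu_r})\star\beta_r\star\cdots\star(N_{\mu_1})\star\beta_1$ with $\beta_1,\ldots,\beta_{r+1}\in\mathcal{M}_1$ and $r=[n](\beta)\ge 1$, and recall that the multiple integral $I_\beta$ vanishes unless $\mu_1\ge\cdots\ge\mu_r$. I would then apply the estimate of Lemma \ref{lem:multiple estimate N_mu}, with the integrand frozen at $X(s)$, which bounds $E(|I_\beta[b^k(X(s),\alpha(\cdot))]_{s,t}|^2\mid\mathcal{F}_T^\alpha)$ by a constant times an iterated integral whose innermost conditional expectation is of $|J^{\beta_{r+1}}_{\alpha(\cdot)}J^{\beta_r}_{i_rk_r}\cdots(J^{\beta_1}_{k_1}b^k(X(s),k_1)-J^{\beta_1}_{i_1}b^k(X(s),i_1))|^2$; Remark \ref{rem:A b N growth} bounds this in turn by $C\,E((1+|X(s)|)^2\mid\mathcal{F}_T^\alpha)$. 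Each Lebesgue integration there contributes a factor $\le T$, while each integration against some $d[M_{i_pk_p}]$, restricted by the factor $\mathbbm{1}\{N^{(s,\cdot]}=\mu_p\}$, reduces on that event to a pathwise sum over at most $\mu_p\le\mu=2\gamma$ jump times, so the jump-combinatorial prefactor is a deterministic constant. Taking total expectations yields $E|I_\beta[b^k(X(s),\alpha(\cdot))]_{s,t}|^2\le C\,E(1+|X(s)|)^2$, and summing over $\tilde{\mathcal{A}}_\gamma^b$, together with the analogous argument for $\sigma$ using the $\sigma$-part of Remark \ref{rem:A b N growth}, finishes the proof; finiteness of $E(1+|X(s)|)^2$ follows from Theorem \ref{thm:true moment} (or Assumption \ref{ass:initial data} when $s=0$).

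I expect the one genuinely delicate point to be the treatment of the iterated integrals against the optional processes $[M_{i_0k_0}]$: one must verify that, after conditioning on $\mathcal{F}_T^\alpha$, each such integral becomes a finite pathwise sum whose length is controlled \emph{deterministically} by the exact-jump-count indicator $\mathbbm{1}\{N^{(s,\cdot]}=\mu_p\}$, so that, unlike in the convergence proof of Theorem \ref{thm:main}, Lemma \ref{lem:rateMS} is not needed here, and that the resulting prefactor, together with the powers of $t-s$, stays bounded once the conditioning is removed, so that the factor $E(1+|X(s)|)^2$ can be pulled out cleanly.
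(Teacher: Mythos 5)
Your proposal is correct and follows essentially the same route as the paper: the first inequality is obtained from the $(t-s)^{\eta(\beta)}$-type bound of Lemma \ref{lem:multiple estimate 0,1} (applied with the integrand frozen at time $s$) combined with the linear growth of Remark \ref{rem:A b sigma ds dw growth}, and the second from Lemma \ref{lem:multiple estimate N_mu} combined with Remark \ref{rem:A b N growth}, the jump-count prefactor being exactly the deterministic $\prod_i \mu_i$ factor already built into that lemma.
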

\begin{lemma}
\label{lem:schemeMoment}
Let Assumptions \ref{ass:initial data}, \ref{ass: b sigma lipschitz} and \ref{ass:A b sigma ds dw lipschitz} to \ref{ass:A sigma N lipschitz} be satisfied. Then, the $\gamma$-order explicit numerical scheme \eqref{eq:gen.scheme} of SDEwMS \eqref{eq:sdems} satisfies the following 
\begin{align*}
E\Big(\sup_{n\in \{0,1,\ldots, n_T\}}|Y(t_n)|^2\Big) \leq C
\end{align*}
where the  constant $C>0$ does not depend on $h$. 
\end{lemma}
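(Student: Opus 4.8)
The plan is to telescope the scheme \eqref{eq:gen.scheme} over a block of grid points, square and take expectations, bound the Riemann part by the Cauchy--Schwarz inequality and the stochastic part by Doob's maximal inequality, feed in the second-moment bounds for the multiple integrals from Corollary \ref{cor:A_gamma moment}, and close the loop by a discrete Gr\"onwall argument. Concretely, fix $M\in\{1,\ldots,n_T\}$. Summing \eqref{eq:gen.scheme} over consecutive indices gives, for every $p\in\{0,\ldots,M\}$ and $k\in\{1,\ldots,d\}$,
\[
Y^k(t_p)=Y^k_0+\int_0^{t_p}g^k(s)\,ds+\sum_{j=1}^m\int_0^{t_p}G^{k,j}(s)\,dW^j(s),
\]
where on $[t_n,t_{n+1})$ the integrands $g^k(s)$ and $G^{k,j}(s)$ are precisely the finite sums of multiple integrals $I_\beta[\,\cdot\,]_{t_n,s}$ of $b^k$ and $\sigma^{(k,j)}$ evaluated at the frozen data $(Y(t_n),\alpha(t_n))$ (resp. $(Y(t_n),\alpha(\cdot))$) appearing in \eqref{eq:gen.scheme}. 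Since $\alpha$ is $\mathbb{F}$-adapted and independent of $W$, each $G^{k,j}$ is $\mathbb{F}$-progressively measurable and $W^j$ remains an $\mathbb{F}$-Brownian motion, so $\big(\int_0^{t_p}G^{k,j}(s)\,dW^j(s)\big)_{0\le p\le M}$ is an $\mathbb{F}$-martingale, square integrable on the block by the induction below. Using $|\sum(\cdot)|^2\le C\sum|\cdot|^2$, the bound $\big|\int_0^{t_p}g^k(s)\,ds\big|^2\le T\int_0^{T}|g^k(s)|^2\,ds$, and Doob's maximal inequality for each stochastic integral, one gets
\[
E\Big(\sup_{0\le p\le M}|Y(t_p)|^2\Big)\le C\,E|Y_0|^2+C\int_0^{t_M}E|g(s)|^2\,ds+C\sum_{j=1}^m\int_0^{t_M}E|G^{j}(s)|^2\,ds.
\]

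The next step is to estimate $E|g(s)|^2$ and $E|G^j(s)|^2$ segment by segment. For $s\in[t_n,t_{n+1})$, both are dominated by a constant multiple of the sums over $\mathcal{A}_\gamma^b\setminus\tilde{\mathcal{A}}_\gamma^b$, $\tilde{\mathcal{A}}_\gamma^b$, $\mathcal{A}_\gamma^\sigma\setminus\tilde{\mathcal{A}}_\gamma^\sigma$ and $\tilde{\mathcal{A}}_\gamma^\sigma$ of the second moments $E|I_\beta[b^k(Y(t_n),\alpha(t_n))]_{t_n,s}|^2$, $E|I_\beta[b^k(Y(t_n),\alpha(\cdot))]_{t_n,s}|^2$ and the analogous $\sigma$-terms. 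Applying Corollary \ref{cor:A_gamma moment} with the $\mathcal{F}_{t_n}$-measurable value $Y(t_n)$ in place of the solution (the corollary only uses the linear-growth bounds recorded in Remarks \ref{rem:A b sigma ds dw growth} and \ref{rem:A b sigma N lipschitz}, which hold under the stated assumptions) yields $E|g(s)|^2+\sum_{j}E|G^j(s)|^2\le C\,E(1+|Y(t_n)|)^2$ for $s\in[t_n,t_{n+1})$, and hence
\[
\int_0^{t_M}E|g(s)|^2\,ds+\sum_{j=1}^m\int_0^{t_M}E|G^j(s)|^2\,ds\le C h\sum_{n=0}^{M-1}E(1+|Y(t_n)|)^2.
\]

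Writing $a_M:=E\big(\sup_{0\le p\le M}|Y(t_p)|^2\big)$ and using $E(1+|Y(t_n)|)^2\le 2+2a_n$ together with $Mh\le T$, the last two displays combine into $a_M\le C(1+E|Y_0|^2)+Ch\sum_{n=0}^{M-1}a_n$. Finiteness of $a_M$ is obtained by induction on $M$: $a_0=E|Y_0|^2<\infty$ by Assumption \ref{ass:initial data}, and if $a_n<\infty$ for all $n\le M-1$ then the multiple-integral moments on $[0,t_M)$ are finite by Corollary \ref{cor:A_gamma moment}, the stochastic integrals up to $t_M$ are genuine square-integrable martingales, Doob applies, and the right-hand side above is finite, giving $a_M<\infty$. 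The discrete Gr\"onwall inequality then gives $a_M\le C(1+E|Y_0|^2)\,e^{CMh}\le C(1+E|Y_0|^2)\,e^{CT}$, a bound independent of $h$; taking $M=n_T$ completes the proof.

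The main obstacle is less any single estimate than ensuring the bookkeeping closes: one needs the uniform bound $E|I_\beta[\,\cdot\,]_{t_n,s}|^2\le C\,E(1+|Y(t_n)|)^2$ for \emph{every} $\beta$ entering the scheme, including those in $\tilde{\mathcal{A}}_\gamma^b$ and $\tilde{\mathcal{A}}_\gamma^\sigma$ whose multiple integrals carry the switching indicators $\mathbbm{1}\{N^{(t_n,s]}=r\}$ and integrals against $[M_{i_0k_0}]$ --- precisely what Corollary \ref{cor:A_gamma moment} supplies via Lemmas \ref{lem:multiple estimate 0,1} and \ref{lem:multiple estimate N_mu} and Remarks \ref{rem:A b sigma ds dw growth} and \ref{rem:A b sigma N lipschitz} --- and one must check that the diffusion-integral terms are genuine martingales in the enlarged filtration $\mathbb{F}=\mathbb{F}^W\vee\mathbb{F}^\alpha$, which is what legitimizes Doob's inequality in spite of the Markov-chain dependence of the integrands.
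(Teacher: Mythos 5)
Your proposal is correct and follows essentially the same route as the paper: telescope the scheme into a single Lebesgue plus stochastic integral with piecewise-frozen integrands, bound the maximal second moment via H\"older/Cauchy--Schwarz and a maximal inequality for the martingale part (the paper uses Burkholder--Davis--Gundy where you use Doob, which is immaterial here), invoke Corollary \ref{cor:A_gamma moment} with $Y(t_i)$ in place of $X(s)$ to reduce to $C\,E(1+|Y(t_i)|)^2$, and close with Gr\"onwall. Your explicit remarks on why the corollary transfers to the scheme's iterates and on the martingale property under $\mathbb{F}=\mathbb{F}^W\vee\mathbb{F}^\alpha$ are sound refinements of steps the paper leaves implicit.
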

\begin{proof}
From \eqref{eq:gen.scheme}, we have, 
\begin{align*} 
Y^k&(t_{n})=Y^k_0+\sum^{n-1}_{i=0}\int_{t_{i}}^{t_{i+1}}\Big(\sum_{\beta\in\mathcal{A}_{\gamma}^b\setminus\tilde{\mathcal{A}}_{\gamma}^b}I_{\beta}[b^k(Y(t_{i}),\alpha(t_{i})) ]_{t_{i},s}+\sum_{\beta\in\tilde{\mathcal{A}}_{\gamma}^b}I_{\beta}[b^k(Y(t_{i}),\alpha(\cdot)) ]_{t_{i},s}\Big)ds\notag
\\
&+\sum_{j=1}^m\sum^{n-1}_{i=0}\int_{t_{i}}^{t_{i+1}}\Big(\sum_{\beta\in\mathcal{A}_{\gamma}^\sigma\setminus\tilde{\mathcal{A}}_{\gamma}^\sigma}I_{\beta}[\sigma^{(k,j)}(Y(t_{i}),\alpha(t_{i})) ]_{t_{i},s}+\sum_{\beta\in\tilde{\mathcal{A}}_{\gamma}^\sigma}I_{\beta}[\sigma^{(k,j)}(Y(t_{i}),\alpha(\cdot)) ]_{t_{i},s}\Big)dW^j(s)
\end{align*}
 almost surely for all $n\in\{0,1,\ldots,n_T\}$ and $k\in\{1,\ldots,d\}$.
The H\"older's inequality and Burkholder-Davis-Gundy inequality yield,
\begin{align*}
E&\Big( \sup_{n\in \{0,1,\ldots, n'\}}|Y^k(t_n)|^2\Big)\leq CE|Y^k_0|^2
\\
+CE&\Big(\sup_{n\in \{0,1,\ldots, n'\}}\big|\sum^{n-1}_{i=0}\int_{t_{i}}^{t_{i+1}}\Big(\sum_{\beta\in\mathcal{A}_{\gamma}^b\setminus\tilde{\mathcal{A}}_{\gamma}^b}I_{\beta}[b^k(Y(t_{i}),\alpha(t_{i})) ]_{t_{i},s}+\sum_{\beta\in\tilde{\mathcal{A}}_{\gamma}^b}I_{\beta}[b^k(Y(t_{i}),\alpha(\cdot)) ]_{t_{i},s}\Big)ds\big|^2\Big)
\\
+CE&\Big(\sup_{n\in \{0,1,\ldots, n'\}}\big|\sum_{j=1}^m\sum^{n-1}_{i=0}\int_{t_{i}}^{t_{i+1}}\Big(\sum_{\beta\in\mathcal{A}_{\gamma}^\sigma\setminus\tilde{\mathcal{A}}_{\gamma}^\sigma}I_{\beta}[\sigma^{(k,j)}(Y(t_{i}),\alpha(t_{i})) ]_{t_{i},s}
\\
&+\sum_{\beta\in\tilde{\mathcal{A}}_{\gamma}^\sigma}I_{\beta}[\sigma^{(k,j)}(Y(t_{i}),\alpha(\cdot)) ]_{t_{i},s}\Big)dW^j(s)\big|^2\Big)
\\
\leq C&E|Y^k_0|^2+C\sum^{n'-1}_{i=0}\sum_{\beta\in\mathcal{A}_{\gamma}^b\setminus\tilde{\mathcal{A}}_{\gamma}^b}\int_{t_{i}}^{t_{i+1}}E\big|I_{\beta}[b^k(Y(t_{i}),\alpha(t_{i})) ]_{t_{i},s}\big|^2ds
\\
&+C\sum^{n'-1}_{i=0}\sum_{\beta\in\tilde{\mathcal{A}}_{\gamma}^b}\int_{t_{i}}^{t_{i+1}}E\big|I_{\beta}[b^k(Y(t_{i}),\alpha(\cdot)) ]_{t_{i},s}\big|^2ds
\\
&+C\sum_{j=1}^m\sum^{n'-1}_{i=0}\sum_{\beta\in\mathcal{A}_{\gamma}^\sigma\setminus\tilde{\mathcal{A}}_{\gamma}^\sigma}\int_{t_{i}}^{t_{i+1}}E\big|I_{\beta}[\sigma^{(k,j)}(Y(t_{i}),\alpha(t_{i})) ]_{t_{i},s}\big|^2ds
\\
&+C\sum_{j=1}^m\sum^{n'-1}_{i=0}\sum_{\beta\in\tilde{\mathcal{A}}_{\gamma}^\sigma}\int_{t_{i}}^{t_{i+1}}E\big|I_{\beta}[\sigma^{(k,j)}(Y(t_{i}),\alpha(\cdot)) ]_{t_{i},s}\big|^2ds
\end{align*}
for any $n'\in\{1,\ldots,n_T\}$ and $k\in\{1,\ldots,d\}$. 
Further, on using Corollary \ref{cor:A_gamma moment}, we obtain
\begin{align*}
E\Big( \sup_{n\in \{0,1,\ldots, n'\}}|Y^k(t_n)|^2\Big)\leq CE|Y^k_0|^2+C+Ch\sum^{n'-1}_{i=0}E\Big(\sup_{n\in\{0,1,\ldots,i\}}|Y^k(t_n)|^2\Big)
\end{align*}
for any $n'\in\{1,\ldots,n_T\}$ and $k\in\{1,\ldots,d\}$. 
The application of Gronwall's lemma completes the proof.
\end{proof}

\subsection{Strong rate of convergence}
\label{Rate of scheme}
In this subsection, we provide the proof of Theorem \ref{thm:main}. For this, we first  establish the following lemmas.  
\begin{lemma}
\label{lem:multiple estimate remainder bar_N}
Let Assumptions \ref{ass:initial data}, \ref{ass: b sigma lipschitz} and \ref{ass:A b sigma ds dw lipschitz} be satisfied.
   Then, for all $\beta\in\mathcal{B}(\mathcal{A}_\gamma^b)\setminus\tilde{\mathcal{B}}^b_1$, $\bar{\beta}\in\mathcal{B}(\mathcal{A}_\gamma^\sigma)\setminus\tilde{\mathcal{B}}^\sigma_1$ such that their first component is $\bar{N}_{2\gamma}$, the following holds, 
\begin{align*}
E\Big(|I_{\beta}[b^k(X(\cdot),\alpha(\cdot))]_{s,t}|^2 \Big)\leq& C (t-s)^{\eta(\beta)}
\\
E\Big(|I_{\bar{\beta}}[\sigma^{(k,j)}(X(\cdot),\alpha(\cdot))]_{s,t}|^2 \Big)\leq& C (t-s)^{\eta(\bar{\beta})}
\end{align*}
for all  $k\in\{1,\ldots,d\}$, $j\in\{1,\ldots,m\}$, $s<t\in[0,T]$ provided $0<(t-s)<1/(2q)$.
\end{lemma}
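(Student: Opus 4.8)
The plan is to exploit the structure $\beta = (\bar N_{2\gamma}) \star \beta'$ and the fact that the indicator $\mathbbm{1}\{N^{(s,t]} > 2\gamma\}$ forces the number of jumps to exceed $2\gamma = \mu$, which by Lemma \ref{lem:rateMS}(a) is an event of probability at most $q^{2\gamma+1}(t-s)^{2\gamma+1}$. First I would unwind the outermost integral: by the definition of multiple integrals in Subsection \ref{sub:multiple integral},
\[
I_{\beta}[b^k(X(\cdot),\alpha(\cdot))]_{s,t} = \sum_{i_0 \neq k_0} \int_s^t \mathbbm{1}\{N^{(s,t]} > \mu\}\, I_{\beta'}[b^k(X(\cdot),k_0) - b^k(X(\cdot),i_0)]_{s,s_1}\, d[M_{i_0 k_0}](s_1),
\]
so that the whole expression vanishes off the event $\{N^{(s,t]} > \mu\}$, and on that event the integral with respect to $d[M_{i_0k_0}]$ is a finite sum over the (at most $N^{(s,t]}$) jump times. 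Using an estimate of the type of \eqref{eq:[M]estimate} (Cauchy–Schwarz over the jump times, producing a factor $N^{(s,t]}$), conditioning on $\mathcal{F}_T^\alpha$, and applying Lemma \ref{lem:multiple estimate 0,1} together with the linear-growth bound from Remark \ref{rem:A b sigma ds dw growth} (valid since $\beta' \in \mathcal{M}_1$ lies in $\mathcal{A}_\gamma^b \setminus \tilde{\mathcal{A}}_\gamma^b$, as $\beta \in \mathcal{B}(\mathcal{A}_\gamma^b)\setminus\tilde{\mathcal{B}}_1^b$ forces $-\beta = \beta' \in \mathcal{A}_\gamma^b$ with components not all in $\{N_1,\dots\}$), I would obtain a conditional bound of the form
\[
E\big(|I_\beta[b^k(X(\cdot),\alpha(\cdot))]_{s,t}|^2 \,\big|\, \mathcal{F}_T^\alpha\big) \le C\, \mathbbm{1}\{N^{(s,t]}>\mu\}\, (N^{(s,t]})^2 (t-s)^{\eta(\beta')}\, E\big(\sup_{u\in[s,t]}(1+|X(u)|)^2 \,\big|\, \mathcal{F}_T^\alpha\big).
\]

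Next I would take expectations. The factor $E(\sup_{u\in[s,t]}(1+|X(u)|)^2\,|\,\mathcal{F}_T^\alpha)$ is bounded by a constant by Theorem \ref{thm:true moment}. For the remaining $\alpha$-measurable factor one needs $E\big(\mathbbm{1}\{N^{(s,t]}>\mu\}(N^{(s,t]})^2\big)$. Writing this as $\sum_{N > \mu} \big((N)^2 - (N-1)^2\big) P(N^{(s,t]} \ge N)$ (or simply $\sum_{N>\mu}(2N-1)P(N^{(s,t]}\ge N)$ after a summation by parts) and inserting the geometric tail bound $P(N^{(s,t]} \ge N) \le q^N (t-s)^N$ from Lemma \ref{lem:rateMS}(a), one gets, for $0 < (t-s) < 1/(2q)$, a convergent series dominated by its first term, giving
\[
E\big(\mathbbm{1}\{N^{(s,t]}>\mu\}(N^{(s,t]})^2\big) \le C (q(t-s))^{\mu+1} = C(t-s)^{2\gamma+1}.
\]
Combining, $E(|I_\beta[b^k]_{s,t}|^2) \le C (t-s)^{\eta(\beta') + 2\gamma + 1}$. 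It then remains to check the exponent: since $\beta = (\bar N_\mu)\star\beta'$ and $\mu_{\max}(\beta) = \mu + 1 = 2\gamma+1$ (the bar on $\bar N_\mu$ signifying $\mu+1$), while $\beta'$ has no barred components, $\eta(\beta) = n(\beta') + 2\bar n(\beta') + (2\gamma+1)$. On the other hand $\eta(\beta') = n(\beta') + 2\bar n(\beta') + \mu_{\max}(\beta')$, and since $\beta' \in \mathcal{M}_1$ we have $\mu_{\max}(\beta') = 0$, hence $\eta(\beta) = \eta(\beta') + 2\gamma + 1$, exactly matching. The diffusion estimate is proved identically, replacing $b^k$ by $\sigma^{(k,j)}$, the outer $ds_1$-type structure by the same $d[M_{i_0k_0}]$ structure (the first component being $\bar N_{2\gamma}$ in both cases), using the $\sigma$-part of Remark \ref{rem:A b sigma ds dw growth} and the corresponding exponent bookkeeping with $\mathcal{A}_\gamma^\sigma$.

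The main obstacle is the exponent bookkeeping together with the combinatorial tail sum: one must be careful that (i) $\beta'$ genuinely lies in $\mathcal{M}_1 \cap (\mathcal{A}_\gamma^b\setminus\tilde{\mathcal{A}}_\gamma^b)$ so that the linear-growth Remark \ref{rem:A b sigma ds dw growth} applies to $J^{\beta'}_{i_0} b^k$ and $J^{\beta'}_{k_0} b^k$ (this is where the hypothesis $\beta \notin \tilde{\mathcal{B}}_1^b$ and "first component is $\bar N_{2\gamma}$" are used — if $\beta'$ had a barred or $N_r$ component the argument would need Lemma \ref{lem:multiple estimate N_mu} instead), and (ii) the series $\sum_{N>\mu}(2N-1)(q(t-s))^N$ is summable and $O((t-s)^{\mu+1})$ precisely because $q(t-s) < 1/2$, which is exactly the stated restriction $0 < (t-s) < 1/(2q)$. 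Everything else — the Cauchy–Schwarz over jump times, the conditional Burkholder–Davis–Gundy / Hölder estimates inside Lemma \ref{lem:multiple estimate 0,1}, and the moment bound from Theorem \ref{thm:true moment} — is routine.
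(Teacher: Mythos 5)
Your overall strategy is the same as the paper's: isolate the $\bar N_{2\gamma}$ component, use Cauchy--Schwarz over the jump times (as in \eqref{eq:[M]estimate}), apply the linear-growth bound of Remark \ref{rem:A b sigma ds dw growth} together with Lemma \ref{lem:multiple estimate 0,1} and Theorem \ref{thm:true moment}, and close with the geometric tail bound $P(N^{(s,t]}\geq N)\leq q^N(t-s)^N$ summed for $q(t-s)<1/2$; your exponent bookkeeping $\eta(\beta)=\eta(-\beta)+2\gamma+1$ is also correct. However, your first displayed identity is false, and the error propagates through the order of your subsequent steps. The recursive definition in Subsection \ref{sub:multiple integral} peels off the \emph{last} component $j_l$ of $\beta=(j_1,\ldots,j_l)$ as the \emph{outermost} integral. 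Since the hypothesis is that the \emph{first} component $j_1$ equals $\bar N_{2\gamma}$, the $d[M_{i_0k_0}]$ integral carrying the indicator $\mathbbm{1}\{N^{(s,\cdot]}>2\gamma\}$ is the \emph{innermost} one (over a sub-interval $(s,u]$ with $u\le t$, not over $(s,t]$), while the outermost integrals are the $dW^{j_l}$ or $ds$ integrals coming from $-\beta$. What you wrote is the expansion of $I_{\beta'\star(\bar N_{2\gamma})}$, not of $I_{(\bar N_{2\gamma})\star\beta'}$, and these are different objects.

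The fix is to reverse the order of your two main steps, which is exactly what the paper does: first observe that if any of $j_2,\ldots,j_l$ lies in $\{N_1,\ldots,N_{2\gamma}\}$ then $I_\beta[\cdot]=0$ (the inner indicator demands more than $2\gamma$ jumps on a subinterval while an outer $N_r$ indicator demands exactly $r\le 2\gamma$ on a larger one), so one may take $-\beta\in\mathcal{M}_1$; then apply the \emph{third} inequality of Lemma \ref{lem:multiple estimate 0,1} with $\bar\beta=(\bar N_{2\gamma})$ to strip the outer components, arriving at iterated time integrals of $E\big(|I_{(\bar N_{2\gamma})}[J^{-\beta}_{\alpha(\cdot)}b^k(X(\cdot),\alpha(\cdot))]_{s,t_{l(-\beta)}}|^2\big)$; and only then perform the jump-time Cauchy--Schwarz, the linear-growth bound (valid since $-\beta\in\mathcal{A}^b_\gamma\setminus\tilde{\mathcal{A}}^b_\gamma$), and the tail sum. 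Because each factor contributes the same power of $(t-s)$ regardless of the order, the corrected argument yields exactly your final bound $C(t-s)^{\eta(-\beta)+2\gamma+1}=C(t-s)^{\eta(\beta)}$. One smaller point: your parenthetical that a barred or $N_r$ component in $\beta'$ would require Lemma \ref{lem:multiple estimate N_mu} is unnecessary --- in that case the integral simply vanishes, as noted above.
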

\begin{proof}
Let $l(\beta)=1$, \textit{i.e.}, $\beta=(\bar{N}_{2\gamma})$. 
On using the definition of multiple integrals from Subsection \ref{sub:multiple integral} and Remark \ref{rem:A b sigma ds dw growth}, we obtain
\begin{align*}
E\Big(|I_{\beta}[b^k(X(\cdot),\alpha(\cdot))]_{s,t}|^2 &\Big)=E\Big(|I_{(\bar{N}_{2\gamma})}[b^k(X(\cdot),\alpha(\cdot))]_{s,t}|^2 \Big)
\\
=&E\Big(\big|\sum_{i_0\neq k_0}\int_s^t\mathbbm{1}\{N^{(s,t]}>{2\gamma}\}(b^k(X(u),k_0)-b^k(X(u),i_0))d[M_{i_0k_0}](u)\big|^2\Big)
\\
\leq& CE\Big(\mathbbm{1}\{N^{(s,t]}>{2\gamma}\}(N^{(s,t]})^2E\Big(\sup_{u\in[s,t]}(1+|X(u)|)^2\Big|\mathcal{F}^T_{\alpha}\Big)\Big)
\end{align*}
which further implies due to Theorem \ref{thm:true moment} and Lemma \ref{lem:rateMS},
\begin{align*}
E\Big(|I_{\beta}[b^k(X(\cdot),\alpha(\cdot))]_{s,t}|^2 \Big)\leq&C \sum_{N={2\gamma}+1} N^2P\{N^{(s,t]}=N\}\leq C \sum_{N={2\gamma}+1}N^2((t-s)q)^N
\\
\leq& C (t-s)^{{2\gamma}+1}\sum_{N=0}(N+{2\gamma}+1)^2((t-s)q)^N \leq C (t-s)^{\eta(\beta)}
\end{align*}
for all $k\in\{1,\ldots,d\}$ and $s<t\in[0,T]$ where the series $\displaystyle\sum_{N=0}(N+{2\gamma}+1)^2(q(t-s))^N$ is convergent for $0<(t-s)<1/(2q)$.

Notice that for $l(\beta)>1$, if $j_2,\ldots,j_l\in\{N_1,\ldots,N_{2\gamma}\}$, then $I_{\beta}[b^k(X(\cdot),\alpha(\cdot))]_{s,t}=0$ and the first inequality  holds trivially.

Thus, Lemma \ref{lem:multiple estimate 0,1} gives
\begin{align*}
E\Big(&|I_{\beta}[b^k(X(\cdot),\alpha(\cdot))]_{s,t}|^2 \Big)=E\Big(|I_{(\bar{N}_{2\gamma})\star-\beta}[b^k(X(\cdot),\alpha(\cdot))]_{s,t}|^2 \Big)
\\
\leq& C(t-s)^{\eta(-\beta)-l(-\beta)} \int_{s}^{t}\int_{s}^{t_{1}}\cdots \int_{s}^{t_{l(-\beta)-1}} E\big( \big|I_{(\bar{N}_{2\gamma})}[J^{-\beta}_{\alpha(\cdot)}b^k(X(\cdot),\alpha(\cdot))]_{s,t_{l(-\beta
)}}\big|^2\big)  dt_{l(-\beta)}\cdots dt_2dt_1
\end{align*}
which by the definition of multiple integrals from Subsection \ref{sub:multiple integral} yields,
\begin{align*}
E\Big(&|I_{\beta}[b^k(X(\cdot),\alpha(\cdot))]_{s,t}|^2 \Big) \leq C(t-s)^{\eta(-\beta)-l(-\beta)} \int_{s}^{t}\int_{s}^{t_{1}}\cdots \int_{s}^{t_{l(-\beta)-1}}
\\
& E\Big(\big|\sum_{i_0\neq k_0}\int_{s}^{t_{l(-\beta)}}\mathbbm{1}\{N^{(s,t_{l(-\beta)}]}>{2\gamma} \}  (J^{-\beta}_{k_0}b^k(X(u),k_0)-J^{-\beta}_{i_0}b^k(X(u),i_0))d[M_{i_0k_0}](u)\big|^2\Big)  
\\
&dt_{l(-\beta)}\cdots dt_2dt_1
\end{align*}
 for all $s<t\in[0,T]$ and $k\in\{1,\ldots,d\}$.
 Notice that, $-\beta\in\mathcal{A}^b_\gamma\setminus\tilde{\mathcal{A}}^b_\gamma$.
Thus, by the application of Remark \ref{rem:A b sigma ds dw growth} and Theorem \ref{thm:true moment}, we have
\begin{align*}
E\Big(|I_{\beta}[&b^k(X(\cdot),\alpha(\cdot))]_{s,t}|^2 \Big)\leq C(t-s)^{\eta(-\beta)-l(-\beta)} \int_{s}^{t}\int_{s}^{t_{1}}\cdots \int_{s}^{t_{l(-\beta)-1}}
\\
&E\Big(\big(N^{(s,t_{l(-\beta)}]}\big)^2\mathbbm{1}\{N^{(s,t_{l(-\beta)}]}>{2\gamma} \}  E\Big(\sup_{u\in[s,t]}(1+|X(u)|)^2\Big|\mathcal{F}_T^{\alpha} \Big)\Big)dt_{l(-\beta)}\cdots dt_2dt_1
\\
\leq & C(t-s)^{\eta(-\beta)-l(-\beta)} \int_{s}^{t}\int_{s}^{t_{1}}\cdots\int_{s}^{t_{l(-\beta)-1}}\sum_{N={2\gamma}+1} N^2P\{N^{(s,t_{l(-\beta)}]}=N\}dt_{l(-\beta)}\cdots dt_2dt_1
\end{align*}
and  due to Lemma \ref{lem:rateMS}, 
\begin{align*}
E\Big(|I_{\beta}&[b^k(X(\cdot),\alpha(\cdot))]_{s,t}|^2 \Big)\leq C(t-s)^{\eta(-\beta)-l(-\beta)} \int_{s}^{t}\int_{s}^{t_{1}}\cdots\int_{s}^{t_{l(-\beta)-1}}\sum_{N={2\gamma}+1} N^2((t_{l(-\beta)}-s)q)^N
\\
&\qquad dt_{l(-\beta)}\cdots dt_2dt_1
\\
\leq& C(t-s)^{\eta(-\beta)-l(-\beta)} \int_{s}^{t}\int_{s}^{t_{1}}\cdots\int_{s}^{t_{l(-\beta)-1}}(t_{l(-\beta)}-s)^{{2\gamma}+1}\sum_{N=0}(N+{2\gamma}+1)^2(q(t_{l(-\beta)}-s))^N
\\
&\qquad dt_{l(-\beta)}\cdots dt_2dt_1\leq C(t-s)^{\eta(\beta)}
\end{align*}
for all $k\in\{1,\ldots,d\}$ and $s<t\in[0,T]$ where the the last inequality is obtained by using the finiteness of the series $\displaystyle\sum_{N=0}(N+{2\gamma}+1)^2(q(t_{l(-\beta)}-s))^N$
 for $0<(t-s)<1/(2q)$.
This completes the proof of the first inequality of the lemma.
One can prove the second inequality of the lemma by adapting similar arguments.
\end{proof}
\begin{lemma}\label{lem:multiple estimate remainder N_mu}
Let Assumptions \ref{ass:initial data}, \ref{ass: b sigma lipschitz}, \ref{ass:reminder b N growth} and \ref{ass:reminder sigma N growth} be satisfied.
Then, for all $\beta\in\mathcal{B}(\mathcal{A}_\gamma^b)\setminus\tilde{\mathcal{B}}^b_1$, $\bar{\beta}\in\mathcal{B}(\mathcal{A}_\gamma^\sigma)\setminus\tilde{\mathcal{B}}^\sigma_1$ such that none of the components of $\beta,\bar{\beta}$ is equal to $\bar{N}_{2\gamma}$ and atleast one of their component  is equal to $N_1,\ldots,N_{2\gamma}$,  we have, 
\begin{align*}
E\Big(|I_{\beta}[b^k(X(\cdot),\alpha(\cdot))]_{s,t}|^2 \Big)\leq& C (t-s)^{\eta(\beta)}
\\
E\Big(|I_{\bar{\beta}}[\sigma^{(k,j)}(X(\cdot),\alpha(\cdot))]_{s,t}|^2 \Big)\leq& C (t-s)^{\eta(\bar{\beta})}
\end{align*}
 for all $k\in\{1,\ldots,d\}$, $j\in\{1,\ldots,m\}$ and $s<t\in[0,T]$ provided $0<(t-s)<1/(2q)$.
\end{lemma}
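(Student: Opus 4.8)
The plan is to mirror the proof of Lemma~\ref{lem:multiple estimate remainder bar_N}, with the geometric-series estimate used there for the component $\bar{N}_{2\gamma}$ replaced by the exact jump-count mechanism supplied by Lemma~\ref{lem:multiple estimate N_mu}, together with the growth bounds of Remarks~\ref{rem:reminder b N growth} and~\ref{rem:reminder sigma N growth}. I would carry out the drift statement in full and then observe that the diffusion statement follows line by line after replacing $b^k$, Remark~\ref{rem:reminder b N growth} and $\mathcal{B}(\mathcal{A}_\gamma^b)\setminus\tilde{\mathcal{B}}_1^b$ by $\sigma^{(k,j)}$, Remark~\ref{rem:reminder sigma N growth} and $\mathcal{B}(\mathcal{A}_\gamma^\sigma)\setminus\tilde{\mathcal{B}}_1^\sigma$. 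First I would fix $\beta$ as in the hypothesis: since no component of $\beta$ equals $\bar{N}_{2\gamma}$ while at least one equals some $N_{r'}$ with $r'\in\{1,\ldots,2\gamma\}$, condition~(a) on multi-indices lets me write $\beta=\beta_{r+1}\star(N_{\mu_r})\star\beta_r\star\cdots\star(N_{\mu_1})\star\beta_1$ with $\beta_1,\ldots,\beta_{r+1}\in\mathcal{M}_1$, $\mu_1,\ldots,\mu_r\in\{1,\ldots,2\gamma\}$ and $r=[n](\beta)\in\mathbb{N}$; moreover $\beta\in(\mathcal{B}(\mathcal{A}_\gamma^b)\setminus\tilde{\mathcal{B}}_1^b)\cap(\mathcal{M}_2\cup\mathcal{M}_3)$, so Remark~\ref{rem:reminder b N growth} applies to $\beta$. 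If $\mu_1\geq\cdots\geq\mu_r$ fails, then $I_\beta[b^k(X(\cdot),\alpha(\cdot))]_{s,t}=0$ by Lemma~\ref{lem:multiple estimate N_mu} and there is nothing to prove, so I would assume $\mu_1\geq\cdots\geq\mu_r$; then $\mu_{\max}(\beta)=\mu_1$ and, since the $\beta_i$ carry no $N$-components, $\eta(\beta)=\eta(\beta_1)+\cdots+\eta(\beta_{r+1})+\mu_1$.

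Next I would apply Lemma~\ref{lem:multiple estimate N_mu} with $f=b^k$ and $t_0=t$. Because $\mathcal{S}$ is finite, Remark~\ref{rem:reminder b N growth} bounds, uniformly in all the index choices, the quantity $|J^{\beta_{r+1}}_{\alpha(u)}J^{\beta_r}_{i_rk_r}\cdots J^{\beta_2}_{i_2k_2}(J^{\beta_1}_{k_1}b^k(x,k_1)-J^{\beta_1}_{i_1}b^k(x,i_1))|$ by $C(1+|x|)$, so the innermost conditional expectation on the right-hand side of Lemma~\ref{lem:multiple estimate N_mu} is dominated by $C\,E(\sup_{u\in[s,t]}(1+|X(u)|)^2\mid\mathcal{F}_T^\alpha)$, which is $\leq C$ by Theorem~\ref{thm:true moment}. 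Pulling this constant out and then taking the outer expectation would leave the prefactor $(t-s)^{\eta(\beta_1)+\cdots+\eta(\beta_{r+1})-l_{r+1}}$ times the expectation of a nested integral of the constant $1$ against $l_{r+1}$ Riemann measures and $r$ measures $d[M_{i_jk_j}]$, the latter carrying the indicators $\mathbbm{1}\{N^{(s,\cdot]}=\mu_j\}$. It would then suffice to bound this expectation by $C(t-s)^{l_{r+1}+\mu_1}$, because $\eta(\beta_1)+\cdots+\eta(\beta_{r+1})-l_{r+1}+l_{r+1}+\mu_1=\eta(\beta)$.

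The main obstacle is this last combinatorial estimate, where the nested integral structure of Lemma~\ref{lem:multiple estimate N_mu} must be reconciled with the jump-counting indicators. My plan there is: for the $j$-th integration against $d[M_{i_jk_j}]$, over an interval $(s,u_j]$ with $u_j$ one of the Riemann variables, use $|\sum_{i_j\neq k_j}\int_s^{u_j}g\,d[M_{i_jk_j}]|\leq N^{(s,u_j]}\sup_{[s,u_j]}|g|$ and, on the paired event $\{N^{(s,u_j]}=\mu_j\}$, replace $N^{(s,u_j]}$ by the numerical constant $\mu_j$ while keeping the indicator itself; since the deeper nested integrals are non-decreasing in their upper limits, the supremum is attained at the endpoint, so each of the $r$ integrations against $d[M_{i_jk_j}]$ collapses at the cost only of the constant $\prod_{j=1}^r\mu_j$. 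What remains is a nested Riemann integral of $1$ in $l_{r+1}$ variables, each bounded by $(t-s)$, multiplied by the single surviving indicator $\mathbbm{1}\{N^{(s,\tilde u]}=\mu_1\}$ at the outermost constrained time $\tilde u\leq t$, whose expectation is at most $P(N^{(s,t]}\geq\mu_1)\leq(q(t-s))^{\mu_1}$ by Lemma~\ref{lem:rateMS}(a), for which the hypothesis $0<t-s<1/(2q)$ is comfortably available. This produces $(t-s)^{l_{r+1}}\cdot(t-s)^{\mu_1}$ and closes the argument. The delicate points to get right are the ``paired event'' step (arranging that the indicator used to tame a jump sum is not consumed, by extracting only the numerical factor $\mu_j$) and checking that the exponents add up precisely to $\eta(\beta)=\sum_i\eta(\beta_i)+\mu_{\max}(\beta)$.
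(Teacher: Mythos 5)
Your proposal is correct and follows essentially the same route as the paper: apply Lemma \ref{lem:multiple estimate N_mu} to the decomposition $\beta=\beta_{r+1}\star(N_{\mu_r})\star\cdots\star(N_{\mu_1})\star\beta_1$, bound the innermost conditional expectation by a constant via Remark \ref{rem:reminder b N growth} and Theorem \ref{thm:true moment}, collapse the inner integrals while retaining only the outermost indicator $\mathbbm{1}\{N^{(s,\cdot]}=\mu_1\}$, and finish with Lemma \ref{lem:rateMS} to produce the factor $(q(t-s))^{\mu_1}$ that, combined with the Riemann integrals, gives the exponent $\eta(\beta)=n(\beta)+2\bar{n}(\beta)+\mu_1$. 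The "paired event" bookkeeping you flag as delicate is in fact already absorbed into the $\prod_{i}\mu_i$ prefactor in the statement of Lemma \ref{lem:multiple estimate N_mu}, so the paper's proof simply discards the inner jump-measure integrals as bounded by constants on the surviving event.
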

\begin{proof}
Clearly, we can write $\beta=\beta_{r+1}\star(N_{\mu_{r}})\star \beta_{r}\star \cdots\star(N_{\mu_2})\star \beta_2\star(N_{\mu_1})\star \beta_1$ where $\mu_1,\ldots,\mu_r\in\{1,\ldots,2\gamma\}$, $\beta_1,\ldots,\beta_{r+1}\in\mathcal{M}_1$ and $r\in\{1,\ldots, 2\gamma-2\}$.
Further, if $\mu_1> \mu_2>\cdots>\mu_r$ is not satisfied, then $I_\beta[b^k(X(\cdot),\alpha(\cdot))]_{s,t}=0$.
On using Lemma \ref{lem:multiple estimate N_mu} and Remark \ref{rem:reminder b N growth}, we obtain
\begin{align*}
E\Big(|I_{\beta}&[b^k(X(\cdot),\alpha(\cdot))]_{s,t}|^2 \Big)\leq CE\Big(\prod_{i=1}^r\mu_i (t-s)^{\eta(\beta_1)+\cdots +\eta(\beta_{r+1})-l_{r+1}}
\\
&\int_s^{t}\int_{s}^{t_{1}}\cdots\int_{s}^{t_{l_1-1}}\sum_{i_1\neq k_1}\int_{s}^{t_{l_1}}\mathbbm{1}\{N^{(s,t_{l_1}]}=\mu_1\}
\\
&\int_s^{t_{l_1+1}}\int_{s}^{t_{l_1+2}}\cdots\int_{s}^{t_{l_2}}\sum_{i_2\neq k_2}\int_{s}^{t_{l_2+1}}\mathbbm{1}\{N^{(s,t_{l_2+1}]}=\mu_2\} \cdots
\\
&\int_s^{t_{l_{r-1}+r-1}}\int_{s}^{t_{l_{r-1}+r}}\cdots\int_{s}^{t_{l_r+r-2}}\sum_{i_r\neq k_r}\int_{s}^{t_{l_r+r-1}}\mathbbm{1}\{N^{(s,t_{l_r+r-1}]}=\mu_r\}
\\
&\int_s^{t_{l_{r}+r}}\int_{s}^{t_{l_{r}+r+1}}\cdots\int_{s}^{t_{l_{r+1}+r-1}}
\\
&E\Big(|J^{\beta_{r+1}}_{\alpha(t_{l_{r+1}+r})}J^{\beta_r}_{i_rk_r}\cdots J^{\beta_2}_{i_2k_2}(J^{\beta_1}_{k_1}b^k(X(t_{l_{r+1}+r}),k_1)-J^{\beta_1}_{i_1}b^k(X(t_{l_{r+1}+r}),i_1))|^2\Big|\mathcal{F}_T^{\alpha} \Big)
\\
&dt_{l_{r+1}+r}\cdots dt_{l_{r}+r+2}dt_{l_{r}+r+1}d[M_{i_1k_1}](t_{l_r+r})dt_{l_r+r-1}\cdots dt_{l_{r-1}+r+1}dt_{l_{r-1}+r} 
\\
&d[M_{i_1k_1}](t_{l_2+2})dt_{l_2+1}\cdots dt_{l_1+3}dt_{l_1+2}d[M_{i_1k_1}](t_{l_1+1})dt_{l_1}\cdots dt_2dt_1\Big)
\\
\leq& C(t-s)^{n(\beta)+2\bar{n}(\beta)-l(\beta_1)}\int_s^{t}\int_{s}^{t_{1}}\cdots\int_{s}^{t_{l_1-1}}E\Big(\mathbbm{1}\{N^{(s,t_{l_1}]}=\mu_1\}E\Big(\sup_{u\in[s,t]}(1+|X(u)|)^2\Big|\mathcal{F}_T^{\alpha} \Big)\Big)
\\
&dt_{l_1}\cdots dt_2dt_1
\end{align*}
which by the application of Theorem \ref{thm:true moment} and Lemma \ref{lem:rateMS} yield,
\begin{align*}
E\Big(|I_{\beta}[b^k(X(\cdot),\alpha(\cdot))]_{s,t}|^2 \Big)
\leq& C(t-s)^{n(\beta)+2\bar{n}(\beta)-l(\beta_1)}\int_s^{t}\int_{s}^{t_{1}}\cdots\int_{s}^{t_{l_1-1}}
\\
&\qquad P\{N^{(s,t_{l_1}]}=\mu_1\}dt_{l_1}\cdots dt_2dt_1
\leq C(t-s)^{\eta(\beta)}
\end{align*}
for all $k\in\{1,\ldots,d\}$ and $s<t\in[0,T]$ where $0<(t-s)<1/2q$.
By adopting similar arguments, one gets the second inequality of the lemma.
\end{proof}
Using Lemma \ref{lem:multiple estimate 0,1}, Assumption \ref{ass:reminder b sigma ds dw growth} and Theorem \ref{thm:true moment}, the following corollary is obtained.

\begin{corollary}\label{cor:multiple estimate remainder dsdW}
Let Assumptions \ref{ass:initial data}, \ref{ass: b sigma lipschitz} and \ref{ass:reminder b sigma ds dw growth} hold. Then, for all  $\beta\in(\mathcal{B}(\mathcal{A}_\gamma^b)\setminus\tilde{\mathcal{B}}^b_1)\cap\mathcal{M}_1$, $\bar{\beta}\in(\mathcal{B}(\mathcal{A}_\gamma^\sigma)\setminus\tilde{\mathcal{B}}^\sigma_1)\cap\mathcal{M}_1$, $k\in\{1,\ldots,d\}$, $j\in\{1,\ldots,m\}$ and $s<t\in[0,T]$, 
\begin{align*}
E\Big(|I_{\beta}[b^k(X(\cdot),\alpha(\cdot))]_{s,t}|^2 \Big)+E\Big(|I_{\bar{\beta}}[\sigma^{(k,j)}(X(\cdot),\alpha(\cdot))]_{s,t}|^2 \Big)\leq& C (t-s)^{\eta(\beta)}.
\end{align*}
\end{corollary}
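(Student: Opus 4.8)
The plan is to combine the second inequality of Lemma \ref{lem:multiple estimate 0,1} with the linear growth bound provided by Assumption \ref{ass:reminder b sigma ds dw growth} and the conditional moment estimate of Theorem \ref{thm:true moment}. Since $\beta\in(\mathcal{B}(\mathcal{A}_\gamma^b)\setminus\tilde{\mathcal{B}}_1^b)\cap\mathcal{M}_1$, every component of $\beta$ lies in $\{0,1,\ldots,m\}$, so $I_\beta[b^k(X(\cdot),\alpha(\cdot))]_{s,t}$ is a genuine iterated It\^o--Lebesgue integral, with no factors of the type $d[M_{i_0k_0}]$; in particular $\beta$ falls exactly within the class of multi-indices to which Lemma \ref{lem:multiple estimate 0,1} applies, and the same is true of $\bar\beta$.

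First I would invoke Assumption \ref{ass:reminder b sigma ds dw growth}, which gives precisely $|J^\beta_{i_0}b^k(x,i_0)|\le L(1+|x|)$ for all $x\in\mathbb{R}^d$ and $i_0\in\mathcal{S}$ whenever $\beta\in(\mathcal{B}(\mathcal{A}_\gamma^b)\setminus\tilde{\mathcal{B}}_1^b)\cap\mathcal{M}_1$, together with the analogous bound for $J^{\bar\beta}_{i_0}\sigma^{(k,j)}$. Hence the second inequality of Lemma \ref{lem:multiple estimate 0,1} applies and yields
\begin{align*}
E\Big(\sup_{u\in[s,t]}|I_{\beta}[b^k(X(\cdot),\alpha(\cdot))]_{s,u}|^2\Big|\mathcal{F}_T^{\alpha}\Big)\le C(t-s)^{\eta(\beta)}\,E\Big(\sup_{u\in[s,t]}(1+|X(u)|)^2\Big|\mathcal{F}_T^{\alpha}\Big),
\end{align*}
and the same estimate with $(\beta,b^k)$ replaced by $(\bar\beta,\sigma^{(k,j)})$.

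Next I would bound the conditional second moment $E\big(\sup_{u\in[s,t]}(1+|X(u)|)^2\big|\mathcal{F}_T^{\alpha}\big)$ by a constant independent of $h$, which follows from Theorem \ref{thm:true moment} with $p=2$; thus the right-hand side above is at most $C(t-s)^{\eta(\beta)}$ almost surely. Taking expectations and using the tower property then removes the conditioning and gives $E(|I_\beta[b^k(X(\cdot),\alpha(\cdot))]_{s,t}|^2)\le C(t-s)^{\eta(\beta)}$, and likewise $E(|I_{\bar\beta}[\sigma^{(k,j)}(X(\cdot),\alpha(\cdot))]_{s,t}|^2)\le C(t-s)^{\eta(\bar\beta)}$; adding the two inequalities and absorbing constants yields the assertion. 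There is no real obstacle here — the only points that require a little care are verifying that the membership $\beta\in\mathcal{M}_1$ is exactly the hypothesis Lemma \ref{lem:multiple estimate 0,1} requires, and keeping the conditioning on $\mathcal{F}_T^{\alpha}$ consistent across the lemma, the growth assumption and Theorem \ref{thm:true moment} before passing to the unconditional expectation.
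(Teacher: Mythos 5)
Your proposal is correct and follows exactly the route the paper takes: the paper obtains this corollary by combining the second inequality of Lemma \ref{lem:multiple estimate 0,1} (applicable since $\beta,\bar\beta\in\mathcal{M}_1$), the linear growth bound of Assumption \ref{ass:reminder b sigma ds dw growth}, and the conditional moment bound of Theorem \ref{thm:true moment}, then removing the conditioning by the tower property. No differences worth noting.
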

\begin{lemma}\label{lem:B_A_b_N_mu}
Let Assumptions \ref{ass:initial data}, \ref{ass: b sigma lipschitz} hold and Assumption \ref{ass:reminder b N growth} be satisfied for all $\beta\in \mathcal{B}(\mathcal{A}_\gamma^b)\setminus\tilde{\mathcal{B}}^b_1$ such that $\eta(\beta)=2\gamma-1$, none of the component of $\beta$ is $1,\ldots,m$ and at least one of its component is $N_1,\ldots,N_{2\gamma-1}$. 
Then, for all $ n_T\in\mathbb{N}$ and $k\in\{1,\ldots,d\}$,
\begin{align*}
E\Big(&\sup_{n\in\{0,1,\ldots, n_T\}}\big|\sum^{n-1}_{i=0}\int_{t_{i}}^{t_{i+1}}I_{\beta}[b^k(X(\cdot),\alpha(\cdot)) ]_{t_{i},s}ds  \big|^2\Big)\leq Ch^{2\gamma}.
\end{align*}
\end{lemma}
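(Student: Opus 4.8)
The plan is to estimate the double sum $\sum_{i=0}^{n-1}\int_{t_i}^{t_{i+1}}I_\beta[b^k(X(\cdot),\alpha(\cdot))]_{t_i,s}\,ds$ by exploiting the fact that each summand carries an indicator forcing at least $\mu_1$ jumps of the Markov chain $\alpha$ in the subinterval $(t_i,s]$; since jumps are rare (Lemma \ref{lem:rateMS}), most summands vanish, and this is precisely what upgrades the naive bound $h^{2\gamma-1}$ coming from $\eta(\beta)=2\gamma-1$ to the required $h^{2\gamma}$. Concretely, write $\beta=\beta_{r+1}\star(N_{\mu_r})\star\cdots\star(N_{\mu_1})\star\beta_1$ with $\mu_1,\ldots,\mu_r\in\{1,\ldots,2\gamma-1\}$ and $\beta_1,\ldots,\beta_{r+1}\in\mathcal M_1$; if $\mu_1>\mu_2>\cdots>\mu_r$ fails, the integral is zero and there is nothing to prove, so assume this ordering holds. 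First I would apply Cauchy--Schwarz (in both the $i$-sum and the inner $ds$-integral), obtaining
\begin{align*}
E\Big(\sup_{n}\big|\sum_{i=0}^{n-1}\int_{t_i}^{t_{i+1}}I_\beta[b^k(X(\cdot),\alpha(\cdot))]_{t_i,s}\,ds\big|^2\Big)\leq n_T\,h\sum_{i=0}^{n_T-1}\int_{t_i}^{t_{i+1}}E\big(|I_\beta[b^k(X(\cdot),\alpha(\cdot))]_{t_i,s}|^2\big)\,ds.
\end{align*}

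Next I would estimate the inner second moment $E(|I_\beta[b^k(X(\cdot),\alpha(\cdot))]_{t_i,s}|^2)$ using Lemma \ref{lem:multiple estimate N_mu} together with the growth bound of Remark \ref{rem:reminder b N growth} (available under Assumption \ref{ass:reminder b N growth}, which is hypothesized here), and then Theorem \ref{thm:true moment} to control $E(\sup_{u}(1+|X(u)|)^2|\mathcal F_T^\alpha)$, exactly mirroring the computation in the proof of Lemma \ref{lem:multiple estimate remainder N_mu}. The key point is that after all the innermost deterministic integrations are carried out, what survives is a factor $(s-t_i)^{n(\beta)+2\bar n(\beta)-l(\beta_1)}$ multiplied by an iterated integral over $l(\beta_1)$ variables of $P\{N^{(t_i,\cdot]}=\mu_1\}$; by Lemma \ref{lem:rateMS}(a) this probability is bounded by $(q(s-t_i))^{\mu_1}$, and since $\mu_1\geq 1$ we gain at least one extra power of $(s-t_i)$ beyond $\eta(-\beta)$. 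Counting powers: $n(\beta)+2\bar n(\beta)-l(\beta_1)$ from the deterministic integrations plus $l(\beta_1)$ from the iterated $ds$-integrals plus $\mu_1$ from the jump probability equals $n(\beta)+2\bar n(\beta)+\mu_1$; since no component of $\beta$ equals $1,\ldots,m$ we have $n(\beta)=0$, so $\bar n(\beta)=\mu_{\max}(\beta)$... wait --- more carefully, $\eta(\beta)=n(\beta)+2\bar n(\beta)+\mu_{\max}(\beta)=2\gamma-1$ and $\mu_1=\mu_{\max}(\beta)$ (because $\mu_1>\mu_2>\cdots$), so the total exponent is $\eta(\beta)-\mu_{\max}(\beta)+\mu_1=\eta(\beta)=2\gamma-1$; together with the surviving $\mu_1\geq 1$ this gives $E(|I_\beta[\cdots]_{t_i,s}|^2)\leq C(s-t_i)^{2\gamma}$, using also $0<h<1/(2q)$ for the convergence of the auxiliary series as in Lemma \ref{lem:multiple estimate remainder N_mu}.

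Plugging this back, $\int_{t_i}^{t_{i+1}}E(|I_\beta[\cdots]_{t_i,s}|^2)\,ds\leq Ch^{2\gamma+1}$, and summing over the $n_T=T/h$ subintervals with the prefactor $n_T h$ gives $n_T h\cdot n_T\cdot Ch^{2\gamma+1}=C T^2 h^{2\gamma}$, as claimed. The main obstacle, and the step requiring the most care, is the bookkeeping of exponents in the nested integral estimate from Lemma \ref{lem:multiple estimate N_mu}: one must check that the hypotheses "$\eta(\beta)=2\gamma-1$", "no component equal to $1,\ldots,m$", and "at least one component in $\{N_1,\ldots,N_{2\gamma-1}\}$" together force the critical gain of one power of $h$ (i.e.\ that $\mu_1\geq 1$ is genuinely available and not cancelled), and that the constant $C$ depends only on $T$, $q$, $\gamma$ and the Lipschitz/growth constants, not on $h$. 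The remaining arguments are routine applications of Cauchy--Schwarz, Theorem \ref{thm:true moment}, and Lemma \ref{lem:rateMS}.
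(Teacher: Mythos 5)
There is a genuine gap at exactly the step you flagged as requiring the most care: the claimed bound $E\big(|I_{\beta}[b^k(X(\cdot),\alpha(\cdot))]_{t_i,s}|^2\big)\leq C(s-t_i)^{2\gamma}$ double-counts the power coming from the jump probability. In your bookkeeping, the factor $P\{N^{(t_i,\cdot]}=\mu_1\}\leq (q(s-t_i))^{\mu_1}$ already contributes the $\mu_{\max}(\beta)=\mu_1$ summand inside $\eta(\beta)=n(\beta)+2\bar n(\beta)+\mu_{\max}(\beta)$, so the total exponent is $\eta(\beta)=2\gamma-1$ and there is no "surviving $\mu_1\geq1$" left to add. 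The correct and sharp estimate is $E(|I_{\beta}[\cdots]_{t_i,s}|^2)\leq C(s-t_i)^{\eta(\beta)}=C(s-t_i)^{2\gamma-1}$ (this is exactly Lemma \ref{lem:multiple estimate remainder N_mu}, no better). A concrete check: for $\gamma=2$ take $\beta=(N_1,0)\in\mathcal{B}(\mathcal{A}_2^b)\setminus\tilde{\mathcal{B}}_1^b$; then $I_{(N_1,0)}[b^k]_{t_i,s}$ is essentially $(\min(s,\tau_2)-\tau_1)_+$ times a bounded jump increment, whose second moment is of exact order $(s-t_i)^3=(s-t_i)^{2\gamma-1}$, not $(s-t_i)^{4}$. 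Feeding $C(s-t_i)^{2\gamma-1}$ into your Cauchy--Schwarz prefactor $n_T h$ yields only $n_Th\cdot n_T\cdot Ch^{2\gamma}=Ch^{2\gamma-1}$, one power of $h$ short of the claim.

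The missing idea, and the reason this lemma is singled out from the generic remainder estimates, is the compensator decomposition $d[M_{i_0k_0}]=dM_{i_0k_0}+d\langle M_{i_0k_0}\rangle$ applied to the innermost jump integral. The $dM_{i_0k_0}$ part makes each summand a conditionally centered increment, so the partial sums over $i$ form a martingale and Doob's inequality together with orthogonality of the increments gives $\sum_i E|a_i|^2$ \emph{without} the factor $n_T$ from Cauchy--Schwarz; with $E|a_i|^2\leq h\int_{t_i}^{t_{i+1}}E|I_\beta|^2\,ds\leq Ch^{2\gamma+1}$ this sums to $Ch^{2\gamma}$. The $d\langle M_{i_0k_0}\rangle$ part has the density $q_{i_0k_0}\mathbbm{1}\{\alpha(u-)=i_0\}\,du$, so the jump integral becomes an additional Lebesgue integral over an interval of length at most $h$; this supplies the extra power of $h$ that survives the $n_T$ prefactor, again using $P\{N^{(t_i,\cdot]}=\mu_1\}$ and Lemma \ref{lem:rateMS}. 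Without this splitting (or some equivalent martingale argument exploiting cancellation across the subintervals), the statement cannot be reached from the pathwise second-moment bound alone.
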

\begin{proof}
Clearly, we can write $\beta=\tilde{\beta}\star(N_{\mu_1})\star\bar{\beta}$ where $\tilde{\beta}\in\mathcal{M}$, $\bar{\beta}\in\mathcal{M}_1$ and $\mu_1\in\{1,\ldots,2\gamma-1\}$.
Let $\bar{\beta}\neq\nu$.
Then, on using the definition of multiple integrals from Subsection \ref{sub:multiple integral} and noticing that $d[M_{i_0k_0}](u)$ is a positive measure along with  $dM_{i_0k_0}(u)=d[M_{i_0k_0}](u)-d\langle M_{i_0k_0}\rangle(u)$, one obtains, 
\begin{align*}
E\Big(&\sup_{n\in\{0,1,\ldots, n_T\}}\big|\sum^{n-1}_{i=0}\int_{t_{i}}^{t_{i+1}}I_{\beta}[b^k(X(\cdot),\alpha(\cdot)) ]_{t_{i},s}ds  \big|^2\Big)
\\
=&E\Big(\sup_{n\in\{0,1,\ldots, n_T\}}\big|\sum^{n-1}_{i=0}\int_{t_{i}}^{t_{i+1}}\int_{t_{i}}^s\int_{t_{i}}^{s_1}\cdots\int_{t_{i}}^{s_{l(\bar{\beta})-1}}\sum_{i_0\neq k_0}\int_{t_{i}}^{s_{l(\bar{\beta})}}\mathbbm{1}\{N^{(t_{i},s_{l(\bar{\beta})}]}=\mu_1\}
\\
&\qquad I_{\tilde{\beta}}[J^{\bar{\beta}}_{k_0}b^k(X(\cdot),k_0)-J^{\bar{\beta}}_{i_0}b^k(X(\cdot),i_0)]_{t_{i},u}d[M_{i_0k_0}](u)ds_{l(\bar{\beta})}\cdots ds_2 ds_1 ds \big|^2\Big)
\\
\leq&CE\Big(\sup_{n\in\{0,1,\ldots, n_T\}}\big|\sum^{n-1}_{i=0}\int_{t_{i}}^{t_{i+1}}\int_{t_{i}}^s\int_{t_{i}}^{s_1}\cdots\int_{t_{i}}^{s_{l(\bar{\beta})-1}}\sum_{i_0\neq k_0}\int_{t_{i}}^{s_{l(\bar{\beta})}}\mathbbm{1}\{N^{(t_{i},s_{l(\bar{\beta})}]}=\mu_1\}
\\
&\qquad I_{\tilde{\beta}}[J^{\bar{\beta}}_{k_0}b^k(X(\cdot),k_0)-J^{\bar{\beta}}_{i_0}b^k(X(\cdot),i_0)]_{t_{i},u}dM_{i_0k_0}(u)ds_{l(\bar{\beta})}\cdots ds_2 ds_1ds \big|^2\Big)
\\
&+CE\Big(\sup_{n\in\{0,1,\ldots, n_T\}}\big|\sum^{n-1}_{i=0}\int_{t_{i}}^{t_{i+1}}\int_{t_{i}}^s\int_{t_{i}}^{s_1}\cdots\int_{t_{i}}^{s_{l(\bar{\beta})-1}}\sum_{i_0\neq k_0}\int_{t_{i}}^{s_{l(\bar{\beta})}}\mathbbm{1}\{N^{(t_{i},s_{l(\bar{\beta})}]}=\mu_1\}
\\
&\qquad I_{\tilde{\beta}}[J^{\bar{\beta}}_{k_0}b^k(X(\cdot),k_0)-J^{\bar{\beta}}_{i_0}b^k(X(\cdot),i_0)]_{t_{i},u}d\langle M_{i_0k_0}\rangle(u)ds_{l(\bar{\beta})}\cdots ds_2 ds_1ds \big|^2\Big)
\end{align*}
for all $ n_T\in\mathbb{N}$ and $k\in\{1,\ldots,d\}$.
If $\bar{\beta}=\nu$, then $J^{\bar{\beta}}_{i_0}$ is the identity operator for $i_0 \in \mathcal{S}$,  $s_0=s$ and the iterated integrals $ \int_{t_{i}}^s\int_{t_{i}}^{s_1}\cdots\int_{t_{i}}^{s_{l(\bar{\beta})-1}}$ does not appear on the right side of the above equation and in the forthcoming  estimates.
Notice that the first term on the right side of the above inequality is a martingale. 
On using Doob's martingale inequality, the definition of multiple integrals from Subsection \ref{sub:multiple integral} and H\"older's inequality, we obtain
\begin{align}
E&\Big(\sup_{n\in\{0,1,\ldots, n_T\}}\big|\sum^{n-1}_{i=0}\int_{t_{i}}^{t_{i+1}}I_{\beta}[b^k(X(\cdot),\alpha(\cdot)) ]_{t_{i},s}ds  \big|^2\Big)\notag
\\
\leq&C \sum^{n_T-1}_{i=0}E\big|\int_{t_{i}}^{t_{i+1}}\int_{t_{i}}^s\int_{t_{i}}^{s_1}\cdots\int_{t_{i}}^{s_{l(\bar{\beta})-1}}\sum_{i_0\neq k_0}\int_{t_{i}}^{s_{l(\bar{\beta})}}\mathbbm{1}\{N^{(t_{i},s_{l(\bar{\beta})}]}=\mu_1\}\notag
\\
&\qquad I_{\tilde{\beta}}[J^{\bar{\beta}}_{k_0}b^k(X(\cdot),k_0)-J^{\bar{\beta}}_{i_0}b^k(X(\cdot),i_0)]_{t_{i},u}dM_{i_0k_0}(u)ds_{l(\bar{\beta})}\cdots ds_2 ds_1 ds \big|^2\notag
\\
+&Cn_T \sum^{n_T-1}_{i=0}E\big|\int_{t_{i}}^{t_{i+1}}\int_{t_{i}}^s\int_{t_{i}}^{s_1}\cdots\int_{t_{i}}^{s_{l(\bar{\beta})-1}}\sum_{i_0\neq k_0}\int_{t_{i}}^{s_{l(\bar{\beta})}}\mathbbm{1}\{N^{(t_{i},s_{l(\bar{\beta})}]}=\mu_1\}\notag
\\
&\qquad I_{\tilde{\beta}}[J^{\bar{\beta}}_{k_0}b^k(X(\cdot),k_0)-J^{\bar{\beta}}_{i_0}b^k(X(\cdot),i_0)]_{t_{i},u}d\langle M_{i_0k_0}\rangle(u)ds_{l(\bar{\beta})}\cdots ds_2 ds_1ds \big|^2\notag
\\
\leq&C \sum^{n_T-1}_{i=0}E\big|\int_{t_{i}}^{t_{i+1}}\int_{t_{i}}^s\int_{t_{i}}^{s_1}\cdots\int_{t_{i}}^{s_{l(\bar{\beta})-1}}\sum_{i_0\neq k_0}\int_{t_{i}}^{s_{l(\bar{\beta})}}\mathbbm{1}\{N^{(t_{i},s_{l(\bar{\beta})}]}=\mu_1\}\notag
\\
&\qquad I_{\tilde{\beta}}[J^{\bar{\beta}}_{k_0}b^k(X(\cdot),k_0)-J^{\bar{\beta}}_{i_0}b^k(X(\cdot),i_0)]_{t_{i},u}d[M_{i_0k_0}](u)ds_{l(\bar{\beta})}\cdots ds_2 ds_1 ds \big|^2\notag
\\
+&Cn_T \sum^{n_T-1}_{i=0}E\big|\int_{t_{i}}^{t_{i+1}}\int_{t_{i}}^s\int_{t_{i}}^{s_1}\cdots\int_{t_{i}}^{s_{l(\bar{\beta})-1}}\mathbbm{1}\{N^{(t_{i},s_{l(\bar{\beta})}]}=\mu_1\}\sum_{i_0\neq k_0}\int_{t_{i}}^{s_{l(\bar{\beta})}}q_{i_0k_0}\mathbbm{1}\{\alpha(u-)=i_0\}\notag
\\
&\qquad I_{\tilde{\beta}}[J^{\bar{\beta}}_{k_0}b^k(X(\cdot),k_0)-J^{\bar{\beta}}_{i_0}b^k(X(\cdot),i_0)]_{t_{i},u}duds_{l(\bar{\beta})}\cdots ds_2 ds_1ds \big|^2\notag
\\
\leq&Ch\sum^{n_T-1}_{i=0}\int_{t_{i}}^{t_{i+1}}E|I_{\beta}[b^k(X(\cdot),\alpha(\cdot)) ]_{t_{i},s}|^2ds\notag
\\
&+Ch^{l(\bar{\beta})+1} \sum^{n_T-1}_{i=0}\int_{t_{i}}^{t_{i+1}}\int_{t_{i}}^s\int_{t_{i}}^{s_1}\cdots\int_{t_{i}}^{s_{l(\bar{\beta})-1}}\sum_{i_0\neq k_0}\int_{t_{i}}^{s_{l(\bar{\beta})}}E\Big(\mathbbm{1}\{N^{(t_{i},s_{l(\bar{\beta})}]}=\mu_1\}\notag
\\
&\qquad E\Big(|I_{\tilde{\beta}}[J^{\bar{\beta}}_{k_0}b^k(X(\cdot),k_0)-J^{\bar{\beta}}_{i_0}b^k(X(\cdot),i_0)]_{t_{i},u}|^2\Big|\mathcal{F}_T^{\alpha}\Big)\Big)duds_{l(\bar{\beta})}\cdots ds_2 ds_1ds\label{eq:angle M} 
\end{align}
for all $ n_T\in\mathbb{N}$ and $k\in\{1,\ldots,d\}$.
Now, for different possibilities of $\tilde{\beta}\in\mathcal{M}$, we estimate $E\Big(|I_{\tilde{\beta}}[J^{\bar{\beta}}_{k_0}b^k(X(\cdot),k_0)-J^{\bar{\beta}}_{i_0}b^k(X(\cdot),i_0)]_{t_{i},u}|^2\Big|\mathcal{F}_T^{\alpha}\Big)$ for all $u\in[t_i,t_{i+1}]$, $i\in\{0,\ldots,n_T-1\}$, $n_T\in\mathbb{N}$ and $k\in\{1,\ldots,d\}$.
If $\tilde{\beta}=\nu$, then by Remark \ref{rem:reminder b N growth} and Theorem \ref{thm:true moment}, we have
\begin{align*}
E\Big(|I_{\tilde{\beta}}[J^{\bar{\beta}}_{k_0}b^k(X(\cdot),k_0)-J^{\bar{\beta}}_{i_0}b^k(X(\cdot),i_0)]_{t_{i},u}|^2\Big|\mathcal{F}_T^{\alpha}\Big)\leq& CE\Big((1+|X(u)|)^2\Big|\mathcal{F}_T^{\alpha}\Big)\leq C
\end{align*} 
for all $u\in[t_i,t_{i+1}]$, $i\in\{0,\ldots,n_T-1\}$, and $k\in\{1,\ldots,d\}$.
Now, if $\tilde{\beta}\in\mathcal{M}_1\setminus\{\nu\}$, then Remark \ref{rem:reminder b N growth}, Lemma \ref{lem:multiple estimate 0,1} and Theorem \ref{thm:true moment} yield,
\begin{align*}
E\Big(|I_{\tilde{\beta}}[J^{\bar{\beta}}_{k_0}b^k(X(\cdot),k_0)-&J^{\bar{\beta}}_{i_0}b^k(X(\cdot),i_0)]_{t_{i},u}|^2\Big|\mathcal{F}_T^{\alpha}\Big)
\\
\leq& C(u-t_i)^{n(\tilde{\beta})+2\bar{n}(\tilde{\beta})}E\Big(\sup_{v\in[0,T]}(1+|X(v)|)^2\Big|\mathcal{F}_T^{\alpha}\Big)
\leq C(u-t_i)^{n(\tilde{\beta})+2\bar{n}(\tilde{\beta})}
\end{align*}
for all $u\in[t_i,t_{i+1}]$, $i\in\{0,\ldots,n_T-1\}$, $n_T\in\mathbb{N}$ and $k\in\{1,\ldots,d\}$.
Further, if $\tilde{\beta}\in\mathcal{M}_2\cup\mathcal{M}_3$, then on using Remark \ref{rem:reminder b N growth}, Lemma \ref{lem:multiple estimate N_mu} and Theorem \ref{thm:true moment}, we obtain
\begin{align*}
E\Big(|I_{\tilde{\beta}}[J^{\bar{\beta}}_{k_0}b^k(X(\cdot),k_0)&-J^{\bar{\beta}}_{i_0}b^k(X(\cdot),i_0)]_{t_{i},u}|^2\Big|\mathcal{F}_T^{\alpha}\Big)
\\
\leq& C(u-t_i)^{n(\tilde{\beta})+2\bar{n}(\tilde{\beta})}E\Big(\sup_{v\in[0,T]}(1+|X(v)|)^2\Big|\mathcal{F}_T^{\alpha}\Big)\leq C(u-t_i)^{n(\tilde{\beta})+2\bar{n}(\tilde{\beta})}
\end{align*}
for all $u\in[t_i,t_{i+1}]$, $i\in\{0,\ldots,n_T-1\}$, $n_T\in\mathbb{N}$ and $k\in\{1,\ldots,d\}$.
Hence, \eqref{eq:angle M} becomes
\begin{align*}
E&\Big(\sup_{n\in\{0,1,\ldots, n_T\}}\big|\sum^{n-1}_{i=0}\int_{t_{i}}^{t_{i+1}}I_{\beta}[b^k(X(\cdot),\alpha(\cdot)) ]_{t_{i},s}ds  \big|^2\Big)\leq Ch\sum^{n_T-1}_{i=0}\int_{t_{i}}^{t_{i+1}}E|I_{\beta}[b^k(X(\cdot),\alpha(\cdot)) ]_{t_{i},s}|^2ds\notag
\\
&+Ch^{l(\bar{\beta})+1} \sum^{n_T-1}_{i=0}\int_{t_{i}}^{t_{i+1}}\int_{t_{i}}^s\int_{t_{i}}^{s_1}\cdots\int_{t_{i}}^{s_{l(\bar{\beta})-1}}\int_{t_{i}}^{s_{l(\bar{\beta})}}E(\mathbbm{1}\{N^{(t_{i},s_{l(\bar{\beta})}]}=\mu_1\})(u-t_i)^{n(\tilde{\beta})+2\bar{n}(\tilde{\beta})} 
\\
&\qquad duds_{l(\bar{\beta})}\cdots ds_2 ds_1ds
\end{align*}
for all $ n_T\in\mathbb{N}$ and $k\in\{1,\ldots,d\}$.
By the application of Lemmas \ref{lem:multiple estimate remainder N_mu} and \ref{lem:rateMS}, we have
\begin{align*}
E&\Big(\sup_{n\in\{0,1,\ldots, n_T\}}\big|\sum^{n-1}_{i=0}\int_{t_{i}}^{t_{i+1}}I_{\beta}[b^k(X(\cdot),\alpha(\cdot)) ]_{t_{i},s}ds  \big|^2\Big)\leq Ch^{2\gamma}
\\
&+Ch^{l(\bar{\beta})+1} \sum^{n_T-1}_{i=0}\int_{t_{i}}^{t_{i+1}}\int_{t_{i}}^s\int_{t_{i}}^{s_1}\cdots\int_{t_{i}}^{s_{l(\bar{\beta})-1}}\int_{t_{i}}^{s_{l(\bar{\beta})}}P\{N^{(t_{i},s_{l(\bar{\beta})}]}=\mu_1\}(u-t_i)^{n(\tilde{\beta})+2\bar{n}(\tilde{\beta})} 
\\
&\qquad duds_{l(\bar{\beta})}\cdots ds_2 ds_1ds
\\
\leq& Ch^{2\gamma}
\end{align*}
for all $ n_T\in\mathbb{N}$ and $k\in\{1,\ldots,d\}$.
\end{proof}
\begin{lemma}\label{lem:B_A_b estimate}
Let Assumptions \ref{ass:initial data}, \ref{ass: b sigma lipschitz}, \ref{ass:A b sigma ds dw lipschitz}, \ref{ass:reminder b sigma ds dw growth} and \ref{ass:reminder b N growth} hold.
 Then,
\begin{align*}
\sum_{\beta\in \mathcal{B}(\mathcal{A}_\gamma^b)\setminus\tilde{\mathcal{B}}^b_1 }E\Big(&\sup_{n\in\{0,1,\ldots, n_T\}}\big|\sum^{n-1}_{i=0}\int_{t_{i}}^{t_{i+1}}I_{\beta}[b^k(X(\cdot),\alpha(\cdot)) ]_{t_{i},s}ds  \big|^2\Big)\leq Ch^{2\gamma}, 
\end{align*}
for all $ n_T\in\mathbb{N}$ and $k\in\{1,\ldots,d\}$.
\end{lemma}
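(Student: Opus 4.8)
The plan is to split the finite index set $\mathcal{B}(\mathcal{A}_\gamma^b)\setminus\tilde{\mathcal{B}}_1^b$ and to bound each term separately. The first observation is that every $\beta$ in this set satisfies $\eta(\beta)\geq 2\gamma-1$: since $-\beta\in\mathcal{A}_\gamma^b$ while $\beta\notin\mathcal{A}_\gamma^b$, an all-zero $\beta$ must have $\eta(\beta)\geq 2\gamma$, a $\beta$ containing some $N_r$ or some $j\in\{1,\dots,m\}$ must have $\eta(\beta)\geq 2\gamma-1$, and a $\beta$ containing $\bar N_{2\gamma}$ must have $\eta(\beta)\geq 2\gamma+1$ (recall that no element of $\mathcal{A}_\gamma^b$ carries a $\bar N_\mu$ component, so such a $\bar N_{2\gamma}$ is necessarily the first component of $\beta$). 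Accordingly, exactly one of Corollary~\ref{cor:multiple estimate remainder dsdW} (when $\beta\in\mathcal{M}_1$), Lemma~\ref{lem:multiple estimate remainder bar_N} (when the first component of $\beta$ is $\bar N_{2\gamma}$) or Lemma~\ref{lem:multiple estimate remainder N_mu} (otherwise) applies and gives $E\big(|I_\beta[b^k(X(\cdot),\alpha(\cdot))]_{s,t}|^2\big)\leq C(t-s)^{\eta(\beta)}$ for all $s<t\in[0,T]$ with $0<(t-s)<1/(2q)$.

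I would first dispose of the indices with $\eta(\beta)\geq 2\gamma$, for which the crude estimate suffices: by the discrete and continuous Cauchy--Schwarz inequalities and the bound just recalled,
\[
E\Big(\sup_{n\leq n_T}\Big|\sum_{i=0}^{n-1}\int_{t_i}^{t_{i+1}}I_\beta[b^k(X(\cdot),\alpha(\cdot))]_{t_i,s}\,ds\Big|^2\Big)
\leq n_T\,h\sum_{i=0}^{n_T-1}\int_{t_i}^{t_{i+1}}E\big|I_\beta[b^k(X(\cdot),\alpha(\cdot))]_{t_i,s}\big|^2\,ds
\leq C\,n_T^2\,h^{\,2+\eta(\beta)}=C\,h^{\eta(\beta)}\leq Ch^{2\gamma},
\]
using $n_T h=T$. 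This handles, in particular, all $\beta$ with first component $\bar N_{2\gamma}$ and all all-zero $\beta\in\mathcal{M}_1$, for which $\eta(\beta)=2\bar n(\beta)$ is even and hence $\geq 2\gamma$.

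The delicate layer is $\eta(\beta)=2\gamma-1$, where the crude bound is short by one power of $h$; here I would split according to whether $\beta$ contains a component from $\{1,\dots,m\}$. If it does, then conditioning on $\mathcal{F}_T^\alpha$ turns every measure $d[M_{i_0k_0}]$ and every indicator $\mathbbm{1}\{N^{(t_i,s]}=r\}$ into a deterministic object, so by the independence of $W$ and $\alpha$ and Fubini's theorem one obtains $E\big(I_\beta[b^k(X(\cdot),\alpha(\cdot))]_{t_i,s}\,\big|\,\mathcal{F}_{t_i}\vee\mathcal{F}_T^\alpha\big)=0$ for every $s\in[t_i,t_{i+1}]$; consequently $M_n:=\sum_{i=0}^{n-1}\int_{t_i}^{t_{i+1}}I_\beta[b^k(X(\cdot),\alpha(\cdot))]_{t_i,s}\,ds$ is a martingale with respect to $\{\mathcal{F}_{t_n}\vee\mathcal{F}_T^\alpha\}_n$, and Doob's inequality together with the orthogonality of martingale increments recovers the factor $n_T$ lost above:
\[
E\big(\sup_{n\leq n_T}|M_n|^2\big)\leq 4\sum_{i=0}^{n_T-1}E\big|\!\int_{t_i}^{t_{i+1}}\!I_\beta[b^k(X(\cdot),\alpha(\cdot))]_{t_i,s}\,ds\big|^2
\leq C\,h\sum_{i=0}^{n_T-1}\int_{t_i}^{t_{i+1}}E\big|I_\beta[b^k]_{t_i,s}\big|^2\,ds\leq C\,n_T\,h^{\,2+\eta(\beta)}\leq Ch^{2\gamma}.
\]
If instead $\beta$ has no component from $\{1,\dots,m\}$, then it consists only of $0$'s and $N_r$'s, and $\eta(\beta)=2\gamma-1$ forces at least one component $N_r$ with $r\leq 2\gamma-1$ and no $\bar N_{2\gamma}$ --- precisely the configuration covered by Lemma~\ref{lem:B_A_b_N_mu}, which already yields $Ch^{2\gamma}$ via the splitting $d[M_{i_0k_0}]=dM_{i_0k_0}+d\langle M_{i_0k_0}\rangle$ (Doob on the martingale part, one extra power of $h$ from the compensator). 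Summing the finitely many estimates over $\beta\in\mathcal{B}(\mathcal{A}_\gamma^b)\setminus\tilde{\mathcal{B}}_1^b$ gives the claim; the restriction $0<h<1/(2q)$ needed by the sub-lemmas is harmless, since for $h$ bounded away from $0$ the bound is trivial.

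The step I expect to be the main obstacle is exactly this critical layer $\eta(\beta)=2\gamma-1$: one must check that removing $\tilde{\mathcal{B}}_1^b$ leaves \emph{only} the two tractable configurations above --- a Brownian component present (so the partial sums are centred and form a martingale) or no Brownian component (handled by Lemma~\ref{lem:B_A_b_N_mu}) --- and that the martingale argument is rigorous, i.e.\ that enlarging the filtration by $\mathcal{F}_T^\alpha$ is legitimate and that an outer $\int\!\cdot\,d[M_{i_0k_0}]$ or $\int\!\cdot\,ds$ layer never destroys the centredness produced by an inner It\^o integral. This bookkeeping, together with reading off $\eta(\beta)$ correctly for each index of the remainder set, is where the care lies; the rest is the routine Cauchy--Schwarz/Doob machinery.
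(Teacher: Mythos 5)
Your proposal is correct and follows essentially the same route as the paper: the paper partitions $\mathcal{B}(\mathcal{A}_\gamma^b)\setminus\tilde{\mathcal{B}}_1^b$ into six subsets $\mathcal{C}_1,\mathcal{C}_{21},\mathcal{C}_{22},\mathcal{C}_{31},\mathcal{C}_{32},\mathcal{C}_{33}$, applies the crude Cauchy--Schwarz bound together with Lemma \ref{lem:multiple estimate remainder bar_N}, Lemma \ref{lem:multiple estimate remainder N_mu} and Corollary \ref{cor:multiple estimate remainder dsdW} on the indices with $\eta(\beta)\geq 2\gamma$, the Doob/orthogonality-of-increments argument on the $\eta(\beta)=2\gamma-1$ indices containing a Brownian component, and Lemma \ref{lem:B_A_b_N_mu} on the remaining $\eta(\beta)=2\gamma-1$ indices --- exactly your three cases. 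Your justification of the martingale property via conditioning on $\mathcal{F}_T^\alpha$ is in fact spelled out more carefully than in the paper, which simply asserts it.
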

\begin{proof}
We write $\mathcal{B}(\mathcal{A}_\gamma^b)\setminus\tilde{\mathcal{B}}^b_1=\mathcal{C}_1\cup\mathcal{C}_{21}\cup\mathcal{C}_{22}\cup\mathcal{C}_{31}\cup\mathcal{C}_{32}\cup\mathcal{C}_{33}$ where
\begin{itemize}
\item[] $\mathcal{C}_1:=\{\beta=(j_1,\ldots,j_l)\in \mathcal{B}(\mathcal{A}_\gamma^b)\setminus\tilde{\mathcal{B}}^b_1:\eta(\beta)=2\gamma-1,j_i\notin\{1,\ldots,m\} \forall i\in\{1,\ldots,l\},j_k\in\{N_1,\ldots,N_{2\gamma}\}\mbox{ for any }k\in\{1,\ldots,l\}\}$,
\item[] $\mathcal{C}_{21}:=\{\beta=(j_1,\ldots,j_l)\in \mathcal{B}(\mathcal{A}_\gamma^b)\setminus\tilde{\mathcal{B}}^b_1:\eta(\beta)=2\gamma-1,j_i\in\{1,\ldots,m\}\mbox{ for any }i\in\{1,\ldots,l\},j_i\notin\{N_1,\ldots,N_{2\gamma}\} \forall i\in\{1,\ldots,l\}\}$,
\item[]  $\mathcal{C}_{22}:=\{\beta=(j_1,\ldots,j_l)\in \mathcal{B}(\mathcal{A}_\gamma^b)\setminus\tilde{\mathcal{B}}^b_1:\eta(\beta)=2\gamma-1,j_i\in\{1,\ldots,m\}\mbox{ for any }i\in\{1,\ldots,l\},j_k\in\{N_1,\ldots,N_{2\gamma}\} \mbox{ for any } k\in\{1,\ldots,l\}\}$,
\item[] $\mathcal{C}_{31}:=\{\beta=(j_1,\ldots,j_l)\in \mathcal{B}(\mathcal{A}_\gamma^b)\setminus\tilde{\mathcal{B}}^b_1:\eta(\beta)\geq 2\gamma,j_1=\bar{N}_{2\gamma}\}$,
\item[]  $\mathcal{C}_{32}:=\{\beta=(j_1,\ldots,j_l)\in \mathcal{B}(\mathcal{A}_\gamma^b)\setminus\tilde{\mathcal{B}}^b_1:\eta(\beta)\geq 2\gamma,j_i\in\{0,1,\ldots,m\}\forall i\in\{1,\ldots,l\}\}$,
\item[]  $\mathcal{C}_{33}:=\{\beta=(j_1,\ldots,j_l)\in \mathcal{B}(\mathcal{A}_\gamma^b)\setminus\tilde{\mathcal{B}}^b_1:\eta(\beta)\geq 2\gamma,J_1\neq\bar{N} _{2\gamma},j_i\in\{N_1,\ldots,N_{2\gamma}\} \mbox{ for any } i\in\{1,\ldots,l\}\}$.
\end{itemize}
We can write
\begin{align*}
 \sum_{\beta\in \mathcal{B}(\mathcal{A}_\gamma^b)\setminus\tilde{\mathcal{B}}^b_1 }E\Big(&\sup_{n\in\{0,1,\ldots, n_T\}}\big|\sum^{n-1}_{i=0}\int_{t_{i}}^{t_{i+1}}I_{\beta}[b^k(X(\cdot),\alpha(\cdot)) ]_{t_{i},s}ds  \big|^2\Big)
\\
=&\sum_{\beta\in\mathcal{C}_{1}  }E\Big(\sup_{n\in\{0,1,\ldots, n_T\}}\big|\sum^{n-1}_{i=0}\int_{t_{i}}^{t_{i+1}}I_{\beta}[b^k(X(\cdot),\alpha(\cdot)) ]_{t_{i},s}ds  \big|^2\Big)
\\
&+\sum_{\beta\in\mathcal{C}_{21}  }E\Big(\sup_{n\in\{0,1,\ldots, n_T\}}\big|\sum^{n-1}_{i=0}\int_{t_{i}}^{t_{i+1}}I_{\beta}[b^k(X(\cdot),\alpha(\cdot)) ]_{t_{i},s}ds  \big|^2\Big)
\\
&+\sum_{\beta\in\mathcal{C}_{22}  }E\Big(\sup_{n\in\{0,1,\ldots, n_T\}}\big|\sum^{n-1}_{i=0}\int_{t_{i}}^{t_{i+1}}I_{\beta}[b^k(X(\cdot),\alpha(\cdot)) ]_{t_{i},s}ds  \big|^2\Big)
\\
&+ \sum_{\beta\in\mathcal{C}_{31}  }E\Big(\sup_{n\in\{0,1,\ldots, n_T\}}\big|\sum^{n-1}_{i=0}\int_{t_{i}}^{t_{i+1}}I_{\beta}[b^k(X(\cdot),\alpha(\cdot)) ]_{t_{i},s}ds  \big|^2\Big)
\\
&+\sum_{\beta\in\mathcal{C}_{32}  }E\Big(\sup_{n\in\{0,1,\ldots, n_T\}}\big|\sum^{n-1}_{i=0}\int_{t_{i}}^{t_{i+1}}I_{\beta}[b^k(X(\cdot),\alpha(\cdot)) ]_{t_{i},s}ds  \big|^2\Big)
\\
&+ \sum_{\beta\in\mathcal{C}_{33}  }E\Big(\sup_{n\in\{0,1,\ldots, n_T\}}\big|\sum^{n-1}_{i=0}\int_{t_{i}}^{t_{i+1}}I_{\beta}[b^k(X(\cdot),\alpha(\cdot)) ]_{t_{i},s}ds  \big|^2\Big)
\end{align*}
for all $ n_T\in\mathbb{N}$ and $k\in\{1,\ldots,d\}$.
Notice that $I_{\beta}[b^k(X(\cdot),\alpha(\cdot)) ]_{t_{i},s}$ is a martingale for every $\beta\in\mathcal{C}_{21}$ and $\beta\in\mathcal{C}_{22}$. 
Then, Doob's martingale inequality, H\"older's inequality and Lemma \ref{lem:B_A_b_N_mu} yield,
\begin{align*}
&\sum_{\beta\in \mathcal{B}(\mathcal{A}_\gamma^b)\setminus\tilde{\mathcal{B}}^b_1 }E\Big(\sup_{n\in\{0,1,\ldots, n_T\}}\big|\sum^{n-1}_{i=0}\int_{t_{i}}^{t_{i+1}}I_{\beta}[b^k(X(\cdot),\alpha(\cdot)) ]_{t_{i},s}ds  \big|^2\Big)\leq Ch^{2\gamma}
\\
&+C h\sum^{n_T-1}_{i=0}\Big(\sum_{\beta\in\mathcal{C}_{21}  }\int_{t_{i}}^{t_{i+1}}E|I_{\beta}[b^k(X(\cdot),\alpha(\cdot)) ]_{t_{i},s}|^2ds+\sum_{\beta\in\mathcal{C}_{22}  }\int_{t_{i}}^{t_{i+1}}E|I_{\beta}[b^k(X(\cdot),\alpha(\cdot)) ]_{t_{i},s}|^2ds\Big)
\\
&+C \sum^{n_T-1}_{i=0}\Big(\sum_{\beta\in\mathcal{C}_{31}  }\int_{t_{i}}^{t_{i+1}}E|I_{\beta}[b^k(X(\cdot),\alpha(\cdot)) ]_{t_{i},s}|^2ds+ \sum_{\beta\in\mathcal{C}_{32}  }\int_{t_{i}}^{t_{i+1}}E|I_{\beta}[b^k(X(\cdot),\alpha(\cdot)) ]_{t_{i},s}|^2ds\Big)
\\
&+C \sum_{\beta\in\mathcal{C}_{33}  }\sum^{n_T-1}_{i=0}\int_{t_{i}}^{t_{i+1}}E|I_{\beta}[b^k(X(\cdot),\alpha(\cdot)) ]_{t_{i},s}|^2ds
\end{align*} 
for all $ n_T\in\mathbb{N}$ and $k\in\{1,\ldots,d\}$.
Hence on using Lemmas \ref{lem:multiple estimate remainder bar_N}, \ref{lem:multiple estimate remainder N_mu} and Corollary \ref{cor:multiple estimate remainder dsdW}, we obtain
\begin{align*}
\sum_{\beta\in \mathcal{B}(\mathcal{A}_\gamma^b)\setminus\tilde{\mathcal{B}}^b_1 }&E\Big(\sup_{n\in\{0,1,\ldots, n_T\}}\big|\sum^{n-1}_{i=0}\int_{t_{i}}^{t_{i+1}}I_{\beta}[b^k(X(\cdot),\alpha(\cdot)) ]_{t_{i},s}ds  \big|^2\Big)\leq Ch^{2\gamma}
\\
&+Ch\sum^{n_T-1}_{i=0}\int_{t_{i}}^{t_{i+1}}\Big(\sum_{\beta\in\mathcal{C}_{21}  }(s-t_{i})^{\eta(\beta)}ds+\sum_{\beta\in\mathcal{C}_{22}  }(s-t_{i})^{\eta(\beta)}ds\Big)
\\
&+C\sum^{n_T-1}_{i=0}\int_{t_{i}}^{t_{i+1}}\Big(\sum_{\beta\in\mathcal{C}_{31}  }(s-t_{i})^{\eta(\beta)}ds+\sum_{\beta\in\mathcal{C}_{32}  }(s-t_{i})^{\eta(\beta)}ds+\sum_{\beta\in\mathcal{C}_{33}  }(s-t_{i})^{\eta(\beta)}ds\Big)
\\
\leq& Ch^{2\gamma}
\end{align*}
for all $ n_T\in\mathbb{N}$ and $k\in\{1,\ldots,d\}$.
\end{proof}
\begin{lemma}\label{lem:B_A_sigma estimate}
Let  Assumptions \ref{ass:initial data}, \ref{ass: b sigma lipschitz}, \ref{ass:A b sigma ds dw lipschitz}, \ref{ass:reminder b sigma ds dw growth} and \ref{ass:reminder sigma N growth} be satisfied. Then
\begin{align*}
\sum_{\beta\in \mathcal{B}(\mathcal{A}_\gamma^\sigma)\setminus\tilde{\mathcal{B}}^\sigma_1}E\Big(&\sup_{n\in\{0,1,\ldots, n_T\}}\big|\sum^{n-1}_{i=0}\int_{t_{i}}^{t_{i+1}}I_{\beta}[\sigma^{(k,j)}(X(\cdot),\alpha(\cdot)) ]_{t_{i},s}dW^j(s)  \big|^2\Big)\leq Ch^{2\gamma}
\end{align*}
for all $ n_T\in\mathbb{N}$, $k\in\{1,\ldots,d\}$ and $j\in\{1,\ldots,m\}$. 
\end{lemma}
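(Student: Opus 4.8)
The plan is to run the same argument as in the proof of Lemma \ref{lem:B_A_b estimate}, but it turns out to be noticeably shorter here because the outermost integral is a stochastic integral rather than a time integral. First I would partition $\mathcal{B}(\mathcal{A}_\gamma^\sigma)\setminus\tilde{\mathcal{B}}^\sigma_1$ into three disjoint classes according to the structure of the multi-index: $\mathcal{D}_1$, those whose first component equals $\bar{N}_{2\gamma}$; $\mathcal{D}_2$, those with no component equal to $\bar{N}_{2\gamma}$ but at least one component in $\{N_1,\ldots,N_{2\gamma}\}$; and $\mathcal{D}_3$, those lying in $\mathcal{M}_1$, i.e.\ with every component in $\{0,1,\ldots,m\}$. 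A short check using the definition of a multi-index from Subsection \ref{sub:Multi-indices} and of a remainder set from Subsection \ref{sub:hierarchical and reminder} shows this partition is exhaustive: if $\bar{N}_{2\gamma}$ occurs anywhere in $\beta$ it must occur in the first position, for otherwise $-\beta$ would still contain $\bar{N}_{2\gamma}$ and hence $\eta(-\beta)\geq 2\gamma+1>2\gamma-1$, contradicting $-\beta\in\mathcal{A}_\gamma^\sigma$. The same reasoning gives that every $\beta\in\mathcal{B}(\mathcal{A}_\gamma^\sigma)$ satisfies $\eta(\beta)\geq 2\gamma$, and this is the feature that ultimately produces the rate $h^{2\gamma}$.

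Next, fix $\beta\in\mathcal{B}(\mathcal{A}_\gamma^\sigma)\setminus\tilde{\mathcal{B}}^\sigma_1$, $k\in\{1,\ldots,d\}$ and $j\in\{1,\ldots,m\}$. The process indexed by $n$, obtained by integrating the $\mathbb{F}$-adapted piecewise integrand $s\mapsto I_\beta[\sigma^{(k,j)}(X(\cdot),\alpha(\cdot))]_{t_i,s}$ on $[t_i,t_{i+1}]$ against $dW^j$, is a square-integrable $\mathbb{F}$-martingale, its square-integrability following from Theorem \ref{thm:true moment} together with Assumptions \ref{ass:reminder b sigma ds dw growth} and \ref{ass:reminder sigma N growth} (with the usual stopping-time localization, as elsewhere in the paper). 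Hence Doob's maximal inequality, or equivalently the Burkholder--Davis--Gundy inequality, gives
\[
E\Big(\sup_{n\in\{0,\ldots,n_T\}}\big|\sum_{i=0}^{n-1}\int_{t_i}^{t_{i+1}}I_\beta[\sigma^{(k,j)}(X(\cdot),\alpha(\cdot))]_{t_i,s}\,dW^j(s)\big|^2\Big)\leq C\sum_{i=0}^{n_T-1}\int_{t_i}^{t_{i+1}}E\big|I_\beta[\sigma^{(k,j)}(X(\cdot),\alpha(\cdot))]_{t_i,s}\big|^2\,ds .
\]

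For each of the three classes I would now invoke the a priori bound already at our disposal: Lemma \ref{lem:multiple estimate remainder bar_N} for $\beta\in\mathcal{D}_1$, the second estimate of Lemma \ref{lem:multiple estimate remainder N_mu} for $\beta\in\mathcal{D}_2$ (whose proof for the diffusion only uses Assumption \ref{ass:reminder sigma N growth}, via Remark \ref{rem:reminder sigma N growth}), and Corollary \ref{cor:multiple estimate remainder dsdW} for $\beta\in\mathcal{D}_3$; in all cases one gets $E|I_\beta[\sigma^{(k,j)}(X(\cdot),\alpha(\cdot))]_{t_i,s}|^2\leq C(s-t_i)^{\eta(\beta)}$, the uses of Lemmas \ref{lem:multiple estimate remainder bar_N} and \ref{lem:multiple estimate remainder N_mu} being legitimate because $s-t_i\leq h<1/(2q)$ with $q:=\max\{-q_{i_0i_0}:i_0\in\mathcal{S}\}$. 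Since $\eta(\beta)\geq 2\gamma$, one has
\[
\sum_{i=0}^{n_T-1}\int_{t_i}^{t_{i+1}}(s-t_i)^{\eta(\beta)}\,ds=\frac{n_T\,h^{\eta(\beta)+1}}{\eta(\beta)+1}=\frac{T\,h^{\eta(\beta)}}{\eta(\beta)+1}\leq C\,h^{2\gamma}
\]
for $0<h<1$. Summing over the finitely many $\beta\in\mathcal{B}(\mathcal{A}_\gamma^\sigma)\setminus\tilde{\mathcal{B}}^\sigma_1$ and the finitely many $j\in\{1,\ldots,m\}$ keeps the constant finite and yields the stated bound.

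The routine parts are the BDG/martingale estimate and the elementary power counting; the only point requiring care is the combinatorial bookkeeping of the first paragraph, namely verifying that $\mathcal{D}_1,\mathcal{D}_2,\mathcal{D}_3$ exhaust $\mathcal{B}(\mathcal{A}_\gamma^\sigma)\setminus\tilde{\mathcal{B}}^\sigma_1$ and that every such multi-index has $\eta(\beta)\geq 2\gamma$, which is what keeps the exponents of $h$ from dropping below $2\gamma$. It is worth stressing that, in contrast to the drift case treated in Lemma \ref{lem:B_A_b estimate}, no analogue of Lemma \ref{lem:B_A_b_N_mu} is needed: there the $[M_{i_0k_0}]$-integral had to be decomposed into its martingale and compensator parts precisely because the outermost integral was a time integral, whereas here the outermost $dW^j$-integral already provides the martingale structure that lets us pass directly to the pointwise moment bound on $I_\beta[\sigma^{(k,j)}(X(\cdot),\alpha(\cdot))]_{t_i,s}$.
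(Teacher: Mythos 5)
Your proposal is correct and follows essentially the same route as the paper: the same three-way partition of $\mathcal{B}(\mathcal{A}_\gamma^\sigma)\setminus\tilde{\mathcal{B}}^\sigma_1$ (with your $\mathcal{D}_2$ and $\mathcal{D}_3$ merely relabelled), the same reduction via Doob's and the Burkholder--Davis--Gundy inequalities to pointwise second-moment bounds, and the same invocation of Lemmas \ref{lem:multiple estimate remainder bar_N} and \ref{lem:multiple estimate remainder N_mu} and Corollary \ref{cor:multiple estimate remainder dsdW} followed by the $\eta(\beta)\geq 2\gamma$ power counting. Your added justification of the exhaustiveness of the partition and your remark contrasting this with the drift case are correct but not part of the paper's (terser) argument.
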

\begin{proof}
We write $\mathcal{B}(\mathcal{A}_\gamma^\sigma)\setminus\tilde{\mathcal{B}}^\sigma_1=\mathcal{D}_{1}\cup\mathcal{D}_{2}\cup\mathcal{D}_{3}$ where
\begin{itemize}
\item[] $\mathcal{D}_{1}:=\{(j_1,\ldots,j_l)\in \mathcal{B}(\mathcal{A}_\gamma^\sigma)\setminus\tilde{\mathcal{B}}^\sigma_1:j_1=\bar{N}_{2\gamma}\}$,
\item[]  $\mathcal{D}_{2}:=\{(j_1,\ldots,j_l)\in \mathcal{B}(\mathcal{A}_\gamma^\sigma)\setminus\tilde{\mathcal{B}}^\sigma_1:j_i\in\{0,1,\ldots,m\}\forall i\in\{1,\ldots,l\}\}$,
\item[]  $\mathcal{D}_{3}:=\{(j_1,\ldots,j_l)\in \mathcal{B}(\mathcal{A}_\gamma^\sigma)\setminus\tilde{\mathcal{B}}^\sigma_1:j_1\neq\bar{N}_{2\gamma},j_i\in\{N_1,\ldots,N_{2\gamma}\} \mbox{ for any } i\in\{1,\ldots,l\}\}$.
\end{itemize}
On using Doob's martingale inequality and Burkholder-Davis-Gundy inequality, we have
\begin{align*}
 &\sum_{\beta\in \mathcal{B}(\mathcal{A}_\gamma^\sigma)\setminus\tilde{\mathcal{B}}^\sigma_1}E\Big(\sup_{n\in\{0,1,\ldots, n_T\}}\big|\sum^{n-1}_{i=0}\int_{t_{i}}^{t_{i+1}}I_{\beta}[\sigma^{(k,j)}(X(\cdot),\alpha(\cdot)) ]_{t_{i},s}dW^j(s)  \big|^2\Big)
\\
\leq& C\sum^{n_T-1}_{i=0}\Big(\sum_{\beta\in\mathcal{D}_{1}}\int_{t_{i}}^{t_{i+1}}E|I_{\beta}[\sigma^{(k,j)}(X(\cdot),\alpha(\cdot)) ]_{t_{i},s}|^2ds+\sum_{\beta\in\mathcal{D}_{2}}\int_{t_{i}}^{t_{i+1}}E|I_{\beta}[\sigma^{(k,j)}(X(\cdot),\alpha(\cdot)) ]_{t_{i},s}|^2ds
\\
&+\sum_{\beta\in\mathcal{D}_{3}}\int_{t_{i}}^{t_{i+1}}E|I_{\beta}[\sigma^{(k,j)}(X(\cdot),\alpha(\cdot)) ]_{t_{i},s}|^2ds\Big)
\end{align*}
for all $ n_T\in\mathbb{N}$, $k\in\{1,\ldots,d\}$ and $j\in\{1,\ldots,m\}$. 
The application of Lemmas \ref{lem:multiple estimate remainder bar_N}, \ref{lem:multiple estimate remainder N_mu} and Corollary \ref{cor:multiple estimate remainder dsdW} yield,
\begin{align*}
\sum_{\beta\in \mathcal{B}(\mathcal{A}_\gamma^\sigma)\setminus\tilde{\mathcal{B}}^\sigma_1}&E\Big(\sup_{n\in\{0,1,\ldots, n_T\}}\big|\sum^{n-1}_{i=0}\int_{t_{i}}^{t_{i+1}}I_{\beta}[\sigma^{(k,j)}(X(\cdot),\alpha(\cdot)) ]_{t_{i},s}dW^j(s)  \big|^2\Big)
\\
\leq C&\sum^{n_T-1}_{i=0}\Big(\sum_{\beta\in\mathcal{D}_{1}}\int_{t_{i}}^{t_{i+1}}(s-t_{i})^{\eta(\beta)}ds+ \sum_{\beta\in\mathcal{D}_{2}}\int_{t_{i}}^{t_{i+1}}(s-t_{i})^{\eta(\beta)}ds+ \sum_{\beta\in\mathcal{D}_{3}}\int_{t_{i}}^{t_{i+1}}(s-t_{i})^{\eta(\beta)}ds\Big)
\\
\leq C&h^{2\gamma}
\end{align*}
for all $n_T\in\mathbb{N}$, $k\in\{1,\ldots,d\}$ and $j\in\{1,\ldots,m\}$.
\end{proof}
\begin{lemma}\label{lem:convergence B_1 estimate}
Let Assumptions \ref{ass:initial data}, \ref{ass: b sigma lipschitz} and \ref{ass:A b sigma ds dw lipschitz} hold. Then,
\begin{align*}
\sum_{\beta\in\mathcal{B}_1^b}E\Big(\sup_{n\in\{1,\ldots,n_T\}}\Big|\sum_{i=0}^{n-1}\int_{t_i}^{t_{i+1}}I_{\beta}[b^k(X(\cdot),\alpha(\cdot))-b^k(X(t_i),\alpha(t_i))]_{t_i,s}ds\Big|^2\Big)\leq&Ch^{2\gamma},
\\
\sum_{\beta\in\mathcal{B}_1^\sigma}E\Big(\sup_{n\in\{1,\ldots,n_T\}}\Big|\sum_{i=0}^{n-1}\int_{t_i}^{t_{i+1}}I_{\beta}[\sigma^{(k,j)}(X(\cdot),\alpha(\cdot))-\sigma^{(k,j)}(X(t_i),\alpha(t_i))]_{t_i,s}dW^j(s)\Big|^2\Big)\leq& Ch^{2\gamma}
\end{align*}
for all $k\in\{1,\ldots,d\}$, $j\in\{1,\ldots,m\}$ and $n_T\in\mathbb{N}$.
\end{lemma}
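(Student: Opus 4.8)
\emph{Proof plan.} The two bounds are proved in parallel; I describe the drift statement in detail and indicate the (essentially cosmetic) changes for the diffusion statement at the end. The first step is to recognise that each summand is a multiple integral with an \emph{increment} integrand. Since $\mathcal{B}_1^b\subset\mathcal{M}_1$, $I_\beta$ is an iterated integral in $dW^{j_1},\dots,dW^{j_l}$ (with $dW^0=ds$), and by linearity
\[
I_\beta[b^k(X(\cdot),\alpha(\cdot))-b^k(X(t_i),\alpha(t_i))]_{t_i,s}=I_\beta[b^k(X(\cdot),\alpha(\cdot))]_{t_i,s}-I_\beta[b^k(X(t_i),\alpha(t_i))]_{t_i,s};
\]
unrolling the recursion of Subsection \ref{sub:multiple integral}, this is an iterated integral whose innermost integrand is $\Delta_i^\beta(u):=J^\beta_{\alpha(u)}b^k(X(u),\alpha(u))-J^\beta_{\alpha(t_i)}b^k(X(t_i),\alpha(t_i))$ (for $\beta=\nu$ this is simply $b^k(X(u),\alpha(u))-b^k(X(t_i),\alpha(t_i))$). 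Conditioning on $\mathcal{F}_T^\alpha$, I would bound $\Delta_i^\beta$ by splitting on the $\mathcal{F}_T^\alpha$-measurable event $\{N^{(t_i,u]}=0\}$: there $\alpha$ is frozen, so Assumption \ref{ass:A b sigma ds dw lipschitz} (or Assumption \ref{ass: b sigma lipschitz} if $\beta=\nu$) and Theorem \ref{thm:true moment} give $E(|\Delta_i^\beta(u)|^2\mid\mathcal{F}_T^\alpha)\le C(u-t_i)$, whereas on $\{N^{(t_i,u]}\ge1\}$ the linear growth of Remark \ref{rem:A b sigma ds dw growth} and Theorem \ref{thm:true moment} give the bound $C$. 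Hence $E(|\Delta_i^\beta(u)|^2\mid\mathcal{F}_T^\alpha)\le C(u-t_i)+C\mathbbm{1}\{N^{(t_i,u]}\ge1\}$.

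Feeding this into the Burkholder--Davis--Gundy/H\"older iteration that underlies Lemma \ref{lem:multiple estimate 0,1} (now applied to the increment-valued integrand) yields, for $s\in[t_i,t_{i+1}]$,
\[
E\Big(\sup_{u\le s}\big|I_\beta[b^k(X(\cdot),\alpha(\cdot))-b^k(X(t_i),\alpha(t_i))]_{t_i,u}\big|^2\,\Big|\,\mathcal{F}_T^\alpha\Big)\le C(s-t_i)^{\eta(\beta)}\big((s-t_i)+\mathbbm{1}\{N^{(t_i,s]}\ge1\}\big);
\]
taking expectations and using $P(N^{(t_i,s]}\ge1)\le q(s-t_i)$ (Lemma \ref{lem:rateMS}(a)) gives $E|I_\beta[\cdots]_{t_i,s}|^2\le C(s-t_i)^{\eta(\beta)+1}$, so that $A_i^\beta:=\int_{t_i}^{t_{i+1}}I_\beta[\cdots]_{t_i,s}\,ds$ satisfies $E|A_i^\beta|^2\le Ch^{\eta(\beta)+3}$ by H\"older. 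The summation over $i$ is where I would use a martingale structure whenever available: if the last component of $\beta$ lies in $\{1,\dots,m\}$ then, for each $s$, $I_\beta[\cdots]_{t_i,s}$ is a mean-zero $\mathcal{F}_{t_i}$-martingale increment, so $\{A_i^\beta\}_i$ is a martingale difference sequence and Doob's inequality gives $E(\sup_n|\sum_{i<n}A_i^\beta|^2)\le4\sum_{i<n_T}E|A_i^\beta|^2\le Cn_Th^{\eta(\beta)+3}\le Ch^{\eta(\beta)+2}$, which is $\le Ch^{2\gamma}$ because every $W$-containing $\beta\in\mathcal{B}_1^b$ forces $\eta(\beta)\ge2\gamma-2$ (from $\beta\in\mathcal{A}_\gamma^b$, $-\beta\in\mathcal{A}_{\gamma-0.5}^b$, $\beta\notin\mathcal{A}_{\gamma-0.5}^b$); if $\beta$ is built from $0$'s only, the same inclusions together with the evenness of $\eta$ on purely-drift multi-indices force $\eta(\beta)\ge2\gamma-1$, and already the crude bound $E(\sup_n|\sum_{i<n}A_i^\beta|^2)\le n_T\sum_{i<n_T}E|A_i^\beta|^2\le Cn_T^2h^{\eta(\beta)+3}\le Ch^{\eta(\beta)+1}\le Ch^{2\gamma}$ closes.

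The remaining, and hardest, case is $\beta=\nu$, where $\eta(\nu)=0$, the sequence $\{A_i^\nu\}_i$ is not a martingale difference, and the crude bound only yields $h$; this is where one genuinely loses a power of $h$ unless more is done. The plan is to apply the It\^o--Taylor expansion (Lemma \ref{lem:ItowMS}, then Lemma \ref{lem:Ito on beta}) to the increment $b^k(X(s),\alpha(s))-b^k(X(t_i),\alpha(t_i))$ itself, peeling off one component at a time while carefully retaining the ``running minus frozen'' difference structure, so that each term produced is again a multiple integral with an increment integrand of the required type; each such term is then bounded by Steps~2--3 as soon as its $\eta$ reaches $2\gamma-2$ (when it ends in a Wiener integral) or $2\gamma-1$ (otherwise), and the finitely many terms that are not yet of this kind are re-expanded once more. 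The expansion terminates since $\eta$ strictly increases at each peeling step and the multi-indices stay of bounded length, and collecting the contributions gives $Ch^{2\gamma}$. For the diffusion statement one repeats the whole argument with $b^k\rightsquigarrow\sigma^{(k,j)}$, $\mathcal{B}_1^b\rightsquigarrow\mathcal{B}_1^\sigma$, $\mathcal{A}_\gamma^b\rightsquigarrow\mathcal{A}_\gamma^\sigma$, and the outer $\int_{t_i}^{t_{i+1}}\cdot\,ds$ replaced by $\int_{t_i}^{t_{i+1}}\cdot\,dW^j(s)$; since the outer integral is now stochastic, $\{A_i^{\bar\beta}\}_i$ is automatically a martingale difference sequence, the extra power is gained from Burkholder--Davis--Gundy rather than from H\"older, and the delicate $\bar\beta=\nu$ re-expansion is carried out exactly as above. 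I expect the bookkeeping in this last step --- keeping track of which re-expanded pieces still carry a difference structure and verifying the $\eta$-counts so that nothing escapes the increment estimate --- to be the main obstacle.
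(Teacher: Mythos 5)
Your main argument is, in substance, the paper's own proof. The event split on $\{N^{(t_i,u]}=0\}$ versus $\{N^{(t_i,u]}\ge 1\}$ is equivalent to the paper's algebraic split of the integrand into $b^k(X(\cdot),\alpha(\cdot))-b^k(X(t_i),\alpha(\cdot))$ (handled by Assumptions \ref{ass: b sigma lipschitz}, \ref{ass:A b sigma ds dw lipschitz} and the $h^{1/2}$ modulus of continuity from Theorem \ref{thm:true moment}) plus $b^k(X(t_i),\alpha(\cdot))-b^k(X(t_i),\alpha(t_i))$ (which vanishes without a jump, so it carries $\mathbbm{1}\{N^{(t_i,s]}\ge 1\}$ and is bounded by linear growth); converting the indicator into a factor $(s-t_i)$ via Lemma \ref{lem:rateMS} and then summing over $i$ with Doob in the martingale case and crudely otherwise is exactly what the paper does, with the same exponent counts $\eta(\beta)\ge 2\gamma-2$ (Wiener component present) and $\eta(\beta)\ge 2\gamma-1$ (all components $0$). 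One small repair: the dichotomy must be ``at least one component of $\beta$ lies in $\{1,\dots,m\}$'' versus ``all components equal $0$''; your two cases (``last component in $\{1,\dots,m\}$'' and ``all zeros'') miss multi-indices such as $(j_1,0)$ with $j_1\ge 1$, which are still conditionally centred and belong to the Doob branch.

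The genuine problem is your treatment of $\beta=\nu$. You correctly observe that the direct estimate degenerates there, but the proposed It\^o--Taylor re-expansion cannot close the gap, because for $\gamma\ge 1.5$ the asserted bound is simply unattainable for the $\nu$-term: $\sum_{i<n}\int_{t_i}^{t_{i+1}}\bigl(b^k(X(s),\alpha(s))-b^k(X(t_i),\alpha(t_i))\bigr)\,ds$ contains the non-centred piece $\sum_i\int_{t_i}^{t_{i+1}}\int_{t_i}^{s}L^0_{\alpha}b^k\,ds_1\,ds$, which is of order $h$ in value (take $d=m=m_0=1$, $\sigma\equiv 0$, $b(x)=x$: the expression is the left-endpoint Riemann-sum error of $\int X(s)\,ds$, of exact order $h$), hence of order $h^2$ in mean square, and no re-expansion ``retaining the difference structure'' removes a deterministic $O(h^2)$-per-step drift. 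The resolution is not analytic but definitional: $\nu$ enters $\mathcal{B}_1^b$ through $\mathcal{B}(\mathcal{A}_{\gamma-0.5}^b)$ only when $\gamma=1/2$ (for $\gamma\ge 1$ one has $\nu\in\mathcal{A}_{\gamma-0.5}^b$, so the increment term $I_\nu[b^k(X(\cdot),\alpha(\cdot))-b^k(X(t_i),\alpha(t_i))]$ never actually arises in the error decomposition \eqref{eq:gen. scheme derivation}), and in that case $h^{2\gamma}=h$ and your crude bound already suffices. The paper's own proof makes the same implicit exclusion by applying $(s-t_i)^{\eta(\beta)}\le (s-t_i)^{2\gamma-1}$ uniformly over $\mathcal{B}_{11}^b$, which presupposes $\eta(\beta)\ge 2\gamma-1$ and therefore rules out $\nu$ for $\gamma\ge 1$. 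You should state this exclusion explicitly rather than attempt the re-expansion.
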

\begin{proof}
We write $\mathcal{B}_1^b=\mathcal{B}_{11}^b\cup\mathcal{B}_{12}^b$ where
\begin{itemize}
\item[] $\mathcal{B}_{11}^b:=\{\beta\in\mathcal{B}_1^b:$ the components of $\beta$ are not equal to $1,\ldots,m\} $ and
\item[] $\mathcal{B}_{12}^b:=\{\beta\in\mathcal{B}_1^b:$ at least one of the components of $\beta$ is equal to $1,\ldots,m\} $.
\end{itemize}
Notice that $I_{\beta}[b^k(X(\cdot),\alpha(\cdot)) ]_{t_{i},s}$ is a martingale for every $\beta\in\mathcal{B}_{12}^b$.
Thus, Doob's martingale inequality and H\"older's inequality yield,
\begin{align*}
\sum_{\beta\in\mathcal{B}_1^b}E\Big(\sup_{n\in\{1,\ldots,n_T\}}\Big|&\sum_{i=0}^{n-1}\int_{t_i}^{t_{i+1}}I_{\beta}[b^k(X(\cdot),\alpha(\cdot))-b^k(X(t_i),\alpha(t_i))]_{t_i,s}ds\Big|^2\Big)
\\
=&\sum_{\beta\in\mathcal{B}_{11}^b}E\Big(\sup_{n\in\{1,\ldots,n_T\}}\Big|\sum_{i=0}^{n-1}\int_{t_i}^{t_{i+1}}I_{\beta}[b^k(X(\cdot),\alpha(\cdot))-b^k(X(t_i),\alpha(t_i))]_{t_i,s}ds\Big|^2\Big)
\\
&+\sum_{\beta\in\mathcal{B}_{12}^b}E\Big(\sup_{n\in\{1,\ldots,n_T\}}\Big|\sum_{i=0}^{n-1}\int_{t_i}^{t_{i+1}}I_{\beta}[b^k(X(\cdot),\alpha(\cdot))-b^k(X(t_i),\alpha(t_i))]_{t_i,s}ds\Big|^2\Big)
\\
\leq&C \sum_{\beta\in\mathcal{B}_{11}^b}\sum_{i=0}^{n_T-1}\int_{t_i}^{t_{i+1}}E|I_{\beta}[b^k(X(\cdot),\alpha(\cdot))-b^k(X(t_i),\alpha(t_i))]_{t_i,s}|^2ds
\\
&+Ch\sum_{\beta\in\mathcal{B}_{12}^b}\sum_{i=0}^{n_T-1}\int_{t_i}^{t_{i+1}}E|I_{\beta}[b^k(X(\cdot),\alpha(\cdot))-b^k(X(t_i),\alpha(t_i))]_{t_i,s}|^2ds
\end{align*}
which further implies, 
\begin{align*}
\sum_{\beta\in\mathcal{B}_1^b}E\Big(&\sup_{n\in\{1,\ldots,n_T\}}\Big|\sum_{i=0}^{n-1}\int_{t_i}^{t_{i+1}}I_{\beta}[b^k(X(\cdot),\alpha(\cdot))-b^k(X(t_i),\alpha(t_i))]_{t_i,s}ds\Big|^2\Big)
\\
\leq&C \sum_{\beta\in\mathcal{B}_{11}^b}\sum_{i=0}^{n_T-1}\int_{t_i}^{t_{i+1}}E|I_{\beta}[b^k(X(\cdot),\alpha(\cdot))-b^k(X(t_i),\alpha(\cdot))]_{t_i,s}|^2ds
\\
&+C \sum_{\beta\in\mathcal{B}_{11}^b}\sum_{i=0}^{n_T-1}\int_{t_i}^{t_{i+1}}E\Big(\mathbbm{1}\{N^{(t_i,s]}\geq 1\}|I_{\beta}[b^k(X(t_i),\alpha(\cdot))-b^k(X(t_i),\alpha(t_i))]_{t_i,s}|^2\Big)ds
\\
&+hC \sum_{\beta\in\mathcal{B}_{12}^b}\sum_{i=0}^{n_T-1}\int_{t_i}^{t_{i+1}}E|I_{\beta}[b^k(X(\cdot),\alpha(\cdot))-b^k(X(t_i),\alpha(\cdot))]_{t_i,s}|^2ds
\\
&+hC \sum_{\beta\in\mathcal{B}_{12}^b}\sum_{i=0}^{n_T-1}\int_{t_i}^{t_{i+1}}E\Big(\mathbbm{1}\{N^{(t_i,s]}\geq 1\}|I_{\beta}[b^k(X(t_i),\alpha(\cdot))-b^k(X(t_i),\alpha(t_i))]_{t_i,s}|^2\Big)ds
\\
\leq&C \sum_{\beta\in\mathcal{B}_{11}^b}\sum_{i=0}^{n_T-1}\int_{t_i}^{t_{i+1}}E|I_{\beta}[b^k(X(\cdot),\alpha(\cdot))-b^k(X(t_i),\alpha(\cdot))]_{t_i,s}|^2ds
\\
&+C \sum_{\beta\in\mathcal{B}_{11}^b}\sum_{i=0}^{n_T-1}\int_{t_i}^{t_{i+1}}E\Big(\mathbbm{1}\{N^{(t_i,s]}\geq 1\}E\Big(|I_{\beta}[b^k(X(t_i),\alpha(\cdot))]_{t_i,s}|^2\Big|\mathcal{F}_T^{\alpha}\Big)\Big)ds
\\
&+C \sum_{\beta\in\mathcal{B}_{11}^b}\sum_{i=0}^{n_T-1}\int_{t_i}^{t_{i+1}}E\Big(\mathbbm{1}\{N^{(t_i,s]}\geq 1\}E\Big(|I_{\beta}[b^k(X(t_i),\alpha(t_i))]_{t_i,s}|^2\Big|\mathcal{F}_T^{\alpha}\Big)\Big)ds
\\
&+Ch \sum_{\beta\in\mathcal{B}_{12}^b}\sum_{i=0}^{n_T-1}\int_{t_i}^{t_{i+1}}E|I_{\beta}[b^k(X(\cdot),\alpha(\cdot))-b^k(X(t_i),\alpha(\cdot))]_{t_i,s}|^2ds
\\
&+Ch \sum_{\beta\in\mathcal{B}_{12}^b}\sum_{i=0}^{n_T-1}\int_{t_i}^{t_{i+1}}E\Big(\mathbbm{1}\{N^{(t_i,s]}\geq 1\}E\Big(|I_{\beta}[b^k(X(t_i),\alpha(\cdot))]_{t_i,s}|^2\Big|\mathcal{F}_T^{\alpha}\Big)\Big)ds
\\
&+Ch \sum_{\beta\in\mathcal{B}_{12}^b}\sum_{i=0}^{n_T-1}\int_{t_i}^{t_{i+1}}E\Big(\mathbbm{1}\{N^{(t_i,s]}\geq 1\}E\Big(|I_{\beta}[b^k(X(t_i),\alpha(t_i))]_{t_i,s}|^2\Big|\mathcal{F}_T^{\alpha}\Big)\Big)ds
\end{align*}
for all $k\in\{1,\ldots,d\}$ and $n_T\in\mathbb{N}$.
Moreover, Assumptions \ref{ass: b sigma lipschitz}, \ref{ass:A b sigma ds dw lipschitz}, Remark \ref{rem:A b sigma ds dw growth} and Lemma \ref{lem:multiple estimate 0,1} yield,
\begin{align*}
\sum_{\beta\in\mathcal{B}_1^b}E\Big(&\sup_{n\in\{1,\ldots,n_T\}}\Big|\sum_{i=0}^{n-1}\int_{t_i}^{t_{i+1}}I_{\beta}[b^k(X(\cdot),\alpha(\cdot))-b^k(X(t_i),\alpha(t_i))]_{t_i,s}ds\Big|^2\Big)
\\
\leq& C\sum_{i=0}^{n_T-1}\sum_{\beta\in\mathcal{B}_{11}^b}\int_{t_i}^{t_{i+1}}(s-t_i)^{2\gamma-1}E\Big(\sup_{u\in[t_i,s]}|X(u)-X(t_i)|^2\Big)ds
\\
&+C\sum_{i=0}^{n_T-1}\sum_{\beta\in\mathcal{B}_{11}^b}\int_{t_i}^{t_{i+1}}(s-t_i)^{2\gamma-1}E\Big(\mathbbm{1}\{N^{(t_i,s]}\geq 1\}E\Big(\sup_{u\in[t_i,s]}(1+|X(u)|)^2\Big|\mathcal{F}_T^{\alpha}\Big)\Big)ds
\\
&+Ch\sum_{i=0}^{n_T-1}\sum_{\beta\in\mathcal{B}_{12}^b}\int_{t_i}^{t_{i+1}}(s-t_i)^{2\gamma-2}E\Big(\sup_{u\in[t_i,s]}|X(u)-X(t_i)|^2\Big)ds
\\
&+Ch\sum_{i=0}^{n_T-1}\sum_{\beta\in\mathcal{B}_{12}^b}\int_{t_i}^{t_{i+1}}(s-t_i)^{2\gamma-2}E\Big(\mathbbm{1}\{N^{(t_i,s]}\geq 1\}E\Big(\sup_{u\in[t_i,s]}(1+|X(u)|)^2\Big|\mathcal{F}_T^{\alpha}\Big)\Big)ds
\end{align*}
for all $k\in\{1,\ldots,d\}$ and $n_T\in\mathbb{N}$.
On using Theorem \ref{thm:true moment} and Lemma \ref{lem:rateMS}, we obtain
\begin{align*}
\sum_{\beta\in\mathcal{B}_1^b}E\Big(&\sup_{n\in\{1,\ldots,n_T\}}\Big|\sum_{i=0}^{n-1}\int_{t_i}^{t_{i+1}}I_{\beta}[b^k(X(\cdot),\alpha(\cdot))-b^k(X(t_i),\alpha(t_i))]_{t_i,s}ds\Big|^2\Big)
\\
\leq& Ch^{2\gamma}+Ch^{2\gamma-1}\sum_{i=0}^{n_T-1}\int_{t_i}^{t_{i+1}}P\{N^{(t_i,s]}\geq 1\}ds\leq Ch^{2\gamma}
\end{align*}
for all $k\in\{1,\ldots,d\}$ and $n_T\in\mathbb{N}$.
Now, we prove the second inequality of the lemma.
Due to Doob's martingale inequality and It\^o's isometry, we have
\begin{align*}
\sum_{\beta\in\mathcal{B}_1^\sigma}E&\Big(\sup_{n\in\{1,\ldots,n_T\}}\Big|\sum_{i=0}^{n-1}\int_{t_i}^{t_{i+1}}I_{\beta}[\sigma^{(k,j)}(X(\cdot),\alpha(\cdot))-\sigma^{(k,j)}(X(t_i),\alpha(t_i))]_{t_i,s}dW^j(s)\Big|^2\Big)
\\
\leq&C\sum_{\beta\in\mathcal{B}_1^\sigma}\sum_{i=0}^{n_T-1}\int_{t_i}^{t_{i+1}}E|I_{\beta}[\sigma^{(k,j)}(X(\cdot),\alpha(\cdot))-\sigma^{(k,j)}(X(t_i),\alpha(t_i))]_{t_i,s}|^2ds
\\
\leq&C\sum_{\beta\in\mathcal{B}_1^\sigma}\sum_{i=0}^{n_T-1}\int_{t_i}^{t_{i+1}}E|I_{\beta}[\sigma^{(k,j)}(X(\cdot),\alpha(\cdot))-\sigma^{(k,j)}(X(t_i),\alpha(\cdot))]_{t_i,s}|^2ds
\\
&+C\sum_{\beta\in\mathcal{B}_1^\sigma}\sum_{i=0}^{n_T-1}\int_{t_i}^{t_{i+1}}E\Big(\mathbbm{1}\{N^{(t_i,s]}\geq 1\}|I_{\beta}[\sigma^{(k,j)}(X(t_i),\alpha(\cdot))-\sigma^{(k,j)}(X(t_i),\alpha(t_i))]_{t_i,s}|^2\Big)ds
\\
\leq&C\sum_{\beta\in\mathcal{B}_1^\sigma}\sum_{i=0}^{n_T-1}\int_{t_i}^{t_{i+1}}E|I_{\beta}[\sigma^{(k,j)}(X(\cdot),\alpha(\cdot))-\sigma^{(k,j)}(X(t_i),\alpha(\cdot))]_{t_i,s}|^2ds
\\
&+C\sum_{\beta\in\mathcal{B}_1^\sigma}\sum_{i=0}^{n_T-1}\int_{t_i}^{t_{i+1}}E\Big(\mathbbm{1}\{N^{(t_i,s]}\geq 1\}E\Big(|I_{\beta}[\sigma^{(k,j)}(X(t_i),\alpha(\cdot))]_{t_i,s}|^2\Big|\mathcal{F}_T^{\alpha}\Big)\Big)ds
\\
&+C\sum_{\beta\in\mathcal{B}_1^\sigma}\sum_{i=0}^{n_T-1}\int_{t_i}^{t_{i+1}}E\Big(\mathbbm{1}\{N^{(t_i,s]}\geq 1\}E\Big(|I_{\beta}[\sigma^{(k,j)}(X(t_i),\alpha(t_i))]_{t_i,s}|^2\Big|\mathcal{F}_T^{\alpha}\Big)\Big)ds
\end{align*}
which by using Assumptions \ref{ass: b sigma lipschitz}, \ref{ass:A b sigma ds dw lipschitz}, Remark \ref{rem:A b sigma ds dw growth} and Lemma \ref{lem:multiple estimate 0,1} yield,
\begin{align*}
\sum_{\beta\in\mathcal{B}_1^\sigma}E&\Big(\sup_{n\in\{1,\ldots,n_T\}}\Big|\sum_{i=0}^{n-1}\int_{t_i}^{t_{i+1}}I_{\beta}[\sigma^{(k,j)}(X(\cdot),\alpha(\cdot))-\sigma^{(k,j)}(X(t_i),\alpha(t_i))]_{t_i,s}dW^j(s)\Big|^2\Big)
\\
\leq&C\sum_{i=0}^{n_T-1}\sum_{\beta\in\mathcal{B}_{1}^\sigma}\int_{t_i}^{t_{i+1}}(s-t_i)^{2\gamma-1}E\Big(\sup_{u\in[t_i,s]}|X(u)-X(t_i)|^2\Big)ds
\\
&+C\sum_{i=0}^{n_T-1}\sum_{\beta\in\mathcal{B}_{1}^\sigma}\int_{t_i}^{t_{i+1}}(s-t_i)^{2\gamma-1}E\Big(\mathbbm{1}\{N^{(t_i,s]}\geq 1\}E\Big(\sup_{u\in[t_i,s]}(1+|X(u)|)^2\Big|\mathcal{F}_T^{\alpha}\Big)\Big)ds
\end{align*}
for all $k\in\{1,\ldots,d\}$ and $j\in\{1,\ldots,m\}$ and $n_T\in\mathbb{N}$.
Hence, Theorem \ref{thm:true moment} and Lemma \ref{lem:rateMS}
give
\begin{align*}
\sum_{\beta\in\mathcal{B}_1^\sigma}E&\Big(\sup_{n\in\{1,\ldots,n_T\}}\Big|\sum_{i=0}^{n-1}\int_{t_i}^{t_{i+1}}I_{\beta}[\sigma^{(k,j)}(X(\cdot),\alpha(\cdot))-\sigma^{(k,j)}(X(t_i),\alpha(t_i))]_{t_i,s}dW^j(s)\Big|^2\Big)\leq Ch^{2\gamma}
\end{align*}
for all $k\in\{1,\ldots,d\}$ and $j\in\{1,\ldots,m\}$ and $n_T\in\mathbb{N}$.
\end{proof}

\begin{proof}[\textbf{Proof of theorem \ref{thm:main}}]
Due to  \eqref{eq:gen. scheme derivation} and \eqref{eq:gen.scheme}, 
\begin{align*}
&X^k(t_{n})-Y^k(t_{n})=X^k_0-Y^k_0+\sum_{i=0}^{n-1}\sum_{\beta\in\mathcal{A}_{\gamma}^b\setminus\tilde{\mathcal{A}}_{\gamma}^b}\int_{t_i}^{t_{i+1}}I_{\beta}[b^k(X(t_i),\alpha(t_i))-b^k(Y(t_i),\alpha(t_i))]_{t_i,s}ds\notag
\\
&+\sum_{i=0}^{n-1}\sum_{\beta\in\tilde{\mathcal{A}}_{\gamma}^b}\int_{t_i}^{t_{i+1}}I_{\beta}[b^k(X(t_i),\alpha(\cdot))-b^k(Y(t_i),\alpha(\cdot))]_{t_i,s}ds \notag
\\
&+\sum_{i=0}^{n-1}\sum_{\beta\in\mathcal{A}_{\gamma}^\sigma\setminus\tilde{\mathcal{A}}_{\gamma}^\sigma}\sum_{j=1}^m\int_{t_i}^{t_{i+1}}I_{\beta}[\sigma^{(k,j)}(X(t_i),\alpha(t_i))-\sigma^{(k,j)}(Y(t_i),\alpha(t_i))]_{t_i,s}dW^j(s)\notag
\\
&+\sum_{i=0}^{n-1}\sum_{\beta\in\tilde{\mathcal{A}}_{\gamma}^\sigma}\sum_{j=1}^m\int_{t_i}^{t_{i+1}}I_{\beta}[\sigma^{(k,j)}(X(t_i),\alpha(\cdot))-\sigma^{(k,j)}(Y(t_i),\alpha(\cdot))]_{t_i,s}dW^j(s)\notag
\\
&+\sum_{i=0}^{n-1}\sum_{\beta\in\mathcal{B}(\mathcal{A}_\gamma^b)\setminus \tilde{\mathcal{B}}_1^b}\int_{t_i}^{t_{i+1}}I_{\beta}[b^k(X(\cdot),\alpha(\cdot))]_{t_i,s}ds\notag
\\
&+\sum_{i=0}^{n-1}\sum_{\beta\in\mathcal{B}_1^b}\int_{t_i}^{t_{i+1}}I_{\beta}[b^k(X(\cdot),\alpha(\cdot))-b^k(X(t_i),\alpha(t_i))]_{t_i,s}ds\notag
\\
&+\sum_{i=0}^{n-1}\sum_{\beta\in\mathcal{B}(\mathcal{A}_\gamma^\sigma)\setminus \tilde{\mathcal{B}}_1^\sigma}\sum_{j=1}^m\int_{t_i}^{t_{i+1}}I_{\beta}[\sigma^{(k,j)}(X(\cdot),\alpha(\cdot))]_{t_i,s}dW^j(s)\notag
\\
&+\sum_{i=0}^{n-1}\sum_{\beta\in\mathcal{B}_1^\sigma}\sum_{j=1}^m\int_{t_i}^{t_{i+1}}I_{\beta}[\sigma^{(k,j)}(X(\cdot),\alpha(\cdot))-\sigma^{(k,j)}(X(t_i),\alpha(t_i))]_{t_i,s}dW^j(s) 
\end{align*}
which implies,  for all $k\in\{1,\ldots,d\}$ and $n\in\{1,\ldots,n_T\}$,
\begin{align}
E&\Big(\sup_{n\in\{1,\ldots,n'\}}|X^k(t_n)-Y^k(t_n)|^2\Big)\leq CE|X^k_0-Y^k_0|^2\notag
\\
+&CE\Big(\sup_{n\in\{1,\ldots,n'\}}\Big|\sum_{i=0}^{n-1}\sum_{\beta\in\mathcal{A}_{\gamma}^b\setminus\tilde{\mathcal{A}}_{\gamma}^b}\int_{t_i}^{t_{i+1}}I_{\beta}[b^k(X(t_i),\alpha(t_i))-b^k(Y(t_i),\alpha(t_i))]_{t_i,s}ds\Big|^2\Big)\notag
\\
+&CE\Big(\sup_{n\in\{1,\ldots,n'\}}\Big|\sum_{i=0}^{n-1}\sum_{\beta\in\tilde{\mathcal{A}}_{\gamma}^b}\int_{t_i}^{t_{i+1}}I_{\beta}[b^k(X(t_i),\alpha(\cdot))-b^k(Y(t_i),\alpha(\cdot))]_{t_i,s}ds\Big|^2\Big)\notag
\\
+&CE\Big(\sup_{n\in\{1,\ldots,n'\}}\Big|\sum_{i=0}^{n-1}\sum_{\beta\in\mathcal{A}_{\gamma}^\sigma\setminus\tilde{\mathcal{A}}_{\gamma}^\sigma}\sum_{j=1}^m\int_{t_i}^{t_{i+1}}I_{\beta}[\sigma^{(k,j)}(X(t_i),\alpha(t_i))-\sigma^{(k,j)}(Y(t_i),\alpha(t_i))]_{t_i,s}dW^j(s)\Big|^2\Big)\nonumber
\\
+&CE\Big(\sup_{n\in\{1,\ldots,n'\}}\Big|\sum_{i=0}^{n-1}\sum_{\beta\in\tilde{\mathcal{A}}_{\gamma}^\sigma}\sum_{j=1}^m\int_{t_i}^{t_{i+1}}I_{\beta}[\sigma^{(k,j)}(X(t_i),\alpha(\cdot))-\sigma^{(k,j)}(Y(t_i),\alpha(\cdot))]_{t_i,s}dW^j(s)\Big|^2\Big) \notag
\\
+&CE\Big(\sup_{n\in\{1,\ldots,n'\}}\Big| \sum_{i=0}^{n-1}\sum_{\beta\in\mathcal{B}(\mathcal{A}_\gamma^b)\setminus \tilde{\mathcal{B}}_1^b}\int_{t_i}^{t_{i+1}}I_{\beta}[b^k(X(\cdot),\alpha(\cdot))]_{t_i,s}ds \Big|^2\Big)\notag
\\
+&CE\Big(\sup_{n\in\{1,\ldots,n'\}}\Big|\sum_{i=0}^{n-1}\sum_{\beta\in\mathcal{B}_1^b}\int_{t_i}^{t_{i+1}}I_{\beta}[b^k(X(\cdot),\alpha(\cdot))-b^k(X(t_i),\alpha(t_i))]_{t_i,s}ds\Big|^2\Big)\notag
\\
+&CE\Big(\sup_{n\in\{1,\ldots,n'\}}\Big| \sum_{i=0}^{n-1}\sum_{\beta\in\mathcal{B}(\mathcal{A}_\gamma^\sigma)\setminus \tilde{\mathcal{B}}_1^\sigma}\sum_{j=1}^m\int_{t_i}^{t_{i+1}}I_{\beta}[\sigma^{(k,j)}(X(\cdot),\alpha(\cdot))]_{t_i,s}dW^j(s) \Big|^2\Big)\notag
\\
+&CE\Big(\sup_{n\in\{1,\ldots,n'\}}\Big|\sum_{i=0}^{n-1}\sum_{\beta\in\mathcal{B}_1^\sigma}\sum_{j=1}^m\int_{t_i}^{t_{i+1}}I_{\beta}[\sigma^{(k,j)}(X(\cdot),\alpha(\cdot))-\sigma^{(k,j)}(X(t_i),\alpha(t_i))]_{t_i,s}dW^j(s)\Big|^2\Big)\notag 
\end{align}
for all $k\in\{1,\ldots,d\}$ and $n'\in\{1,\ldots,n_T\}$. 
On using Doob's martingale inequality, Burkholder-Davis-Gundy inequality and H\"older's inequality, Lemmas \ref{lem:B_A_b estimate}, \ref{lem:B_A_sigma estimate}, \ref{lem:convergence B_1 estimate} and Assumption \ref{ass:initial data}, we obtain
\begin{align*}
E\Big(\sup_{n\in\{1,\ldots,n'\}}&|X^k(t_n)-Y^k(t_n)|^2\Big)\leq Ch^{2\gamma}
\\
&+C\sum_{i=0}^{n'-1}\sum_{\beta\in\mathcal{A}_{\gamma}^b\setminus\tilde{\mathcal{A}}_{\gamma}^b}\int_{t_i}^{t_{i+1}}E|I_{\beta}[b^k(X(t_i),\alpha(t_i))-b^k(Y(t_i),\alpha(t_i))]_{t_i,s}|^2ds
\\
&+C\sum_{i=0}^{n-1}\sum_{\beta\in\tilde{\mathcal{A}}_{\gamma}^b}\int_{t_i}^{t_{i+1}}E|I_{\beta}[b^k(X(t_i),\alpha(\cdot))-b^k(Y(t_i),\alpha(\cdot))]_{t_i,s}|^2ds
\\
&+C\sum_{i=0}^{n'-1}\sum_{\beta\in\mathcal{A}_{\gamma}^\sigma\setminus\tilde{\mathcal{A}}_{\gamma}^\sigma}\sum_{j=1}^m\int_{t_i}^{t_{i+1}}E|I_{\beta}[\sigma^{(k,j)}(X(t_i),\alpha(t_i))-\sigma^{(k,j)}(Y(t_i),\alpha(t_i))]_{t_i,s}|^2ds
\\
&+C\sum_{i=0}^{n-1}\sum_{\beta\in\tilde{\mathcal{A}}_{\gamma}^\sigma}\sum_{j=1}^m\int_{t_i}^{t_{i+1}}E|I_{\beta}[\sigma^{(k,j)}(X(t_i),\alpha(\cdot))-\sigma^{(k,j)}(Y(t_i),\alpha(\cdot))]_{t_i,s}|^2ds
\end{align*}
for all $k\in\{1,\ldots,d\}$ and $n'\in\{1,\ldots,n_T\}$.
By the application of Assumptions \ref{ass: b sigma lipschitz}, \ref{ass:A b sigma ds dw lipschitz}, Remark \ref{rem:A b sigma N lipschitz} and Lemmas \ref{lem:multiple estimate 0,1}, \ref{lem:multiple estimate N_mu}, we have 
\begin{align*}
E\Big(\sup_{n\in\{1,\ldots,n'\}}|X^k(t_n)&-Y^k(t_n)|^2\Big)\leq Ch^{2\gamma}
\\
&+ C\sum_{i=0}^{n'-1}\sum_{\beta\in\mathcal{A}_{\gamma}^b\cup\mathcal{A}_{\gamma}^\sigma}\int_{t_i}^{t_{i+1}}(s-t_i)^{\eta(\beta)}E\Big(\sup_{n\in\{0,1,\ldots,i\}}|X^k(t_n)-Y^k(t_n)|^2\Big)ds
\\
\leq&Ch^{2\gamma}+ Ch\sum_{i=0}^{n'-1}E\Big(\sup_{n\in\{0,1,\ldots,i\}}|X^k(t_n)-Y^k(t_n)|^2\Big)
\end{align*} 
for all $k\in\{1,\ldots,d\}$ and $n'\in\{1,\ldots,n_T\}$.
 The Gronwall’s lemma completes the proof.
\end{proof}

\bibliographystyle{amsplain}

\end{document}